\newtheorem*{maintheorem*}{Main Theorem}
\newenvironment{customthm}[1]
  {\innercustomthm}
  {\endinnercustomthm}
\newenvironment{mycorollary}[1]
  {\innercustomcorollary}
  {\endinnercustomcorollary}
\newtheorem{theorem}{Theorem}[section]
\newtheorem{lemma}[theorem]{Lemma}
\newtheorem{corollary}[theorem]{Corollary}
\newtheorem{proposition}[theorem]{Proposition}
\theoremstyle{definition}
\newtheorem{definition}[theorem]{Definition}
\newtheorem{example}[theorem]{Example}
\theoremstyle{remark}
\newtheorem{remark}[theorem]{Remark}
\numberwithin{equation}{section}
\newcommand{\ba}{\backslash}
\newcommand{\im}{\mathrm{im}\;}
\newcommand{\Ob}{\text{Ob}}
\newcommand{\scrC}{\mathscr{C}}
\newcommand{\calM}{\mathcal{M}}
\newcommand{\calF}{\mathcal{F}}
\newcommand{\hgr}{\text{gr}_{h}}
\newcommand{\qgr}{\text{gr}_{q}}
\title[stable homotopy type for planar trivalent graphs]{Lipshitz\textendash Sarkar stable homotopy type for certain planar trivalent graphs with perfect matchings}
\author{Nilangshu Bhattacharyya}
\email{nbhatt7@lsu.edu}
\begin{document}

\begin{abstract}
   We develop a space-level refinement of the \( 2 \)-factor homology \cite{CohomologyPlanarTrivalentGraph} by constructing a stable homotopy type associated to a certain family \( \mathscr{G} \) of planar trivalent graphs equipped with perfect matchings. Specifically, we define a cover functor from the \(2\)-factor flow category \(\mathscr{C}(\Gamma_{M})\) to the cube flow category \(\mathscr{C}_{C}(n)\), where the perfect matching graph \(\Gamma_{M}\) represents a planar trivalent graph \(G\) together with a perfect matching \(M\), such that \((G,M) \in \mathscr{G}\). By applying the Cohen–Jones–Segal realization to the \(2\)-factor flow category, we obtain the \(2\)-factor spectrum \(\mathcal{X}(\Gamma_{M})\). This spectrum serves as a space-level version of the \(2\)-factor homology, analogous to the Lipshitz–Sarkar Khovanov spectrum \(\mathcal{X}_{Kh}(L)\) for links~\cite{KhStableHomotopyType}. We show that the cohomology of \(\mathcal{X}(\Gamma_{M})\) with \(\mathbb{Z}_{2}\)-coefficients is isomorphic to the \(2\)-factor homology as defined by Baldridge~\cite{CohomologyPlanarTrivalentGraph}. We prove that the stable homotopy type of the \(2\)-factor spectrum is an invariant of planar trivalent graphs \(G\) equipped with perfect matchings \(M\), whenever \((G, M) \in \mathscr{G}\). Furthermore, we show that the closed webs obtained by performing flattenings at each crossing of an oriented link diagram in the context of \( \mathfrak{sl}_3 \) link homology \cite{sl3-link-homology, Kuperberg-spiders} belong to the family \( \mathscr{G} \).
\end{abstract}
\maketitle

\section{Introduction}
Baldridge introduced a cohomology theory for planar trivalent graphs \( G \) equipped with perfect matchings \( M \), which categorifies a polynomial invariant known as the \emph{2-factor polynomial} \( \langle G : M \rangle_{2}(q) \)~\cite{CohomologyPlanarTrivalentGraph}. When evaluated at \( q = 1 \), the 2-factor polynomial counts the number of 2-factors that contain the perfect matching \( M \)~\cite[Theorem 2]{2-factor-detecting-even-perfect-matching}, hence the name. Moreover, the polynomial \( \langle G : M \rangle_{2}(q) \) detects even perfect matchings and also enumerates the number of Tait colorings of the graph \( G \)~\cite{2-factor-detecting-even-perfect-matching}. 

Baldridge further developed bigraded and filtered \( n \)-color homology theories for graphs arising as the 1-skeletons of 2-dimensional CW complexes on closed smooth surfaces~\cite{n-color-homology}. These theories originate from 2-factor homology and provide a new approach to proving the Four Color Theorem~\cite{four-color-theorem}. The 2-factor polynomial and the corresponding 2-factor homology in graph theory are closely analogous to the Jones polynomial and Khovanov homology, respectively, in low-dimensional topology; see also \cite[Section 5.1 and 5.2]{ribbon-moves}. 

In low-dimensional topology, one of the most significant and widely studied link invariants is the Jones polynomial~\cite{Jones-polynomial-1, Jones-polynomial-2}. Khovanov introduced a homology theory, known as \textit{Khovanov homology}, for links in \( S^3 \), whose $q$-graded Euler characteristic recovers the Jones polynomial~\cite{KhovanovHomology}. 
\[
\chi_{q}(\mathit{Kh}^{i,j}(L)) = \sum_{i,j} (-1)^i q^j \, \mathrm{rank}\, \mathit{Kh}^{i,j}(L) = (q + q^{-1}) V(L),
\]
where \(V(L)\) denotes the Jones polynomial of the link \(L\). Bar-Natan demonstrated that Khovanov homology is a strictly stronger invariant than the Jones polynomial~\cite{Bar-Natan-Khovanov-homology}. 

In~\cite{KhStableHomotopyType}, Lipshitz and Sarkar constructed a space-level refinement of Khovanov homology, commonly referred to as the Lipshitz–Sarkar stable homotopy type \( \mathcal{X}_{Kh}(L) \), and proved that the reduced cohomology of \( \mathcal{X}_{Kh}(L) \) recovers the Khovanov homology. 
\[
\widetilde{H}^{i}(\mathcal{X}^{j}_{Kh}(L); \mathbb{Z}) = \mathit{Kh}^{i,j}(L).
\]
Lipshitz and Sarkar also provided a method to compute the first and second Steenrod square operations on Khovanov homology~\cite{Steenrod-square-on-Khovanov}. In~\cite{KhStableHomotopyType}, they constructed a stable homotopy refinement of Khovanov homology by applying the Cohen--Jones--Segal realization to a framed flow category. Later, in~\cite{Burnside-stable-homotopy}, Lawson, Lipshitz, and Sarkar introduced two additional constructions: one via the cubical realization of a cubical flow category, and another via the homotopy colimit of a spatial refinement of a strictly unitary lax 2-functor from the cube category to the Burnside category. They showed that all three constructions yield stable homotopy equivalent spectra.

\subsection{Main Results}

In this paper, we construct a space-level refinement of the \(2\)-factor homology for planar trivalent graphs \(G\) equipped with perfect matchings \(M\), where 
\((G, M)\) belongs to a specific family \(\mathscr{G}\) in which the hypercube of states contains no bad face satisfying the relation \(m \circ \Delta = \eta \circ \eta\); see Definition~\ref{the-certain-family}. Among all planar trivalent graphs \( G \) with a perfect matching \( M \) satisfying \( |M| = 3 \), we observe that a significant majority—approximately 77\%—of such pairs \((G, M)\) lie in the family $\mathscr{G}$. The family $\mathscr{G}$ is well-defined due to the following corollary.

\begin{mycorollary}{\ref{bad-face-corollary}}
Let \( \Gamma_M \) be a perfect matching graph representing a planar trivalent graph \( G \) with perfect matching \( M \), and let \( \widetilde{\Gamma}_M \) be another representative of \( (G, M) \) such that \( \Gamma_M \) and \( \widetilde{\Gamma}_M \) are related by a flip move. Then the hypercube of states of \( \Gamma_M \) contains a bad face, as depicted in Figure~\ref{fig:SingleCircleSurgery}(B), if and only if the hypercube of states of \( \widetilde{\Gamma}_M \) contains a bad face.
\end{mycorollary}

The closed webs arising in the context of \( \mathfrak{sl}_3 \) link homology~\cite{sl3-link-homology, Kuperberg-spiders}, obtained by flattening each crossing of an oriented link diagram \( D_L \), can be interpreted as planar trivalent graphs endowed with natural perfect matchings, where each perfect matching edge corresponds to a crossing of \( D_L \). We prove that the planar trivalent graphs with perfect matchings associated to such closed webs belong to the family~$\mathscr{G}$.

\begin{customthm}{\ref{web-in-our-family}}
    For a closed web $W$ obtained by flattening all the crossings of an oriented link diagram \( D_L \), consider the  perfect matching graph $\Gamma_{M}$ associated to \( W \). Let $\Gamma_{M}$ represent a planar trivalent graph $G$ with perfect matching $M$, then $(G,M)\in \mathscr{G}$. 
\end{customthm}

Our construction of stable homotopy type builds on the techniques developed in~\cite{KhStableHomotopyType}. Analogous to the \emph{ladybug matching} used in Lipshitz and Sarkar's Khovanov stable homotopy type, we introduce a choice called the \emph{butterfly matching} (see Definition~\ref{butterfly-matching}) in the construction of the resolution moduli spaces.
Using the butterfly matching, we define a cover functor \( \mathscr{F} \) from the $2$-factor flow category \( \mathscr{C}(\Gamma_{M}) \) (see Definition~\ref{def:2factor-flow-category}) to the cube flow category \( \mathscr{C}_{C}(n) \). The $2$-factor flow category is a framed flow category, where both the neat embeddings and coherent framings are induced via the cover functor \( \mathscr{F} \).

\begin{customthm}{\ref{maintheorem-resolution-moduli-space}}
    For any basic decorated resolution configuration \( (D, x, y) \), there exist spaces \( \mathcal{M}(D, x, y) \) and maps \( \mathcal{F} \) satisfying conditions~\textnormal{(RM-1)} through~\textnormal{(RM-4)} of Definition~\ref{def:resolution moduli space}.
\end{customthm}

\begin{customthm}{\ref{2factor-flow-category-is-a-cubical-flow-category}}
    Given a perfect matching graph \( \Gamma_{M} \) representing a planar trivalent graph \( G \) with perfect matching \( M \), where \( (G,M) \in \mathscr{G} \), the $2$-factor flow category $\mathscr{C}(\Gamma_{M})$ is a cubical flow category.
\end{customthm}

We define the \( 2 \)-factor spectrum \( \mathcal{X}(\Gamma_{M}) \) to be the formal desuspension of the suspension spectrum of the Cohen–Jones–Segal realization \( |\mathscr{C}(\Gamma_{M})|_{\widetilde{\imath},\widetilde{\mathbf{d}}, \widetilde{\Phi}} \). Figure~\ref{fig: flow-chart} provides a flowchart summarizing the construction of the $2$-factor stable homotopy type.
The \( 2 \)-factor spectrum \( \mathcal{X}(\Gamma_{M}) \) decomposes as the wedge sum over the quantum gradings. 
$$
\mathcal{X}(\Gamma_{M}) = \bigvee\limits_{j}\mathcal{X}^{j}(\Gamma_{M}).
$$
The reduced cohomology of the \( 2 \)-factor spectrum \( \mathcal{X}^{j}(\Gamma_{M}) \) is isomorphic to the $2$-factor homology $H^{i,j}(G,M)$.

\begin{mycorollary}{\ref{corollary:reduced-cohomology}}
    The reduced cohomology of $\mathcal{X}(\Gamma_{M})$ with $R$-coefficients is isomorphic to the $2$-factor homology defined in \ref{def:2-factor-cohomology}.
    \[
    \widetilde{H}^{i}(\mathcal{X}^{j}(\Gamma_{M}); R) \cong H^{i,j}(G,M; R),
    \]
    where $R = \mathbb{Z}, \mathbb{Z}_{2}$, and $(G,M)\in \mathscr{G}$.
\end{mycorollary}

\begin{figure}[htp]
    \centering
    \input{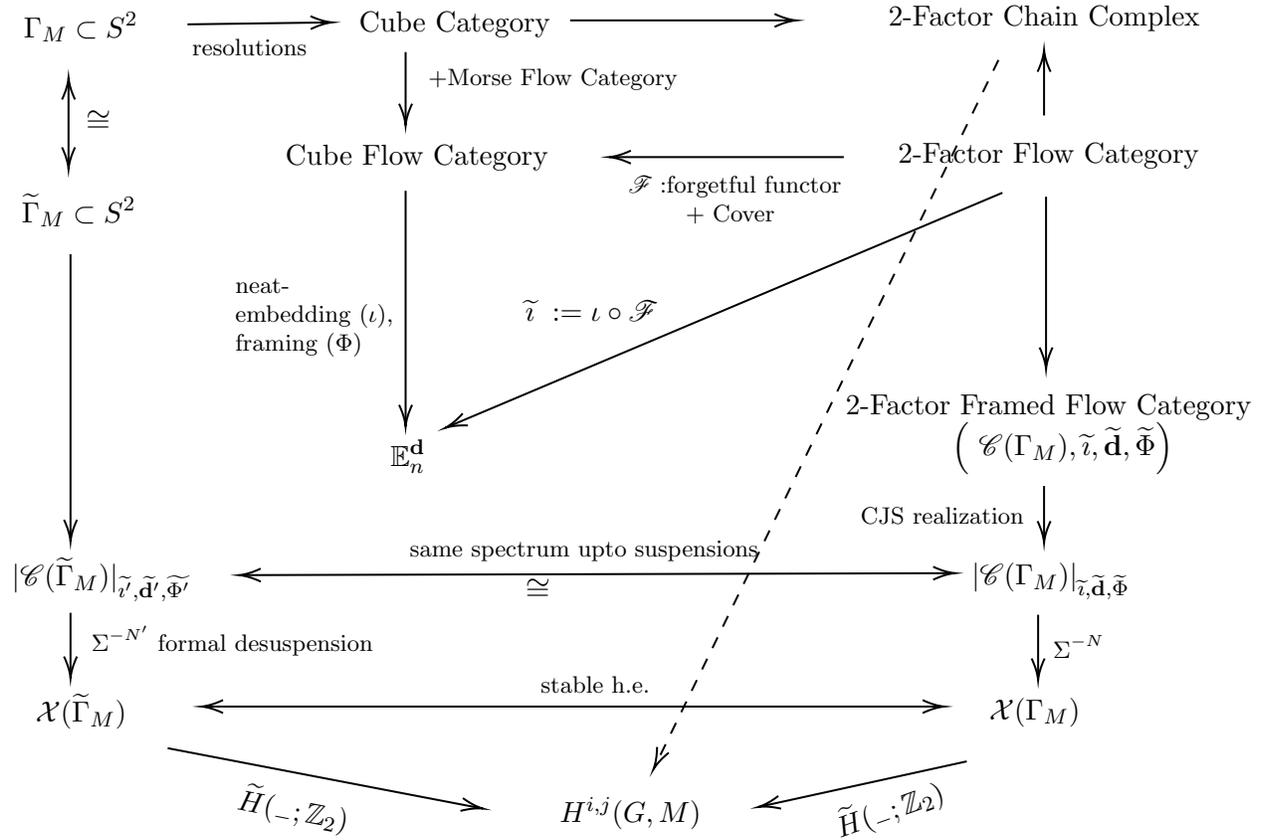}
    \caption{Flowchart describing the construction of $2$-factor spectra}
    \label{fig: flow-chart}    
\end{figure}

\begin{remark}
Note that the \(2\)-factor homology was originally defined with \(\mathbb{Z}_{2}\) coefficients \cite{CohomologyPlanarTrivalentGraph}. The obstruction to lifting this homology theory to \(\mathbb{Z}\) coefficients is the presence of \emph{bad faces} in the hypercube of resolutions. For planar trivalent graphs with perfect matchings \((G,M) \in \mathscr{G}\), this obstruction does not occur, so the \(2\)-factor homology can be defined with $\mathbb{Z}$-coefficients for this family. If the \(2\)-factor homology of all planar trivalent graphs with perfect matchings could be lifted from \(\mathbb{Z}_{2}\)-modules to \(\mathbb{Z}\)-modules---not just those in \(\mathscr{G}\)---then our construction of the \(2\)-factor spectrum (see Definition~\ref{2-factor-spectrum}) would extend to all planar trivalent graphs with perfect matchings.  

One possible approach to achieving this lift is to use the functors \(\mathbb{K}\) and \(\mathbb{K}^{-1}\), introduced in \cite{ribbon-moves}, to interpolate between graphs and virtual links.  
This framework could be combined with techniques from unoriented virtual Khovanov homology developed in \cite{unoriented-Khovanov} to produce coherent sign assignments in the presence of bad faces.  
Furthermore, methods from Manolescu and Sarkar’s stable homotopy refinement of knot Floer homology \cite{knot-Floer-stable-homotopy} may be adaptable to our setting: in particular, the obstruction from bad faces is reminiscent of the horizontal and vertical annuli in their construction.  
The author intends to pursue these directions in the future.
\end{remark}

We showed that if two perfect matching graphs, \( \Gamma_{M} \) and \( \widetilde{\Gamma}_{M} \), represent the same planar trivalent graph $G$ equipped with a perfect matching \( M \), where \( (G, M) \in \mathscr{G} \), then their associated 2-factor spectra, \( \mathcal{X}(\Gamma_{M}) \) and \( \mathcal{X}(\widetilde{\Gamma}_{M}) \), are stably homotopy equivalent. 
\begin{customthm}{\ref{thm:invariance-of-2factorspectra}}
    Let \( \Gamma_M \) be a perfect matching graph representing a planar trivalent graph \( G \) with perfect matching \( M \), such that \( (G, M) \in \mathscr{G} \), and let \( \widetilde{\Gamma}_M \) be another representative of \( (G, M) \) such that \( \Gamma_M \) and \( \widetilde{\Gamma}_M \) are related by a 0-flip, 1-flip, or 2-flip move. Then, for each quantum grading \( j \), the associated 2-factor spectrum \( \mathcal{X}^{j}(\Gamma_{M}) \) is stably homotopy equivalent to \( \mathcal{X}^{j}(\widetilde{\Gamma}_{M}) \).
\end{customthm}
In a forthcoming paper, the author intends to apply the techniques of Lipshitz and Sarkar~\cite{Steenrod-square-on-Khovanov} to compute the Steenrod squares \( \operatorname{Sq}^1 \) and \( \operatorname{Sq}^2 \) on the homology \( H^{i,j}(G, M) \). 

\bigskip
\noindent\textbf{Acknowledgments.}  
The author would like to thank Prof. Scott Baldridge for his mentorship, insightful discussions, support, and guidance as my thesis advisor. It was through him that I became acquainted with Khovanov homology, the Khovanov stable homotopy type, \(2\)-factor homology, and related topics. The author is also sincerely grateful to his co-advisor, Prof.~Shea Vela-Vick, for his constant encouragement, motivation, and thoughtful feedback throughout the development of this project. The author further thanks Prof.~James Oxley and Prof.~Wayne Ge for sharing their knowledge of graph theory, which enriched the author’s understanding. Special thanks are also due to Ben McCarty for his valuable input regarding \(2\)-factor homology theory.
\bigskip

\section{A Cohomology Theory for Planar Trivalent Graphs with Perfect Matchings}\label{sec:PTG}
Let $G$ be a planar trivalent graph with a perfect matching $M$. The cohomology $H^{i,j}(G,M)$ was originally defined in terms of resolutions, states, and hypercubes; see~\cite{CohomologyPlanarTrivalentGraph}. In this section, we reinterpret these definitions in the framework and terminology of Lipshitz and Sarkar~\cite{KhStableHomotopyType}.
\subsection{Decorated resolution configurations} 
\begin{definition}[\cite{CohomologyPlanarTrivalentGraph}, Definition 2.5]\label{perfect matching graph}
    A \textit{planar graph} $G$ is a graph that embeds in $S^2$. A \textit{trivalent graph} is a graph such that every vertex has degree three. An embedding $G \hookrightarrow S^2$ of a planar graph $G$ is called a \textit{plane graph diagram}, and we denote it by $\Gamma$. A \textit{perfect matching graph} $\Gamma_{M}$ is a plane graph diagram $\Gamma$ of $G$ with a perfect matching $M$. 
\end{definition}

\begin{definition}\label{resolution configuaration}
    A \textit{resolution configuration} $D$ is a pair $\big(Z(D),A(D)\big)$, where $Z(D)$ is a set of immersed circles in $S^2$ where all singularities are double points, and $A(D)$ is a totally ordered collection of disjoint arcs embedded in $S^2$, with $A(D)\cap Z(D)= \partial A(D)$. The cardinality of the set $A(D)$ is called \textit{index} of the resolution configuration.
\end{definition}

Note that we changed \cite[Definition 2.1]{KhStableHomotopyType} into Definition \ref{resolution configuaration} by replacing
`pairwise-disjoint embedded circles' \cite[Definition 2.1]{KhStableHomotopyType} by `immersed circles' in Definition \ref{resolution configuaration}. 

\begin{definition}\label{resolution configuration corresponding to a state}
    Given a perfect matching graph $\Gamma_{M}$ for a planar trivalent graph $G$ with a perfect matching $M$, an ordering on the perfect matching edges and a vector $v\in \{0,1\}^{|M|}$ there is an associated resolution configuration $D_{\Gamma_{M}}(v)$ of $\Gamma_{M}$ corresponding to $v$ by taking the 0-resolution (resp. 1-resolution) at the $i^{\text{th}}$ perfect matching edge if $v_{i}=0$ (resp. $v_{i}=1$) and then placing an arc at the $i^{\text{th}}$ perfect matching if $v_{i}=0$; see Figure~\ref{fig: resolution configuration for a state}. Note that the $0$ and $1$-resolutions of a perfect matching graph at a perfect matching edge are analogous to the $0$ and $1$-smoothings of a knot diagram at a crossing. However, the key difference is that in the graph case, the $1$-resolution results in a double point, whereas in the knot case, $1$-smoothings do not have singularities. A vector \( v \in \{0,1\}^{|M|} \) is referred to as a \emph{state}.
\end{definition}

\begin{figure}[htp]
    \centering
    \input{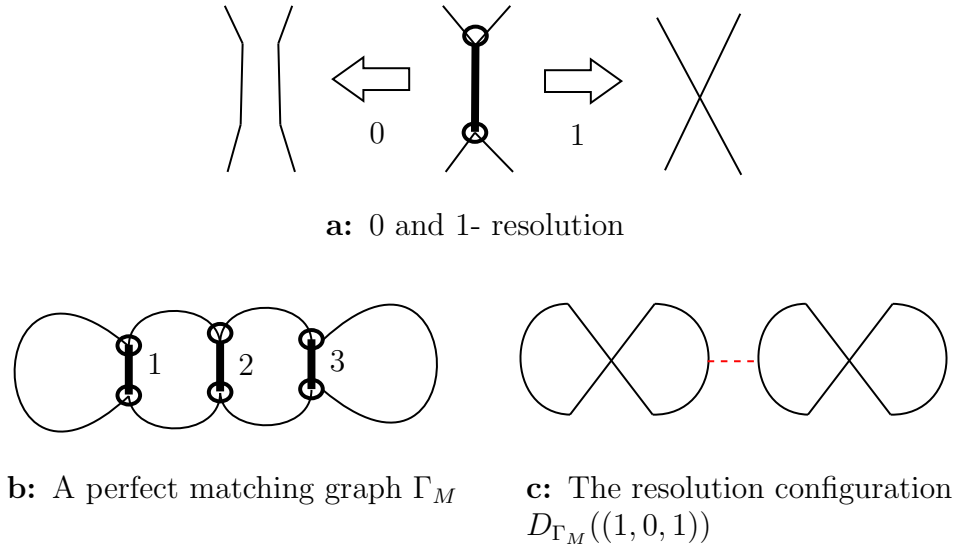}
    \caption{Resolution configuration associated to a state.}
    \label{fig: resolution configuration for a state}    
\end{figure}
\begin{definition}[\cite{KhStableHomotopyType}, Definition 2.3]\label{subtraction, intersection on resolution configurations}
    Given resolution configurations $D$ and $E$ there is a new resolution configuration $D\backslash E$ defined by
    $$Z(D\ba E) = Z(D)\ba Z(E),\,\text{and}\,\,A(D\backslash E)= \{A\in A(D)\,|\, \partial A \cap Z =\emptyset \,\,\, \text{for all}\,\, Z\in Z(E) \};$$ see Figure~\ref{fig:operations on resolution configuration}. \\
    Let $D\cap E = D\ba(D\ba E)$.
\end{definition}
\begin{definition}\label{surgery}
    Given a resolution configuration $D$ and an arc $\widehat{A} \in A(D)$, there is a new resolution configuration $s_{\widehat{A}}(D)$, called the \emph{surgery of $D$ along $\widehat{A}$}, defined as follows. The circles $Z\big(s_{\widehat{A}}(D)\big)$ are obtained by deleting a neighborhood of $\partial \widehat{A}'$ from $Z(D)$ and then connecting the resulting four endpoints by two strands with a transverse double-point intersection.
    Similarly, for a subset $A' \subseteq A(D)$, we define $s_{A'}(D)$ to be the resolution configuration obtained by performing surgery along all arcs in $A'$; see Figure~\ref{fig:operations on resolution configuration}. Note that $A(s_{A'}(D)) = A(D) \setminus A'$.
    Let $s(D) = s_{A(D)}(D)$ denote the maximal surgery on $D$. We also declare $s_{\emptyset}(D)$ to be $D$.
\end{definition}
\begin{figure}[htp]
    \centering
    \input{tikz_images/resolution-configuration-graph.tikz}
    \caption{Resolution configuration and different operations on it}
    \label{fig:operations on resolution configuration}    
\end{figure}
\begin{definition}\label{scs m and c arc}
    Suppose a resolution configuration $E$ is obtained from $D$ by surgery along an arc $A\in A(D)$, then the difference $|Z(E)| - |Z(D)|$ can be 0, 1, or -1. If $|Z(E)| - |Z(D)|=0$, then this surgery is called a \textit{single cycle surgery} and the arc $A$ is called an $\eta$\textit{-arc}. \\
    If $|Z(E)| - |Z(D)|=1$, then the surgery is called \textit{co-multiplication} and the arc $A$ is called $c$\textit{-arc} or $\Delta$\textit{-arc}. On the other hand, if  $|Z(E)| - |Z(D)|=-1$, then the surgery is called \textit{multiplication} and the arc $A$ is called $m$-arc.
\end{definition}
We note that Definition \ref{scs m and c arc} is analogous to Definitions 2.6 and 2.7 in \cite[Definitions 2.6 and 2.7]{StableHomotopyTypeForLinksInThickenedHigherGenusSurface}.
In Figure~\ref{fig:operations on resolution configuration}, the arc $A_{1}$ is a $\eta$ arc. The arcs $A_{2}, A_{3},\,\,\text{and}\,\,A_{4}$ are $m$-arcs, and in the dual resolution configuration $A^{*}_{1}$ is a $\Delta$-arc.
Our setting differs from the classical case of knots and links, where \( |Z(E)| - |Z(D)| = \pm 1 \) when \( E \) is obtained from \( D \) by surgery along an arc, and the resolution configurations contain no \( \eta \)-arcs. However, our situation is more closely related to that of virtual knot theory; see~\cite{ribbon-moves}.
 The single-cycle surgery appears in various contexts, including Khovanov homology for virtual knots \cite{Khovanov-virtual-Manturov, Unoriented-Khovanov-homology, Khovanov-lee-homology-Rasmussen-invariant-for-virtual-knot}, as well as homotopical Khovanov homology for links in thickened surfaces \cite{Homotopical-Khovanov-homology, StableHomotopyTypeForLinksInThickenedHigherGenusSurface}. 
\begin{definition} \label{standard local disk containing an arc}
    Let $B$ be a closed disk in $S^2$ small enough so that $B\cap \Big(\bigcup\limits_{Z\in Z(D)}Z \Big)$ consists of exactly two components, $\alpha$ and $\beta$, each diffeomorphic to the closed interval $[0,1]$, with $\partial \alpha$ and $\partial \beta$ contained in $\partial B$, and the interior of $B$ contains exactly one arc $A\in A(D)$ in a way so that the two points in the boundary $\partial A$ lie in two different components $\alpha$ and $\beta$; see Figure~\ref{fig:Disk neighborhood of an arc.}. Let $\alpha \subset Z_{\alpha}\in Z(D)$ and $\beta \subset Z_{\beta}\in Z(D)$ with a possibility that $Z_{\alpha}$ and $Z_{\beta}$ are the same circle. Denote the two points $\alpha \cap \partial B$ as $z$ and $w$, similarly let $\beta \cap \partial B =\{x,y\}$. Without loss of generality, we assume that \( z \) and \( x \) lie on one side of the arc \( A \), while \( w \) and \( y \) lie on the opposite side; see Figure~\ref{fig:Disk neighborhood of an arc.}. We take $B$ small enough so that none of these four points are double points. We call such $B$ as the \textit{standard local disc containing} $A$.
\end{definition}

\begin{figure}[htp]
    \centering
    \tikzset{every picture/.style={line width=0.75pt}} 

\begin{tikzpicture}[x=0.75pt,y=0.75pt,yscale=-1,xscale=1]

\draw [line width=0.75]    (170.14,25.43) -- (170.14,90.17) ;
\draw [line width=0.75]    (220.14,27.39) -- (220.14,88.17) ;
\draw  [color={rgb, 255:red, 189; green, 16; blue, 224 }  ,draw opacity=1 ][line width=0.75]  (153.86,57.93) .. controls (153.86,35.8) and (171.8,17.86) .. (193.93,17.86) .. controls (216.06,17.86) and (234,35.8) .. (234,57.93) .. controls (234,80.06) and (216.06,98) .. (193.93,98) .. controls (171.8,98) and (153.86,80.06) .. (153.86,57.93) -- cycle ;
\draw [color={rgb, 255:red, 252; green, 3; blue, 3 }  ,draw opacity=1 ][line width=0.75]  [dash pattern={on 2.5pt off 2.5pt}]  (170,60.2) -- (219.8,60.2) ;

\draw (160.06,9.6) node [anchor=north west][inner sep=0.75pt]    {$z$};
\draw (155.03,89.6) node [anchor=north west][inner sep=0.75pt]    {$w$};
\draw (217.77,9.26) node [anchor=north west][inner sep=0.75pt]    {$x$};
\draw (220.29,84.69) node [anchor=north west][inner sep=0.75pt]    {$y$};
\draw (184.8,61.2) node [anchor=north west][inner sep=0.75pt]    {$A$};
\draw (170.14,36.43) node [anchor=north west][inner sep=0.75pt]    {$\alpha $};
\draw (207.23,36.43) node [anchor=north west][inner sep=0.75pt]    {$\beta $};
\draw (235.6,49.4) node [anchor=north west][inner sep=0.75pt]    {$\textcolor[rgb]{0.74,0.06,0.88}{\partial B}$};

\end{tikzpicture}
    \caption{Standard local disk containing an arc}
    \label{fig:Disk neighborhood of an arc.}    
\end{figure}
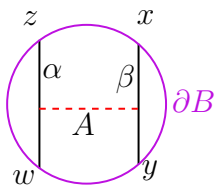
The following lemma  gives a way to figure out whether an arc $A\in A(D)$ is $m, \Delta,$ or $\eta$-arc. 
\begin{lemma}\label{determine-m,Delta,eta}
    Let $B$ be a standard local disk containing an arc $A\in A(D)$. 
    \begin{enumerate}
        \item If $Z_{\alpha} \neq Z_{\beta}$ and outside of the disk $B$, the circle $Z_{\alpha}$ connects $z$ with $w$ and $Z_{\beta}$ connects $x$ with $y$, then $A$ is an $m$-arc.
        \item If $Z_{\alpha} = Z_{\beta} = Z \in Z(D)$ and outside of the disk $B$, the circle $Z$ connects $z$ with $y$ and $x$ with $w$, then $A$ is an $\Delta$-arc. 
        \item If $Z_{\alpha} = Z_{\beta} = Z \in Z(D)$ and outside of the disk $B$, the circle $Z$ connects $z$ with $x$ and $y$ with $w$, then $A$ is an $\eta$-arc.
    \end{enumerate}
\end{lemma}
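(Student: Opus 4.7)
The plan is to carry out a direct case analysis, tracing how the circles of $Z(D)$ recombine after surgery along $A$. First I would fix the local picture inside the standard disk $B$: before surgery, $B \cap \bigcup_{Z \in Z(D)} Z$ consists of the two arcs $\alpha$ (joining $z$ to $w$) and $\beta$ (joining $x$ to $y$), together with the arc $A$ meeting both. The surgery replaces $\alpha$ and $\beta$ inside $B$ by two strands meeting transversally at a single double point. Since $z, x$ lie on one side of $A$ while $w, y$ lie on the other, and the new resolution must differ from the original $\{z\text{-}w, x\text{-}y\}$ pairing, these two strands pair the boundary points of $B$ as $\{z, y\}$ and $\{x, w\}$, each strand passing through the double point. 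Outside $B$, the diagram is unchanged.

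Next I would walk through the three cases and count resulting circles. In (1), the distinct circles $Z_\alpha, Z_\beta$ contribute outside-$B$ arcs $z \leftrightarrow w$ and $x \leftrightarrow y$. After surgery, the alternating sequence $z \to y$ (inside $B$) $\to x$ (outside along $Z_\beta$) $\to w$ (inside $B$) $\to z$ (outside along $Z_\alpha$) closes into a single loop, so two circles become one, giving $|Z(E)| - |Z(D)| = -1$ and hence an $m$-arc. In (2), the single circle $Z$ contributes outside arcs $z \leftrightarrow y$ and $x \leftrightarrow w$, which exactly match the new inside-$B$ pairings; so $Z$ splits into two disjoint circles, giving $|Z(E)| - |Z(D)| = +1$ and hence a $\Delta$-arc. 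In (3), the outside arcs of $Z$ are $z \leftrightarrow x$ and $y \leftrightarrow w$, and the sequence $z \to y$ (inside) $\to w$ (outside) $\to x$ (inside) $\to z$ (outside) again closes into one loop, so $|Z(E)| - |Z(D)| = 0$ and hence an $\eta$-arc. These counts match Definition~\ref{scs m and c arc} case by case.

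I do not expect a serious obstacle; the argument is essentially combinatorial. The only subtlety worth spelling out is that the three hypotheses exhaust all possibilities: when $Z_\alpha \neq Z_\beta$ the outside pairing is forced to be $\{z,w\}\{x,y\}$, while when $Z_\alpha = Z_\beta$ the outside pairing cannot be $\{z,w\}\{x,y\}$ (this would close $\alpha$ and $\beta$ into separate circles, contradicting $Z_\alpha = Z_\beta$), leaving only $\{z,y\}\{x,w\}$ or $\{z,x\}\{y,w\}$. Both remaining configurations are realizable because $Z(D)$ is allowed to consist of immersed circles with transverse double points (Definition~\ref{resolution configuaration}), so no planarity obstruction arises even when the outside arcs must cross.
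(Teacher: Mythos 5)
Your proposal is correct, and it is essentially the paper's argument: the paper simply declares the lemma straightforward and illustrates the $\Delta$-arc case with a figure, while you spell out the same direct tracing of how the surgered strands inside $B$ (pairing $z$ with $y$ and $x$ with $w$ at the double point) close up against the outside arcs, giving circle counts $-1$, $+1$, $0$ in the three cases. Your added remarks on exhaustiveness of the three hypotheses and on the identification of the new pairing are consistent with the paper's surgery convention (Definition~\ref{surgery} and Figures~\ref{fig: resolution configuration for a state}, \ref{fig:Delta arc.}), so nothing is missing.
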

\begin{proof}
    \begin{figure}[htp] 
        \centering
        \tikzset{every picture/.style={line width=0.75pt}} 

\begin{tikzpicture}[x=0.75pt,y=0.75pt,yscale=-1,xscale=1]

\draw    (90.14,48.57) -- (90.14,113.31) ;
\draw    (140.14,50.53) -- (140.14,111.31) ;
\draw  [color={rgb, 255:red, 189; green, 16; blue, 224 }  ,draw opacity=1 ] (73.86,81.07) .. controls (73.86,58.94) and (91.8,41) .. (113.93,41) .. controls (136.06,41) and (154,58.94) .. (154,81.07) .. controls (154,103.2) and (136.06,121.14) .. (113.93,121.14) .. controls (91.8,121.14) and (73.86,103.2) .. (73.86,81.07) -- cycle ;
\draw [color={rgb, 255:red, 252; green, 3; blue, 3 }  ,draw opacity=1 ] [dash pattern={on 2.5pt off 2.5pt}]  (90,83.34) -- (139.8,83.34) ;
\draw    (90.14,48.57) .. controls (86.2,-40.2) and (17.8,89.8) .. (44.6,127) .. controls (71.4,164.2) and (116.01,162.66) .. (140.14,111.31) ;
\draw    (140.14,50.53) .. controls (145.4,-20.2) and (209,32.6) .. (201,96.6) .. controls (193,160.6) and (102.6,148.6) .. (90.14,113.31) ;
\draw    (231.2,80.4) -- (288.6,80.59) ;
\draw [shift={(290.6,80.6)}, rotate = 180.19] [color={rgb, 255:red, 0; green, 0; blue, 0 }  ][line width=0.75]    (10.93,-3.29) .. controls (6.95,-1.4) and (3.31,-0.3) .. (0,0) .. controls (3.31,0.3) and (6.95,1.4) .. (10.93,3.29)   ;
\draw    (371.74,71) -- (421.74,89.8) ;
\draw  [color={rgb, 255:red, 189; green, 16; blue, 224 }  ,draw opacity=1 ] (355.46,77.87) .. controls (355.46,55.74) and (373.4,37.8) .. (395.53,37.8) .. controls (417.66,37.8) and (435.6,55.74) .. (435.6,77.87) .. controls (435.6,100) and (417.66,117.94) .. (395.53,117.94) .. controls (373.4,117.94) and (355.46,100) .. (355.46,77.87) -- cycle ;
\draw    (371.74,45.37) .. controls (367.8,-43.4) and (299.4,86.6) .. (326.2,123.8) .. controls (353,161) and (397.61,159.46) .. (421.74,108.11) ;
\draw    (421.74,47.33) .. controls (427,-23.4) and (490.6,29.4) .. (482.6,93.4) .. controls (474.6,157.4) and (384.2,145.4) .. (371.74,110.11) ;
\draw    (371.74,45.37) -- (371.74,71) ;
\draw    (371.74,91.8) -- (371.74,110.11) ;
\draw    (421.74,47.33) -- (421.74,72.96) ;
\draw    (421.74,89.8) -- (421.74,108.11) ;
\draw    (421.74,72.96) -- (371.74,91.8) ;

\draw (70.06,38.34) node [anchor=north west][inner sep=0.75pt]    {$z$};
\draw (69.83,111.94) node [anchor=north west][inner sep=0.75pt]    {$w$};
\draw (147.57,39.4) node [anchor=north west][inner sep=0.75pt]    {$x$};
\draw (148.69,102.63) node [anchor=north west][inner sep=0.75pt]    {$y$};
\draw (104.8,84.34) node [anchor=north west][inner sep=0.75pt]    {$A$};
\draw (90.14,59.57) node [anchor=north west][inner sep=0.75pt]    {$\alpha $};
\draw (122.94,59.57) node [anchor=north west][inner sep=0.75pt]    {$\beta $};
\draw (250.8,89.6) node [anchor=north west][inner sep=0.75pt]    {$\Delta $};

\end{tikzpicture}
        \caption{$\Delta$-arc in a standard local disk}.
        \label{fig:Delta arc.}    
    \end{figure}
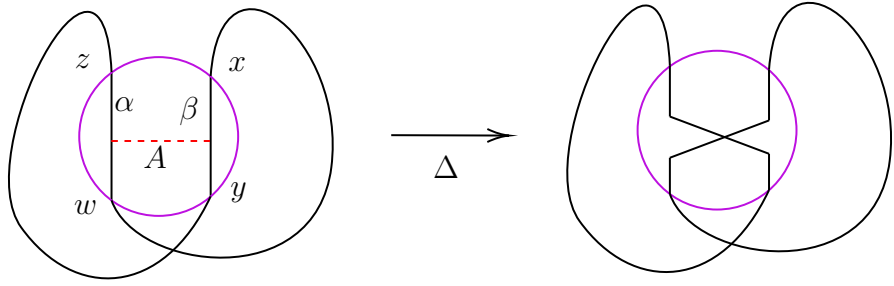
    The proof is straightforward. Figure~\ref{fig:Delta arc.} is an illustration when an arc inside a standard local disk is a $\Delta$-arc.
\end{proof}
\begin{definition}\label{basic resolution configuration}
    A resolution configuration $D$ is \textit{basic} if every circle in $Z(D)$ intersects an arc in $A(D)$. 
\end{definition}
For example, in Figure~\ref{fig:operations on resolution configuration}, the resolution configuration  $D\ba E$ is a basic resolution configuration, whereas $D$ is not a basic resolution configuration.
Lemma \ref{basic} below is analogous to \cite[Lemma 2.6]{KhStableHomotopyType}. For our purposes, we will be primarily interested in the basic resolution configurations; see Lemma~\ref{basic} and Definition~\ref{def:2factor-flow-category}.
\begin{lemma}\label{basic}
    If a resolution configuration $E$ is obtained from a resolution configuration $D$ by a surgery, then $D\ba E$ is a basic resolution configuration.
\end{lemma}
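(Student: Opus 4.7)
The plan is to unpack the definition of surgery directly. Since $E$ is obtained from $D$ by a surgery, there is a subset $A' \subseteq A(D)$ with $E = s_{A'}(D)$. By Definition \ref{surgery}, a circle $Z \in Z(D)$ persists unchanged in $Z(E)$ if and only if no arc in $A'$ has an endpoint on $Z$; any circle of $Z(D)$ meeting $\partial A$ for some $A \in A'$ is destroyed and replaced by the new circles produced by the surgery. Consequently, every $Z \in Z(D\ba E) = Z(D)\ba Z(E)$ must contain at least one endpoint of some arc $A^* \in A'$.

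Next, I would show that any such $A^*$ automatically lies in $A(D\ba E)$. Again by Definition \ref{surgery}, performing surgery along $A'$ deletes a small neighborhood of $\partial A^*$ from the circle picture, so the two points of $\partial A^*$ do not lie on any circle in $Z(E)$. Hence $\partial A^* \cap Z' = \emptyset$ for every $Z' \in Z(E)$, which is precisely the condition in Definition \ref{subtraction, intersection on resolution configurations} for membership in $A(D\ba E)$. Since $A^*$ meets the arbitrarily chosen $Z$, every circle of $Z(D\ba E)$ has an arc of $A(D\ba E)$ incident to it, and so $D\ba E$ is basic.

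I do not foresee any substantial obstacle; the content is essentially bookkeeping. The only mild subtlety worth emphasizing is that the witnessing arc for a given $Z \in Z(D\ba E)$ is \emph{not} drawn from $A(E) = A(D)\setminus A'$, but rather from the very set $A'$ of arcs that were surgered to pass from $D$ to $E$. The key observation enabling the argument is that the surgery simultaneously removes the ambient circles containing $\partial A^*$ from $Z(E)$ \emph{and} removes the specific points $\partial A^*$ from every circle of $E$, so that $A^*$ indeed fails to meet any circle of $Z(E)$ and therefore survives in $A(D\ba E)$.
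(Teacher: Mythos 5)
Your proof is correct and follows essentially the same route as the paper: the paper's proof likewise observes that $Z(D\ba E)$ consists exactly of the circles of $D$ meeting $\partial A'$ and that $A(D\ba E)=A'$, from which basicness is immediate. Your write-up just spells out the bookkeeping (in particular that the surgered arcs, not the surviving ones, are the witnesses) in more detail.
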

\begin{proof}
    If \( E \) is obtained from the resolution configuration \( D \) by surgery along the arcs in a subset \( A' \subseteq A(D) \), i.e., \( E = s_{A'}(D) \), then the difference \( D \setminus E \) consists precisely of those circles in \( Z(D) \) that intersect the boundaries of the arcs in \( A' \), and \( A(D \setminus E) = A' \).
\end{proof}

\begin{definition} \label{dual resolution configuration}
    Associated with a resolution configuration $D$ there is a \textit{dual resolution configuration} $D^{*}$, defined as follows. The circles $Z(D^{*})$ are obtained from $Z(D)$ by performing surgery along all arcs of $D$; i.e., $Z(D^{*}) = Z\big(s(D)\big)$. The arcs $A^{*}_{i} \in A(D^{*})$ are dual to the arcs $A_{n-i+1}\in A(D)$, where $n= |A(D)|$; see Figure~\ref{fig:operations on resolution configuration} and Figure~\ref{fig:dual arc}. The dual arc $A^{*}$ is parallel to the arc $A$, which we can think of as one of the components of the boundary of a neighborhood of $A$.
\end{definition}
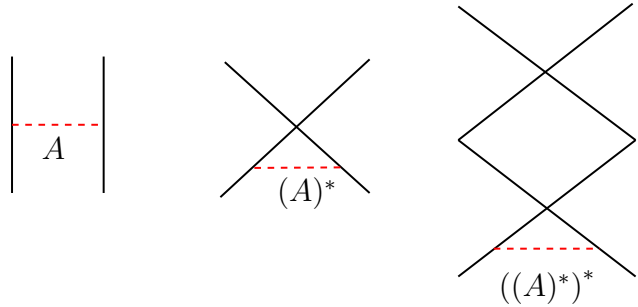
\begin{figure}[htp]
    \centering
    \tikzset{every picture/.style={line width=0.75pt}} 

\begin{tikzpicture}[x=0.75pt,y=0.75pt,yscale=-1,xscale=1]

\draw [line width=0.75]    (100,37.7) -- (100,106.57) ;
\draw [line width=0.75]    (146.21,37.7) -- (146.21,106.57) ;
\draw [color={rgb, 255:red, 252; green, 3; blue, 3 }  ,draw opacity=1 ][line width=0.75]  [dash pattern={on 2.5pt off 2.5pt}]  (100,72.14) -- (146.21,72.14) ;
\draw [line width=0.75]    (207.32,40.52) -- (280.35,106.57) ;
\draw [line width=0.75]    (280.35,39.11) -- (205.08,108.68) ;
\draw [color={rgb, 255:red, 252; green, 3; blue, 3 }  ,draw opacity=1 ][line width=0.75]  [dash pattern={on 2.5pt off 2.5pt}]  (222.22,93.92) -- (267.2,93.64) ;
\draw [line width=0.75]    (325.07,12.41) -- (414.5,79.87) ;
\draw [line width=0.75]    (413.01,11) -- (325.07,79.87) ;
\draw [line width=0.75]    (325.07,79.87) -- (414.5,148.74) ;
\draw [line width=0.75]    (414.5,79.87) -- (325.07,148.74) ;
\draw [color={rgb, 255:red, 252; green, 3; blue, 3 }  ,draw opacity=1 ][line width=0.75]  [dash pattern={on 2.5pt off 2.5pt}]  (342.95,134.68) -- (396.5,134.68) ;

\draw (113.54,76.86) node [anchor=north west][inner sep=0.75pt]    {$A$};
\draw (232.21,97.29) node [anchor=north west][inner sep=0.75pt]    {$( A)^{*}$};
\draw (343.95,144.08) node [anchor=north west][inner sep=0.75pt]    {$\left(( A)^{*}\right)^{*}$};

\end{tikzpicture}
    \caption{Dual arc}
    \label{fig:dual arc}    
\end{figure}
\begin{definition}[\cite{KhStableHomotopyType}, Definition 2.8] \label{graph corresponding to resolution configuration, leaf, coleaf}
    A resolution configuration $D$ specifies a graph $G(D)$ with one vertex for each element of $Z(D)$ and an edge for each element of $A(D)$. Given an element $Z \in Z(D)$ (resp. $A \in A(D))$ we will write $G(Z)$ (resp. $G(A)$) for the corresponding vertex (resp. edge) of $G(D)$. \\
    A \textit{leaf} of a resolution configuration $D$ is a circle $Z \in Z(D)$ such that $G(Z)$ is a leaf (i.e. a vertex with degree 1) of $G(D)$. A \textit{co-leaf} of $D$ is an arc $A \in A(D)$ such that one endpoint of the dual arc $(A)^{*} \in A(D^{*})$ is a leaf of the dual configuration $D^{*}$. For example, in Figure~\ref{fig:operations on resolution configuration}, the circle $Z_{4}$ is a leaf of $D$, while the arc $A^{*}_{1}= (A_{4})^*$ is a co-leaf of $D^{*}$.
\end{definition}
In Figure~\ref{fig:leaf-coleaf}, local pictures of three resolution configurations are shown where the circles $z$ and $\widetilde{z}$ are leaves and the arc $(A)^{*}$ is a co-leaf. From the definitions, it is clear that the unique arc intersecting a leaf is a $m$-arc and a co-leaf is a $\Delta$-arc.
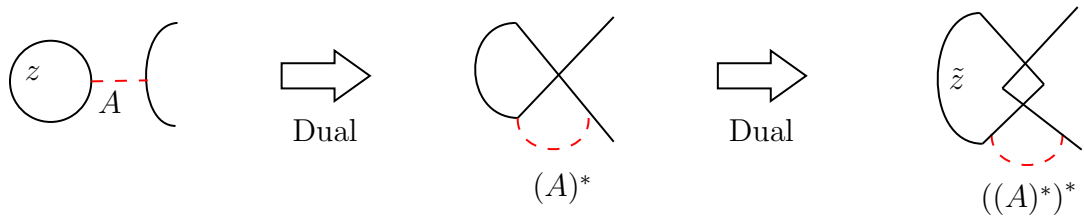
\begin{figure}[htp]
    \centering
    \tikzset{every picture/.style={line width=0.75pt}} 

\begin{tikzpicture}[x=0.75pt,y=0.75pt,yscale=-1,xscale=1]

\draw [line width=0.75]    (140.5,12) .. controls (119.5,13) and (119.5,64) .. (139.5,64) ;
\draw  [line width=0.75]  (56,41.5) .. controls (56,30.18) and (65.18,21) .. (76.5,21) .. controls (87.82,21) and (97,30.18) .. (97,41.5) .. controls (97,52.82) and (87.82,62) .. (76.5,62) .. controls (65.18,62) and (56,52.82) .. (56,41.5) -- cycle ;
\draw [color={rgb, 255:red, 252; green, 3; blue, 3 }  ,draw opacity=1 ][line width=0.75]  [dash pattern={on 4.5pt off 4.5pt}]  (97,41.5) -- (125.5,41) ;
\draw [line width=0.75]    (311.14,12.01) .. controls (283.63,12.01) and (283.63,59.3) .. (312.05,60) ;
\draw [line width=0.75]    (311.14,12.01) -- (360.5,72) ;
\draw [line width=0.75]    (360.5,9) -- (312.05,60) ;
\draw [color={rgb, 255:red, 252; green, 3; blue, 3 }  ,draw opacity=1 ][line width=0.75]  [dash pattern={on 4.5pt off 4.5pt}]  (312.05,60) .. controls (311.5,81) and (348.67,81) .. (347.81,60) ;
\draw [line width=0.75]    (545.14,7.01) .. controls (517.63,7.01) and (515.5,73) .. (546.5,73) ;
\draw [line width=0.75]    (545.14,7.01) -- (577.5,43) ;
\draw [line width=0.75]    (594.5,4) -- (556.5,45) ;
\draw [color={rgb, 255:red, 252; green, 3; blue, 3 }  ,draw opacity=1 ][line width=0.75]  [dash pattern={on 4.5pt off 4.5pt}]  (551.05,68) .. controls (550.5,89) and (587.67,89) .. (586.81,68) ;
\draw [line width=0.75]    (556.5,45) -- (596.5,76) ;
\draw [line width=0.75]    (577.5,43) -- (546.5,73) ;
\draw  [line width=0.75]  (193,32.36) -- (219.7,32.36) -- (219.7,26) -- (237.5,38.71) -- (219.7,51.43) -- (219.7,45.07) -- (193,45.07) -- cycle ;
\draw  [line width=0.75]  (413,32.36) -- (439.7,32.36) -- (439.7,26) -- (457.5,38.71) -- (439.7,51.43) -- (439.7,45.07) -- (413,45.07) -- cycle ;

\draw (62,32.4) node [anchor=north west][inner sep=0.75pt]    {$z$};
\draw (99,44.9) node [anchor=north west][inner sep=0.75pt]    {$A$};
\draw (318,85.9) node [anchor=north west][inner sep=0.75pt]    {$( A)^{*}$};
\draw (544,90.9) node [anchor=north west][inner sep=0.75pt]    {$\left(( A)^{*}\right)^{*}$};
\draw (528,33.4) node [anchor=north west][inner sep=0.75pt]    {$\tilde{z}$};
\draw (197,59) node [anchor=north west][inner sep=0.75pt]   [align=left] {Dual};
\draw (417,59) node [anchor=north west][inner sep=0.75pt]   [align=left] {Dual};

\end{tikzpicture}
    \caption{Leaf and co-leaf}
    \label{fig:leaf-coleaf}    
\end{figure}
\begin{remark} \label{dual is actually dual}
    Note that Definition~\ref{dual resolution configuration} is not well-defined. For each arc $A \in A(D)$, there are two possible choices for the dual arc $A^{*} \in A(D^{*})$, corresponding to the two components of the boundary of a neighborhood of $A$. Moreover, Figure~\ref{fig:dual arc} illustrates that the resolution configurations \( D \) and \( (D^{*})^{*} \) are not identical. However, the dual arc is well-defined up to the equivalence relation depicted in Figure~\ref{fig:equivalence}(i). Furthermore, the resolution configurations \( D \) and \( (D^{*})^{*} \) are equivalent under the equivalence relation shown in Figure~\ref{fig:equivalence}(b).
\end{remark}
\begin{remark}
    Note that some of the local moves depicted in Figure~\ref{fig:equivalence} may resemble ribbon moves as described in~\cite{ribbon-moves}; however, the local moves considered in our setting are fundamentally different in nature, and the two should not be conflated.
\end{remark}
\begin{definition}\label{equivalent resolution configuration}
    We call two resolution configurations \textit{equivalent resolution configurations}  if they are related by a finite sequence of local moves depicted in Figure~\ref{fig:equivalence} and planar isotopies of circles and arcs in $S^2$. We denote this equivalence relation as $\sim$, and we write $E\sim D$ to denote that $E$ and $D$ are equivalent resolution configurations.
\end{definition}
\begin{figure}[htp]
    \centering    \input{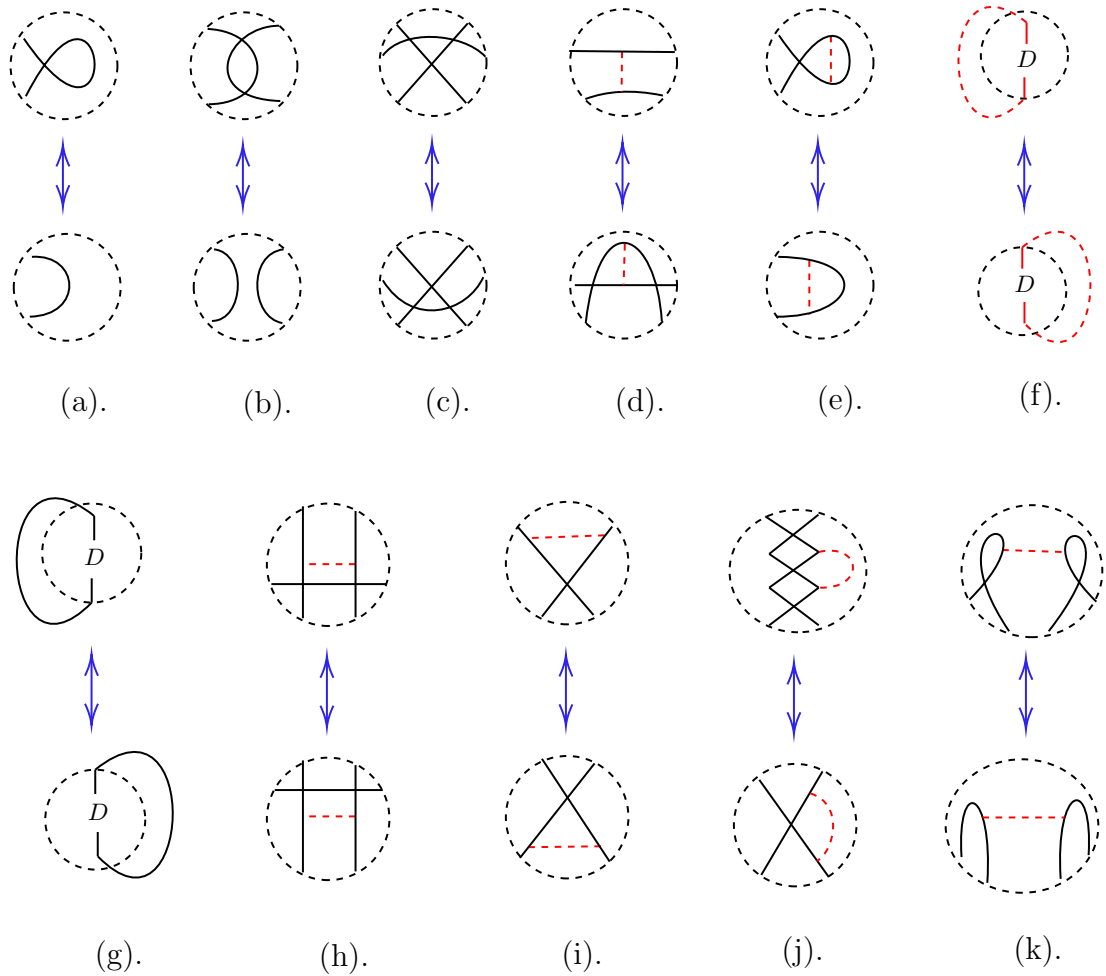}
    \caption{Equivalence relation on resolution configurations}
    \label{fig:equivalence}    
\end{figure}
\begin{remark}\label{bijections-arising-from-equivalence}
    If $D\sim E$, then there are obvious bijections $\alpha (D,E): Z(D) \to Z(E)$ and $\beta (D,E): A(D) \to A(E)$ since the local moves in the Definition \ref{equivalent resolution configuration} do not change the number of arcs and the number of circles. 
\end{remark}
\begin{lemma}\label{threeImportantEquivalenceMoves}
    If two resolution configurations are related by the local moves mentioned in the Figure~\ref{fig:threeImportantEquivalenceMoves}, then those resolution configurations are equivalent.
\end{lemma}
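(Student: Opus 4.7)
The plan is to verify each of the three local moves depicted in Figure~\ref{fig:threeImportantEquivalenceMoves} by exhibiting an explicit finite sequence of the elementary moves (a)--(k) from Figure~\ref{fig:equivalence}, possibly interspersed with planar isotopies of the underlying circles and arcs in $S^{2}$. Since the equivalence relation $\sim$ is defined to be generated by precisely these moves (together with planar isotopies), this is enough to conclude equivalence; no additional structural theory is needed.

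First, I would catalog the three local moves one at a time and identify the arcs and circles that are modified in each. For every move, the strategy is the same: isolate a small standard disk neighborhood containing the arcs and circle arcs being manipulated, compare the configuration inside to the left-hand and right-hand sides of one of the elementary moves, and then reduce the global move to a local statement. In the typical case, one first applies a move of type (a)--(e) to convert an arc/circle pair into a simpler local form, then applies one of (f)--(k) to perform the actual surgical rearrangement, and finally applies the inverse of the preparatory move to restore the rest of the configuration.

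The mild obstacle is bookkeeping rather than conceptual: at each intermediate stage one must check that the total ordering of the arcs in $A(D)$ is respected (since resolution configurations are defined with ordered arcs in Definition~\ref{resolution configuaration}), and that the immersed circles outside the local disk are not changed. For the moves that involve $\eta$-arcs (single-cycle surgery) one must further verify, using Lemma~\ref{determine-m,Delta,eta}, that the arc-type ($m$-, $\Delta$-, or $\eta$-) of each arc on the two sides of the move coincides after composing the intermediate elementary moves. Once this is checked, the bijections $\alpha(D,E)$ and $\beta(D,E)$ of Remark~\ref{bijections-arising-from-equivalence} are forced, and the equivalence follows.

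The hardest case is likely to be the move (if present) that rearranges an arc across a double point of an immersed circle, since the double points are the feature distinguishing our setting from the classical Lipshitz--Sarkar setup. For this case, I would combine a move of type (g)--(h), which modifies the immersed strand locally, with a move of type (b) or (e) to relocate the arc endpoint. In each of the three cases, the verification is a straightforward but careful picture-chase, and I would present it by drawing the intermediate configurations explicitly rather than trying to describe them in prose.
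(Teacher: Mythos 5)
Your proposal is correct and follows essentially the same route as the paper: each local move is verified by exhibiting an explicit finite sequence of the elementary moves of Figure~\ref{fig:equivalence} (plus planar isotopy) via a picture-chase with intermediate configurations drawn explicitly, exactly as the paper does for move~(A) in Figure~\ref{fig:proof(A)twoimportantEquivalenceMove} before leaving (B) and (C) to the reader. The additional bookkeeping you mention (arc ordering and $m$-/$\Delta$-/$\eta$-types) is harmless but not needed, since equivalence is defined purely by the local moves and the bijections of Remark~\ref{bijections-arising-from-equivalence} come for free.
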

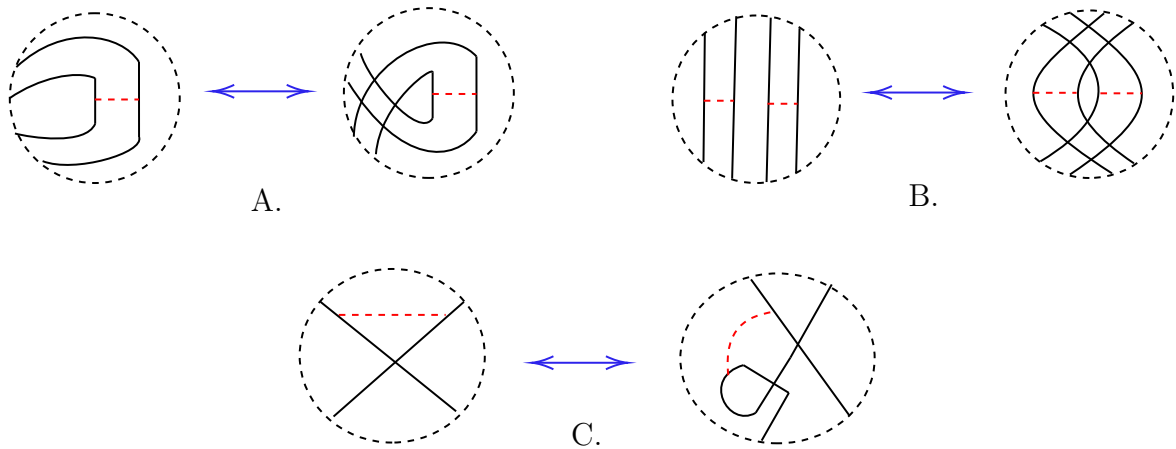
\begin{figure}[htp]
    \centering
    \tikzset{every picture/.style={line width=0.75pt}} 

\begin{tikzpicture}[x=0.75pt,y=0.75pt,yscale=-1,xscale=1]

\draw  [dash pattern={on 2.5pt off 2.5pt}] (235.66,97.43) .. controls (211.99,97.43) and (192.81,78.24) .. (192.81,54.57) .. controls (192.81,30.9) and (211.99,11.72) .. (235.66,11.72) .. controls (259.33,11.72) and (278.52,30.9) .. (278.52,54.57) .. controls (278.52,78.24) and (259.33,97.43) .. (235.66,97.43) -- cycle ;
\draw [color={rgb, 255:red, 45; green, 35; blue, 235 }  ,draw opacity=1 ]   (172.9,53.72) -- (128.55,53.72) ;
\draw [shift={(126.55,53.72)}, rotate = 360] [color={rgb, 255:red, 45; green, 35; blue, 235 }  ,draw opacity=1 ][line width=0.75]    (10.93,-3.29) .. controls (6.95,-1.4) and (3.31,-0.3) .. (0,0) .. controls (3.31,0.3) and (6.95,1.4) .. (10.93,3.29)   ;
\draw [shift={(174.9,53.72)}, rotate = 180] [color={rgb, 255:red, 45; green, 35; blue, 235 }  ,draw opacity=1 ][line width=0.75]    (10.93,-3.29) .. controls (6.95,-1.4) and (3.31,-0.3) .. (0,0) .. controls (3.31,0.3) and (6.95,1.4) .. (10.93,3.29)   ;
\draw  [dash pattern={on 2.5pt off 2.5pt}] (67.17,14.29) .. controls (43.5,14.29) and (24.31,33.47) .. (24.31,57.14) .. controls (24.31,80.81) and (43.5,100) .. (67.17,100) .. controls (90.83,100) and (110.02,80.81) .. (110.02,57.14) .. controls (110.02,33.47) and (90.83,14.29) .. (67.17,14.29) -- cycle ;
\draw    (67.37,47.09) -- (67.37,70.11) ;
\draw    (89.47,38.89) -- (89.5,76.79) ;
\draw    (27.61,41.51) .. controls (51.32,19.19) and (80.62,24.77) .. (89.47,38.89) ;
\draw    (24.31,57.14) .. controls (38.77,47.09) and (57.6,42.9) .. (67.37,47.09) ;
\draw    (26.91,74.99) .. controls (34.58,77.08) and (62.48,81.27) .. (67.37,70.11) ;
\draw    (40.86,88.94) .. controls (56.9,95.22) and (92.48,84.76) .. (89.5,76.79) ;
\draw [color={rgb, 255:red, 252; green, 3; blue, 3 }  ,draw opacity=1 ] [dash pattern={on 2.5pt off 2.5pt}]  (67.17,57.84) -- (89.49,57.84) ;
\draw    (237.57,44.3) -- (237.57,67.32) ;
\draw    (259.67,36.1) -- (259.7,74) ;
\draw    (197.83,76.62) .. controls (198.5,40.81) and (241.05,16.4) .. (259.67,36.1) ;
\draw    (208.97,85.45) .. controls (211.06,59.64) and (238.26,40.11) .. (237.57,44.3) ;
\draw    (195.02,49.18) .. controls (221.52,91.73) and (248.73,89.64) .. (259.7,74) ;
\draw    (201.29,34.53) .. controls (204.78,47.79) and (230.59,78.48) .. (237.57,67.32) ;
\draw [color={rgb, 255:red, 252; green, 3; blue, 3 }  ,draw opacity=1 ] [dash pattern={on 2.5pt off 2.5pt}]  (237.37,55.05) -- (259.69,55.05) ;
\draw  [dash pattern={on 2.5pt off 2.5pt}] (568.79,97.46) .. controls (545.44,97.46) and (526.5,78.53) .. (526.5,55.18) .. controls (526.5,31.82) and (545.44,12.89) .. (568.79,12.89) .. controls (592.15,12.89) and (611.08,31.82) .. (611.08,55.18) .. controls (611.08,78.53) and (592.15,97.46) .. (568.79,97.46) -- cycle ;
\draw [color={rgb, 255:red, 45; green, 35; blue, 235 }  ,draw opacity=1 ]   (503.97,54.33) -- (460.26,54.33) ;
\draw [shift={(458.26,54.33)}, rotate = 360] [color={rgb, 255:red, 45; green, 35; blue, 235 }  ,draw opacity=1 ][line width=0.75]    (10.93,-3.29) .. controls (6.95,-1.4) and (3.31,-0.3) .. (0,0) .. controls (3.31,0.3) and (6.95,1.4) .. (10.93,3.29)   ;
\draw [shift={(505.97,54.33)}, rotate = 180] [color={rgb, 255:red, 45; green, 35; blue, 235 }  ,draw opacity=1 ][line width=0.75]    (10.93,-3.29) .. controls (6.95,-1.4) and (3.31,-0.3) .. (0,0) .. controls (3.31,0.3) and (6.95,1.4) .. (10.93,3.29)   ;
\draw  [dash pattern={on 2.5pt off 2.5pt}] (400.81,15.43) .. controls (377.46,15.43) and (358.52,34.36) .. (358.52,57.71) .. controls (358.52,81.07) and (377.46,100) .. (400.81,100) .. controls (424.16,100) and (443.1,81.07) .. (443.1,57.71) .. controls (443.1,34.36) and (424.16,15.43) .. (400.81,15.43) -- cycle ;
\draw    (374.85,24.05) -- (374.17,89.43) ;
\draw    (390.68,15.79) -- (388.62,97.69) ;
\draw [color={rgb, 255:red, 252; green, 3; blue, 3 }  ,draw opacity=1 ] [dash pattern={on 2.5pt off 2.5pt}]  (374.32,58.46) -- (389.46,58.46) ;
\draw    (408.08,16.53) -- (405.98,99.51) ;
\draw    (422.61,21.9) -- (420.89,94.92) ;
\draw [color={rgb, 255:red, 252; green, 3; blue, 3 }  ,draw opacity=1 ] [dash pattern={on 2.5pt off 2.5pt}]  (406.44,59.99) -- (421.58,59.99) ;
\draw    (545.55,20.26) .. controls (576.17,36.26) and (589.25,62.07) .. (544.07,89.09) ;
\draw    (559.31,14.24) .. controls (603,45.8) and (611.4,60.6) .. (558.97,95.72) ;
\draw    (576.21,14.36) .. controls (536.6,47.4) and (519.8,56.6) .. (580.19,95.11) ;
\draw    (590.63,19.4) .. controls (552.86,46.72) and (554.58,65.21) .. (591.27,90.07) ;
\draw [color={rgb, 255:red, 252; green, 3; blue, 3 }  ,draw opacity=1 ] [dash pattern={on 2.5pt off 2.5pt}]  (574.36,54.72) -- (595.4,54.72) ;
\draw [color={rgb, 255:red, 252; green, 3; blue, 3 }  ,draw opacity=1 ] [dash pattern={on 2.5pt off 2.5pt}]  (540.2,54.46) -- (562.32,54.46) ;
\draw  [dash pattern={on 2.5pt off 2.5pt}] (217.19,231) .. controls (191.4,231) and (170.5,211.38) .. (170.5,187.18) .. controls (170.5,162.99) and (191.4,143.37) .. (217.19,143.37) .. controls (242.97,143.37) and (263.88,162.99) .. (263.88,187.18) .. controls (263.88,211.38) and (242.97,231) .. (217.19,231) -- cycle ;
\draw  [dash pattern={on 2.5pt off 2.5pt}] (411.5,231.37) .. controls (384.44,231.37) and (362.5,212.18) .. (362.5,188.5) .. controls (362.5,164.83) and (384.44,145.63) .. (411.5,145.63) .. controls (438.56,145.63) and (460.5,164.83) .. (460.5,188.5) .. controls (460.5,212.18) and (438.56,231.37) .. (411.5,231.37) -- cycle ;
\draw    (438.86,151.19) -- (417.99,188.15) ;
\draw    (398.14,148.65) -- (447.77,217.37) ;
\draw    (400.5,216) -- (417.99,188.15) ;
\draw    (394.32,191.92) -- (417.23,205.92) ;
\draw    (394.32,191.92) .. controls (372.68,198.28) and (386.5,224) .. (400.5,216) ;
\draw    (417.23,205.92) -- (403.5,230) ;
\draw [color={rgb, 255:red, 252; green, 3; blue, 3 }  ,draw opacity=1 ] [dash pattern={on 2.5pt off 2.5pt}]  (386.68,197.01) .. controls (384.14,172.83) and (396.5,167) .. (409.5,165) ;
\draw [color={rgb, 255:red, 252; green, 3; blue, 3 }  ,draw opacity=1 ] [dash pattern={on 2.5pt off 2.5pt}]  (190.23,166.56) -- (244.15,166.56) ;
\draw    (253.36,159.98) -- (187.6,217.85) ;
\draw    (181.02,159.98) -- (249.41,215.22) ;
\draw [color={rgb, 255:red, 45; green, 35; blue, 235 }  ,draw opacity=1 ]   (333.9,190.72) -- (289.55,190.72) ;
\draw [shift={(287.55,190.72)}, rotate = 360] [color={rgb, 255:red, 45; green, 35; blue, 235 }  ,draw opacity=1 ][line width=0.75]    (10.93,-3.29) .. controls (6.95,-1.4) and (3.31,-0.3) .. (0,0) .. controls (3.31,0.3) and (6.95,1.4) .. (10.93,3.29)   ;
\draw [shift={(335.9,190.72)}, rotate = 180] [color={rgb, 255:red, 45; green, 35; blue, 235 }  ,draw opacity=1 ][line width=0.75]    (10.93,-3.29) .. controls (6.95,-1.4) and (3.31,-0.3) .. (0,0) .. controls (3.31,0.3) and (6.95,1.4) .. (10.93,3.29)   ;

\draw (476,99) node [anchor=north west][inner sep=0.75pt]   [align=left] {B.};
\draw (144.4,101.6) node [anchor=north west][inner sep=0.75pt]   [align=left] {A.};
\draw (306,220) node [anchor=north west][inner sep=0.75pt]   [align=left] {C.};

\end{tikzpicture}
    \caption{Three local moves}
    \label{fig:threeImportantEquivalenceMoves}    
\end{figure}
\begin{proof}
Figure~\ref{fig:proof(A)twoimportantEquivalenceMove} illustrates that the two local resolution configurations related by the local move~(A) are equivalent. The proofs for the local moves~(B) and~(C) are omitted and left to the reader.
\begin{figure}[htp]
    \centering
    \input{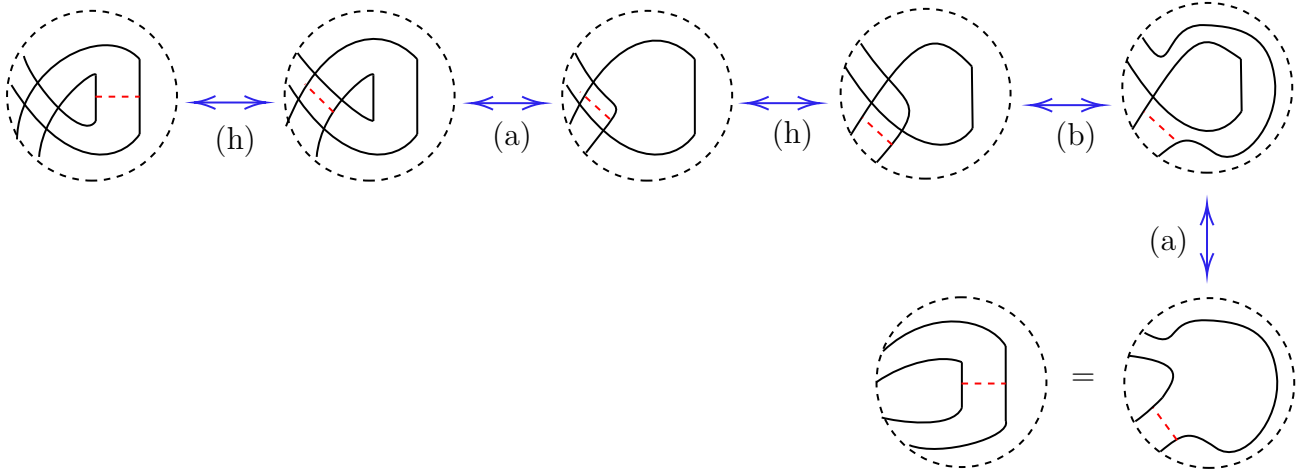}
    \caption{Proof of Lemma~\ref{threeImportantEquivalenceMoves}(A)}
    \label{fig:proof(A)twoimportantEquivalenceMove}    
\end{figure}
\end{proof}

\begin{definition}[\cite{KhStableHomotopyType}, Definition~2.9] \label{labeled resolution configuration}
   A \textit{labeled resolution configuration} is a pair \((D, x)\), where \(D\) is a resolution configuration and \(x\) is a labeling of each element of \(Z(D)\) by either \(x_{+}\) or \(x_{-}\).
\end{definition}

We note that labeling a circle by \( x_{+} \) (respectively, \( x_{-} \)) corresponds to labeling a circle by \( 1 \) (respectively, \( x \)) in~\cite{CohomologyPlanarTrivalentGraph}. 

\begin{definition}\label{equivalent labeled resolution configuration}
   Two labeled resolution configurations \((D, x)\) and \((D', y')\) are called \textit{equivalent labeled resolution configurations} if \(D \sim D'\) and, for every circle \(Z \in Z(D)\), the labeling \(y\) of the circle \(Z\) is the same as the labeling \(y'\) of the circle \(\alpha(D, D')(Z) \in Z(D')\). We write \((D, y) \sim (D', y')\) to denote that the two labeled resolution configurations \((D, y)\) and \((D', y')\) are equivalent.
\end{definition}
Analogous to the partial order introduced in \cite[Definition~2.10]{KhStableHomotopyType}, we define the following partial order \( \prec \) within our framework for planar trivalent graphs.
\begin{definition} \label{partial order}
    There is a partial order $\prec$ on labeled resolution configurations defined as follows. We declare that $(E,y)\prec (D,x)$ if: 
    \begin{enumerate}
        \item The labelings $x$ and $y$ induce the same labeling on $D\cap E = E\cap D$.
        \item $D$ is obtained from $E$ by performing a surgery along a single arc in $A(E)$. In particular, either:
            \begin{enumerate}[label=(\alph*)]
                \item \label{partial order-case2a} $Z(E\ba D)$ contains exactly one circle, say $Z_{i}$, and $Z(s(E\ba D))$ contains exactly two circles, say $Z_{j}$ and $Z_{k}$, i.e., the surgery arc is a $\Delta$-arc or
                \item \label{partial order-case2b} $Z(E\ba D)$ contains exactly two circles, say $Z_{i}$ and $Z_{j}$, and $Z(s(E\ba D))$ contains exactly one circle, say $Z_{k}$, i.e., the surgery arc is an $m$-arc.
            \end{enumerate}
        \item In Case~2\ref{partial order-case2a}, either $y(Z_{i}) = x(Z_{j} ) = x(Z_{k}) = x_{-}$ or $y(Z_{i}) = x_{+}$ and $\{x(Z_{j} ), x(Z_{k})\} =\{x_{+}, x_{-}\}$. \\ 
        In Case Case~2\ref{partial order-case2b}, either $y(Z_{i}) = y(Z_{j}) = x(Z_{k}) = x_{+} $ or $\{y(Z_{i}), y(Z_{j})\} = \{x_{-}, x_{+}\}$ and $x(Z_{k}) = x_{-}$.
    \end{enumerate}
    Now, $\prec$ is defined to be the transitive closure of this relation.
\end{definition}

\begin{definition}[\cite{KhStableHomotopyType}, Definition 2.11]\label{decorated resolution configuration}
    A \textit{decorated resolution configuration} is a triple $(D, x, y)$ where $D$ is a resolution configuration and $x$ (resp. $y$) is a labeling of each component of $Z(s(D))$ (resp. $Z(D))$ by an element of $\{x_{+}, x_{-}\},$ such that: $(D, y) \preceq (s(D), x)$. Associated to a decorated resolution configuration $(D, x, y)$ is the poset $P(D, x, y)$ consisting of all labeled resolution configurations $(E, z)$ with $(D, y) \preceq (E, z) \preceq (s(D), x)$.
\end{definition}

\begin{definition}[\cite{KhStableHomotopyType}, Definition 2.12]\label{dual of a decorated resolution configuration}
   The \textit{dual} of a decorated resolution configuration $(D, x, y)$ is the decorated resolution configuration $(D^{*}, y^{*}, x^{*})$, where $D^{*}$ is the dual of $D$, the labelings $x$ and $x^{*}$ are dual in the sense that they differ on every circle in $Z(D^{*}) = Z(s(D))$, and the labelings $y$ and $y^{*}$ are dual in the sense that they differ on every circle in $Z(D) = Z(s(D^{*}))$.
\end{definition}

\begin{lemma}\label{dual-poset-is-reverse-poset}
    For any decorated resolution configuration $(D, x, y)$, the poset $P(D^{*}, y^{*}, x^{*})$ is the reverse of the poset $P(D, x, y)$.
\end{lemma}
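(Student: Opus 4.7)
The plan is to construct an explicit order-reversing bijection $\phi\colon P(D, x, y) \to P(D^*, y^*, x^*)$. Every $(E, z) \in P(D, x, y)$ has $E = s_{A'}(D)$ for a unique $A' \subseteq A(D)$; writing $A'' := A(D) \setminus A'$ and $(A'')^{*} := \{A^{*} : A \in A''\}$, I set $F := s_{(A'')^{*}}(D^{*})$. Since $E^{*}$ has underlying circle set $Z(s(E)) = Z(s(D)) = Z(D^{*})$ and arc set $(A'')^{*}$, we have $F = s(E^{*})$; combining with the equivalence $E \sim (E^{*})^{*}$ from Remark~\ref{dual is actually dual} and Remark~\ref{bijections-arising-from-equivalence} then produces a natural bijection $\sigma_E\colon Z(E) \to Z(F)$. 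I define $\phi(E, z) := (F, z^{\vee})$, where $z^{\vee}$ is the labeling of $Z(F)$ specified by $z^{\vee}(\sigma_E(Z)) \neq z(Z)$ for every $Z \in Z(E)$.

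First I would verify the extrema. When $A' = \emptyset$, so $E = D$, the construction yields $F = s(D^{*})$ with $\sigma_D$ identifying $Z(D)$ with $Z(s(D^{*}))$; by Definition~\ref{dual of a decorated resolution configuration}, the label swap produces exactly $y^{*}$, so $\phi(D, y) = (s(D^{*}), y^{*})$, the maximum of $P(D^*, y^*, x^*)$. Symmetrically, $\phi(s(D), x) = (D^{*}, x^{*})$, the minimum. Bijectivity of $\phi$ then follows by applying the analogous construction to $P(D^*, y^*, x^*)$, which produces the inverse (using $(D^{*})^{*} \sim D$).

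The core step is that $\phi$ reverses covering relations. A cover $(E, z) \prec (E', z')$ corresponds to surgery on a single arc $A \in A(E) \cap A''$, and Lemma~\ref{determine-m,Delta,eta}, applied in a standard local disk containing $A$ and its dual, shows that if $A$ is an $m$-arc relative to $E$ then $A^{*}$ is a $\Delta$-arc relative to $F'$, and vice versa. Consequently the two subcases of Definition~\ref{partial order} are exchanged under $\phi$. The label conditions translate directly: in each subcase there are precisely two allowed patterns on the three relevant circles $(Z_i; Z_j, Z_k)$, and uniformly swapping $x_{+} \leftrightarrow x_{-}$ sends the two $m$-surgery patterns to the two $\Delta$-surgery patterns, and vice versa. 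A direct inspection of these four label patterns confirms $\phi(E', z') \prec \phi(E, z)$ in $P(D^*, y^*, x^*)$, so $\phi$ is an isomorphism from $P(D, x, y)$ to the reverse of $P(D^*, y^*, x^*)$.

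The main obstacle I anticipate is the careful construction of $\sigma_E$ and its naturality with respect to further surgery: one must check that the local moves of Figure~\ref{fig:equivalence} underlying the equivalence $(E^{*})^{*} \sim E$ do not depend on the auxiliary choices noted in Remark~\ref{dual is actually dual}, so that $\sigma_E$ is genuinely canonical and so that $\sigma_{E'}$ restricts compatibly to $\sigma_E$ on the circles that survive a single surgery. Once this coherence is pinned down, the remaining verifications (extrema and the four label cases) reduce to a short finite case-check.
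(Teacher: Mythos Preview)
Your proposal is correct and matches the paper's approach: both construct the bijection sending $(s_{A'}(D), z)$ to $(s_{(A'')^{*}}(D^{*}), z^{*})$ with $A'' = A(D)\setminus A'$, identifying $Z(E)$ with $Z(F)$ via the equivalence $(E^{*})^{*}\sim E$. The paper's proof is in fact terser than yours---it simply asserts the order-reversing property after setting up the bijection, without the explicit appeal to Lemma~\ref{determine-m,Delta,eta} or the label-pattern case-check that you outline---so your plan is, if anything, more complete.
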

\begin{proof}
    For any index $n$ decorated resolution configuration $(D,x,y)$, there is a natural bijection $f_D$ from $P(D,x,y)$ to $P(D^*,y^*,x^*)$ defined as follows. Let $(E,z)\in P(D,x,y)$ be a labeled resolution configuration, such that $E = s_{A'}(D)$ for $ \emptyset \subseteq A' \subseteq A(D)$. Then we define $f_{D}(E)$ to be $s_{(A')^*}(D^*)$, where $(A')^{*}$ is defined by
    \begin{equation*}
    (A')^{*}= \big\{A^*_{n-i+1} : A_i\in A(D)\setminus A'\big\}. 
    \end{equation*}
    From Definition \ref{dual resolution configuration} and Remark \ref{dual is actually dual}, we can identify $Z(D) = Z(s(D^{*}))$ and $Z(s(D)) = Z(D^{*})$. We define $f_{D}\big((s(D),x)\big) \coloneqq \big(D^{*}, x^{*}\big)$, and $f_{D}\big((D,y)\big) \coloneqq \big(s(D^{*}), y^{*}\big)$. Moreover, when $A' \notin \{\emptyset, A(D)\}$, then we have a bijection between $Z(f_{D}(E))$ and $Z(E)$; see also Figure~\ref{fig:dualDecoratedResolutionConfiguration}. Using this bijection, we identify the sets $Z(f_{D}(E))$ and $Z(E)$. Now we define $f_D((E, z)) \coloneqq (f_D(E),z^*)$, where labeling $z^{*}$ is dual to the labeling $z$. 
    Thus we constructed a bijection $P(D,x,y) \to P(D^*,y^*,x^*)$. In fact, this bijection $f_{D}$ gives an isomorphism from the reverse poset of $P(D,x,y)$ to $P(D^*,y^*,x^*)$.
    
\end{proof}
\begin{example}
    In Figure~\ref{fig:dualDecoratedResolutionConfiguration}, we present an example of a decorated resolution configuration \( (D, x, y) \) and its dual \( (D^{*}, y^{*}, x^{*}) \), along with the corresponding posets \( P(D, x, y) \) and \( P(D^{*}, y^{*}, x^{*}) \). The partial orders are represented using directed arrows, allowing us to interpret these posets as hypercubes of enhanced states.
    \begin{figure}[htp]
    \centering
    \input{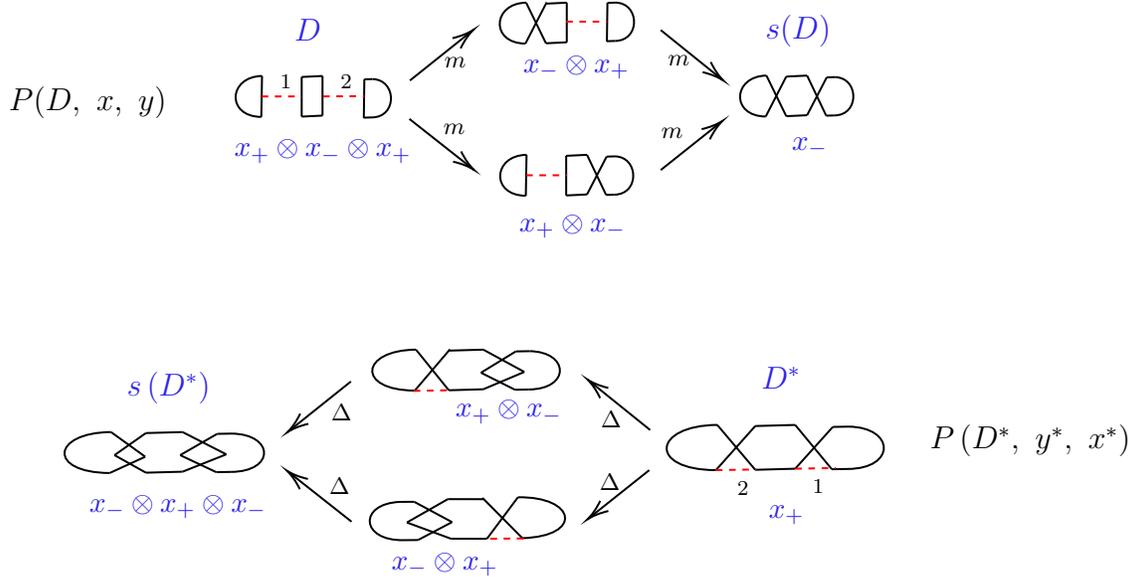}
    \caption{Dual decorated resolution configuration}
    \label{fig:dualDecoratedResolutionConfiguration}    
\end{figure}
\end{example}

\begin{lemma}\label{lemma:equivalence-of-decorated-resoln-conf}
    Let $(D,x,y)$ and $(D',x',y')$ be two decorated resolution configurations such that $(D,y)\sim (D',y')$.  Let $(E,z) \in P(D,x,y)$ and $E= s_{A_{1}(D)}(D)$ where $A_{1}(D) \subseteq A(D)$. Then there exists an isomorphism $\phi_{D}$ of posets $P(D,x,y) \cong P(D',x',y')$ such that if $\phi_{D}((E,z)) =(E',z')$, then $(E,z)\sim (E',z')$. In fact, $$ E' = s_{\beta(D,D') (A_{1}(D))}(D'),$$ 
    where $\beta(D,D'): A(D)\to A(D')$ is the bijection mentioned in Remark \ref{bijections-arising-from-equivalence} and $\beta(D,D') (A_{1}(D))$ denotes the image of the restriction of $\beta(D,D')$ on $A_{1}(D)$.
\end{lemma}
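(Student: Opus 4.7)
The plan is to define $\phi_D$ explicitly using the bijections $\alpha := \alpha(D,D') : Z(D) \to Z(D')$ and $\beta := \beta(D,D') : A(D) \to A(D')$ of Remark~\ref{bijections-arising-from-equivalence}, and then to verify that $\phi_D$ is an order-preserving bijection that respects the equivalence relation. For $(E,z) \in P(D,x,y)$ with $E = s_{A_1(D)}(D)$, I set $\phi_D((E,z)) := (E',z')$, where $E' := s_{\beta(A_1(D))}(D')$ and $z'$ is the labeling induced from $z$ by a canonical bijection $\alpha_E : Z(E) \to Z(E')$ constructed during the argument. Since $\phi_D$ must send the top element $(s(D),x)$ to the top $(s(D'),x')$, the statement implicitly forces $(s(D),x) \sim (s(D'),x')$, i.e. $x'$ is the pushforward of $x$ under the induced bijection $Z(s(D)) \to Z(s(D'))$; I would first observe that this follows from $(D,y) \sim (D',y')$ by applying the argument below with $A_1(D) = A(D)$.

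First I would prove $E \sim E'$ by induction on the number of local moves from Figure~\ref{fig:equivalence} realizing the equivalence $D \sim D'$. The inductive step reduces to a single local move supported in a planar disk $U \subset S^2$. If the arcs of $A_1(D)$ are disjoint from $U$, surgery commutes with the local move and the conclusion is immediate. Otherwise I would check, case by case for the moves (a)--(k) in Figure~\ref{fig:equivalence}, that after performing surgery along the arcs of $A_1(D)$ inside $U$ (and correspondingly on $D'$ via $\beta$) the resulting local pictures are still related by some move from Figure~\ref{fig:equivalence} or a planar isotopy. Each verification is purely pictorial and uses the fact that $\beta$ sends arcs of $A(D)$ lying in $U$ bijectively to the corresponding arcs of $D'$. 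The same case analysis produces the bijection $\alpha_E$ of circles used to define $z'$.

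Next I would verify that $(E',z') \in P(D',x',y')$. Using the bijections $\alpha$, $\alpha_{s(D)}$, and $\alpha_E$, the compatibility of labelings in Definition~\ref{partial order} transfers from $(D,x,y)$ to $(D',x',y')$. The key point is Lemma~\ref{determine-m,Delta,eta}: because the moves of Figure~\ref{fig:equivalence} leave the standard local disk around each arc unchanged up to planar isotopy, the classification of an arc as an $m$-, $\Delta$-, or $\eta$-arc is preserved under $\beta$, and so the labeling conditions in Definition~\ref{partial order} are preserved. It follows that $(D',y') \preceq (E',z') \preceq (s(D'),x')$. A symmetric construction from $\alpha^{-1}$ and $\beta^{-1}$ yields $\phi_{D'}$ which is a two-sided inverse of $\phi_D$, so $\phi_D$ is a bijection, and order preservation reduces to the single-surgery case of Definition~\ref{partial order} handled by the same local analysis.

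The main obstacle is the case analysis in the first step: each of the eleven local moves (a)--(k) in Figure~\ref{fig:equivalence} must be combined with an arbitrary subset of surgeries along the arcs inside the move region, and in each subcase the result must be exhibited as a sequence of moves from the same list. The verifications are essentially mechanical, but are delicate in the moves involving dual arcs such as (g) and (h), where one must track how a surgery alters the routing of circles that enter and exit the local disk.
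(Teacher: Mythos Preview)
Your proposal is correct and follows essentially the same approach as the paper. The paper also reduces to a single local move supported in a disk, observes that a circle of $s_A(T)$ enters and exits the disk through the same boundary points as the corresponding circle of $s_{A'}(T')$, and concludes that $s_A(T)$ and $s_{A'}(T')$ are again related by a move from Figure~\ref{fig:equivalence}; it then iterates, though over the arcs in $A_1(D)$ rather than over the local moves, and leaves the poset-isomorphism verification you carry out in your third paragraph implicit.
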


\begin{proof}
    Let \( T \) and \( T' \) be two resolution configurations related by one of the local moves depicted in Figure~\ref{fig:equivalence}, supported inside a disk \( B \). Suppose \( A \in A(T) \) is an arc contained in \( B \), and let \( A' \in A(T') \) denote the corresponding arc in \( T' \) after the local move, i.e., \( A' = \beta(T, T')(A) \). Suppose a circle \( Z \in Z\big(s_{A}(T)\big) \) enters the disk \( B \) through a point \( a \in \partial B \) and exits through a point \( d \in \partial B \). Then the corresponding circle \( Z' \in Z\big(s_{A'}(T')\big) \) enters through a point \( a' \in \partial B \) corresponding to \( a \), and exits through a point \( d' \in \partial B \) corresponding to \( d \). Figure~\ref{fig:EquivalenceAfterSurgery} illustrates an example in which \( T \) and \( T' \) are related by one of the local moves shown in Figure~\ref{fig:equivalence}. Observe that the corresponding configurations \( s_{A}(T) \) and \( s_{A'}(T') \) are also related by a local move from Figure~\ref{fig:equivalence}. Moreover, this property holds for all of the local moves depicted in Figure~\ref{fig:equivalence}. Consequently, if $T$ and $T'$ are equivalent, i.e., they are related by a finite sequence of local moves, then $s_{A}(T)$ and $s_{A'}(T')$ are equivalent as well.  \\
    \begin{figure}[htp]
        \centering
        \tikzset{every picture/.style={line width=0.75pt}} 

\begin{tikzpicture}[x=0.75pt,y=0.75pt,yscale=-1,xscale=1]

\draw  [dash pattern={on 2.5pt off 2.5pt}] (218.46,42.72) .. controls (218.46,57.48) and (206.5,69.44) .. (191.74,69.44) .. controls (176.99,69.44) and (165.02,57.48) .. (165.02,42.72) .. controls (165.02,27.96) and (176.99,16) .. (191.74,16) .. controls (206.5,16) and (218.46,27.96) .. (218.46,42.72) -- cycle ;
\draw [color={rgb, 255:red, 45; green, 35; blue, 235 }  ,draw opacity=1 ]   (191.21,84.8) -- (191.21,120.34) ;
\draw [shift={(191.21,122.34)}, rotate = 270] [color={rgb, 255:red, 45; green, 35; blue, 235 }  ,draw opacity=1 ][line width=0.75]    (10.93,-3.29) .. controls (6.95,-1.4) and (3.31,-0.3) .. (0,0) .. controls (3.31,0.3) and (6.95,1.4) .. (10.93,3.29)   ;
\draw [shift={(191.21,82.8)}, rotate = 90] [color={rgb, 255:red, 45; green, 35; blue, 235 }  ,draw opacity=1 ][line width=0.75]    (10.93,-3.29) .. controls (6.95,-1.4) and (3.31,-0.3) .. (0,0) .. controls (3.31,0.3) and (6.95,1.4) .. (10.93,3.29)   ;
\draw    (211.4,59.22) .. controls (200.8,54.8) and (182.4,55.2) .. (172.61,60.99) ;
\draw  [dash pattern={on 2.5pt off 2.5pt}] (218.46,161.35) .. controls (218.46,176.11) and (206.5,188.07) .. (191.74,188.07) .. controls (176.99,188.07) and (165.02,176.11) .. (165.02,161.35) .. controls (165.02,146.59) and (176.99,134.63) .. (191.74,134.63) .. controls (206.5,134.63) and (218.46,146.59) .. (218.46,161.35) -- cycle ;
\draw    (167.36,161.35) -- (218.46,161.35) ;
\draw    (210.85,179.35) .. controls (208.13,134.78) and (179.87,119.56) .. (172.8,180.44) ;
\draw [color={rgb, 255:red, 252; green, 3; blue, 3 }  ,draw opacity=1 ] [dash pattern={on 2.5pt off 2.5pt}]  (192.37,140.76) -- (191.74,161.35) ;
\draw [color={rgb, 255:red, 252; green, 3; blue, 3 }  ,draw opacity=1 ] [dash pattern={on 2.5pt off 2.5pt}]  (191.2,37.1) -- (191.2,56) ;
\draw    (165.91,36.99) -- (218,37.2) ;
\draw  [dash pattern={on 2.5pt off 2.5pt}] (454.65,42.72) .. controls (454.65,57.48) and (442.69,69.44) .. (427.93,69.44) .. controls (413.17,69.44) and (401.21,57.48) .. (401.21,42.72) .. controls (401.21,27.96) and (413.17,16) .. (427.93,16) .. controls (442.69,16) and (454.65,27.96) .. (454.65,42.72) -- cycle ;
\draw [color={rgb, 255:red, 45; green, 35; blue, 235 }  ,draw opacity=1 ]   (427.4,84.8) -- (427.4,120.34) ;
\draw [shift={(427.4,122.34)}, rotate = 270] [color={rgb, 255:red, 45; green, 35; blue, 235 }  ,draw opacity=1 ][line width=0.75]    (10.93,-3.29) .. controls (6.95,-1.4) and (3.31,-0.3) .. (0,0) .. controls (3.31,0.3) and (6.95,1.4) .. (10.93,3.29)   ;
\draw [shift={(427.4,82.8)}, rotate = 90] [color={rgb, 255:red, 45; green, 35; blue, 235 }  ,draw opacity=1 ][line width=0.75]    (10.93,-3.29) .. controls (6.95,-1.4) and (3.31,-0.3) .. (0,0) .. controls (3.31,0.3) and (6.95,1.4) .. (10.93,3.29)   ;
\draw  [dash pattern={on 2.5pt off 2.5pt}] (454.65,161.35) .. controls (454.65,176.11) and (442.69,188.07) .. (427.93,188.07) .. controls (413.17,188.07) and (401.21,176.11) .. (401.21,161.35) .. controls (401.21,146.59) and (413.17,134.63) .. (427.93,134.63) .. controls (442.69,134.63) and (454.65,146.59) .. (454.65,161.35) -- cycle ;
\draw    (402.38,161.35) -- (420.94,161.35) ;
\draw    (401.29,40.19) -- (422.15,40.19) ;
\draw    (433.45,161.35) -- (454.65,161.35) ;
\draw    (408.99,180.44) .. controls (409.66,174.8) and (410.21,161.35) .. (419.04,145.72) ;
\draw    (447.04,179.35) .. controls (447.17,173.85) and (445.14,156.32) .. (436.57,145.04) ;
\draw    (419.04,145.72) -- (433.45,161.35) ;
\draw    (436.57,145.04) -- (420.94,161.35) ;
\draw    (422.58,53.58) .. controls (417.41,54.66) and (413.33,56.84) .. (408.79,60.99) ;
\draw    (447.58,59.22) .. controls (445.27,57.79) and (440.65,54.8) .. (432.09,53.44) ;
\draw    (433.27,40.52) -- (454.1,40.73) ;
\draw    (422.15,40.19) -- (432.09,53.44) ;
\draw    (433.27,40.52) -- (422.58,53.58) ;
\draw    (262.5,55.51) -- (353.5,55.51) ;
\draw [shift={(355.5,55.51)}, rotate = 180] [color={rgb, 255:red, 0; green, 0; blue, 0 }  ][line width=0.75]    (10.93,-3.29) .. controls (6.95,-1.4) and (3.31,-0.3) .. (0,0) .. controls (3.31,0.3) and (6.95,1.4) .. (10.93,3.29)   ;
\draw    (261.5,167.69) -- (353.5,167.69) ;
\draw [shift={(355.5,167.69)}, rotate = 180] [color={rgb, 255:red, 0; green, 0; blue, 0 }  ][line width=0.75]    (10.93,-3.29) .. controls (6.95,-1.4) and (3.31,-0.3) .. (0,0) .. controls (3.31,0.3) and (6.95,1.4) .. (10.93,3.29)   ;

\draw (151.69,29.23) node [anchor=north west][inner sep=0.75pt]  [font=\normalsize]  {$a$};
\draw (224.75,27.86) node [anchor=north west][inner sep=0.75pt]  [font=\normalsize]  {$b$};
\draw (156.47,56.22) node [anchor=north west][inner sep=0.75pt]  [font=\normalsize]  {$c$};
\draw (216.62,55.13) node [anchor=north west][inner sep=0.75pt]  [font=\normalsize]  {$d$};
\draw (146.51,152.46) node [anchor=north west][inner sep=0.75pt]  [font=\normalsize]  {$a'$};
\draw (223.18,153.56) node [anchor=north west][inner sep=0.75pt]  [font=\normalsize]  {$b'$};
\draw (157.25,180.99) node [anchor=north west][inner sep=0.75pt]  [font=\normalsize]  {$c'$};
\draw (212.85,175.75) node [anchor=north west][inner sep=0.75pt]  [font=\normalsize]  {$d'$};
\draw (385.87,36.23) node [anchor=north west][inner sep=0.75pt]  [font=\normalsize]  {$a$};
\draw (459.63,32.86) node [anchor=north west][inner sep=0.75pt]  [font=\normalsize]  {$b$};
\draw (393.66,57.22) node [anchor=north west][inner sep=0.75pt]  [font=\normalsize]  {$c$};
\draw (451.67,56.87) node [anchor=north west][inner sep=0.75pt]  [font=\normalsize]  {$d$};
\draw (381.7,151.89) node [anchor=north west][inner sep=0.75pt]  [font=\normalsize]  {$a'$};
\draw (458.35,152.76) node [anchor=north west][inner sep=0.75pt]  [font=\normalsize]  {$b'$};
\draw (395.27,183.88) node [anchor=north west][inner sep=0.75pt]  [font=\normalsize]  {$c'$};
\draw (447.89,181.77) node [anchor=north west][inner sep=0.75pt]  [font=\normalsize]  {$d'$};
\draw (263.74,29.33) node [anchor=north west][inner sep=0.75pt]  [font=\normalsize] [align=left] {after surgery};
\draw (263.2,141.97) node [anchor=north west][inner sep=0.75pt]  [font=\normalsize] [align=left] {after surgery};
\draw (14,54) node [anchor=north west][inner sep=0.75pt]  [font=\normalsize] [align=left] {local picture of $\displaystyle T$};
\draw (11,178) node [anchor=north west][inner sep=0.75pt]  [font=\normalsize] [align=left] {local picture of $\displaystyle T'$};
\draw (497,55) node [anchor=north west][inner sep=0.75pt]  [font=\normalsize] [align=left] {local picture of $\displaystyle s_{A}( T)$};
\draw (489,172) node [anchor=north west][inner sep=0.75pt]  [font=\normalsize] [align=left] {local picture of $\displaystyle s_{A'}( T')$};

\end{tikzpicture}
        \caption{Equivalence after surgery}
        \label{fig:EquivalenceAfterSurgery}    
    \end{figure}
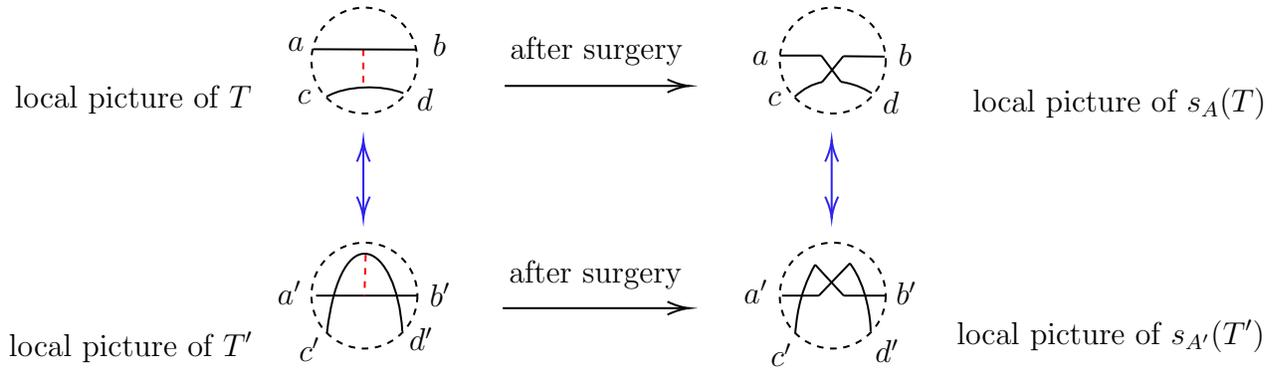
    Now, if \( D \) and \( D' \) are equivalent, then \( s_A(D) \) and \( s_{A'}(D') \) are equivalent for some \( A \in A_1(D) \), where \( A' = \beta(D, D')(A) \). 
    If \( A_1(D) \setminus \{A\} \neq \emptyset \), we continue the process by taking \( s_A(D) \) and \( s_{A'}(D') \) as new candidates for \( T \) and \( T' \), respectively, and perform surgery along an arc \( \widetilde{A} \in A_1(D) \setminus \{A\} \) and the corresponding arc \( \beta(T, T')(\widetilde{A}) \). 
    By iterating this process, we eventually obtain \( (E, z) \sim (E', z') \).
\end{proof}
The following Lemma~\ref{poset-for-leaf} is identical to \cite[Lemma~2.14]{KhStableHomotopyType}. The statement remains valid in our setting of planar trivalent graphs with perfect matchings.
\begin{lemma}[\cite{KhStableHomotopyType}, Lemma 2.14]\label{poset-for-leaf}
    Let $(D, x, y)$ be a decorated resolution configuration. Let $Z_{1} \in Z(D)$ be a leaf of $D$, and let $A_{1} \in A(D)$ be the arc one of whose endpoints lies on $Z_{1}$. Let $Z_{2} \in Z(D)$ be the circle containing the other endpoint of $A_{1}$, let $Z^{*}_{1} \in Z(s(D))$ be the circle which contains both the endpoints of $(A_{1})^{*}$ (dual arc of $A_{1}$), and let $Z^{*}_{2} \in Z(s_{A(D) \backslash \{A_{1}\}}(D))       \backslash \{Z_{1}\}$ be the unique circle that contains an endpoint of $A_{1}$. Let $D'$ be the resolution configuration obtained from $D$ by deleting $Z_{1}$ and $A_{1}$. Let $(D', x' , y')$ be the following decorated resolution configuration: The labeling $y'$ on $Z(D')$ is induced from the labeling $y$ on $Z(D)$. The labeling $x'$ on $Z(s(D')) \backslash \{Z^{*}_{2} \}$ is induced from the labeling $x$ on $Z(s(D))\backslash \{Z^{*}_{1} \}$. If $y(Z_{1}) = x_{+}$, set $x'(Z^{*}_{2} ) = x(Z^{*}_{1} )$; otherwise, set $x'(Z^{*}_{2} ) = x_{+}$.\\
    Then $P(D, x, y) = P(D', x' , y' ) \times \{0, 1\}$, where $\{0, 1\}$ is the two-element poset with $0 \prec 1$.
\end{lemma}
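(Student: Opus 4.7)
The plan is to construct an explicit poset isomorphism
\[
\Psi : P(D', x', y') \times \{0, 1\} \longrightarrow P(D, x, y).
\]
Since $Z_1$ is a leaf, the arc $A_1$ is an $m$-arc. Every element of $P(D,x,y)$ has the underlying form $(s_{A'}(D), z)$ for some $A' \subseteq A(D)$, and I would decompose such an $A'$ as a disjoint union of $A'' \subseteq A(D) \setminus \{A_1\} = A(D')$ together with possibly $\{A_1\}$, encoded by a flag $\varepsilon \in \{0,1\}$. When $\varepsilon = 0$, the circle $Z_1$ is untouched in $E := s_{A'}(D)$, and $Z(E) = Z(s_{A''}(D')) \sqcup \{Z_1\}$; I set $E' = s_{A''}(D')$ and let $z'$ be the restriction of $z$. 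When $\varepsilon = 1$, the $m$-surgery along $A_1$ has merged $Z_1$ with the unique circle $\widetilde{Z} \in Z(s_{A''}(D))$ containing the other endpoint of $A_1$, yielding a single circle $Z_{\mathrm{m}} \in Z(E)$; here $\widetilde{Z} \in Z(s_{A''}(D'))$ as well, and I define $z'$ to agree with $z$ on circles other than $\widetilde{Z}$, setting $z'(\widetilde{Z})$ to the unique value making the $m$-rule $y(Z_1) \cdot z'(\widetilde{Z}) = z(Z_{\mathrm{m}})$ hold. The inverse of $\Psi$ is given either by adjoining $(Z_1, A_1)$ labeled by $y(Z_1)$ (if $\varepsilon = 0$) or by performing the $m$-surgery along $A_1$ with the computed merged label (if $\varepsilon = 1$).

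Next I would verify that $\Psi$ lands inside $P(D,x,y)$ and that the endpoints of the two posets match: $((D', y'), 0) \leftrightarrow (D, y)$ and $((s(D'), x'), 1) \leftrightarrow (s(D), x)$. The top-endpoint compatibility is exactly what motivates the case split in the definition of $x'(Z_2^*)$. If $y(Z_1) = x_+$, the $m$-rule $x_+ \cdot x'(Z_2^*) = x(Z_1^*)$ forces $x'(Z_2^*) = x(Z_1^*)$, matching the definition. If instead $y(Z_1) = x_-$, then for $P(D,x,y)$ to be nonempty one already has $x(Z_1^*) = x_-$, since any surgery chain containing $A_1$ must combine an $x_-$ label with a compatible second label and the product $x_- \cdot x_-$ is disallowed; the rule $x_- \cdot x_+ = x_-$ is then consistent with $x'(Z_2^*) = x_+$.

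Partial order preservation can be verified on covering relations, which correspond to a single surgery step and split into two cases. If the surgery is along an arc $A \in A(D) \setminus \{A_1\}$, then the same arc is present in $D'$ because deleting $Z_1$ together with $A_1$ does not affect $A$; its type as a $\Delta$-arc, $m$-arc, or $\eta$-arc with respect to the current circles is unchanged, and the covering relation in $P(D,x,y)$ matches one in $P(D', x', y')$ with the same $\varepsilon$. If the surgery is along $A_1$ itself, $A''$ is unchanged and $\varepsilon$ flips from $0$ to $1$; label compatibility then follows immediately from the very definition of $z'(\widetilde{Z})$ via the $m$-rule.

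The main obstacle is the bookkeeping when $\varepsilon = 1$: one must check that as we move further up the chain by surgering arcs disjoint from $A_1$, the circle $\widetilde{Z}$ on the $D'$-side evolves consistently with the merged circle $Z_{\mathrm{m}}$ on the $D$-side, and that the labels related by the fixed factor $y(Z_1)$ remain compatible at every stage. This works because surgery along $A_1$ commutes with surgeries along arcs of $A''$, and the $m$-operation is associative; equivalently, one can re-order any surgery chain from $(D,y)$ to $(s(D),x)$ so that $A_1$ is performed last, reducing the verification to the single $m$-surgery rule already checked. Taking the transitive closure of these covering-relation identifications yields the desired poset isomorphism $P(D,x,y) \cong P(D', x', y') \times \{0, 1\}$.
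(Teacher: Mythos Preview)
Your proposal is correct and spells out precisely the explicit bijection that the paper defers to via \cite[Lemma~2.14]{KhStableHomotopyType}. The one place worth tightening is the case $y(Z_1)=x_-$: your reordering step works because once the circle containing the $Z_1$-spot carries the label $x_-$, this persists under every subsequent $m$- or $\Delta$-surgery, which forces $z(Z_{\mathrm m})=x_-$ and hence a unique valid $z'(\widetilde Z)=x_+$; stating this explicitly makes the well-definedness of both $\Psi$ and $\Psi^{-1}$ transparent.
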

\begin{proof}
    The proof is analogous to that of \cite[Lemma~2.14]{KhStableHomotopyType}.
\end{proof}

\subsection{Categorification of 2-factor polynomial} 

\begin{definition}[\cite{CohomologyPlanarTrivalentGraph}, Definition 3.3, see also \cite{2-factor-detecting-even-perfect-matching}]\label{2 factor polynomial}
    Let $(G,M)$ be a planar trivalent graph with a perfect matching $M$. The \textit{2 factor polynomial} $\langle G:M \rangle_{2}$ associated to $(G,M)$ is a Laurent polynomial defined by the skein relation mentioned in the following Figure~\ref{fig:2FactorSkeinrelation}. The \textit{2 factor polynomial} $\langle G:M \rangle_{2}$ is an invariant of the planar trivalent graph $G$ with perfect matching $M$. 
    \begin{figure}[htp]
        \centering
  
\tikzset {_fp1sfy2l4/.code = {\pgfsetadditionalshadetransform{ \pgftransformshift{\pgfpoint{0 bp } { 0 bp }  }  \pgftransformrotate{0 }  \pgftransformscale{2 }  }}}
\pgfdeclarehorizontalshading{_03fr5be4l}{150bp}{rgb(0bp)=(1,1,1);
rgb(37.5bp)=(1,1,1);
rgb(37.5bp)=(0,0,0);
rgb(100bp)=(0,0,0)}
\tikzset{_wmx4xd7w5/.code = {\pgfsetadditionalshadetransform{\pgftransformshift{\pgfpoint{0 bp } { 0 bp }  }  \pgftransformrotate{0 }  \pgftransformscale{2 } }}}
\pgfdeclarehorizontalshading{_fzorq1kib} {150bp} {color(0bp)=(transparent!0);
color(37.5bp)=(transparent!0);
color(37.5bp)=(transparent!10);
color(100bp)=(transparent!10) } 
\pgfdeclarefading{_33qbefkc0}{\tikz \fill[shading=_fzorq1kib,_wmx4xd7w5] (0,0) rectangle (50bp,50bp); } 

  
\tikzset {_q8h8vc2aj/.code = {\pgfsetadditionalshadetransform{ \pgftransformshift{\pgfpoint{0 bp } { 0 bp }  }  \pgftransformrotate{0 }  \pgftransformscale{2 }  }}}
\pgfdeclarehorizontalshading{_8x5tsd4iz}{150bp}{rgb(0bp)=(1,1,1);
rgb(37.5bp)=(1,1,1);
rgb(37.5bp)=(0,0,0);
rgb(100bp)=(0,0,0)}
\tikzset{_v5ieqf06h/.code = {\pgfsetadditionalshadetransform{\pgftransformshift{\pgfpoint{0 bp } { 0 bp }  }  \pgftransformrotate{0 }  \pgftransformscale{2 } }}}
\pgfdeclarehorizontalshading{_cjbuoyg1a} {150bp} {color(0bp)=(transparent!0);
color(37.5bp)=(transparent!0);
color(37.5bp)=(transparent!10);
color(100bp)=(transparent!10) } 
\pgfdeclarefading{_ggjcqsgjv}{\tikz \fill[shading=_cjbuoyg1a,_v5ieqf06h] (0,0) rectangle (50bp,50bp); } 
\tikzset{every picture/.style={line width=0.75pt}} 

\begin{tikzpicture}[x=0.6pt,y=0.6pt,yscale=-1,xscale=1]

\draw [line width=3]    (130.2,98.4) -- (130.2,119) ;
\draw    (110,82) -- (130.2,93.74) ;
\draw    (109,130) -- (130.2,119) ;
\draw    (149,84) -- (130.2,93.74) ;
\draw    (150,130) -- (130.2,119) ;
\draw    (264,85) -- (275.6,98.8) ;
\draw    (291.2,98.8) -- (302,85) ;
\draw    (275.2,119.2) -- (262,131) ;
\draw    (303,130) -- (290.8,119.6) ;
\draw    (275.6,98.8) -- (275.2,119.2) ;
\draw    (291.2,98.8) -- (290.8,119.2) ;
\draw    (448.6,86.54) -- (456.71,97.9) ;
\draw    (477.81,97.9) -- (487,86) ;
\draw    (456.17,125.48) -- (448.6,135.22) ;
\draw    (485.92,136.3) -- (477.26,126.02) ;
\draw    (456.71,97.9) -- (477.26,126.02) ;
\draw    (477.81,97.9) -- (456.17,125.48) ;
\draw    (100,80) -- (78,108) ;
\draw    (99,133) -- (78,108) ;
\draw    (160,79) -- (182,107) ;
\draw    (161,132) -- (182,107) ;

\path  [shading=_03fr5be4l,_fp1sfy2l4,path fading= _33qbefkc0 ,fading transform={xshift=2}] (124.5,93.74) .. controls (124.5,91.16) and (127.05,89.07) .. (130.2,89.07) .. controls (133.35,89.07) and (135.9,91.16) .. (135.9,93.74) .. controls (135.9,96.31) and (133.35,98.4) .. (130.2,98.4) .. controls (127.05,98.4) and (124.5,96.31) .. (124.5,93.74) -- cycle ; 
 \draw  [line width=1.5]  (124.5,93.74) .. controls (124.5,91.16) and (127.05,89.07) .. (130.2,89.07) .. controls (133.35,89.07) and (135.9,91.16) .. (135.9,93.74) .. controls (135.9,96.31) and (133.35,98.4) .. (130.2,98.4) .. controls (127.05,98.4) and (124.5,96.31) .. (124.5,93.74) -- cycle ; 

\path  [shading=_8x5tsd4iz,_q8h8vc2aj,path fading= _ggjcqsgjv ,fading transform={xshift=2}] (124.5,119) .. controls (124.5,116.42) and (127.05,114.34) .. (130.2,114.34) .. controls (133.35,114.34) and (135.9,116.42) .. (135.9,119) .. controls (135.9,121.58) and (133.35,123.66) .. (130.2,123.66) .. controls (127.05,123.66) and (124.5,121.58) .. (124.5,119) -- cycle ; 
 \draw  [line width=1.5]  (124.5,119) .. controls (124.5,116.42) and (127.05,114.34) .. (130.2,114.34) .. controls (133.35,114.34) and (135.9,116.42) .. (135.9,119) .. controls (135.9,121.58) and (133.35,123.66) .. (130.2,123.66) .. controls (127.05,123.66) and (124.5,121.58) .. (124.5,119) -- cycle ; 

\draw    (255,83) -- (233,111) ;
\draw    (254,136) -- (233,111) ;
\draw    (315,82) -- (337,110) ;
\draw    (316,135) -- (337,110) ;

\draw    (437,84) -- (415,112) ;
\draw    (436,137) -- (415,112) ;
\draw    (497,83) -- (519,111) ;
\draw    (498,136) -- (519,111) ;

\draw    (100,172) -- (78,200) ;
\draw    (99,225) -- (78,200) ;
\draw    (160,171) -- (182,199) ;
\draw    (161,224) -- (182,199) ;

\draw   (104,200) .. controls (104,186.19) and (115.19,175) .. (129,175) .. controls (142.81,175) and (154,186.19) .. (154,200) .. controls (154,213.81) and (142.81,225) .. (129,225) .. controls (115.19,225) and (104,213.81) .. (104,200) -- cycle ;

\draw (194,100.4) node [anchor=north west][inner sep=0.75pt]    {$=$};
\draw (358,102.4) node [anchor=north west][inner sep=0.75pt]    {$-\ $};
\draw (394,102.4) node [anchor=north west][inner sep=0.75pt]    {$q$};
\draw (193,191.4) node [anchor=north west][inner sep=0.75pt]    {$=$};
\draw (234,183.4) node [anchor=north west][inner sep=0.75pt]    {$q\ +\ q^{-1}$};

\end{tikzpicture}
        \caption{Skein relation for 2 factor polynomial}
        \label{fig:2FactorSkeinrelation}    
    \end{figure}
\end{definition}

Baldridge \cite{CohomologyPlanarTrivalentGraph} gave a cohomology theory that categorifies the 2-factor polynomial and showed that the cohomology is invariant under the planar trivalent graph with a perfect matching. In this paper, we reformulate the definition using resolution configurations. Our approach is similar to how Lipshitz and Sarkar defined Khovanov homology via resolution configurations in $ S^2 $ arising from knot diagrams.

\begin{definition}\label{gradings}
    Let $\Gamma_{M}$ be a perfect matching graph corresponding to a planar trivalent graph $G$ with a perfect matching $M$. We choose an ordering $1,\dots,n(\Gamma_{M})$ of the perfect matching edges of $\Gamma_{M}$, where $n(\Gamma_{M})$ denotes the cardinality of $M$. For each state $v \in \{0,1\}^{n(\Gamma_{M})}$, we assign a resolution configuration $D_{\Gamma_{M}}(v)$ as in Definition \ref{resolution configuration corresponding to a state}.

The (Manhattan) norm $ \sum_{i=1}^{n(\Gamma_{M})}v_{i}$ is denoted by $|v|$. Corresponding to each labeled resolution configuration $(D_{\Gamma_{M}}(v),y)$, we call the tuple $(v,y)$ as \textit{enhanced state of $\Gamma_{M}$}. For each $(D_{\Gamma_{M}}(v),y)$, we define two gradings as follows:
\begin{align*}
    \hgr((D_{\Gamma_{M}}(v),y)) &=  |v|, \\
    \qgr((D_{\Gamma_{M}}(v),y)) &= |v| + \big|\{Z\in Z(D_{\Gamma_{M}}(v))\,|\, y(Z)= x_{+}\}\big|\\
         & \qquad \,\,\,  - \big|\{Z\in Z(D_{\Gamma_{M}}(v))\,|\, y(Z)= x_{-}\}\big|.    
\end{align*}
The gradings $\hgr$ and  $\qgr$ are called \textit{homological grading} (or sometimes $h$-grading), and \textit{quantum grading} (or sometimes $q$-grading) respectively. 
\end{definition}

\begin{definition}\label{def:2-factor-cohomology}
    Let \( C(\Gamma_{M}) \) denote the \( \mathbb{Z}_{2} \)-module freely generated by labeled resolution configurations of the form \( (D_{\Gamma_{M}}(v), y) \). Define \( C^{i,*}(\Gamma_{M}) \) to be the \( \mathbb{Z}_{2} \)-module freely generated by those labeled resolution configurations for which \( \hgr((D_{\Gamma_{M}}(v), y)) = |v| = i \). Similarly, let \( C^{i,j}(\Gamma_{M}) \) denote the \( \mathbb{Z}_{2} \)-module freely generated by those labeled resolution configurations for which \( \hgr((D_{\Gamma_{M}}(v), y)) = |v| = i \) and \( \qgr((D_{\Gamma_{M}}(v), y)) = j \). Thus $ C(\Gamma_{M}) $ is bi-graded. 
    So,
    $$
        C(\Gamma_{M}) = \bigoplus\limits_{i=0}^{n(\Gamma_{M})} C^{i,*}(\Gamma_{M}) = \bigoplus\limits_{i=0}^{n(\Gamma_{M})} \bigoplus\limits_{j\in \mathbb{Z}} C^{i,j}(\Gamma_{M}).
    $$
    The boundary map $\partial$ is defined in the following way. 
    \begin{align*}
        \partial\big((D_{\Gamma_{M}}(w),y)\big)= \sum\limits_{\substack{(D_{\Gamma_{M}}(v),x)\\ |v|=|w|+1 \\ (D_{\Gamma_{M}}(w),y)\prec (D_{\Gamma_{M}}(v),x)}}   (D_{\Gamma_{M}}(v),x),
    \end{align*}
    We note that the cochain complex \( \big(C(\Gamma_{M}), \partial\big) \) coincides with the one defined in \cite[Section~4.3]{CohomologyPlanarTrivalentGraph}.
    Observe that the boundary map preserves the quantum grading but increases the homological grading by 1. In fact, we can write $\partial = \sum\limits_{i,j}\partial^{i,j}$, where $\partial^{i,j}: C^{i,j}(\Gamma_{M}) \to C^{i+1,j}(\Gamma_{M})$. Baldridge \cite{CohomologyPlanarTrivalentGraph} showed that $\partial \circ \partial=0$ and thus we get a bi-graded cohomology $H^{i,j}(G,M) = \displaystyle \frac{\ker \partial^{i,j}}{\im \partial^{i-1,j}}$ of the complex $\big(C({\Gamma_{M}}),\partial\big)$, where $i$ and $j$ are homological and quantum grading respectively. This bi-graded cohomology $H^{i,j}(G,M)$ is invariant of the planar trivalent graph $G$ with perfect matching $M$. 
\end{definition}
\begin{remark}\label{bad-face-remark}
    Note that, according to Definition~\ref{partial order}, if the labeled resolution configurations \( (D_{\Gamma_{M}}(v), x) \) and \( (D_{\Gamma_{M}}(w), y) \) are such that \( D_{\Gamma_{M}}(v) \) is obtained from \( D_{\Gamma_{M}}(w) \) by surgery along an \( \eta \)-arc, then \( (D_{\Gamma_{M}}(v), x) \) and \( (D_{\Gamma_{M}}(w), y) \) are not related by the partial order \( \prec \). This observation is crucial for ensuring that \( \partial \circ \partial = 0 \).
    \begin{figure}[htp]
        \centering
        \input{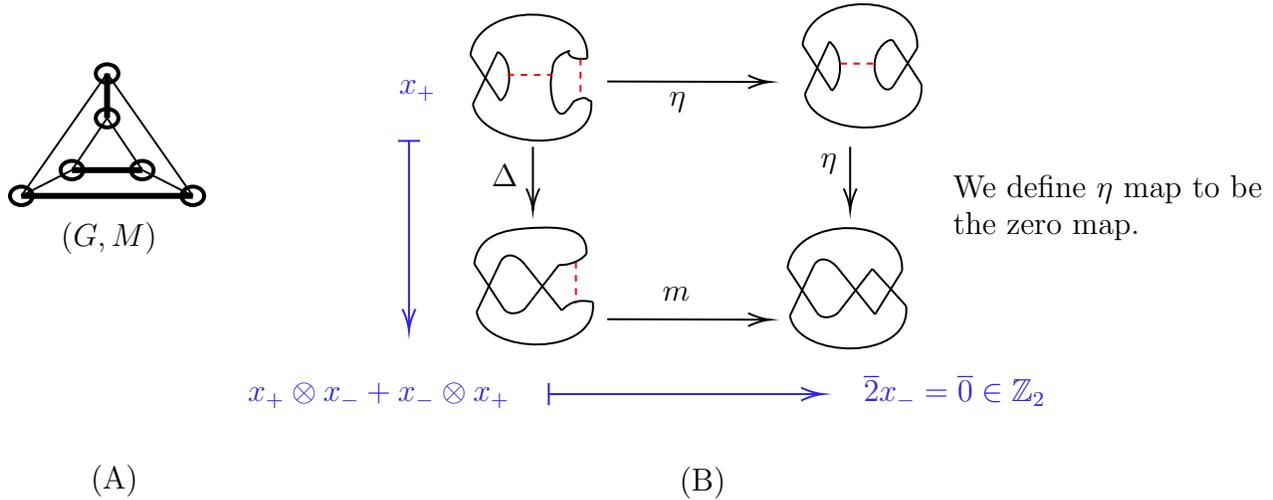}
        \caption{Single cycle surgery }
        \label{fig:SingleCircleSurgery}    
    \end{figure}
    Figure~\ref{fig:SingleCircleSurgery}(B) illustrates a face of the hypercube of states associated with a planar trivalent graph \( G \) equipped with a perfect matching \( M \), as depicted in Figure~\ref{fig:SingleCircleSurgery}(A). The diagram commutes if the boundary map corresponding to surgery along \( \eta \)-arcs is taken to be the zero map. We refer the reader to \cite[Sections~4.2 and~4.3]{CohomologyPlanarTrivalentGraph} for a more detailed discussion.
\end{remark}

We refer to the face of the hypercube of states where the relation \( m \circ \Delta = \eta \circ \eta \) holds---depicted in Figure~\ref{fig:SingleCircleSurgery}~(B)---as the \textit{bad face}.

\begin{remark}
    The $q$-graded Euler characteristic $\chi_{q}(H^{*,*}(G,M))$ of the cohomology $H^{*,*}(G,M)$ is defined to be $\sum\limits_{i,j}(-1)^{i}q^{j} \dim \big(H^{i,j}(G,M)\big)$. It can be shown that 
    $$
    \chi_{q}(H^{*,*}(G,M)) = \sum\limits_{i,j}(-1)^{i}q^{j} \dim \big(C^{i,j}(\Gamma_{M})\big) = \langle G: M \rangle_{2}.
    $$    
\end{remark}

\subsection{Invariance under flip moves}
Analogous to the Reidemeister moves in knot theory, two perfect matching graphs represent the same planar trivalent graph \( G \) with perfect matching \( M \) if and only if they are related by a sequence of flip moves; see Figure~\ref{fig:FlipMoves}. The following theorem is due to Baldridge; see \cite[Theorem~2.6]{CohomologyPlanarTrivalentGraph}. See also \cite[Lemma~4.2]{flip-greene} and \cite[Theorem~2.6.8 ]{flip-Mohar-Thomassen} for related results.
\begin{theorem}[\cite{CohomologyPlanarTrivalentGraph}, Theorem~2.6]
    Let \( (G, M) \) be a planar trivalent graph \( G \) equipped with a perfect matching \( M \). Then any two perfect matching graphs associated to the pair \( (G, M) \) are related by a sequence of flip moves, together with local isotopies.
\end{theorem}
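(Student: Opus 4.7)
The plan is to reduce the statement to the classical theorem of Whitney on planar embeddings. Recall that Whitney's $2$-switching theorem asserts that any two embeddings of a $2$-connected planar graph into $S^{2}$ are related by a finite sequence of $2$-switches (also called Whitney flips), where a $2$-switch is performed at a separating pair of vertices $\{u,v\}$ by reflecting one of the two subgraphs determined by $\{u,v\}$. Since two plane graph diagrams $\Gamma$ and $\widetilde{\Gamma}$ of the same planar trivalent graph $G$ differ (up to orientation-preserving homeomorphism of $S^{2}$) precisely by their combinatorial rotation systems, the theorem reduces to showing that any two rotation systems of $G$ compatible with a planar embedding are connected by flip moves (0-flip, 1-flip, or 2-flip) that respect the perfect matching $M$.

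First, I would deal with the connectivity reduction. If $G$ has a cut vertex $v$, trivalence at $v$ forces the splitting to be unbalanced; I would handle this case by local analysis, noting that the permutation of blocks around $v$ can be realized by isotopy and by flips inside a small neighborhood of $v$. For the $2$-connected pieces, I would invoke Whitney's theorem, which guarantees that any rearrangement is a composition of elementary $2$-switches.

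Next, the core step is to classify each elementary Whitney $2$-switch performed at a separating pair $\{u, v\}$ according to how the perfect matching $M$ interacts with the two edges of $G$ incident to $u$ and $v$ that are swapped by the flip. Since $G$ is trivalent, at each of $u$ and $v$ exactly one of the three incident edges is in $M$. A case analysis gives three distinct configurations, based on whether zero, one, or two of the edges participating in the flip belong to $M$; these correspond exactly to the $0$-flip, $1$-flip, and $2$-flip moves depicted in Figure~\ref{fig:FlipMoves}. In each case, the local picture of the flip, together with a planar isotopy of the complementary part of $\Gamma_{M}$, realizes the Whitney $2$-switch while preserving $M$.

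The main technical obstacle will be verifying that the three types of flip moves, which are genuinely local in nature, suffice to realize Whitney $2$-switches at arbitrary separating pairs; in general such pairs are not local, and one must argue that a global $2$-switch factors, up to planar isotopy in $S^{2}$, into a sequence of local flips performed on edges that meet the separating pair. This follows from the observation that after an isotopy sliding one side of the $2$-separation into a disk bounded by a small circle transverse to the two separating edges, the reflection becomes supported in that disk, so the move is local in precisely the sense captured by the flip moves. Combining this factorization with the case analysis above completes the proof.
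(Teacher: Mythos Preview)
The paper does not give its own proof of this theorem; it is quoted from \cite{CohomologyPlanarTrivalentGraph} and supported by pointers to the classical results \cite[Lemma~4.2]{flip-greene} and \cite[Theorem~2.6.8]{flip-Mohar-Thomassen}. Your overall plan---reduce to Whitney's theory of planar embeddings and realize each elementary Whitney switch as a flip---is exactly the classical route encoded in those references, so at the level of strategy you are aligned with what the paper invokes.

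However, your case analysis rests on a misreading of the flip moves. In Figure~\ref{fig:FlipMoves}, the index $n$ in an $n$-flip records the number of edges of $G$ that cross the boundary of the flipping disk (zero, one, or two), not the number of those edges that happen to lie in the perfect matching $M$. Thus the trichotomy $0$-flip/$1$-flip/$2$-flip is governed by the \emph{edge-connectivity} of the piece being flipped (isolated component, block attached along a bridge, block attached along a $2$-edge cut), and the perfect matching plays no role in distinguishing the cases. The matching $M$ is carried along passively: a flip changes only the rotation system, and the abstract pair $(G,M)$ is untouched. So your paragraph ``classify each elementary Whitney $2$-switch \dots\ according to how the perfect matching $M$ interacts'' is headed in the wrong direction; the correct case analysis is by the size of the edge cut bounding the reflected region, and this is already what the classical theorems of Whitney/Mohar--Thomassen provide. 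Once you fix this, the reduction you outline (isotope one side of the separation into a small disk, then reflect) goes through without needing any $M$-dependent subdivision.
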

    \begin{figure}[htp]
        \centering
        \input{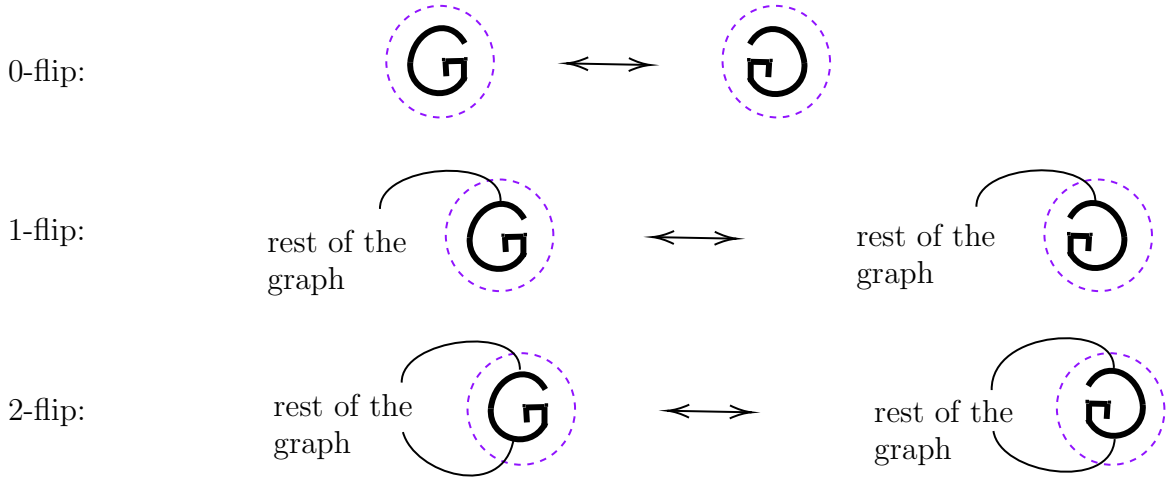}
        \caption{Flip moves}
        \label{fig:FlipMoves}    
    \end{figure}
Baldridge showed that up to chochain isomorphism the cochain complex $\big(C^{*,*}(\Gamma_{M}), \partial \big)$ is invariant of the planar trivalent graph $G$ with perfect matching. 

\begin{theorem} [\cite{CohomologyPlanarTrivalentGraph}, Theorem~5.1]
    Let $(G, M)$ be a planar trivalent graph $G$ with a perfect matching $M$. Let $\Gamma_{M}$ and $\widetilde{\Gamma}_{M}$ be perfect matching graphs of $(G, M)$ related by local isotopies and a sequence of flip moves. Then the cochain complexes $(C^{i,j}(\Gamma_{M}), \partial)$ and $\big(C^{i,j}(\widetilde{\Gamma}_{M}), \widetilde{\partial}\big)$ are isomorphic, that is, there exists a cochain isomorphism
    \[
    S : C^{i,j}(\Gamma_{M}) \rightarrow C^{i,j}(\widetilde{\Gamma}_{M})
    \]
    for each $i$ and $j$ such that
    \[
    \partial \circ S = S \circ \widetilde{\partial}.
    \]
\end{theorem}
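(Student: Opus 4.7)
The plan is to reduce the statement to a separate verification for each of the three elementary moves (0-flip, 1-flip, 2-flip), since any two perfect matching graphs representing $(G,M)$ are related by a finite sequence of such flips together with planar isotopies, and planar isotopies induce the tautological cochain isomorphism (same hypercube of states, same labelings, same partial order $\prec$). So it suffices, for each $k \in \{0,1,2\}$, to exhibit an explicit isomorphism $S$ between the complexes associated to the two local models of a $k$-flip.

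The strategy exploits locality. Let $B \subset S^{2}$ be a small closed disk that supports the flip, so that $\Gamma_{M}$ and $\widetilde{\Gamma}_{M}$ agree on $S^{2}\setminus B$. This induces a canonical bijection between the perfect matching edges outside $B$, with which we identify the corresponding coordinates of states. For a fixed choice of $0$/$1$ resolutions at all perfect matching edges outside $B$, the piece of the resolution configuration outside $B$ is fixed, while the piece inside $B$ runs through a small number of local states (none for a 0-flip; two for a 1-flip; four for a 2-flip, per side). For each local state I would compute the local resolution configuration in $B$, determine how its boundary points glue to the fixed outside picture to produce circles of $Z(D_{\Gamma_{M}}(v))$, and then define $S$ on enhanced states by an explicit, homological- and quantum-grading-preserving bijection: the labels on circles that lie entirely outside $B$ are preserved, while the labels on circles meeting $B$ are transported by a prescribed rule (for example, via the isomorphisms of one- and two-circle Frobenius pieces that already appear in the standard $\Delta$/$m$ computations underlying Definition 2.28).

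Given such an $S$, commutativity $\partial \circ S = S \circ \widetilde{\partial}$ splits into two cases: surgery along an arc outside $B$, where $S$ is the identity on the relevant tensor factor and commutativity is automatic; and surgery along an arc inside $B$, which is a bounded, finite verification because the local hypercube has at most $2^{2}$ vertices. The main obstacle I expect lies in this second case for the 1- and 2-flip, because the type of a local arc ($m$-, $\Delta$-, or $\eta$-arc) is determined by how the local strands reconnect through the outside picture, and can therefore vary with the global topology. One must check that the bijection defining $S$ matches $\eta$-arcs to $\eta$-arcs and, in particular, sends bad faces (where $m \circ \Delta$ is forced to equal $\eta \circ \eta$, compensated by declaring $\eta$ to be zero) on one side to bad faces on the other, so that the vanishing of $\partial$ on single-cycle surgeries is preserved. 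Once this local compatibility is confirmed on each flip, tensoring with the identity on the outside complex yields the global cochain isomorphism $S$, proving invariance.
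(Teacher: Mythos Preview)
This theorem is quoted from \cite{CohomologyPlanarTrivalentGraph} without proof in the present paper; the paper only invokes the ingredients of Baldridge's argument later, in the proofs of Theorems~\ref{theorem:eta-arc} and~\ref{thm:invariance-of-2factorspectra}. From those references one sees that Baldridge's proof has exactly the flip-by-flip structure you propose: for 0- and 1-flips the resolution configurations $D_{\Gamma_{M}}(v)$ and $D_{\widetilde\Gamma_{M}}(v)$ are canonically identical (his Propositions~5.3 and~5.5), so $S$ is the identity on the nose; only the 2-flip requires an explicit nontrivial $S$ (his Definition~5.18), built after a case split on how the resolved circles enter and exit the flipping disk (his Analysis~5.11, four subcases, only the last of which is substantive).

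So your outline is correct and matches the cited proof. Two small calibrations. First, you overestimate the 1-flip: a reflection of the disk about its single boundary edge sends each unoriented labeled circle to itself, so no Frobenius transport of labels is needed there. Second, for the 2-flip the ``bounded, finite verification'' you envisage must be organized by the global reconnection pattern of the two boundary edges through the exterior of $B$, not merely by the $0/1$ states of matching edges inside $B$; that is where Baldridge's four subcases come from. Your instinct about having to match $\eta$-arcs to $\eta$-arcs is exactly the content of the one nontrivial subcase.
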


\begin{theorem}\label{theorem:eta-arc}
    Let \( \Gamma_M \) be a perfect matching graph representing a planar trivalent graph \( G \) with perfect matching \( M \), and let \( \widetilde{\Gamma}_M \) be another representative of \( (G, M) \) such that \( \Gamma_M \) and \( \widetilde{\Gamma}_M \) are related by a flip move. Then for any \( v \in \{0,1\}^{n(\Gamma_M)} \), the resolution configuration \( D_{\Gamma_M}(v) \) contains an \( \eta \)-arc, \( m \)-arc, or \( \Delta \)-arc if and only if the corresponding configuration \( D_{\widetilde{\Gamma}_M}(v) \) contains an \( \eta \)-arc, \( m \)-arc, or \( \Delta \)-arc, respectively.
\end{theorem}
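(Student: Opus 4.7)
The plan is to proceed by a case analysis over the three types of flip moves depicted in Figure~\ref{fig:FlipMoves}. A flip is a local move supported inside a disk $B\subset S^2$: outside $B$ the two planar embeddings $\Gamma_M$ and $\widetilde{\Gamma}_M$ coincide, while inside $B$ they are modified in a prescribed way. Consequently, there is a natural identification between the matching edges of $\Gamma_M$ and those of $\widetilde{\Gamma}_M$ (tautological outside $B$, and given by the flip description inside $B$), so we may view $v$ as a state for both perfect matching graphs. For any $v$, the resolution configurations $D_{\Gamma_M}(v)$ and $D_{\widetilde{\Gamma}_M}(v)$ therefore agree outside $B$.

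First I would dispose of arcs lying outside $B$: any such arc $A\in A(D_{\Gamma_M}(v))$ admits a standard local disk disjoint from $B$, so the external data used to apply Lemma~\ref{determine-m,Delta,eta} is identical in both configurations, and hence $A$ and its counterpart in $D_{\widetilde{\Gamma}_M}(v)$ are of the same type ($m$, $\Delta$, or $\eta$).

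The main content is the comparison for arcs inside $B$. For each of the three flip moves I would enumerate the possible restrictions $v|_B$ of the state to the perfect matching edges located in $B$, and for every such restriction draw the local pictures of $D_{\Gamma_M}(v)\cap B$ and $D_{\widetilde{\Gamma}_M}(v)\cap B$. The crucial observation is that the external pairing of the boundary points on $\partial B$---that is, which pairs of points on $\partial B$ are joined by strands outside $B$---is identical for both graphs, since it is determined by the common exterior. With this external pairing fixed, Lemma~\ref{determine-m,Delta,eta} determines the type of each interior arc in both configurations by checking whether the two circle pieces meeting the arc belong to the same global circle and how they reconnect outside the local disk. One then verifies by direct comparison that corresponding arcs receive matching types.

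The principal obstacle is combinatorial: the 2-flip involves the largest number of perfect matching edges inside $B$ and hence the largest number of local states to enumerate, and one must also organize the bookkeeping for circles that may traverse $B$ multiple times. A minor subtlety occurs when the standard local disk of an arc near $\partial B$ cannot be chosen inside $B$; in that case one enlarges the disk to contain a larger portion of $S^2$, but since the flip does not alter the strands outside $B$, this enlargement only adds unchanged data, so the determination of the arc's type is unaffected. The finite enumeration together with the invariance of the external pairing of $\partial B$ completes the proof.
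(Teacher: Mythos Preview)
Your overall strategy---case analysis over the flip types, followed by an appeal to Lemma~\ref{determine-m,Delta,eta}---matches the paper's. However, there is a genuine gap in your treatment of arcs lying outside $B$. You argue that since such an arc $A$ has a standard local disk disjoint from $B$, ``the external data used to apply Lemma~\ref{determine-m,Delta,eta} is identical in both configurations.'' This inference is not valid: the type of $A$ is determined by how the four boundary points of its local disk are connected \emph{globally} by the circles of the resolution configuration, and those circles may well pass through $B$. Since the interior of $B$ is reflected under the flip, the way strands inside $B$ pair up the points of $\partial B$ could a priori change, and if it did, the global circle through an endpoint of $A$ would change as well, altering the type of $A$. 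Your later observation that the \emph{exterior} pairing of $\partial B$ is preserved is correct but insufficient; what you need is that the \emph{interior} pairing is also preserved.

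What is actually required, for interior and exterior arcs alike, is a bijection between the circles of $D_{\Gamma_M}(v)$ and those of $D_{\widetilde{\Gamma}_M}(v)$ that preserves which arcs each circle meets. The paper does not attempt your enumeration of restrictions $v|_B$ (which is in any case not finite: the disk $B$ may contain an arbitrary subgraph, hence arbitrarily many matching edges). Instead, for the 0- and 1-flip it cites \cite[Propositions~5.3 and~5.5]{CohomologyPlanarTrivalentGraph} to conclude that the two resolution configurations are canonically identical. For the 2-flip it reduces to the case where both edges crossing $\partial B$ lie in $M$ and carry the 0-resolution, and then invokes Baldridge's classification \cite[Analysis~5.11]{CohomologyPlanarTrivalentGraph} of how circles can enter and exit $B$: in three of the four subcases the circle correspondence is immediate, and in the remaining subcase the two circles crossing $\partial B$ swap their interior strands $\beta_v,\gamma_v$ under reflection but still join the same pairs of boundary points. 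Once this circle correspondence is established, preservation of all arc types follows directly from Lemma~\ref{determine-m,Delta,eta}.
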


\begin{proof}
    Let \( B \) denote the flipping disk. Fix an ordering of the perfect matching edges in \( \Gamma_{M} \), and impose the same ordering on the perfect matching edges in \( \widetilde{\Gamma}_{M} \). Let \( v \in \{0,1\}^{n(\Gamma_{M})} \) be a state such that \( D_{\Gamma_{M}}(v) \) contains an \( \eta \)-arc. We will show that \( D_{\widetilde{\Gamma}_{M}}(v) \) must contain an \( \eta \)-arc. If \( \Gamma_{M} \) and \( \widetilde{\Gamma}_{M} \) differ by a 0-flip or a 1-flip move, then by \cite[Propositions 5.3 and 5.5]{CohomologyPlanarTrivalentGraph}, the resolution configurations \( D_{\Gamma_{M}}(v) \) and \( D_{\widetilde{\Gamma}_{M}}(v) \) are canonically identical. It follows that \( D_{\widetilde{\Gamma}_{M}}(v) \) must also contain an \( \eta \)-arc. 
    
    Now let \( \Gamma_{M} \) and \( \widetilde{\Gamma}_{M} \) be related by a $2$-flip move. The nontrivial case occurs when \( \partial B \) intersects the interiors of two distinct edges \( e_{1} \) and \( e_{2} \) of \( \Gamma_{M} \), such that there exists a path of edges in \( B \) connecting the vertex of \( e_{1} \) in \( B \) to the vertex of \( e_{2} \) in \( B \), and both \( e_{1} \) and \( e_{2} \) belong to \( M \); see \cite[Analysis 2.4 and Section 5.2]{CohomologyPlanarTrivalentGraph}. Without loss of generality, we assume that \( v \) is a state in which both edges \( e_1 \) and \( e_2 \) are assigned 0-resolutions. There are four subcases describing how the circles of \( D_{\Gamma_{M}}(v) \) may enter or exit the flipping disk \( B \); see \cite[Analysis 5.11]{CohomologyPlanarTrivalentGraph}. In subcases Analysis 5.11(1), 5.11(2), and 5.11(3) of \cite{CohomologyPlanarTrivalentGraph}, each circle in \( D_{\Gamma_{M}}(v) \) corresponds to a circle in \( D_{\widetilde{\Gamma}_{M}}(v) \) that continues to have all the same resolution sites as in $D_{\Gamma_{M}}(v)$. Consequently, for these three subcases, the resolution configuration \( D_{\Gamma_{M}}(v) \) contains an \( \eta \)-arc if and only if \( D_{\widetilde{\Gamma}_{M}}(v) \) contains an \( \eta \)-arc. The most relevant subcase is \cite[Analysis 5.11(4)]{CohomologyPlanarTrivalentGraph}, in which two circles enter and exit the flipping disk in the resolution configuration \( D_{\Gamma_{M}}(v) \) in the following manner. In \( D_{\Gamma_{M}}(v) \), the first circle enters and exits through an arc \( \beta_v \) in the disk, entering at \( a \) and exiting at \( c \) (or \( d \)). The second circle enters and exits through an arc \( \gamma_v \), entering at \( b \) and exiting at \( d \) (or \( c \)). After the 2-flip move, the first circle enters at arc \( \widetilde{a} \), traverses the reflection of \( \gamma_v \), and exits at arc \( \widetilde{c} \) (or \( \widetilde{d} \)) in \( D_{\widetilde{\Gamma}_{M}}(v) \). Similarly, the second circle enters at arc \( \widetilde{b} \), traverses the reflection of \( \beta_v \), and exits at arc \( \widetilde{d} \) (or \( \widetilde{c} \)); see Figure~\ref{fig:2-flip-intersting-case} (left). By Lemma~\ref{determine-m,Delta,eta}, the two boundary points of an \( \eta \)-arc must lie on a single circle. Suppose there is a third circle inside the flipping disk \( B \) that contains an \( \eta \)-arc. Then, after the 2-flip move, the corresponding third circle will also contain an \( \eta \)-arc. Therefore, we focus on the case where there exists an \( \eta \)-arc, denoted by \( A \), whose two boundary points lie, without loss of generality, on \( \gamma_v \). Furthermore, we assume, without loss of generality, that the arc \( \gamma_v \) connects \( b \) and \( d \) rather than \( b \) and \( c \); see Figure~\ref{fig:2-flip-intersting-case} (right). After the 2-flip move, applying Lemma~\ref{determine-m,Delta,eta}, we conclude that the arc \( A \) is reflected but remains an \( \eta \)-arc; see Figure~\ref{fig:2-flip-intersting-case} (right). 
    Similarly, the statement of the theorem can be proved in the cases of the $\Delta$-arc and the $m$-arc as well; see Figure~\ref{fig:2-flip-intersting-case-delta-m}. This completes the proof.
    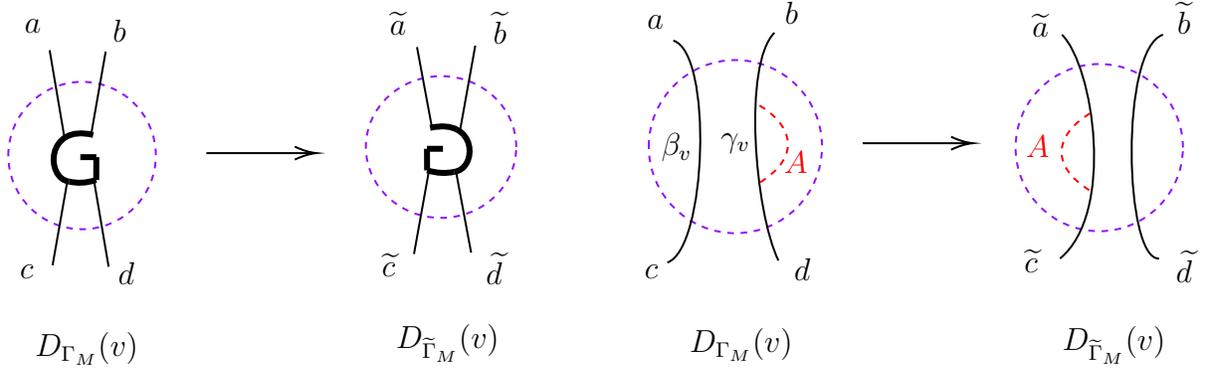
\begin{figure}[htp]
        \centering
        \tikzset{every picture/.style={line width=0.75pt}} 

\begin{tikzpicture}[x=0.75pt,y=0.75pt,yscale=-1,xscale=1]

\draw  [color={rgb, 255:red, 144; green, 19; blue, 254 }  ,draw opacity=1 ][dash pattern={on 2.5pt off 2.5pt}] (23.11,100.22) .. controls (23.11,79.68) and (39.77,63.03) .. (60.31,63.03) .. controls (80.85,63.03) and (97.5,79.68) .. (97.5,100.22) .. controls (97.5,120.76) and (80.85,137.42) .. (60.31,137.42) .. controls (39.77,137.42) and (23.11,120.76) .. (23.11,100.22) -- cycle ;
\draw [color={rgb, 255:red, 0; green, 0; blue, 0 }  ,draw opacity=1 ]   (43.94,47.04) -- (51.38,90.93) ;
\draw [color={rgb, 255:red, 0; green, 0; blue, 0 }  ,draw opacity=1 ]   (72.21,47.78) -- (64.77,90.18) ;
\draw [color={rgb, 255:red, 0; green, 0; blue, 0 }  ,draw opacity=1 ]   (73.7,156.38) -- (66.26,112.5) ;
\draw [color={rgb, 255:red, 0; green, 0; blue, 0 }  ,draw opacity=1 ]   (45.43,155.64) -- (52.87,113.24) ;
\draw  [color={rgb, 255:red, 144; green, 19; blue, 254 }  ,draw opacity=1 ][dash pattern={on 2.5pt off 2.5pt}] (279.27,95.92) .. controls (279.27,76.16) and (262.29,60.15) .. (241.35,60.15) .. controls (220.41,60.15) and (203.43,76.16) .. (203.43,95.92) .. controls (203.43,115.67) and (220.41,131.69) .. (241.35,131.69) .. controls (262.29,131.69) and (279.27,115.67) .. (279.27,95.92) -- cycle ;
\draw [color={rgb, 255:red, 0; green, 0; blue, 0 }  ,draw opacity=1 ]   (258.62,44.76) -- (251.03,86.98) ;
\draw [color={rgb, 255:red, 0; green, 0; blue, 0 }  ,draw opacity=1 ]   (229.22,45.48) -- (236.8,86.26) ;
\draw [color={rgb, 255:red, 0; green, 0; blue, 0 }  ,draw opacity=1 ]   (227.7,149.93) -- (235.28,107.72) ;
\draw [color={rgb, 255:red, 0; green, 0; blue, 0 }  ,draw opacity=1 ]   (256.52,149.22) -- (248.93,108.44) ;
\draw [color={rgb, 255:red, 0; green, 0; blue, 0 }  ,draw opacity=1 ][line width=2.25]    (235.83,86.26) .. controls (259.86,80.77) and (259.86,107.06) .. (247.96,108.44) ;
\draw [color={rgb, 255:red, 0; green, 0; blue, 0 }  ,draw opacity=1 ][line width=2.25]    (247.96,108.44) .. controls (243.37,109.37) and (239.93,109.19) .. (234.31,107.72) ;
\draw [color={rgb, 255:red, 0; green, 0; blue, 0 }  ,draw opacity=1 ][line width=2.25]    (234.31,107.72) -- (234.26,96.82) ;
\draw [color={rgb, 255:red, 0; green, 0; blue, 0 }  ,draw opacity=1 ][line width=2.25]    (234.26,96.82) -- (242.33,96.82) ;

\draw [color={rgb, 255:red, 0; green, 0; blue, 0 }  ,draw opacity=1 ][line width=2.25]    (65.93,89.82) .. controls (41.87,83.99) and (41.87,111.89) .. (53.79,113.35) ;
\draw [color={rgb, 255:red, 0; green, 0; blue, 0 }  ,draw opacity=1 ][line width=2.25]    (53.79,113.35) .. controls (58.39,114.34) and (61.83,114.16) .. (67.45,112.59) ;
\draw [color={rgb, 255:red, 0; green, 0; blue, 0 }  ,draw opacity=1 ][line width=2.25]    (67.45,112.59) -- (67.5,101.03) ;
\draw [color={rgb, 255:red, 0; green, 0; blue, 0 }  ,draw opacity=1 ][line width=2.25]    (67.5,101.03) -- (59.42,101.03) ;

\draw    (123,99) -- (176.5,99) ;
\draw [shift={(178.5,99)}, rotate = 180] [color={rgb, 255:red, 0; green, 0; blue, 0 }  ][line width=0.75]    (10.93,-3.29) .. controls (6.95,-1.4) and (3.31,-0.3) .. (0,0) .. controls (3.31,0.3) and (6.95,1.4) .. (10.93,3.29)   ;
\draw  [color={rgb, 255:red, 144; green, 19; blue, 254 }  ,draw opacity=1 ][dash pattern={on 2.5pt off 2.5pt}] (346.11,95.72) .. controls (346.11,71.59) and (365.68,52.03) .. (389.81,52.03) .. controls (413.94,52.03) and (433.5,71.59) .. (433.5,95.72) .. controls (433.5,119.85) and (413.94,139.42) .. (389.81,139.42) .. controls (365.68,139.42) and (346.11,119.85) .. (346.11,95.72) -- cycle ;
\draw    (454,94) -- (507.5,94) ;
\draw [shift={(509.5,94)}, rotate = 180] [color={rgb, 255:red, 0; green, 0; blue, 0 }  ][line width=0.75]    (10.93,-3.29) .. controls (6.95,-1.4) and (3.31,-0.3) .. (0,0) .. controls (3.31,0.3) and (6.95,1.4) .. (10.93,3.29)   ;
\draw    (358.5,42.22) .. controls (378.5,48.22) and (375.5,147.22) .. (355.5,154.22) ;
\draw    (409.5,38.22) .. controls (393.5,54.22) and (399.5,128.22) .. (411.5,153.22) ;
\draw [color={rgb, 255:red, 252; green, 3; blue, 3 }  ,draw opacity=1 ] [dash pattern={on 2.5pt off 2.5pt}]  (402,75) .. controls (417.5,84.22) and (424.5,100.22) .. (401.5,114.22) ;
\draw  [color={rgb, 255:red, 144; green, 19; blue, 254 }  ,draw opacity=1 ][dash pattern={on 2.5pt off 2.5pt}] (615.5,96.22) .. controls (615.5,72.92) and (596.61,54.03) .. (573.31,54.03) .. controls (550,54.03) and (531.11,72.92) .. (531.11,96.22) .. controls (531.11,119.53) and (550,138.42) .. (573.31,138.42) .. controls (596.61,138.42) and (615.5,119.53) .. (615.5,96.22) -- cycle ;
\draw    (605.5,39.22) .. controls (583.5,46.22) and (585.89,153.22) .. (603.5,152.22) ;
\draw    (554.5,39.22) .. controls (570.5,55.22) and (581.5,129.22) .. (553.5,152.22) ;
\draw [color={rgb, 255:red, 252; green, 3; blue, 3 }  ,draw opacity=1 ] [dash pattern={on 2.5pt off 2.5pt}]  (568.61,79) .. controls (553.11,88.22) and (546.11,104.22) .. (569.11,118.22) ;

\draw (206.69,68.21) node [anchor=north west][inner sep=0.75pt]   [align=left] {};
\draw (35.3,188) node [anchor=north west][inner sep=0.75pt]  [font=\normalsize]  {$D_{\Gamma _{M}}( v)$};
\draw (29.81,29.68) node [anchor=north west][inner sep=0.75pt]  [font=\normalsize]  {$a$};
\draw (74.32,30.16) node [anchor=north west][inner sep=0.75pt]  [font=\normalsize]  {$b$};
\draw (27.53,153.62) node [anchor=north west][inner sep=0.75pt]  [font=\normalsize]  {$c$};
\draw (77.06,152.62) node [anchor=north west][inner sep=0.75pt]  [font=\normalsize]  {$d$};
\draw (214.12,26.26) node [anchor=north west][inner sep=0.75pt]  [font=\normalsize]  {$\widetilde{a}$};
\draw (210.12,148) node [anchor=north west][inner sep=0.75pt]  [font=\normalsize]  {$\widetilde{c}$};
\draw (262.62,147.81) node [anchor=north west][inner sep=0.75pt]  [font=\normalsize]  {$\widetilde{d}$};
\draw (266.27,26.22) node [anchor=north west][inner sep=0.75pt]  [font=\normalsize]  {$\widetilde{b}$};
\draw (217.3,184) node [anchor=north west][inner sep=0.75pt]  [font=\normalsize]  {$D_{\widetilde{\Gamma }_{M}}( v)$};
\draw (365.3,186) node [anchor=north west][inner sep=0.75pt]  [font=\normalsize]  {$D_{\Gamma _{M}}( v)$};
\draw (343.81,27.68) node [anchor=north west][inner sep=0.75pt]  [font=\normalsize]  {$a$};
\draw (413.32,21.16) node [anchor=north west][inner sep=0.75pt]  [font=\normalsize]  {$b$};
\draw (342.53,153.62) node [anchor=north west][inner sep=0.75pt]  [font=\normalsize]  {$c$};
\draw (418.06,150.62) node [anchor=north west][inner sep=0.75pt]  [font=\normalsize]  {$d$};
\draw (534.12,146) node [anchor=north west][inner sep=0.75pt]  [font=\normalsize]  {$\widetilde{c}$};
\draw (610.62,146.81) node [anchor=north west][inner sep=0.75pt]  [font=\normalsize]  {$\widetilde{d}$};
\draw (611.28,21.22) node [anchor=north west][inner sep=0.75pt]  [font=\normalsize]  {$\widetilde{b}$};
\draw (553.3,185) node [anchor=north west][inner sep=0.75pt]  [font=\normalsize]  {$D_{\widetilde{\Gamma }_{M}}( v)$};
\draw (538.12,27.26) node [anchor=north west][inner sep=0.75pt]  [font=\normalsize]  {$\widetilde{a}$};
\draw (413,96.4) node [anchor=north west][inner sep=0.75pt]  [color={rgb, 255:red, 252; green, 3; blue, 3 }  ,opacity=1 ]  {$A$};
\draw (535,88.4) node [anchor=north west][inner sep=0.75pt]  [color={rgb, 255:red, 252; green, 3; blue, 3 }  ,opacity=1 ]  {$A$};
\draw (351,87.4) node [anchor=north west][inner sep=0.75pt]    {$\beta _{v}$};
\draw (381,87.4) node [anchor=north west][inner sep=0.75pt]    {$\gamma _{v}$};

\end{tikzpicture}
        \caption{The flipping disk in \( D_{\Gamma_{M}}(v) \) and \( D_{\widetilde{\Gamma}_{M}}(v) \) in the case of 0-resolutions at edges \( e_1 \) and \( e_2 \) (left). The \( \eta \)-arc \( A \), whose boundary points lie on \( \gamma_v \) (right). } 
        \label{fig:2-flip-intersting-case}    
    \end{figure}
    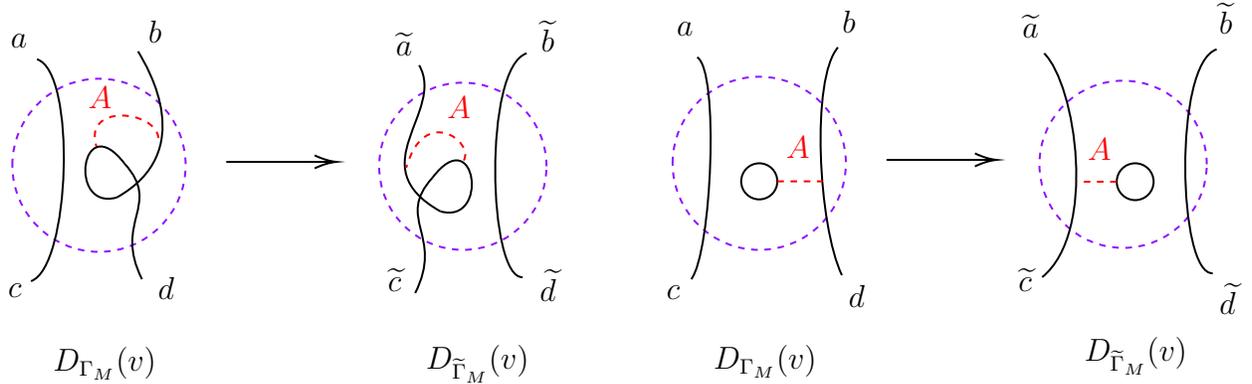
\begin{figure}[htp]
        \centering
        \tikzset{every picture/.style={line width=0.75pt}} 

\begin{tikzpicture}[x=0.75pt,y=0.75pt,yscale=-1,xscale=1]

\draw  [color={rgb, 255:red, 144; green, 19; blue, 254 }  ,draw opacity=1 ][dash pattern={on 2.5pt off 2.5pt}] (32.84,105.96) .. controls (32.84,81.83) and (52.4,62.27) .. (76.53,62.27) .. controls (100.66,62.27) and (120.23,81.83) .. (120.23,105.96) .. controls (120.23,130.09) and (100.66,149.65) .. (76.53,149.65) .. controls (52.4,149.65) and (32.84,130.09) .. (32.84,105.96) -- cycle ;
\draw    (140.73,104.24) -- (194.23,104.24) ;
\draw [shift={(196.23,104.24)}, rotate = 180] [color={rgb, 255:red, 0; green, 0; blue, 0 }  ][line width=0.75]    (10.93,-3.29) .. controls (6.95,-1.4) and (3.31,-0.3) .. (0,0) .. controls (3.31,0.3) and (6.95,1.4) .. (10.93,3.29)   ;
\draw    (45.23,52.45) .. controls (65.23,58.45) and (62.23,157.45) .. (42.23,164.45) ;
\draw [color={rgb, 255:red, 252; green, 3; blue, 3 }  ,draw opacity=1 ] [dash pattern={on 2.5pt off 2.5pt}]  (106.5,92.22) .. controls (105.5,78.22) and (67.5,74.22) .. (75.5,97.22) ;
\draw  [color={rgb, 255:red, 144; green, 19; blue, 254 }  ,draw opacity=1 ][dash pattern={on 2.5pt off 2.5pt}] (302.23,106.46) .. controls (302.23,83.16) and (283.33,64.27) .. (260.03,64.27) .. controls (236.73,64.27) and (217.84,83.16) .. (217.84,106.46) .. controls (217.84,129.76) and (236.73,148.65) .. (260.03,148.65) .. controls (283.33,148.65) and (302.23,129.76) .. (302.23,106.46) -- cycle ;
\draw    (292.23,49.45) .. controls (270.23,56.45) and (272.61,163.45) .. (290.23,162.45) ;
\draw  [color={rgb, 255:red, 144; green, 19; blue, 254 }  ,draw opacity=1 ][dash pattern={on 2.5pt off 2.5pt}] (365.84,104.96) .. controls (365.84,80.83) and (385.4,61.27) .. (409.53,61.27) .. controls (433.66,61.27) and (453.23,80.83) .. (453.23,104.96) .. controls (453.23,129.09) and (433.66,148.65) .. (409.53,148.65) .. controls (385.4,148.65) and (365.84,129.09) .. (365.84,104.96) -- cycle ;
\draw    (473.73,103.24) -- (527.23,103.24) ;
\draw [shift={(529.23,103.24)}, rotate = 180] [color={rgb, 255:red, 0; green, 0; blue, 0 }  ][line width=0.75]    (10.93,-3.29) .. controls (6.95,-1.4) and (3.31,-0.3) .. (0,0) .. controls (3.31,0.3) and (6.95,1.4) .. (10.93,3.29)   ;
\draw    (378.23,51.45) .. controls (389.5,58.22) and (385.5,150.22) .. (375.23,163.45) ;
\draw    (449.23,46.45) .. controls (435.5,62.22) and (439.23,136.45) .. (451.23,161.45) ;
\draw  [color={rgb, 255:red, 144; green, 19; blue, 254 }  ,draw opacity=1 ][dash pattern={on 2.5pt off 2.5pt}] (635.22,105.46) .. controls (635.22,82.16) and (616.33,63.27) .. (593.03,63.27) .. controls (569.73,63.27) and (550.84,82.16) .. (550.84,105.46) .. controls (550.84,128.76) and (569.73,147.65) .. (593.03,147.65) .. controls (616.33,147.65) and (635.22,128.76) .. (635.22,105.46) -- cycle ;
\draw    (640.23,47.45) .. controls (618.23,54.45) and (620.61,161.45) .. (638.23,160.45) ;
\draw    (553.23,49.45) .. controls (569.23,65.45) and (580.23,139.45) .. (552.23,162.45) ;
\draw    (89.5,105.22) .. controls (67.5,76.22) and (60.5,129.22) .. (85.5,122.22) ;
\draw    (96.23,48.45) .. controls (101.5,57.22) and (126.5,99.22) .. (85.5,122.22) ;
\draw    (89.5,105.22) .. controls (108.5,126.22) and (83.5,134.22) .. (98.23,163.45) ;
\draw    (244.73,112.22) .. controls (266.73,83.22) and (273.73,136.22) .. (248.73,129.22) ;
\draw    (238.01,55.45) .. controls (251.5,79.22) and (207.73,106.22) .. (248.73,129.22) ;
\draw    (244.73,112.22) .. controls (225.73,133.22) and (250.73,141.22) .. (236.01,170.45) ;
\draw [color={rgb, 255:red, 252; green, 3; blue, 3 }  ,draw opacity=1 ] [dash pattern={on 2.5pt off 2.5pt}]  (260.5,103.5) .. controls (265.97,94.26) and (240.5,75) .. (231.5,107.22) ;
\draw   (400.42,114.07) .. controls (400.42,109.04) and (404.5,104.96) .. (409.53,104.96) .. controls (414.56,104.96) and (418.64,109.04) .. (418.64,114.07) .. controls (418.64,119.1) and (414.56,123.18) .. (409.53,123.18) .. controls (404.5,123.18) and (400.42,119.1) .. (400.42,114.07) -- cycle ;
\draw [color={rgb, 255:red, 252; green, 3; blue, 3 }  ,draw opacity=1 ] [dash pattern={on 2.5pt off 2.5pt}]  (418.64,114.07) -- (441.5,114.07) ;
\draw   (608.42,114.26) .. controls (608.42,109.12) and (604.26,104.96) .. (599.13,104.96) .. controls (593.99,104.96) and (589.83,109.12) .. (589.83,114.26) .. controls (589.83,119.39) and (593.99,123.55) .. (599.13,123.55) .. controls (604.26,123.55) and (608.42,119.39) .. (608.42,114.26) -- cycle ;
\draw [color={rgb, 255:red, 252; green, 3; blue, 3 }  ,draw opacity=1 ] [dash pattern={on 2.5pt off 2.5pt}]  (589.83,114.26) -- (569.5,114.26) ;

\draw (52.03,196.24) node [anchor=north west][inner sep=0.75pt]  [font=\normalsize]  {$D_{\Gamma _{M}}( v)$};
\draw (30.53,37.91) node [anchor=north west][inner sep=0.75pt]  [font=\normalsize]  {$a$};
\draw (100.04,31.4) node [anchor=north west][inner sep=0.75pt]  [font=\normalsize]  {$b$};
\draw (29.25,163.86) node [anchor=north west][inner sep=0.75pt]  [font=\normalsize]  {$c$};
\draw (104.79,160.86) node [anchor=north west][inner sep=0.75pt]  [font=\normalsize]  {$d$};
\draw (220.84,156.24) node [anchor=north west][inner sep=0.75pt]  [font=\normalsize]  {$\widetilde{c}$};
\draw (297.35,157.04) node [anchor=north west][inner sep=0.75pt]  [font=\normalsize]  {$\widetilde{d}$};
\draw (298,31.46) node [anchor=north west][inner sep=0.75pt]  [font=\normalsize]  {$\widetilde{b}$};
\draw (240.03,195.24) node [anchor=north west][inner sep=0.75pt]  [font=\normalsize]  {$D_{\widetilde{\Gamma }_{M}}( v)$};
\draw (224.84,37.49) node [anchor=north west][inner sep=0.75pt]  [font=\normalsize]  {$\widetilde{a}$};
\draw (385.03,195.24) node [anchor=north west][inner sep=0.75pt]  [font=\normalsize]  {$D_{\Gamma _{M}}( v)$};
\draw (366.53,31.91) node [anchor=north west][inner sep=0.75pt]  [font=\normalsize]  {$a$};
\draw (450.04,26.4) node [anchor=north west][inner sep=0.75pt]  [font=\normalsize]  {$b$};
\draw (361.25,164.86) node [anchor=north west][inner sep=0.75pt]  [font=\normalsize]  {$c$};
\draw (453.23,164.85) node [anchor=north west][inner sep=0.75pt]  [font=\normalsize]  {$d$};
\draw (538.84,156.24) node [anchor=north west][inner sep=0.75pt]  [font=\normalsize]  {$\widetilde{c}$};
\draw (640.23,163.85) node [anchor=north west][inner sep=0.75pt]  [font=\normalsize]  {$\widetilde{d}$};
\draw (640,22.46) node [anchor=north west][inner sep=0.75pt]  [font=\normalsize]  {$\widetilde{b}$};
\draw (572.03,191.24) node [anchor=north west][inner sep=0.75pt]  [font=\normalsize]  {$D_{\widetilde{\Gamma }_{M}}( v)$};
\draw (540.84,28.49) node [anchor=north west][inner sep=0.75pt]  [font=\normalsize]  {$\widetilde{a}$};
\draw (421.73,89.64) node [anchor=north west][inner sep=0.75pt]  [color={rgb, 255:red, 252; green, 3; blue, 3 }  ,opacity=1 ]  {$A$};
\draw (573.73,90.64) node [anchor=north west][inner sep=0.75pt]  [color={rgb, 255:red, 252; green, 3; blue, 3 }  ,opacity=1 ]  {$A$};
\draw (69.73,64.64) node [anchor=north west][inner sep=0.75pt]  [color={rgb, 255:red, 252; green, 3; blue, 3 }  ,opacity=1 ]  {$A$};
\draw (250.73,69.64) node [anchor=north west][inner sep=0.75pt]  [color={rgb, 255:red, 252; green, 3; blue, 3 }  ,opacity=1 ]  {$A$};

\end{tikzpicture}
        \caption{The arc $A$ is a $\Delta$-arc (left). The arc \( A \) is an $m$-arc (right). } 
        \label{fig:2-flip-intersting-case-delta-m}    
    \end{figure}
\end{proof}

\begin{corollary}\label{bad-face-corollary}
Let \( \Gamma_M \) be a perfect matching graph representing a planar trivalent graph \( G \) with perfect matching \( M \), and let \( \widetilde{\Gamma}_M \) be another representative of \( (G, M) \) such that \( \Gamma_M \) and \( \widetilde{\Gamma}_M \) are related by a flip move. Then the hypercube of states of \( \Gamma_M \) contains a bad face, as depicted in Figure~\ref{fig:SingleCircleSurgery}(B), if and only if the hypercube of states of \( \widetilde{\Gamma}_M \) contains a bad face.
\end{corollary}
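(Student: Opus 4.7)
The plan is to deduce this corollary as a direct consequence of Theorem~\ref{theorem:eta-arc}, together with the observation that the property of being a bad face is determined purely by arc-type data at the corners of a 2-face of the hypercube. In other words, since Theorem~\ref{theorem:eta-arc} already guarantees that the classification of each arc as $\eta$, $m$, or $\Delta$ is preserved state-by-state under a flip move, the corollary should follow by applying this vertex-wise preservation around each 2-face.

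First I would make precise what a bad face is combinatorially. Recall that a 2-face of the hypercube of states is determined by an unordered pair $\{e_{i}, e_{j}\}$ of distinct perfect matching edges together with a fixed state $w \in \{0,1\}^{n(\Gamma_{M})-2}$ on the remaining edges, so that the four corners of the face correspond to the four extensions of $w$ by assignments at coordinates $i$ and $j$. By Remark~\ref{bad-face-remark} and the configuration in Figure~\ref{fig:SingleCircleSurgery}(B), such a face is bad precisely when, at the $(0,0)$-corner, both arcs attached to $e_{i}$ and $e_{j}$ in the corresponding resolution configuration are $\eta$-arcs, and (equivalently by duality) at the $(1,1)$-corner the dual arcs are an $m$-arc and a $\Delta$-arc in the appropriate order. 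In particular, whether a given 2-face is bad is determined entirely by the arc-type classification ($\eta$, $m$, or $\Delta$) of the two arcs indexing the face at each of its four corners.

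Next, since $\Gamma_{M}$ and $\widetilde{\Gamma}_{M}$ represent the same planar trivalent graph with perfect matching $(G,M)$, the perfect matching edges of $\Gamma_{M}$ are in canonical bijection with those of $\widetilde{\Gamma}_{M}$. Fixing a common ordering of the perfect matching edges, we identify the hypercubes of states of $\Gamma_{M}$ and $\widetilde{\Gamma}_{M}$ as combinatorial cubes on the same index set, and hence identify their 2-faces. Under this identification, for any state $v$ and any edge $e_{k}$, the arc attached to $e_{k}$ in $D_{\Gamma_{M}}(v)$ corresponds to the arc attached to $e_{k}$ in $D_{\widetilde{\Gamma}_{M}}(v)$, and Theorem~\ref{theorem:eta-arc} asserts that the two arcs have the same type.

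Finally, I would assemble these observations. Fix any 2-face, indexed by $\{e_{i},e_{j}\}$ and partial state $w$. Applying Theorem~\ref{theorem:eta-arc} at each of the four corners of the face produces the same arc-type pattern for $\Gamma_{M}$ as for $\widetilde{\Gamma}_{M}$. By the characterization in the first step, the face is bad for $\Gamma_{M}$ if and only if it is bad for $\widetilde{\Gamma}_{M}$. Taking the existential quantifier over 2-faces yields the corollary. The main obstacle, which is essentially already handled by Theorem~\ref{theorem:eta-arc}, is confirming that the bad-face condition is purely local to each 2-face and requires only the vertex-wise arc-type data rather than any finer global information about how circles in the resolution configurations thread through the flipping disk; once that is verified at the level of Figure~\ref{fig:SingleCircleSurgery}(B), the rest is a direct face-by-face application of the preceding theorem.
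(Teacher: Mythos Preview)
Your approach is correct and matches the paper's: the corollary is deduced directly from Theorem~\ref{theorem:eta-arc}, using that the bad-face condition depends only on the arc types ($m$, $\Delta$, or $\eta$) along the edges of a 2-face, and those types are preserved state-by-state under a flip move.

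However, your explicit characterization of a bad face is wrong. You write that at the $(0,0)$-corner ``both arcs attached to $e_i$ and $e_j$ \ldots\ are $\eta$-arcs.'' This is not the case. In the bad face $m \circ \Delta = \eta \circ \eta$ (Figure~\ref{fig:SingleCircleSurgery}(B)), at the $(0,0)$-corner one arc is a $\Delta$-arc (initiating the $\Delta$-then-$m$ path) and the other is an $\eta$-arc (initiating the $\eta$-then-$\eta$ path); the second $\eta$-arc only appears after the first $\eta$-surgery has been performed. This is confirmed in Lemma~\ref{lemma:upto-equivlence-bad-face}, where the two arcs of $D_{\Gamma_M}(v)\setminus D_{\Gamma_M}(u)$ are explicitly one $\Delta$-arc and one $\eta$-arc. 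Your dual description at the $(1,1)$-corner is similarly off.

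This misdescription does not break your argument: the structural claim you actually use---that whether a 2-face is bad is determined entirely by the arc types at its four corners---is correct regardless of which specific pattern characterizes badness, and Theorem~\ref{theorem:eta-arc} (whose proof is arc-by-arc) preserves that data. But you should fix the stated characterization before presenting this as a finished proof.
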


\begin{proof}
The result follows immediately from Theorem~\ref{theorem:eta-arc}.
\end{proof}

Due to the Corollary \ref{bad-face-corollary}, we can define the following subfamily of planar trivalent graphs with perfect matchings. 
\begin{definition}\label{the-certain-family}
   Let \( \mathscr{G} \) denote the subfamily of planar trivalent graphs $G$ with perfect matchings $M$ such that, for any perfect matching graph \( \Gamma_M \) representing \((G,M)\), the hypercube of states of \( \Gamma_M \) contains no bad face—that is, no face where \( m \circ \Delta = \eta \circ \eta \) holds; see Figure~\ref{fig:SingleCircleSurgery}(B).
\end{definition}
\begin{example}
   The graph with the specified perfect matching shown in Figure~\ref{fig: resolution configuration for a state}~(b) belongs to the subfamily \( \mathscr{G} \).
\end{example}
From this point onward, we restrict our attention to pairs \((G, M)\) belonging to the collection \(\mathscr{G}\). 

\section{Resolution Moduli spaces and Butterfly Configuration}\label{section:Resolution Moduli spaces}
Our goal is to construct stable homotopy refinements of the 2-factor polynomial for graphs in the family \( \mathscr{G} \). Our approach follows the framework of Lipshitz and Sarkar's construction of the Khovanov stable homotopy type \cite{KhStableHomotopyType}. Associated with each perfect matching graph \( \Gamma_{M} \), representing \( (G, M) \in \mathscr{G} \), we first assign a flow category \( \mathscr{C}(\Gamma_{M}) \), referred to as the \emph{2-factor flow category} corresponding to \( \Gamma_{M} \). We then construct a cover functor from the 2-factor flow category \( \mathscr{C}(\Gamma_{M}) \) to the cube flow category \( \mathscr{C}_{C}(|M|) \), using resolution moduli spaces, where \( |M| \) denotes the cardinality of the perfect matching \( M \). Thus, the 2-factor flow category is a cubical flow category; see \cite{Burnside-stable-homotopy}, Definition 3.21, for the definition of a cubical flow category. We refer the reader to \cite{KhStableHomotopyType} for the relevant background: \cite[Definition 4.1]{KhStableHomotopyType} for the notion of the cube flow category \( \mathscr{C}_{C}(n) \), \cite[Definition 3.28]{KhStableHomotopyType} for the definition of a cover functor, and \cite[Section 3.2]{KhStableHomotopyType} for general notions related to flow categories.
\subsection{Resolution Moduli spaces}
     \begin{definition}[\cite{KhStableHomotopyType}, Definition 4.1, see also \cite{Burnside-stable-homotopy}, Definition 3.16] \label{Cube-flow-category}
        The \(n\)-dimensional \textit{cube flow category} \(\mathscr{C}_{C}(n)\) is defined as the Morse flow category of the function \(f_{n}: \mathbb{R}^{n} \to \mathbb{R}\), given by  
\[
f_{n}(x_{1}, \dots, x_{n}) = (3x_{1}^{2} - 2x_{1}^{3}) + \cdots + (3x_{n}^{2} - 2x_{n}^{3}).
\]  
The objects of \(\mathscr{C}_{C}(n)\) correspond to the critical points of \(f_n\), which are precisely the elements of \(\{0,1\}^{n}\), i.e., the vertices of the \(n\)-dimensional cube \(\mathcal{C}(n) = [0,1]^n\). For an object \(u \in \{0,1\}^{n}\), the grading in the flow category is given by the Morse index of \(u\), which is simply \(|u|\), the number of coordinates equal to 1.

The moduli space \(\mathcal{M}_{\mathscr{C}_{C}(n)}(u, v)\) is empty unless \(v \prec u\) in the partial order on \(\{0,1\}^{n}\). When \(v \prec u\), the moduli space \(\mathcal{M}_{\mathscr{C}_{C}(n)}(u, v)\) is the compactified moduli space of gradient flow lines from \(u\) to \(v\), and it is diffeomorphic to the permutohedron \(P_{|u| - |v|}\). We refer the reader to \cite[Example~0.10]{polytopes} and also to \cite[Section~3.3]{Burnside-stable-homotopy} for background on polytopes and the permutohedron.
\end{definition}
\begin{definition} [\cite{KhStableHomotopyType}, Definition 4.4]\label{inclusion functor}
        Let $u,v\in \{0,1\}^n$ be two states such that $v\prec u$, and $|u|-|v|=m$. Let $j_{1}<\cdots< j_{m}$ be the $m$ indices where $u$ and $v$ differ. Corresponding to this pair of states $u,v$, Lipshitz and Sarkar \cite{KhStableHomotopyType} defined an inclusion functor 
        $$ 
            \mathcal{I}_{u,v}: \scrC_{C}(m) \hookrightarrow \scrC_{C}(n) 
        $$
        in the following way. Given an object $w \in \{0,1\}^m$ in $\scrC_{C}(m)$, we define an object $w'\in \{0,1\}^n$ in $\scrC_{C}(n)$ by setting $w'_{i}= 0$ if $u_{i}=0$, and $w'_{i}= 1$ if $v_{i}=1$, and for the rest $w'_{j_{i}} = w_{i}$. The full subcategory spanned by the objects $\big\{w'\,|\, w\in \text{Ob}\big(\scrC_{C}(m)\big)\big\}$ is isomorphic to $\scrC_{C}(m)$, and $\mathcal{I}_{u,v}$ is defined to be this isomorphism.   
\end{definition}
    Figure \ref{fig:CubeFlowCategory} shows the moduli spaces for $\mathscr{C}_{C}(3)$, note that $\mathcal{M}_{\mathscr{C}_{C}(3)}(\overline{1},\overline{0})$ is a diffeomorphic copy of the permutohedron $P_{3}$ whose boundary is a $6$-cycle.
    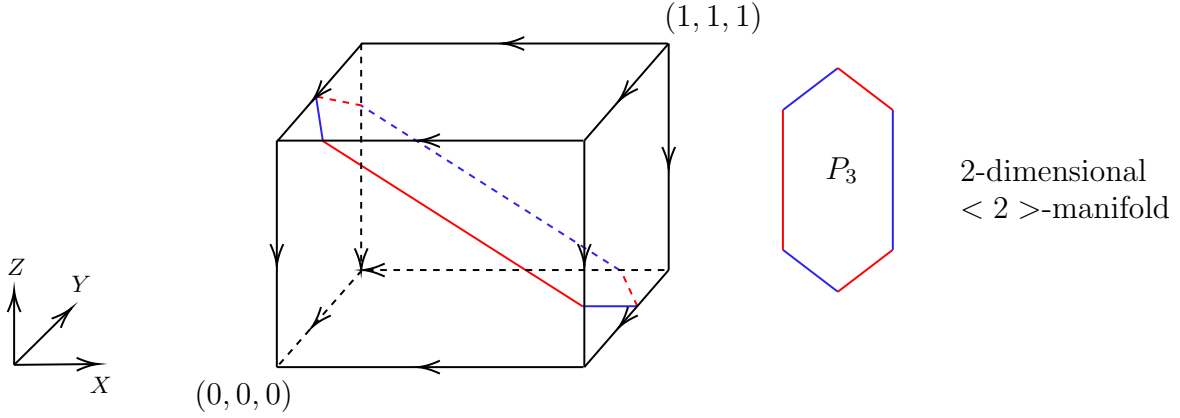
\begin{figure}[htp]
        \centering
        \tikzset{every picture/.style={line width=0.75pt}} 

\begin{tikzpicture}[x=0.75pt,y=0.75pt,yscale=-1,xscale=1]

\draw [color={rgb, 255:red, 252; green, 3; blue, 3 }  ,draw opacity=1 ][fill={rgb, 255:red, 255; green, 255; blue, 255 }  ,fill opacity=1 ][line width=0.75]    (227.44,97.26) -- (358.37,180.64) ;
\draw [color={rgb, 255:red, 45; green, 35; blue, 235 }  ,draw opacity=1 ][fill={rgb, 255:red, 255; green, 255; blue, 255 }  ,fill opacity=1 ][line width=0.75]    (358.37,180.64) -- (386.04,180.64) ;
\draw [color={rgb, 255:red, 252; green, 3; blue, 3 }  ,draw opacity=1 ][fill={rgb, 255:red, 255; green, 255; blue, 255 }  ,fill opacity=1 ][line width=0.75]  [dash pattern={on 2.5pt off 2.5pt}]  (386.04,180.64) -- (377.64,162.43) ;
\draw [color={rgb, 255:red, 45; green, 35; blue, 235 }  ,draw opacity=1 ][fill={rgb, 255:red, 255; green, 255; blue, 255 }  ,fill opacity=1 ][line width=0.75]  [dash pattern={on 2.5pt off 2.5pt}]  (377.14,162.43) -- (246.71,79.34) ;
\draw [color={rgb, 255:red, 252; green, 3; blue, 3 }  ,draw opacity=1 ][fill={rgb, 255:red, 255; green, 255; blue, 255 }  ,fill opacity=1 ][line width=0.75]  [dash pattern={on 2.5pt off 2.5pt}]  (247.21,79.34) -- (223.99,74.78) ;
\draw [color={rgb, 255:red, 45; green, 35; blue, 235 }  ,draw opacity=1 ][fill={rgb, 255:red, 255; green, 255; blue, 255 }  ,fill opacity=1 ][line width=0.75]    (223.99,74.78) -- (227.44,97.26) ;
\draw [line width=0.75]    (401.35,48.03) -- (246.76,48.03) ;
\draw [shift={(318.06,48.03)}, rotate = 360] [color={rgb, 255:red, 0; green, 0; blue, 0 }  ][line width=0.75]    (10.93,-3.29) .. controls (6.95,-1.4) and (3.31,-0.3) .. (0,0) .. controls (3.31,0.3) and (6.95,1.4) .. (10.93,3.29)   ;
\draw [line width=0.75]    (247.26,48.03) -- (204.72,97.04) ;
\draw [shift={(222.05,77.07)}, rotate = 310.96] [color={rgb, 255:red, 0; green, 0; blue, 0 }  ][line width=0.75]    (10.93,-3.29) .. controls (6.95,-1.4) and (3.31,-0.3) .. (0,0) .. controls (3.31,0.3) and (6.95,1.4) .. (10.93,3.29)   ;
\draw [line width=0.75]    (204.22,97.04) -- (204.22,211.37) ;
\draw [shift={(204.22,160.2)}, rotate = 270] [color={rgb, 255:red, 0; green, 0; blue, 0 }  ][line width=0.75]    (10.93,-3.29) .. controls (6.95,-1.4) and (3.31,-0.3) .. (0,0) .. controls (3.31,0.3) and (6.95,1.4) .. (10.93,3.29)   ;
\draw [line width=0.75]    (358.81,97.04) -- (204.22,97.04) ;
\draw [shift={(275.52,97.04)}, rotate = 360] [color={rgb, 255:red, 0; green, 0; blue, 0 }  ][line width=0.75]    (10.93,-3.29) .. controls (6.95,-1.4) and (3.31,-0.3) .. (0,0) .. controls (3.31,0.3) and (6.95,1.4) .. (10.93,3.29)   ;
\draw [line width=0.75]    (358.81,211.37) -- (204.22,211.37) ;
\draw [shift={(275.52,211.37)}, rotate = 360] [color={rgb, 255:red, 0; green, 0; blue, 0 }  ][line width=0.75]    (10.93,-3.29) .. controls (6.95,-1.4) and (3.31,-0.3) .. (0,0) .. controls (3.31,0.3) and (6.95,1.4) .. (10.93,3.29)   ;
\draw [line width=0.75]    (401.85,48.03) -- (359.31,97.04) ;
\draw [shift={(376.64,77.07)}, rotate = 310.96] [color={rgb, 255:red, 0; green, 0; blue, 0 }  ][line width=0.75]    (10.93,-3.29) .. controls (6.95,-1.4) and (3.31,-0.3) .. (0,0) .. controls (3.31,0.3) and (6.95,1.4) .. (10.93,3.29)   ;
\draw [line width=0.75]    (359.31,97.04) -- (359.31,211.37) ;
\draw [shift={(359.31,160.2)}, rotate = 270] [color={rgb, 255:red, 0; green, 0; blue, 0 }  ][line width=0.75]    (10.93,-3.29) .. controls (6.95,-1.4) and (3.31,-0.3) .. (0,0) .. controls (3.31,0.3) and (6.95,1.4) .. (10.93,3.29)   ;
\draw [line width=0.75]    (401.85,48.03) -- (401.85,162.37) ;
\draw [shift={(401.85,111.2)}, rotate = 270] [color={rgb, 255:red, 0; green, 0; blue, 0 }  ][line width=0.75]    (10.93,-3.29) .. controls (6.95,-1.4) and (3.31,-0.3) .. (0,0) .. controls (3.31,0.3) and (6.95,1.4) .. (10.93,3.29)   ;
\draw [line width=0.75]    (401.85,162.37) -- (359.31,211.37) ;
\draw [shift={(376.64,191.4)}, rotate = 310.96] [color={rgb, 255:red, 0; green, 0; blue, 0 }  ][line width=0.75]    (10.93,-3.29) .. controls (6.95,-1.4) and (3.31,-0.3) .. (0,0) .. controls (3.31,0.3) and (6.95,1.4) .. (10.93,3.29)   ;
\draw [line width=0.75]    (71.73,210.23) -- (71.73,174.6) ;
\draw [shift={(71.73,172.6)}, rotate = 90] [color={rgb, 255:red, 0; green, 0; blue, 0 }  ][line width=0.75]    (10.93,-3.29) .. controls (6.95,-1.4) and (3.31,-0.3) .. (0,0) .. controls (3.31,0.3) and (6.95,1.4) .. (10.93,3.29)   ;
\draw [line width=0.75]    (71.73,210.23) -- (112.2,209.82) ;
\draw [shift={(114.2,209.8)}, rotate = 179.42] [color={rgb, 255:red, 0; green, 0; blue, 0 }  ][line width=0.75]    (10.93,-3.29) .. controls (6.95,-1.4) and (3.31,-0.3) .. (0,0) .. controls (3.31,0.3) and (6.95,1.4) .. (10.93,3.29)   ;
\draw [line width=0.75]    (71.73,210.23) -- (99.31,182.94) ;
\draw [shift={(100.73,181.53)}, rotate = 135.3] [color={rgb, 255:red, 0; green, 0; blue, 0 }  ][line width=0.75]    (10.93,-3.29) .. controls (6.95,-1.4) and (3.31,-0.3) .. (0,0) .. controls (3.31,0.3) and (6.95,1.4) .. (10.93,3.29)   ;
\draw [color={rgb, 255:red, 45; green, 35; blue, 235 }  ,draw opacity=1 ][fill={rgb, 255:red, 255; green, 255; blue, 255 }  ,fill opacity=1 ][line width=0.75]    (487.17,60.43) -- (459.46,81.6) ;
\draw [color={rgb, 255:red, 252; green, 3; blue, 3 }  ,draw opacity=1 ][fill={rgb, 255:red, 255; green, 255; blue, 255 }  ,fill opacity=1 ][line width=0.75]    (459.46,81.6) -- (459.46,152.16) ;
\draw [color={rgb, 255:red, 45; green, 35; blue, 235 }  ,draw opacity=1 ][fill={rgb, 255:red, 255; green, 255; blue, 255 }  ,fill opacity=1 ][line width=0.75]    (459.46,152.16) -- (487.17,173.33) ;
\draw [color={rgb, 255:red, 252; green, 3; blue, 3 }  ,draw opacity=1 ][fill={rgb, 255:red, 255; green, 255; blue, 255 }  ,fill opacity=1 ][line width=0.75]    (487.17,173.33) -- (514.89,152.16) ;
\draw [color={rgb, 255:red, 45; green, 35; blue, 235 }  ,draw opacity=1 ][fill={rgb, 255:red, 255; green, 255; blue, 255 }  ,fill opacity=1 ][line width=0.75]    (514.89,152.16) -- (514.89,81.6) ;
\draw [color={rgb, 255:red, 252; green, 3; blue, 3 }  ,draw opacity=1 ][fill={rgb, 255:red, 255; green, 255; blue, 255 }  ,fill opacity=1 ][line width=0.75]    (487.17,60.43) -- (514.89,81.6) ;
\draw [line width=0.75]  [dash pattern={on 2.5pt off 2.5pt}]  (249.7,162.37) -- (401.85,162.37) ;
\draw [shift={(247.7,162.37)}, rotate = 0] [color={rgb, 255:red, 0; green, 0; blue, 0 }  ][line width=0.75]    (10.93,-3.29) .. controls (6.95,-1.4) and (3.31,-0.3) .. (0,0) .. controls (3.31,0.3) and (6.95,1.4) .. (10.93,3.29)   ;
\draw [line width=0.75]  [dash pattern={on 2.5pt off 2.5pt}]  (247.21,162.37) -- (204.22,211.37) ;
\draw [shift={(221.76,191.38)}, rotate = 311.26] [color={rgb, 255:red, 0; green, 0; blue, 0 }  ][line width=0.75]    (10.93,-3.29) .. controls (6.95,-1.4) and (3.31,-0.3) .. (0,0) .. controls (3.31,0.3) and (6.95,1.4) .. (10.93,3.29)   ;
\draw [line width=0.75]  [dash pattern={on 2.5pt off 2.5pt}]  (246.76,48.03) -- (246.76,160.37) ;
\draw [shift={(246.76,162.37)}, rotate = 270] [color={rgb, 255:red, 0; green, 0; blue, 0 }  ][line width=0.75]    (10.93,-3.29) .. controls (6.95,-1.4) and (3.31,-0.3) .. (0,0) .. controls (3.31,0.3) and (6.95,1.4) .. (10.93,3.29)   ;

\draw (108.4,214.6) node [anchor=north west][inner sep=0.75pt]  [font=\scriptsize]  {$X$};
\draw (99.2,163.8) node [anchor=north west][inner sep=0.75pt]  [font=\scriptsize]  {$Y$};
\draw (66.8,155) node [anchor=north west][inner sep=0.75pt]  [font=\scriptsize]  {$Z$};
\draw (547.41,104.76) node [anchor=north west][inner sep=0.75pt]  [font=\normalsize] [align=left] {$\displaystyle 2$-dimensional\\$\displaystyle < 2 >$-manifold};
\draw (398.08,25.51) node [anchor=north west][inner sep=0.75pt]  [font=\normalsize]  {$( 1,1,1)$};
\draw (161.16,216.48) node [anchor=north west][inner sep=0.75pt]  [font=\normalsize]  {$( 0,0,0)$};
\draw (479.42,103.83) node [anchor=north west][inner sep=0.75pt]  [font=\normalsize]  {$P_{3}$};

\end{tikzpicture}
        \caption{Cube flow category $\mathscr{C}_{C}(3)$.}
        \label{fig:CubeFlowCategory}    
    \end{figure}
\begin{definition}[\cite{KhStableHomotopyType}, Section 5.1]\label{def:resolution moduli space}
    We will first associate to each index $n$ basic decorated resolution configuration $(D,x,y)$ an $(n-1)$-dimensional $<n-1>$-manifold $\calM(D,x,y)$ together with an $(n-1)$-map
$$\calF:\calM(D,x,y)\longrightarrow \calM_{\scrC_C(n)}(\overline{1},\overline{0})$$
These spaces and maps will be constructed inductively using \cite[Proposition 5.2]{KhStableHomotopyType}, satisfying the following properties:
\begin{enumerate}[label = (RM-\arabic*)]
    \item Let $(D,x,y)$ be an index $n$-basic decorated resolution configuration  and $(E,z)$ be an index $m$-labeled resolution configuration such that $(E,z) \in P(D,x,y)$. Let $x|$ and $y|$ be the induced labelings on $s(E\setminus s(D)) = s(D)\setminus E$ and $D\setminus E$ respectively, and let $z|$ be the induced labelings for both $s(D\setminus E) = E\setminus D$ and $E\setminus s(D)$. Then there is a composition map: $$\circ : \calM(D\setminus E,z|,y|)\times \calM(E\setminus s(D),x|,z|)\longrightarrow \calM(D,x,y),$$
    such that the following diagram commutes:
    \[\begin{tikzcd}
	\calM(D\setminus E,z|,y|)\times \calM(E\setminus s(D),x|,z|) & \calM(D,x,y) \\
	\calM_{\scrC_C(n-m)}(\overline{1},\overline{0})\times \calM_{\scrC_C(m)}(\overline{1},\overline{0}) \\
	\calM_{\scrC_C(n)}(v,\overline{0})\times \calM_{\scrC_C(n)}(\overline{1},v) & \calM_{\scrC_C(n)}(\overline{1},\overline{0}).
	\arrow["\circ", from=1-1, to=1-2]
	\arrow["{\calF \times \calF}"', from=1-1, to=2-1]
	\arrow["\calF", from=1-2, to=3-2]
	\arrow["{ \mathcal{I}_{v,\overline{0}} \times \mathcal{I}_{\overline{1},v} }"', from=2-1, to=3-1]
	\arrow["\circ"', from=3-1, to=3-2]
\end{tikzcd}\]
Here $v$ is the state $(v_{1},\dots, v_{n})\in \{0,1\}^n$ such that $v_{i}=0$ if and only if the $i$-th arc of $D$ is an arc of $E$ as well.
    \item The faces of $\calM(D,x,y)$ are given by
    \begin{align*}
        \partial_{exp,i}\calM(D,x,y) \coloneq \coprod_{\substack{(E,z)\in P(D,x,y)\\ \text{ind}(D\setminus E)=i}} \circ (\calM(D\setminus E,z|,y|)\times \calM(E\setminus s(D),x|,z|)).
    \end{align*}
    \item The map $\calF$ is a covering map, and a local diffeomorphism.
    \item The covering map $\calF$ is trivial on each component of $\calM(D,x,y)$.
\end{enumerate}
\end{definition}

\begin{proposition} [\cite{KhStableHomotopyType}, Proposition 5.2] \label{induction-proposition-for-resolution-moduli-spaces}
Suppose that the moduli spaces \( \calM(D, x, y) \) and maps 
\( \calF : \calM(D, x, y) \to \calM_{\scrC_C(\mathrm{ind}(D))}(\overline{1},\overline{0}) \) 
have already been constructed for each basic decorated resolution configuration \( (D, x, y) \) with \( \operatorname{ind}(D) \leq n \), 
satisfying Conditions~\textnormal{(RM-1)}–\textnormal{(RM-4)} as given in Definition~\ref{def:resolution moduli space}. Then, given a decorated resolution configuration \( (D, x, y) \) with \( \operatorname{ind}(D) = n + 1 \):

\begin{itemize}
    \item[\textnormal{(E-1)}] The maps \( \calF \) already defined assemble to give a continuous map
    \[
       \calF|_{\partial} : \partial_{exp} \calM(D, x, y) \to \partial \calM_{\scrC_C(n+1)}(\overline{1},\overline{0}).
    \]
    Furthermore, this map respects the $\langle n\rangle$-boundary structure of the two sides, i.e.,
    $\calF|^{-1}_{\partial}\big(\partial_i \calM_{\scrC_C(n+1)}(\overline{1},\overline{0})\big) = \partial_{exp,i} \calM(D, x, y)$, and \( \calF|_{\partial_{exp,i} \calM(D, x, y)} \) is an \( n \)-map.

    \item[\textnormal{(E-2)}] The map \( \calF|_{\partial} \) is a covering map.

    \item[\textnormal{(E-3)}] There exists an \( n \)-dimensional $\langle n \rangle$-manifold \( \calM(D, x, y) \) and an \( n \)-map 
    \[
        \calF:\calM(D,x,y)\longrightarrow \calM_{\scrC_C(n+1)}(\overline{1},\overline{0})
    \]
    satisfying Conditions~\textnormal{(RM-1)}–\textnormal{(RM-4)} as given in Definition~\ref{def:resolution moduli space} if and only if the map \( \calF|_{\partial} \) is a trivial covering map.  
    In particular, if \( n \geq 3 \), then the space \( \calM(D, x, y) \) and map \( \calF \) necessarily exist.

    \item[\textnormal{(E-4)}] If the space \( \calM(D, x, y) \) and map $\calF:\calM(D,x,y)\longrightarrow \calM_{\scrC_C(n+1)}(\overline{1},\overline{0})$ exist, and if \( n \geq 2 \), then \( \calM(D, x, y) \) and \( \calF \) are unique up to diffeomorphism (fixing the boundary).
\end{itemize}

\end{proposition}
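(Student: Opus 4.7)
The plan is to follow the strategy of \cite[Proposition~5.2]{KhStableHomotopyType}, adapting it to our setting of planar trivalent graphs with perfect matchings in the family $\mathscr{G}$. By induction, we have spaces $\mathcal{M}(D',x',y')$ and maps $\mathcal{F}$ for all index~$\leq n$ basic decorated resolution configurations satisfying (RM-1)--(RM-4). Given a basic decorated resolution configuration $(D,x,y)$ of index $n+1$, the expected boundary pieces $\partial_{exp,i}\mathcal{M}(D,x,y)$ are defined in terms of products of already-constructed moduli spaces. First I would assemble the boundary map $\mathcal{F}|_\partial$: on each factor of each product, the inductive $\mathcal{F}$ is defined, and composing with the composition map in the cube flow category gives a map to $\partial\mathcal{M}_{\mathscr{C}_C(n+1)}(\overline{1},\overline{0})$. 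The diagram in (RM-1) applied inductively ensures that these piecewise definitions glue along common faces of the $\langle n\rangle$-manifold structure; this gives (E-1).

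For (E-2), the Cartesian product of covering maps is a covering map, so each piece $\mathcal{F}\times\mathcal{F}$ is a covering. Covering maps are preserved under the gluing of $\langle n\rangle$-manifolds along common boundary faces since the gluing is a local diffeomorphism, so $\mathcal{F}|_\partial$ is a covering map from $\partial_{exp}\mathcal{M}(D,x,y)$ onto $\partial\mathcal{M}_{\mathscr{C}_C(n+1)}(\overline{1},\overline{0})$.

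For (E-3), the "only if" direction is immediate from (RM-4). For the "if" direction, note that $\mathcal{M}_{\mathscr{C}_C(n+1)}(\overline{1},\overline{0})$ is diffeomorphic to the permutohedron $P_{n+1}$, a convex $n$-polytope whose boundary is homeomorphic to $S^{n-1}$. If $\mathcal{F}|_\partial$ is a trivial covering with $k$ sheets, then I would take $k$ disjoint copies of $P_{n+1}$, one for each sheet, and identify the boundary of the $j$-th copy with the $j$-th sheet over $\partial P_{n+1}$; this produces $\mathcal{M}(D,x,y)$ together with $\mathcal{F}$ satisfying (RM-1)--(RM-4). Triviality of $\mathcal{F}|_\partial$ is automatic once $n\geq 3$: then $S^{n-1}$ is simply connected, so every covering of it is trivial. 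For (E-4), uniqueness follows because once the boundary cover is trivial, the extension is unique up to isotopy rel boundary, since a diffeomorphism of $P_{n+1}$ that fixes the boundary and permutes the connected components of the cover is isotopic to the identity through such diffeomorphisms when $n\geq 2$ (the identity component of $\mathrm{Diff}(P_{n+1},\partial P_{n+1})$ acts transitively on interior points in dimensions $\geq 2$).

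The main obstacle will be verifying the gluing compatibility in (E-1): one must check that for every pair $(E_1,z_1), (E_2,z_2)\in P(D,x,y)$ whose associated boundary pieces meet in a codimension-two face of $\mathcal{M}(D,x,y)$, the two descriptions of that face produce the same map to $\mathcal{M}_{\mathscr{C}_C(n+1)}(\overline{1},\overline{0})$. This amounts to an application of the associativity of the composition maps in (RM-1), using the structure of the poset $P(D,x,y)$ and its compatibility with the cube $\{0,1\}^{n+1}$ via the inclusion functors $\mathcal{I}_{u,v}$ of Definition~\ref{inclusion functor}. A subtle point specific to our setting, absent from \cite{KhStableHomotopyType}, is the possible presence of $\eta$-arcs in $D$; however, since $(G,M)\in\mathscr{G}$ excludes bad faces, the partial order $\prec$ on labeled resolution configurations is still compatible enough with the cube order for the argument to go through without modification.
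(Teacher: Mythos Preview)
The paper does not supply its own proof of this proposition; it is stated as a citation of \cite[Proposition~5.2]{KhStableHomotopyType} and used as a black box throughout Section~\ref{section:Resolution Moduli spaces}. Your sketch is the Lipshitz--Sarkar argument and is correct in outline.

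Two minor remarks. First, your closing comment about $\eta$-arcs is unnecessary: the proposition is a purely formal statement about extending spaces and maps satisfying (RM-1)--(RM-4), and it is insensitive to the combinatorics of the underlying resolution configurations. The exclusion of bad faces matters elsewhere in the paper (in the analysis of index-2 and index-3 configurations) but not here. Second, your justification of (E-4) is slightly off. The real reason uniqueness holds for $n\geq 2$ and can fail for $n=1$ is that $\partial P_{n+1}\cong S^{n-1}$ is \emph{connected} for $n\geq 2$: a trivial $k$-sheeted cover of a connected base has its sheets canonically identified with the components of the total space, and each sheet extends uniquely (up to diffeomorphism rel boundary) to a copy of $P_{n+1}$. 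For $n=1$ the boundary $S^0$ is disconnected, so there is no canonical way to pair the fibers over the two boundary points into intervals; this is exactly where the butterfly-matching choice enters.
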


Our inductive construction of the resolution moduli spaces parallels that of Lipshitz and Sarkar’s \cite{KhStableHomotopyType}, developed in the context of the Khovanov stable homotopy type for knots. We will start with index $1$ basic decorated resolution configurations $(D,x,y)$. 

\subsection{$0$-dimensional moduli spaces}\label{0-dimensional moduli spaces}
For an index $1$ basic decorated resolution configuration $(D,x,y)$, we define $\calM(D,x,y)$ to consist of a single point and $$\calF: \calM(D,x,y)\longrightarrow \calM_{\scrC_C(1)}(\overline{1},\overline{0})$$ is the trivial $1$-fold covering map sending a point to a point.

\subsection{$1$-dimensional moduli spaces}\label{1-dimensional moduli spaces}
We aim to define $\mathcal{M}(D,x,y)$, where $(D,x,y)$ is an index 2 basic decorated resolution configuration. Let $\#\Big\{(D_{i},z_{i}): (D,y)\prec (D_{i},z_{i})\prec \big(s(D),x\big)\Big\}=k$.
\begin{lemma}\label{k=2-for-leaf-coleaf}
    If \( D \) has a leaf or a coleaf, then \( P(D, x, y) \cong \{0,1\}^{2} \), and hence \( k = 2 \).
\end{lemma}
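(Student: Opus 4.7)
The plan is to handle the two cases (leaf and coleaf) separately and reduce both to the structural decomposition already established in Lemma~\ref{poset-for-leaf}, then read off the size of the poset.

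First, assume $D$ has a leaf. Since $D$ is basic of index $2$, applying Lemma~\ref{poset-for-leaf} directly yields
\[
P(D,x,y) \;\cong\; P(D',x',y') \times \{0,1\},
\]
where $D'$ is obtained from $D$ by deleting the leaf $Z_1$ together with the unique arc $A_1$ meeting it. Since $\mathrm{ind}(D)=2$, the reduced configuration $D'$ has index $1$, so $P(D',x',y')$ is just the two–element chain $\{(D',y') \prec (s(D'),x')\} \cong \{0,1\}$. Therefore $P(D,x,y) \cong \{0,1\}^2$, which has a unique maximum $(s(D),x)$, a unique minimum $(D,y)$, and exactly two elements strictly between them; hence $k=2$.

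Next, assume $D$ has a coleaf $A$. By the definition of coleaf, the dual configuration $D^{*}$ has a leaf (namely, the leaf endpoint of $A^{*}$). Applying the leaf case already handled to the decorated dual $(D^{*}, y^{*}, x^{*})$, we obtain $P(D^{*}, y^{*}, x^{*}) \cong \{0,1\}^2$. By Lemma~\ref{dual-poset-is-reverse-poset}, $P(D,x,y)$ is the reverse of $P(D^{*}, y^{*}, x^{*})$; since the Boolean lattice $\{0,1\}^2$ is self-dual as a poset, we conclude $P(D,x,y) \cong \{0,1\}^2$ as well, so again $k=2$.

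The step that requires a bit of care, and which I expect to be the main thing to check, is verifying that Lemma~\ref{poset-for-leaf} applies cleanly in index $2$: one has to confirm that the induced labelings $x'$ on $Z(s(D'))\setminus\{Z_2^{*}\}$ and the extension rule for $x'(Z_2^{*})$ described there genuinely produce a valid decorated resolution configuration $(D',x',y')$ for every choice of $(D,x,y)$, so that the product decomposition is unconditional. Once this is granted, the rest of the argument is just counting elements of $\{0,1\}^2$ and invoking duality, and no further case analysis on whether the remaining arc of $D$ is an $m$-, $\Delta$-, or $\eta$-arc is needed.
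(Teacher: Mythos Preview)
Your proof is correct and follows essentially the same approach as the paper: apply Lemma~\ref{poset-for-leaf} in the leaf case to reduce to an index-$1$ configuration, and in the coleaf case pass to the dual via Lemma~\ref{dual-poset-is-reverse-poset} and then invoke the leaf case. Your added remark that $\{0,1\}^2$ is self-dual makes the coleaf conclusion slightly more explicit than the paper's version, but otherwise the arguments are the same.
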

\begin{proof}
    Suppose $Z(D)$ contains a leaf, then by Lemma \ref{poset-for-leaf} there exists a decorated resolution configuration $(D',x',y')$ such that $P(D,x,y)\cong P(D',x',y')\times \{0,1\}$. Since \( (D', x', y') \) is an index-1 decorated resolution configuration, the poset $P(D',x',y')\cong \{0,1\}$. As a result, if $Z(D)$ has a leaf, then $P(D,x,y)\cong \{0,1\}^{2}$, and hence $k=2$.\par
    Similarly, if \( D \) has a coleaf, then its dual \( D^* \) has a leaf. By Lemma~\ref{dual-poset-is-reverse-poset}, the poset \( P(D, x, y) \) is isomorphic to the reverse of \( P(D^*, y^*, x^*) \). Since \( D^* \) has a leaf, it follows that \( P(D^*, y^*, x^*) \cong \{0,1\}^{2} \). Therefore, \( P(D, x, y) \cong \{0,1\}^{2} \), and hence \( k = 2 \).
\end{proof}
\begin{definition}\label{butterfly configuration}
    An index-2 basic resolution configuration \( B \) is called a \emph{butterfly configuration} if it is equivalent to the resolution configuration shown in Figure~\ref{fig:butterfly-resolution configuration}.  
    An index-2 basic decorated resolution configuration \( (B, x, y) \) is called a \emph{butterfly configuration} if \( B \) is a butterfly configuration.
    Note that the labeling \( y \) on the unique circle in \( Z(B) \) must be \( x_{+} \), and the labeling \( x \) on the unique circle in \( Z(s(B)) \) must be \( x_{-} \); see Figure~\ref{fig:butterfly-decorated-configuration}.
\end{definition}
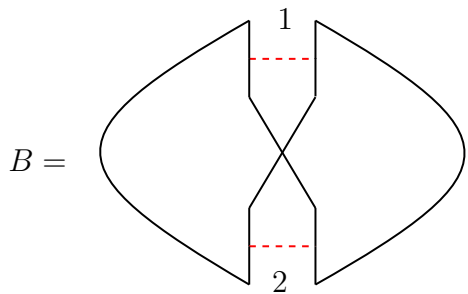
\begin{figure}[htp]
        \centering
        \tikzset{every picture/.style={line width=0.75pt}} 

\begin{tikzpicture}[x=0.75pt,y=0.75pt,yscale=-1,xscale=1]

\draw [color={rgb, 255:red, 0; green, 0; blue, 0 }  ,draw opacity=1 ][line width=0.75]    (182.93,250.43) .. controls (82.72,191.77) and (82.72,174.74) .. (182.93,117.2) ;
\draw [color={rgb, 255:red, 0; green, 0; blue, 0 }  ,draw opacity=1 ][line width=0.75]    (182.93,155.82) -- (216.34,211.8) ;
\draw [color={rgb, 255:red, 0; green, 0; blue, 0 }  ,draw opacity=1 ][line width=0.75]    (182.93,211.8) -- (182.93,231.11) ;
\draw [color={rgb, 255:red, 0; green, 0; blue, 0 }  ,draw opacity=1 ][line width=0.75]    (216.34,211.8) -- (216.34,231.11) ;
\draw [color={rgb, 255:red, 0; green, 0; blue, 0 }  ,draw opacity=1 ][line width=0.75]    (216.34,155.82) -- (182.93,211.8) ;
\draw [color={rgb, 255:red, 0; green, 0; blue, 0 }  ,draw opacity=1 ][line width=0.75]    (216.34,250.43) .. controls (316.55,193.66) and (316.55,174.74) .. (216.34,117.2) ;
\draw [color={rgb, 255:red, 252; green, 3; blue, 3 }  ,draw opacity=1 ][line width=0.75]  [dash pattern={on 2.5pt off 2.5pt}]  (182.93,136.51) -- (216.34,136.51) ;
\draw [color={rgb, 255:red, 252; green, 3; blue, 3 }  ,draw opacity=1 ][line width=0.75]  [dash pattern={on 2.5pt off 2.5pt}]  (182.93,231.11) -- (216.34,231.11) ;
\draw [color={rgb, 255:red, 0; green, 0; blue, 0 }  ,draw opacity=1 ][line width=0.75]    (182.93,117.2) -- (182.93,136.51) ;
\draw [color={rgb, 255:red, 0; green, 0; blue, 0 }  ,draw opacity=1 ][line width=0.75]    (182.93,136.51) -- (182.93,155.82) ;
\draw [color={rgb, 255:red, 0; green, 0; blue, 0 }  ,draw opacity=1 ][line width=0.75]    (216.34,117.2) -- (216.34,136.51) ;
\draw [color={rgb, 255:red, 0; green, 0; blue, 0 }  ,draw opacity=1 ][line width=0.75]    (216.34,136.12) -- (216.34,155.43) ;
\draw [color={rgb, 255:red, 0; green, 0; blue, 0 }  ,draw opacity=1 ][line width=0.75]    (182.93,231.11) -- (182.93,250.43) ;
\draw [color={rgb, 255:red, 0; green, 0; blue, 0 }  ,draw opacity=1 ][line width=0.75]    (216.34,231.11) -- (216.34,250.43) ;

\draw (195.76,109.67) node [anchor=north west][inner sep=0.75pt]    {$1$};
\draw (193.06,242.37) node [anchor=north west][inner sep=0.75pt]    {$2$};
\draw (60,180.73) node [anchor=north west][inner sep=0.75pt]    {$B=$};

\end{tikzpicture}
        \caption{Butterfly resolution configuration}
        \label{fig:butterfly-resolution configuration}    
\end{figure}
\begin{figure}[htp]
        \centering
        \input{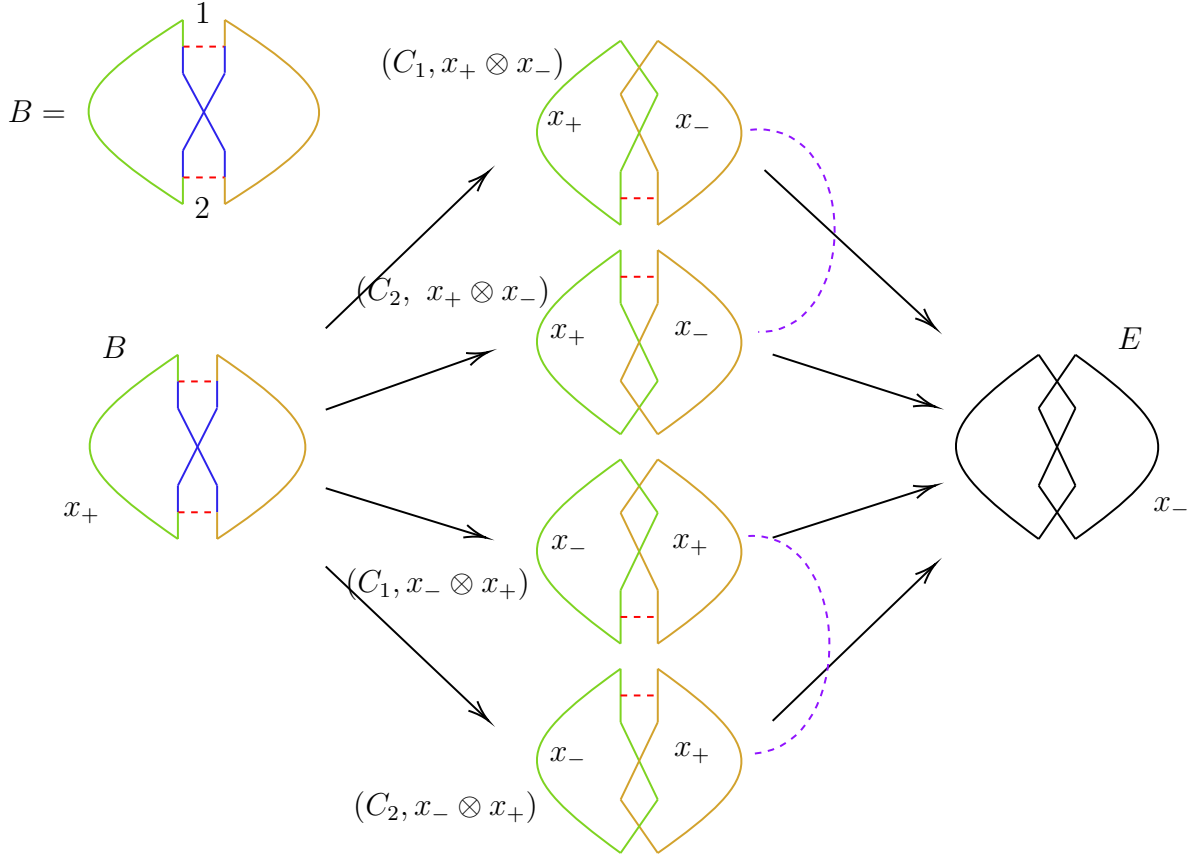}
        \caption{Poset $P(B,x,y)$ corresponding to the butterfly configuration}
        \label{fig:butterfly-decorated-configuration}    
\end{figure}
Readers may be familiar with the \emph{ladybug configuration}; see \cite[Definition~5.6]{KhStableHomotopyType}. In our setting, the butterfly configuration serves the same role as the ladybug configuration does in the construction of the Khovanov stable homotopy type by Lipshitz and Sarkar \cite{KhStableHomotopyType, Burnside-stable-homotopy}.
\begin{theorem}
    Let \( (D, x, y) \) be an index-2 basic decorated resolution configuration. Define  
    $ k = \# \left\{ (D_i, z_i) \mid (D, y) \prec (D_i, z_i) \prec \big(s(D), x\big) \right\}$. Then \( D \) must be one of the following three types:
    \begin{enumerate}
        \item \( D \) has a leaf or a coleaf. In this case, \( k = 2 \).
        \item The graph \( G(D) \) consists of two vertices, \( v_1 \) and \( v_2 \), connected by two edges \( e_1 \) and \( e_2 \). In this case, \( k = 2 \).
        \item \( D \) is a butterfly configuration. In this case, \( k = 4 \).
    \end{enumerate}
\end{theorem}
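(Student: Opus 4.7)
Since $\operatorname{ind}(D) = 2$, the arc set is $A(D) = \{A_{1}, A_{2}\}$, and basicness of $D$ forces the graph $G(D)$ to have no isolated vertex. Enumerating graphs with two edges and minimum degree one by degree sequence yields the following list: the path $P_{3}$; two disjoint edges; the double edge $C_{2}$; a loop together with an incident edge; a loop together with a disjoint edge; two loops at distinct vertices; and two loops at a single vertex. The plan is to walk through these seven shapes of $G(D)$, refining in each case by the local topology in $S^{2}$ via Lemma~\ref{determine-m,Delta,eta} and by the labelings compatible with the partial order of Definition~\ref{partial order}, and in each case reading off the value of $k$.

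Four of these shapes---$P_{3}$, two disjoint edges, loop-with-incident-edge, and loop-with-disjoint-edge---contain a vertex of degree one, so $D$ has a leaf, and Lemma~\ref{poset-for-leaf} (invoked via Lemma~\ref{k=2-for-leaf-coleaf}) yields $P(D, x, y) \cong \{0,1\}^{2}$ and $k = 2$, placing $D$ in Case~1. Only three shapes of minimum degree two remain: (i) the double edge $C_{2}$, (ii) two loops on distinct vertices, and (iii) two loops on a single vertex. In case (i) both arcs are $m$-arcs, and after surgery along $A_{1}$ the endpoints of $A_{2}$ lie on one circle, so $A_{2}$ is of type $\Delta$ or $\eta$. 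An $\eta$-outcome would, by the symmetry between the two paths $D \to s_{A_{i}}(D) \to s(D)$, put both paths outside the partial order and contradict the chain $(D, y) \preceq (s(D), x)$; hence each $A_{i}$ becomes $\Delta$ on $s_{A_{j}}(D)$, $|Z(s(D))| = 2$, and a direct count of labelings from Definition~\ref{partial order} yields $k = 2$, which is Case~2. In case (ii) the two arcs live on disjoint circles and keep their type under the other surgery; the same $\eta$-argument forces both arcs to be $\Delta$ on $D$, so $|Z(s(D))| = 4$, the dual graph $G(D^{*})$ is a disjoint union of two edges whose four vertices are all leaves of $D^{*}$, and both $A_{1}, A_{2}$ become coleaves of $D$, returning us to Case~1.

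Case (iii) is the crucial one. Both arcs are loops, so each is $\Delta$ or $\eta$. The configuration with one $\Delta$-arc and one $\eta$-arc on a single circle is precisely the bad face of Figure~\ref{fig:SingleCircleSurgery}(B), which is forbidden by the standing hypothesis $(G, M) \in \mathscr{G}$; two $\eta$-arcs leave no chain from $(D, y)$ to $(s(D), x)$ at all. Hence both arcs are $\Delta$ on $D$, and after $A_{1}$-surgery the resulting configuration has two circles. The endpoints of $A_{2}$ on $s_{A_{1}}(D)$ then lie either on a single one of these circles (so $A_{2}$ is $\Delta$ or $\eta$, with $\eta$ again ruled out by the no-bad-face hypothesis) or on two distinct circles (so $A_{2}$ is an $m$-arc). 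If $A_{2}$ becomes $m$, then $|Z(s(D))| = 1$; using the equivalences of Lemma~\ref{threeImportantEquivalenceMoves} and the planarity of $S^{2}$ I would identify $D$ with the butterfly configuration of Figure~\ref{fig:butterfly-resolution configuration}, and a direct enumeration of labelings under Definition~\ref{partial order} gives $k = 4$, which is Case~3. If $A_{2}$ becomes $\Delta$, then $|Z(s(D))| = 3$, the dual graph $G(D^{*})$ is a path on three vertices, its two endpoints are leaves of $D^{*}$, and both $A_{1}, A_{2}$ become coleaves of $D$, so we return once more to Case~1.

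The main obstacle is this last fork inside case (iii): the butterfly and the coleaf subcase share the same abstract graph $G(D)$, and they are distinguished only by the planar embedding of the two $\Delta$-arcs on a single circle. Making the dichotomy rigorous requires combining Lemma~\ref{determine-m,Delta,eta} with the planarity of $S^{2}$, and it is precisely here that the hypothesis $(G, M) \in \mathscr{G}$ plays an essential role, eliminating every mixed $\Delta/\eta$ possibility and leaving only the clean butterfly-versus-coleaf alternative predicted by the theorem.
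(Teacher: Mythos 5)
Your overall route is essentially the paper's, reorganized: you enumerate the seven shapes of $G(D)$ and then sort by arc type, whereas the paper sorts directly by arc type (leaf or coleaf via Lemma~\ref{k=2-for-leaf-coleaf}, both arcs $m$-arcs giving the double edge, both arcs $\Delta$-arcs with no coleaf giving the butterfly). Your extra shapes (disjoint loops, nested chords, etc.) are correctly folded back into the leaf/coleaf case, which the paper handles implicitly by excluding coleaves before its local analysis, and your double-edge count $k=2$ and circle-count symmetry argument match the paper's.

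There is, however, a genuine gap at the decisive step, and one misplaced justification. The decisive step is the claim that the non-coleaf configuration with two $\Delta$-loops on a single circle (equivalently $|Z(s(D))|=1$) is equivalent to the butterfly; you only assert this (``I would identify\dots''), yet it is where most of the paper's proof lives: the paper places standard local disks (Definition~\ref{standard local disk containing an arc}) around $A_1,A_2$, uses Lemma~\ref{determine-m,Delta,eta} to show there are exactly four connectivity patterns of the strands outside the disks compatible with ``both $\Delta$, neither a coleaf'', reduces all four to one pattern by a rotation together with the moves of Lemma~\ref{threeImportantEquivalenceMoves}, identifies that pattern with the butterfly (Figure~\ref{fig:case1-equivalentToButterflyConfiguration}), and then transfers the count $k$ across the equivalence using Lemma~\ref{lemma:equivalence-of-decorated-resoln-conf}. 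Without some version of this finite planar case check your Case~3 is not established. Separately, your appeal to $(G,M)\in\mathscr{G}$ at the final fork is misplaced: when both arcs of $D$ are already $\Delta$-arcs, an $\eta$-type second surgery does not produce the bad face of Figure~\ref{fig:SingleCircleSurgery}(B) (that face requires an $\eta$-arc of $D$ itself; here both paths would read $\eta\circ\Delta$), and the correct exclusion is that no admissible chain from $(D,y)$ to $(s(D),x)$ would exist, contradicting decoratedness --- the same argument you use for the $\eta\eta$ case. Where the no-bad-face restriction genuinely matters is one step earlier, in ruling out one $\Delta$-arc and one $\eta$-arc of $D$ on the same circle, since there a $\Delta$-then-$m$ chain can exist and decoratedness alone does not exclude it; so your instinct to invoke $\mathscr{G}$ is sound (indeed the paper's blanket assertion that neither arc is an $\eta$-arc tacitly relies on the standing restriction to $\mathscr{G}$), but it belongs at that step rather than at the butterfly-versus-coleaf fork.
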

\begin{proof}
    Let the two arcs of \( D \) be \( A_1 \) and \( A_2 \). Since \( (D, x, y) \) is a decorated resolution configuration, neither \( A_1 \) nor \( A_2 \) is an \( \eta \)-arc. By Lemma~\ref{k=2-for-leaf-coleaf}, if \( D \) has a leaf or a coleaf, then \( k = 2 \). \par

    Now suppose that \( D \) has neither a leaf nor a coleaf. If \( D \) does not have a leaf, then either both arcs \( A_1 \) and \( A_2 \) are \( m \)-arcs, or both are \( \Delta \)-arcs. First, suppose both arcs \( A_1 \) and \( A_2 \) are \( m \)-arcs. Since \( D \) has no leaf and is a basic resolution configuration, the only possibility for the graph \( G(D) \) is that it consists of two vertices, \( v_1 \) and \( v_2 \), connected by two edges, \( e_1 \) and \( e_2 \); see Figure~\ref{fig:index-2-both-m-arc}.
    \begin{figure}[htp]
        \centering
        \tikzset{every picture/.style={line width=0.75pt}} 

\begin{tikzpicture}[x=0.75pt,y=0.75pt,yscale=-1,xscale=1]

\draw [color={rgb, 255:red, 189; green, 16; blue, 224 }  ,draw opacity=1 ]   (112.62,90.91) .. controls (139.12,56.07) and (183.71,60.2) .. (209.39,91.41) ;
\draw [color={rgb, 255:red, 189; green, 16; blue, 224 }  ,draw opacity=1 ]   (112.62,90.91) .. controls (144.64,115.46) and (159.38,124.7) .. (209.39,91.41) ;
\draw  [fill={rgb, 255:red, 0; green, 0; blue, 0 }  ,fill opacity=1 ] (113.23,81.71) .. controls (118.31,82.05) and (122.16,86.44) .. (121.82,91.52) .. controls (121.49,96.6) and (117.09,100.45) .. (112.01,100.12) .. controls (106.93,99.78) and (103.08,95.39) .. (103.42,90.31) .. controls (103.75,85.22) and (108.14,81.38) .. (113.23,81.71) -- cycle ;
\draw    (338.54,74.26) -- (397.63,36.01) ;
\draw [shift={(399.31,34.93)}, rotate = 147.08] [color={rgb, 255:red, 0; green, 0; blue, 0 }  ][line width=0.75]    (10.93,-3.29) .. controls (6.95,-1.4) and (3.31,-0.3) .. (0,0) .. controls (3.31,0.3) and (6.95,1.4) .. (10.93,3.29)   ;
\draw    (476.82,33.45) -- (538.81,62.78) ;
\draw [shift={(540.61,63.64)}, rotate = 205.32] [color={rgb, 255:red, 0; green, 0; blue, 0 }  ][line width=0.75]    (10.93,-3.29) .. controls (6.95,-1.4) and (3.31,-0.3) .. (0,0) .. controls (3.31,0.3) and (6.95,1.4) .. (10.93,3.29)   ;
\draw    (339.37,98.52) -- (401.36,127.85) ;
\draw [shift={(403.17,128.7)}, rotate = 205.32] [color={rgb, 255:red, 0; green, 0; blue, 0 }  ][line width=0.75]    (10.93,-3.29) .. controls (6.95,-1.4) and (3.31,-0.3) .. (0,0) .. controls (3.31,0.3) and (6.95,1.4) .. (10.93,3.29)   ;
\draw    (476.66,130.2) -- (535.75,91.95) ;
\draw [shift={(537.43,90.87)}, rotate = 147.08] [color={rgb, 255:red, 0; green, 0; blue, 0 }  ][line width=0.75]    (10.93,-3.29) .. controls (6.95,-1.4) and (3.31,-0.3) .. (0,0) .. controls (3.31,0.3) and (6.95,1.4) .. (10.93,3.29)   ;
\draw  [fill={rgb, 255:red, 0; green, 0; blue, 0 }  ,fill opacity=1 ] (210,82.2) .. controls (215.08,82.54) and (218.93,86.93) .. (218.59,92.01) .. controls (218.25,97.1) and (213.86,100.94) .. (208.78,100.61) .. controls (203.7,100.27) and (199.85,95.88) .. (200.19,90.8) .. controls (200.52,85.71) and (204.91,81.87) .. (210,82.2) -- cycle ;

\draw (77,82.4) node [anchor=north west][inner sep=0.75pt]    {$v_{1}$};
\draw (230,78.4) node [anchor=north west][inner sep=0.75pt]    {$v_{2}$};
\draw (154,46.4) node [anchor=north west][inner sep=0.75pt]    {$e_{1}$};
\draw (153,126.4) node [anchor=north west][inner sep=0.75pt]    {$e_{2}$};
\draw (315,79.07) node [anchor=north west][inner sep=0.75pt]  [rotate=-0.39]  {$D$};
\draw (408.47,18.51) node [anchor=north west][inner sep=0.75pt]  [rotate=-0.62]  {$s_{\{A_{1}\}}( D)$};
\draw (366.14,58.72) node [anchor=north west][inner sep=0.75pt]  [rotate=-0.39]  {$m$};
\draw (499.74,114.83) node [anchor=north west][inner sep=0.75pt]  [rotate=-0.39]  {$\Delta $};
\draw (494.17,48.76) node [anchor=north west][inner sep=0.75pt]  [rotate=-0.39]  {$\Delta $};
\draw (357.77,118.67) node [anchor=north west][inner sep=0.75pt]  [rotate=-0.39]  {$m$};
\draw (412.09,121.4) node [anchor=north west][inner sep=0.75pt]  [rotate=-1.67]  {$s_{\{A_{2}\}}( D)$};
\draw (547.07,72.08) node [anchor=north west][inner sep=0.75pt]  [rotate=-0.39]  {$s( D)$};
\draw (154,158) node [anchor=north west][inner sep=0.75pt]   [align=left] {A.};
\draw (432,158) node [anchor=north west][inner sep=0.75pt]   [align=left] {B.};

\end{tikzpicture}
        \caption{(A) The graph \( G(D) \), corresponding to an index-2 resolution configuration \( D \) with no leaf and with both arcs being \( m \)-arcs, satisfies \( k = 2 \). (B) Hypercube of states corresponding to $D$, where $D$ has no leaf and $k=2$.}
        \label{fig:index-2-both-m-arc}    
    \end{figure}
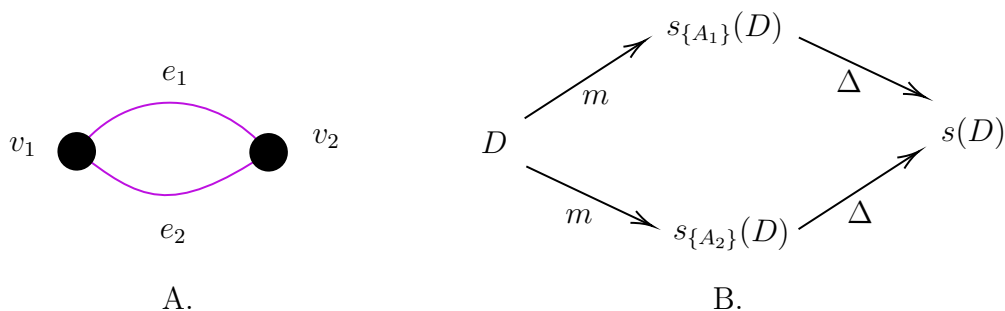
    Since \( D \) is an index-2 resolution configuration, each  \( s_{\{A_i\}}(D) \) consists of a single arc \( \widehat{A}_i \), for \( i = 1, 2 \). As there are no \( \eta \)-maps in the hypercube associated to \( (D, x, y) \), both dual arcs \( A_1^* \) and \( A_2^* \) must be \( m \)-arcs. Furthermore, since \( D \) has no coleaf, the graph \( G(D^*) \) must coincide with \( G(D) \). It follows that both arcs \( \widehat{A}_1 \) and \( \widehat{A}_2 \) are \( \Delta \)-arcs, and the hypercube corresponding to \( (D, x, y) \) is the one depicted in Figure~\ref{fig:index-2-both-m-arc}~(B). Finally, note that for all possible labelings \( x \) on \( D \) and \( y \) on \( s(D) \), the number \( k = 2 \).\par
    Now consider the case where both \(A_{1}\) and \(A_{2}\) are \(\Delta\)-arcs, but neither of them is a coleaf. Let $B_{1}$ and $B_{2}$ be the standard local discs around $A_{1}$ and $A_{2}$ respectively; see Figure~\ref{fig:local-discs-around-A1-A2}.
    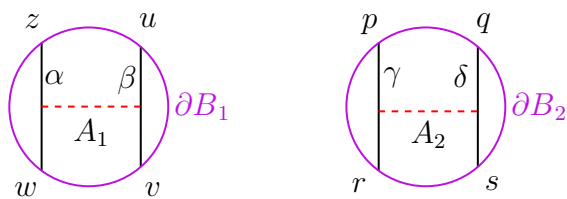
\begin{figure}[htp]
        \centering
        \tikzset{every picture/.style={line width=0.75pt}} 

\begin{tikzpicture}[x=0.75pt,y=0.75pt,yscale=-1,xscale=1]

\draw    (172.14,47.57) -- (172.14,112.31) ;
\draw    (222.14,49.53) -- (222.14,110.31) ;
\draw  [color={rgb, 255:red, 189; green, 16; blue, 224 }  ,draw opacity=1 ] (155.86,80.07) .. controls (155.86,57.94) and (173.8,40) .. (195.93,40) .. controls (218.06,40) and (236,57.94) .. (236,80.07) .. controls (236,102.2) and (218.06,120.14) .. (195.93,120.14) .. controls (173.8,120.14) and (155.86,102.2) .. (155.86,80.07) -- cycle ;
\draw [color={rgb, 255:red, 252; green, 3; blue, 3 }  ,draw opacity=1 ] [dash pattern={on 2.5pt off 2.5pt}]  (172.14,79.94) -- (221.94,79.94) ;
\draw    (342.14,47.57) -- (342.14,112.31) ;
\draw    (392.14,49.53) -- (392.14,110.31) ;
\draw  [color={rgb, 255:red, 189; green, 16; blue, 224 }  ,draw opacity=1 ] (325.86,80.07) .. controls (325.86,57.94) and (343.8,40) .. (365.93,40) .. controls (388.06,40) and (406,57.94) .. (406,80.07) .. controls (406,102.2) and (388.06,120.14) .. (365.93,120.14) .. controls (343.8,120.14) and (325.86,102.2) .. (325.86,80.07) -- cycle ;
\draw [color={rgb, 255:red, 252; green, 3; blue, 3 }  ,draw opacity=1 ] [dash pattern={on 2.5pt off 2.5pt}]  (342,82.34) -- (391.8,82.34) ;

\draw (162.06,31.74) node [anchor=north west][inner sep=0.75pt]    {$z$};
\draw (157.03,116.74) node [anchor=north west][inner sep=0.75pt]    {$w$};
\draw (219.77,31.4) node [anchor=north west][inner sep=0.75pt]    {$u$};
\draw (222.64,116.71) node [anchor=north west][inner sep=0.75pt]    {$v$};
\draw (186.8,84.84) node [anchor=north west][inner sep=0.75pt]    {$A_{1}$};
\draw (172.14,60.07) node [anchor=north west][inner sep=0.75pt]    {$\alpha $};
\draw (208.94,58.07) node [anchor=north west][inner sep=0.75pt]    {$\beta $};
\draw (237.6,71.54) node [anchor=north west][inner sep=0.75pt]    {$\textcolor[rgb]{0.74,0.06,0.88}{\partial B}\textcolor[rgb]{0.74,0.06,0.88}{_{1}}$};
\draw (332.06,31.74) node [anchor=north west][inner sep=0.75pt]    {$p$};
\draw (327.03,114.74) node [anchor=north west][inner sep=0.75pt]    {$r$};
\draw (389.77,31.4) node [anchor=north west][inner sep=0.75pt]    {$q$};
\draw (394.14,113.71) node [anchor=north west][inner sep=0.75pt]    {$s$};
\draw (356.8,85.84) node [anchor=north west][inner sep=0.75pt]    {$A_{2}$};
\draw (343.14,58.57) node [anchor=north west][inner sep=0.75pt]    {$\gamma $};
\draw (378.44,57.57) node [anchor=north west][inner sep=0.75pt]    {$\delta $};
\draw (407.6,71.54) node [anchor=north west][inner sep=0.75pt]    {$\textcolor[rgb]{0.74,0.06,0.88}{\partial B}\textcolor[rgb]{0.74,0.06,0.88}{_{2}}$};

\end{tikzpicture}
        \caption{Local disks $B_{1}$ and $B_{2}$ around $A_{1}$ and $A_{2}$}
        \label{fig:local-discs-around-A1-A2}    
    \end{figure}
    By Lemma \ref{determine-m,Delta,eta}, note that there are exactly $4$ ways where both $A_{1}$ and $A_{2}$ are $\Delta$-arcs but neither of them is a coleaf. These are as follows.
    \begin{enumerate}
        \item The pairs $\{z,q\}$, $\{u,p\}$, $\{w,r\}$, $\{v,s\}$ are connected.
        \item The pairs $\{z,p\}$, $\{u,q\}$, $\{w,s\}$, $\{v,r\}$ are connected. 
        \item The pairs $\{z,r\}$, $\{u,s\}$, $\{w,q\}$, $\{v,p\}$ are connected. 
        \item The pairs $\{z,s\}$, $\{u,r\}$, $\{w,p\}$, $\{v,q\}$ are connected. 
    \end{enumerate}
    Note that the case $(1)$ and $(2)$ are equivalent by $180^{\circ}$ rotation of the plane. The resolution configurations corresponding to case \((4)\) can be obtained from those of case \((1)\) by interchanging the roles of \(p\) and \(r\), as well as \(q\) and \(s\), via the local move shown in Figure~\ref{fig:threeImportantEquivalenceMoves}(A), followed by a \(180^{\circ}\) rotation. Therefore, by Lemma~\ref{threeImportantEquivalenceMoves}, the resolution configurations in case \((4)\) are equivalent to those in case \((1)\). Cases \((3)\) and \((4)\) are analogous under the interchange of \(B_1\) and \(B_2\); that is, by swapping \(z \leftrightarrow p\), \(u \leftrightarrow q\), \(w \leftrightarrow r\), and \(v \leftrightarrow s\). This can be achieved by the local move shown in Figure~\ref{fig:threeImportantEquivalenceMoves}(B). Therefore, by Lemma~\ref{threeImportantEquivalenceMoves}, the resolution configurations in case \((3)\) are equivalent to those in case \((4)\). As a result, up to equivalence, it suffices to consider only the case \((1)\).\par
    Now note that the resolution configurations corresponding to the case $(1)$ is equivalent to the resolution configuration mentioned in the Figure~\ref{fig:case1-equivalentToButterflyConfiguration}. Applying local moves as in Figure~\ref{fig:equivalence} (b) and (g), we observe that the resolution configurations corresponding to the case $(1)$ are butterfly configurations. 
    \begin{figure}[htp]
        \centering
        \tikzset{every picture/.style={line width=0.75pt}} 

\begin{tikzpicture}[x=0.75pt,y=0.75pt,yscale=-1,xscale=1]

\draw    (41.14,129.57) -- (41.14,194.31) ;
\draw    (91.14,131.53) -- (91.14,192.31) ;
\draw  [color={rgb, 255:red, 189; green, 16; blue, 224 }  ,draw opacity=1 ][dash pattern={on 2.5pt off 2.5pt}] (24.86,162.07) .. controls (24.86,139.94) and (42.8,122) .. (64.93,122) .. controls (87.06,122) and (105,139.94) .. (105,162.07) .. controls (105,184.2) and (87.06,202.14) .. (64.93,202.14) .. controls (42.8,202.14) and (24.86,184.2) .. (24.86,162.07) -- cycle ;
\draw [color={rgb, 255:red, 252; green, 3; blue, 3 }  ,draw opacity=1 ] [dash pattern={on 2.5pt off 2.5pt}]  (41,164.34) -- (90.8,164.34) ;
\draw    (244.14,129.57) -- (244.14,194.31) ;
\draw    (294.14,131.53) -- (294.14,192.31) ;
\draw  [color={rgb, 255:red, 189; green, 16; blue, 224 }  ,draw opacity=1 ][dash pattern={on 2.5pt off 2.5pt}] (227.86,162.07) .. controls (227.86,139.94) and (245.8,122) .. (267.93,122) .. controls (290.06,122) and (308,139.94) .. (308,162.07) .. controls (308,184.2) and (290.06,202.14) .. (267.93,202.14) .. controls (245.8,202.14) and (227.86,184.2) .. (227.86,162.07) -- cycle ;
\draw [color={rgb, 255:red, 252; green, 3; blue, 3 }  ,draw opacity=1 ] [dash pattern={on 2.5pt off 2.5pt}]  (244,164.34) -- (293.8,164.34) ;
\draw    (41.14,129.57) .. controls (38.5,91.22) and (99.5,90.25) .. (133.5,89.25) ;
\draw    (91.14,131.53) .. controls (94,116.75) and (115.5,112.25) .. (135,112.75) ;
\draw    (204.5,87.25) .. controls (294.5,87.22) and (295,95.25) .. (294.14,131.53) ;
\draw    (205.5,109.25) .. controls (239.5,112.22) and (244.5,118.75) .. (244.14,129.57) ;
\draw    (41.14,194.31) .. controls (35.5,259.22) and (105,258.25) .. (137.5,256.5) ;
\draw    (91.14,192.31) .. controls (87.5,236.22) and (116.5,233.22) .. (137.5,234.75) ;
\draw    (208,256.5) .. controls (301.5,262.22) and (297.5,229.22) .. (294.14,192.31) ;
\draw    (209.5,233.5) .. controls (251.5,231.75) and (243.5,211.75) .. (244.14,194.31) ;
\draw   (412.05,154.86) -- (459.49,154.92) -- (459.44,189.17) -- (412,189.11) -- cycle ;
\draw    (389.54,162.33) -- (411.54,162.36) ;
\draw    (389.51,182.33) -- (411.51,182.36) ;
\draw    (459.54,162.42) -- (481.54,162.45) ;
\draw    (459.51,182.42) -- (481.51,182.45) ;
\draw    (528.54,160.52) -- (550.54,160.55) ;
\draw    (528.52,180.52) -- (550.52,180.55) ;
\draw    (550.52,180.55) -- (567.54,161.33) ;
\draw    (550.54,160.55) -- (568.52,180.33) ;
\draw  [dash pattern={on 2.5pt off 2.5pt}]  (567.54,161.33) -- (618.54,161.4) ;
\draw  [dash pattern={on 2.5pt off 2.5pt}]  (568.52,180.33) -- (619.52,180.4) ;
\draw   (149,225) -- (196.44,225) -- (196.44,262.75) -- (149,262.75) -- cycle ;
\draw    (137.5,234.75) -- (148.5,234.25) ;
\draw    (137.5,256.5) -- (149,256.5) ;
\draw    (196,233.75) -- (209.5,233.5) ;
\draw    (196,256.25) -- (208,256.5) ;
\draw   (145.91,80.5) -- (193.34,80.5) -- (193.34,118.25) -- (145.91,118.25) -- cycle ;
\draw    (133.5,89.25) -- (145,88.75) ;
\draw    (135,112.75) -- (146.5,112.25) ;
\draw    (192.5,87.25) -- (204.5,87.25) ;
\draw    (193.5,109.75) -- (205.5,109.25) ;

\draw (55.8,167.34) node [anchor=north west][inner sep=0.75pt]    {$A_{1}$};
\draw (106.6,153.54) node [anchor=north west][inner sep=0.75pt]    {$\textcolor[rgb]{0.74,0.06,0.88}{\partial B}\textcolor[rgb]{0.74,0.06,0.88}{_{1}}$};
\draw (259.8,168.34) node [anchor=north west][inner sep=0.75pt]    {$A_{2}$};
\draw (309.6,153.54) node [anchor=north west][inner sep=0.75pt]    {$\textcolor[rgb]{0.74,0.06,0.88}{\partial B}\textcolor[rgb]{0.74,0.06,0.88}{_{2}}$};
\draw (152.5,237.65) node [anchor=north west][inner sep=0.75pt]    {$2l+1$};
\draw (160.5,91.4) node [anchor=north west][inner sep=0.75pt]    {$2l$};
\draw (129,191.4) node [anchor=north west][inner sep=0.75pt]    {$l\in \mathbb{N} \cup \{0\}$};
\draw (428.04,165.78) node [anchor=north west][inner sep=0.75pt]  [rotate=-0.08]  {$N$};
\draw (492.03,169.87) node [anchor=north west][inner sep=0.75pt]  [rotate=-0.08]  {$=$};
\draw (525.99,197.02) node [anchor=north west][inner sep=0.75pt]  [rotate=-0.08] [align=left] {$\displaystyle N$ double points};

\end{tikzpicture}
        \caption{Index 2 basic resolution configuration with $k=4$}
        \label{fig:case1-equivalentToButterflyConfiguration}    
    \end{figure}
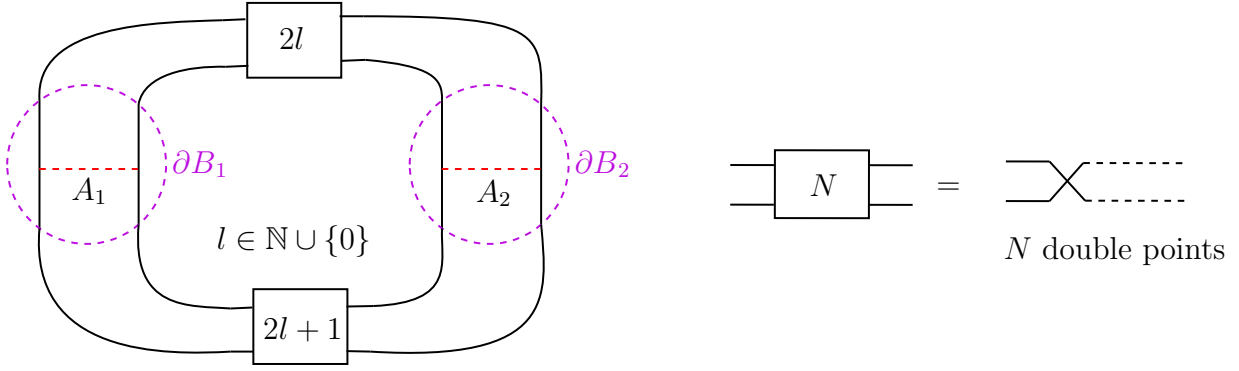
    So if both arcs $A_{1}$ and $A_{2}$ are $\Delta$ arcs then $D$ is a butterfly configuration. Finally, if \( (D, x, y) \) is a butterfly configuration, then there is a unique possible labeling of \( x \) and \( y \); see Definition~\ref{butterfly configuration}. In this case, \( k = 4 \). Therefore, the proof of the theorem is completed by Lemma~\ref{lemma:equivalence-of-decorated-resoln-conf}.
\end{proof}
Since the \(0\)-dimensional moduli spaces and the maps \(\mathcal{F}\) between them have already been defined, it follows from Proposition~\ref{induction-proposition-for-resolution-moduli-spaces}~(E-1) that there exists a map  
\[
\mathcal{F}|_{\partial}: \partial_{exp}\mathcal{M}(D, x, y) \to \partial \mathcal{M}_{\mathscr{C}_{C}(2)}(\overline{1}, \overline{0}).
\]  
In the case \(k = 2\), the space \(\partial_{exp}\mathcal{M}(D, x, y)\) consists of two points. Similarly, \(\mathcal{M}_{\mathscr{C}_{C}(2)}(\overline{1}, \overline{0})\) is an interval, so its boundary \(\partial \mathcal{M}_{\mathscr{C}_{C}(2)}(\overline{1}, \overline{0})\) also consists of two points. Moreover, the map \(\mathcal{F}|_{\partial}\) is a trivial covering map. Therefore, by Proposition~\ref{induction-proposition-for-resolution-moduli-spaces}~(E-3), the map \(\mathcal{F}|_{\partial}\) extends to a map  
\[
\mathcal{F}: \mathcal{M}(D, x, y) \to \mathcal{M}_{\mathscr{C}_{C}(2)}(\overline{1}, \overline{0})
\]  
such that \(\mathcal{M}(D, x, y)\) is an interval and \(\mathcal{F}\) is a diffeomorphism.\par
Now, in the case \(k = 4\), observe that \(\partial_{exp}\mathcal{M}(D, x, y)\) consists of four points, and the map \(\mathcal{F}|_{\partial}\) is a trivial 2-fold covering map. Once again, by Proposition~\ref{induction-proposition-for-resolution-moduli-spaces}~(E-3), this map extends to  
$\mathcal{F}: \mathcal{M}(D, x, y) \to \mathcal{M}_{\mathscr{C}_{C}(2)}(\overline{1}, \overline{0}),$  
where \(\mathcal{F}\) is a 2-fold covering map. In particular, \(\mathcal{M}(D, x, y)\) is the disjoint union of two intervals. However, in this case, a choice must be made: we must determine which pairs of points in \(\partial_{exp}\mathcal{M}(D, x, y)\) bound each interval. Each such choice results in a different extension of the map \(\mathcal{F}|_{\partial}\). We consider the choice specified in Definition~\ref{butterfly-matching}.
\begin{definition}\label{butterfly-matching}
    Let $(D,x,y)$ denote the butterfly configuration where the only circle $Z$ in $Z(D)$ has exactly one double point and let $A_{1}$ and $A_{2}$ be two arcs in $A(D)$. For $i=1,2$, let $C_{i}$ be the resolution configuration obtained via surgery of $D$ along $A_{i}$. Let $C_{i,1}$ and $C_{i,2}$ be the two circles in $Z(C_{i})$ for each $i$. If $A_{1}$ and $A_{2}$ lie on the opposite sides of the double point in $D$, then move the boundary points $\partial A_{1}$ and $\partial A_{2}$ along $Z$ towards the double point. If we remove the four paths corresponding to the four points in $\partial (A_{1}\cup A_{2})$ from the circle $Z$, the remaining portions of \( Z \) consist of two disjoint arcs, which we denote by \( P \) and \( Q \). Note that $P$ and $Q$ belong to two different circles in $s_{A_{i}}(B)$. After relabeling the circles if necessary, assume that \( P \subset C_{i,1} \) and \( Q \subset C_{i,2} \) for each \( i = 1, 2 \); see Figure~\ref{fig:butterfly-matching}. 
    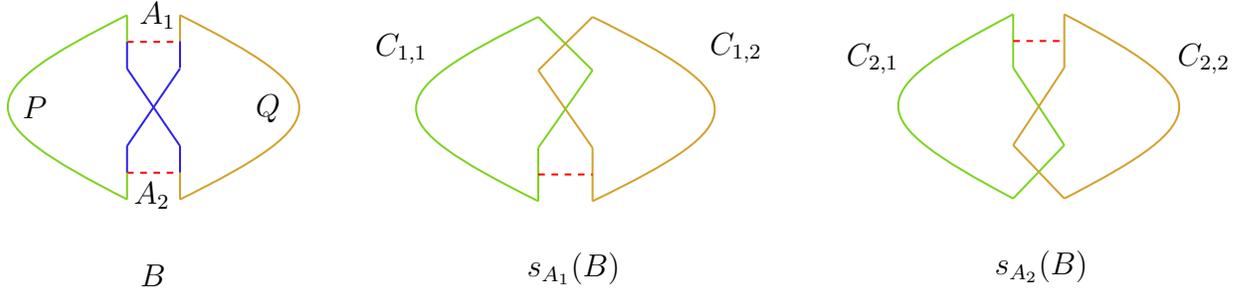
\begin{figure}[htp]
        \centering
        \tikzset{every picture/.style={line width=0.75pt}} 

\begin{tikzpicture}[x=0.75pt,y=0.75pt,yscale=-1,xscale=1]

\draw [color={rgb, 255:red, 126; green, 211; blue, 33 }  ,draw opacity=1 ][line width=0.75]    (78.14,165.11) .. controls (-2.05,124.17) and (-2.05,112.28) .. (78.14,72.11) ;
\draw [color={rgb, 255:red, 45; green, 35; blue, 235 }  ,draw opacity=1 ][line width=0.75]    (78.14,99.07) -- (104.86,138.15) ;
\draw [color={rgb, 255:red, 45; green, 35; blue, 235 }  ,draw opacity=1 ][line width=0.75]    (78.14,138.15) -- (78.14,151.63) ;
\draw [color={rgb, 255:red, 45; green, 35; blue, 235 }  ,draw opacity=1 ][line width=0.75]    (104.86,138.15) -- (104.86,151.63) ;
\draw [color={rgb, 255:red, 45; green, 35; blue, 235 }  ,draw opacity=1 ][line width=0.75]    (104.86,99.07) -- (78.14,138.15) ;
\draw [color={rgb, 255:red, 210; green, 162; blue, 45 }  ,draw opacity=1 ][line width=0.75]    (104.86,165.11) .. controls (185.05,125.49) and (185.05,112.28) .. (104.86,72.11) ;
\draw [color={rgb, 255:red, 252; green, 3; blue, 3 }  ,draw opacity=1 ][line width=0.75]  [dash pattern={on 2.5pt off 2.5pt}]  (78.14,85.59) -- (104.86,85.59) ;
\draw [color={rgb, 255:red, 252; green, 3; blue, 3 }  ,draw opacity=1 ][line width=0.75]  [dash pattern={on 2.5pt off 2.5pt}]  (78.14,151.63) -- (104.86,151.63) ;
\draw [color={rgb, 255:red, 126; green, 211; blue, 33 }  ,draw opacity=1 ][line width=0.75]    (78.14,72.11) -- (78.14,85.59) ;
\draw [color={rgb, 255:red, 45; green, 35; blue, 235 }  ,draw opacity=1 ][line width=0.75]    (78.14,85.59) -- (78.14,99.07) ;
\draw [color={rgb, 255:red, 210; green, 162; blue, 45 }  ,draw opacity=1 ][line width=0.75]    (104.86,72.11) -- (104.86,85.59) ;
\draw [color={rgb, 255:red, 45; green, 35; blue, 235 }  ,draw opacity=1 ][line width=0.75]    (104.86,85.32) -- (104.86,98.8) ;
\draw [color={rgb, 255:red, 126; green, 211; blue, 33 }  ,draw opacity=1 ][line width=0.75]    (78.14,151.63) -- (78.14,165.11) ;
\draw [color={rgb, 255:red, 210; green, 162; blue, 45 }  ,draw opacity=1 ][line width=0.75]    (104.86,151.63) -- (104.86,165.11) ;
\draw [color={rgb, 255:red, 126; green, 211; blue, 33 }  ,draw opacity=1 ][line width=0.75]    (285.53,166) .. controls (203.33,125.06) and (203.33,113.17) .. (285.53,73) ;
\draw [color={rgb, 255:red, 210; green, 162; blue, 45 }  ,draw opacity=1 ][line width=0.75]    (285.53,99.96) -- (312.92,139.04) ;
\draw [color={rgb, 255:red, 126; green, 211; blue, 33 }  ,draw opacity=1 ][line width=0.75]    (285.53,139.04) -- (285.53,166) ;
\draw [color={rgb, 255:red, 210; green, 162; blue, 45 }  ,draw opacity=1 ][line width=0.75]    (312.92,139.04) -- (312.92,166) ;
\draw [color={rgb, 255:red, 126; green, 211; blue, 33 }  ,draw opacity=1 ][line width=0.75]    (312.92,99.96) -- (285.53,139.04) ;
\draw [color={rgb, 255:red, 210; green, 162; blue, 45 }  ,draw opacity=1 ][line width=0.75]    (312.92,166) .. controls (395.11,126.38) and (395.11,113.17) .. (312.92,73) ;
\draw [color={rgb, 255:red, 252; green, 3; blue, 3 }  ,draw opacity=1 ][line width=0.75]  [dash pattern={on 2.5pt off 2.5pt}]  (285.53,152.52) -- (312.92,152.52) ;
\draw [color={rgb, 255:red, 126; green, 211; blue, 33 }  ,draw opacity=1 ][line width=0.75]    (285.53,73) -- (312.92,99.96) ;
\draw [color={rgb, 255:red, 210; green, 162; blue, 45 }  ,draw opacity=1 ][line width=0.75]    (312.92,73) -- (285.53,99.96) ;
\draw [color={rgb, 255:red, 126; green, 211; blue, 33 }  ,draw opacity=1 ][line width=0.75]    (525.06,71.67) -- (525.06,98.63) ;
\draw [color={rgb, 255:red, 210; green, 162; blue, 45 }  ,draw opacity=1 ][line width=0.75]    (550.84,71.67) -- (550.84,98.63) ;
\draw [color={rgb, 255:red, 126; green, 211; blue, 33 }  ,draw opacity=1 ][line width=0.75]    (525.06,164.67) .. controls (447.75,123.72) and (447.75,111.84) .. (525.06,71.67) ;
\draw [color={rgb, 255:red, 126; green, 211; blue, 33 }  ,draw opacity=1 ][line width=0.75]    (525.06,98.63) -- (550.84,137.71) ;
\draw [color={rgb, 255:red, 126; green, 211; blue, 33 }  ,draw opacity=1 ][line width=0.75]    (525.06,164.67) -- (550.84,137.71) ;
\draw [color={rgb, 255:red, 210; green, 162; blue, 45 }  ,draw opacity=1 ][line width=0.75]    (525.06,137.71) -- (550.84,164.67) ;
\draw [color={rgb, 255:red, 210; green, 162; blue, 45 }  ,draw opacity=1 ][line width=0.75]    (550.84,98.63) -- (525.06,137.71) ;
\draw [color={rgb, 255:red, 210; green, 162; blue, 45 }  ,draw opacity=1 ][line width=0.75]    (550.84,164.67) .. controls (628.15,125.04) and (628.15,111.84) .. (550.84,71.67) ;
\draw [color={rgb, 255:red, 252; green, 3; blue, 3 }  ,draw opacity=1 ][line width=0.75]  [dash pattern={on 2.5pt off 2.5pt}]  (525.06,85.15) -- (550.84,85.15) ;

\draw (83.41,196.47) node [anchor=north west][inner sep=0.75pt]   [align=left] {$\displaystyle B$};
\draw (83,63.9) node [anchor=north west][inner sep=0.75pt]    {$A_{1}$};
\draw (80.14,155.03) node [anchor=north west][inner sep=0.75pt]    {$A_{2}$};
\draw (24,111.9) node [anchor=north west][inner sep=0.75pt]    {$P$};
\draw (141.5,110.9) node [anchor=north west][inner sep=0.75pt]    {$Q$};
\draw (201.75,79.9) node [anchor=north west][inner sep=0.75pt]    {$C_{1,1}$};
\draw (370.56,79.4) node [anchor=north west][inner sep=0.75pt]    {$C_{1,2}$};
\draw (439.49,84.9) node [anchor=north west][inner sep=0.75pt]    {$C_{2,1}$};
\draw (606.51,84.9) node [anchor=north west][inner sep=0.75pt]    {$C_{2,2}$};
\draw (278.38,190.47) node [anchor=north west][inner sep=0.75pt]   [align=left] {$\displaystyle s_{A_{1}}( B)$};
\draw (514.23,189.47) node [anchor=north west][inner sep=0.75pt]   [align=left] {$\displaystyle s_{A_{2}}( B)$};

\end{tikzpicture}
        \caption{Butterfly matching.}
        \label{fig:butterfly-matching}    
    \end{figure}
    We choose the following bijection, and call this \textit{butterfly matching}; see Figure~\ref{fig:butterfly-decorated-configuration}.
    \begin{align*}
    C_{1,1} \longleftrightarrow C_{2,1} \text{ and } C_{1,2} \longleftrightarrow C_{2,2}. 
\end{align*}
    However, if \(A_1\) and \(A_2\) lie on the same side of the double point in $D$, we first apply the local move shown in Figure~\ref{fig:equivalence}(i) to reposition the arc to the opposite side of the double point. Once this is done, the butterfly matching can be defined as in the previous case.
\end{definition}
Now we will use the bijection defined by the butterfly matching to extend $\mathcal{F}|_{\partial}$ to the trivial covering map $\mathcal{F}: \mathcal{M}(B,x,y) \to \mathcal{M}_{\mathscr{C}_{C}(2)}(\overline{1},\overline{0})$. The four points in the $\partial_{exp}\mathcal{M}(B,x,y)$ are as follows, see Figure~\ref{fig:butterfly-decorated-configuration}.
\begin{align*}
    a &= [(B,x_{+})\prec (C_{1}=s_{A_1}(B),x_{+}x_{-})\prec (s(B),x_{-})]\\
    b &= [(B,x_{+})\prec (C_{2}=s_{A_2}(B),x_{+}x_{-})\prec (s(B),x_{-})]\\
    c &= [(B,x_{+})\prec (C_{1}=s_{A_1}(B),x_{-}x_{+})\prec (s(B),x_{-})]\\
    d &= [(B,x_{+})\prec (C_{2}=s_{A_2}(B),x_{-}x_{+})\prec (s(B),x_{-})]   
\end{align*}
So, using the butterfly matching we can define $\calM(B,x,y)$ to consist of two intervals $I_{1}\sqcup I_{2}$, so that  $\partial I_{1} = \{a\}\sqcup \{b\}$ and $\partial I_{2} = \{c\}\sqcup \{d\}$. We define the 2-fold covering map $\calF:\calM(B,x,y)\longrightarrow \calM_{\scrC_C(2)}(\overline{1},\overline{0})$ in a unique way compatible with our choices, sending $a$ and $c$ to one endpoint of $\calM_{\scrC_C(2)}(\overline{1},\overline{0})$ and $b$ and $d$ to the other.
\subsection{$2$-dimensional moduli spaces}\label{2-dimensional moduli spaces}
Since the \(0\)- and \(1\)-dimensional moduli spaces and the corresponding \(\mathcal{F}\) maps have already been constructed, we can now apply Proposition~\ref{induction-proposition-for-resolution-moduli-spaces} to define the moduli spaces \(\mathcal{M}(D, x, y)\) for index 3 basic decorated resolution configurations \((D, x, y)\), along with the associated \(\mathcal{F}\) maps. By parts (E-1) and (E-2) of Proposition~\ref{induction-proposition-for-resolution-moduli-spaces}, the previously constructed maps on the \( 0 \)- and \( 1 \)-dimensional moduli spaces assemble to define a covering map  
\[
\left.\mathcal{F}\right|_{\partial} : \partial_{exp} \mathcal{M}(D, x, y) \to \partial \mathcal{M}_{\mathscr{C}_{C}(3)}(\overline{1}, \overline{0}).
\]
Here, \(\partial \mathcal{M}_{\mathscr{C}_{C}(3)}(\overline{1}, \overline{0})\) is the boundary of the permutohedron \(P_3\), which forms a 6-cycle; see Figure \ref{fig:CubeFlowCategory}. Since \( \left.\mathcal{F}\right|_{\partial} \) is a covering map that respects the graph structure, it follows that \( \partial_{exp} \mathcal{M}(D, x, y) \) is a disjoint union of cycles, each having a number of vertices that is a multiple of \( 6 \).
To construct \(\mathcal{M}(D, x, y)\) via Proposition~\ref{induction-proposition-for-resolution-moduli-spaces}~(E-3), we must ensure that \(\mathcal{F}|_{\partial}\) is a trivial covering map. Therefore, it suffices to show that \(\partial_{exp}\mathcal{M}(D, x, y)\) is a disjoint union of 6-cycles. This will be established through a case-by-case analysis. We begin by considering the case in which \( D \) contains a leaf.
\begin{lemma} \label{moduliSpace-containing-leaf}
    Suppose $D$ has a leaf. Then $\partial_{exp}\calM(D,x,y)$ is either a $6$-cycle or a disjoint union of two $6$-cycles.
\end{lemma}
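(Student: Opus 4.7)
The plan is to apply Lemma~\ref{poset-for-leaf} to the leaf in order to reduce the problem to a product with the index-2 case, and then to enumerate the vertices and edges of the 1-complex $\partial_{exp}\mathcal{M}(D,x,y)$ explicitly. Let $Z_1\in Z(D)$ be a leaf and $A_1\in A(D)$ the unique arc incident to $Z_1$. By Lemma~\ref{poset-for-leaf}, there is a poset isomorphism $P(D,x,y)\cong P(D',x',y')\times\{0,1\}$, where $(D',x',y')$ is the index-2 decorated resolution configuration obtained by deleting $Z_1$ and $A_1$, and the $\{0,1\}$-factor records whether the surgery on $A_1$ has been performed. The analysis of index-2 basic decorated resolution configurations in Section~\ref{1-dimensional moduli spaces} shows that the number $k$ of elements strictly between $(D',y')$ and $(s(D'),x')$ in $P(D',x',y')$ is either $2$ (leaf/coleaf or two-vertex two-edge case) or $4$ (butterfly case), and $\mathcal{M}(D',x',y')$ is correspondingly a single interval or a disjoint pair of intervals paired by the butterfly matching of Definition~\ref{butterfly-matching}.

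Next I would enumerate the vertices of $\partial_{exp}\mathcal{M}(D,x,y)$. Since every $0$-dimensional moduli space is a point (Section~\ref{0-dimensional moduli spaces}), vertices are in bijection with maximal chains of length $3$ in $P(D,x,y)$. Organizing them by the position of the $A_1$-step among the three covering relations produces three families of $k$ chains each, so the vertex count equals $3k\in\{6,12\}$. Because $\mathcal{F}|_\partial\colon \partial_{exp}\mathcal{M}(D,x,y)\to\partial\mathcal{M}_{\mathscr{C}_C(3)}(\overline{1},\overline{0})$ is a covering map onto a $6$-cycle (see Figure~\ref{fig:CubeFlowCategory}), every connected component of $\partial_{exp}\mathcal{M}(D,x,y)$ is itself a $6m$-cycle. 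Hence the case $k=2$ yields a single $6$-cycle immediately, and the case $k=4$ leaves only two possibilities: a single $12$-cycle or two disjoint $6$-cycles.

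To rule out the $12$-cycle in the $k=4$ case, I would describe the edges explicitly via (RM-2). Each intermediate element $(E,z)\in P(D,x,y)$ contributes one $1$-dimensional piece $\mathcal{M}(E\setminus s(D),x|,z|)$ or $\mathcal{M}(D\setminus E,z|,y|)$ in the boundary, and from Section~\ref{1-dimensional moduli spaces} each such piece is an interval or a butterfly-matched pair of intervals. Using the product decomposition $P(D,x,y)\cong P(D',x',y')\times\{0,1\}$, I would lift the two components of $\mathcal{M}(D',x',y')$ to two disjoint subgraphs of $\partial_{exp}\mathcal{M}(D,x,y)$; each of these subgraphs has $6$ vertices and maps to the target hexagon as a covering of degree $1$ on each component, hence is a $6$-cycle.

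The main obstacle is ensuring that the various butterfly matchings chosen at different facets of $\partial_{exp}\mathcal{M}(D,x,y)$ are compatible with the butterfly matching originally chosen on $(D',x',y')$. Specifically, some facet moduli space may itself come from a butterfly configuration that is created after the $A_1$-surgery, and one must verify that the independent choice of Definition~\ref{butterfly-matching} at that facet agrees with the splitting induced from $(D',x',y')$. I expect this to reduce to a local analysis: since $A_1$ is attached to a leaf and therefore sits in a geometrically separated disc from the butterfly core of $(D',x',y')$, the $A_1$-surgery commutes with the remaining surgeries in an essentially product-like manner, so the butterfly matching propagates coherently across facets and the two lifted components remain disjoint, forcing the decomposition into two $6$-cycles rather than a single $12$-cycle.
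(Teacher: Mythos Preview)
Your approach is essentially the same as the paper's: reduce via Lemma~\ref{poset-for-leaf} to $P(D',x',y')\times\{0,1\}$, dispose of the non-butterfly case immediately, and in the butterfly case enumerate the $12$ maximal chains and their incidences. The paper writes out the twelve vertices as $u_i,v_i,w_i$ ($1\le i\le 4$) and checks edges directly, arriving at the same two $6$-cycles.

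One caveat on your compatibility argument: the claim that $A_1$ ``sits in a geometrically separated disc from the butterfly core'' is not accurate. Since $D'$ is a butterfly it has a single circle $Z_0$, so the non-leaf endpoint of $A_1$ must lie on $Z_0$; the leaf arc is \emph{not} disjoint from the butterfly. What actually makes the two butterfly matchings (on $D'$ and on $s_{A_1}(D)\setminus s(D)$) agree is that merging the $x_+$-labeled leaf into $Z_0$ adds a double point but does not disturb the arcs $P,Q$ of Definition~\ref{butterfly-matching} that determine the pairing; the paper records this by noting the leaf is labeled $x_+$ and $A_1$ is an $m$-arc, so the labeled circle at level $1$ is the same as at level $0$ up to an equivalence move. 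You should replace the ``separated disc'' heuristic with this observation.
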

\begin{proof}
The proof closely parallels those of \cite[Lemma~5.14]{KhStableHomotopyType} and \cite[Lemma~3.5]{Transverse-extreme-spectra}. Let \( D \) contain a leaf \( Z \in Z(D) \), and let \( A \in A(D) \) be the arc with one endpoint lying on \( Z \). Since $D$ contains a leaf, by Lemma \ref{poset-for-leaf} there exists an index $2$ decorated resolution configuration $(D',x',y')$ so that $P(D,x,y)\cong P(D',x',y')\times \{0,1\}$. If $(D',x',y')$ is not the butterfly configuration then $P(D',x',y')\cong \{0,1\}^{2}$, and $P(D,x,y)\cong \{0,1\}^{3}$, and therefore $\partial_{exp}\mathcal{M}(D,x,y)$ is a $6$- cycle. \par
If \((D', x', y')\) is the butterfly configuration, then the only circle \(Z_0 \in Z(D') \subset Z(D)\) must be labeled with \(x_+\). Moreover, since \(Z\) is a leaf, the arc \(A\) must be an \(m\)-arc. We begin by considering the case where the other endpoint of \(A\) lies on \(Z_0\). In this case, since \((D, x, y)\) is a decorated resolution configuration—that is, \((D, y) \prec \big(s(D), x\big)\)—it follows that the leaf \(Z\) must be labeled with \(x_+\). 
Let the four maximal chains in \( P(D', x', y') \) be given by \( e_i = [b \prec c_i \prec d] \) for \( 1 \leq i \leq 4 \), and let the butterfly matching pair \( e_1 \leftrightarrow e_2 \) and \( e_3 \leftrightarrow e_4 \). Then, the twelve vertices in \( \partial_{exp}\mathcal{M}(D, x, y) \)—that is, the maximal chains in \( P(D, x, y) \cong P(D', x', y') \times \{0,1\} \)—are as follows.
    \begin{align*}
        u_{i} &= \big[(b,0)\prec (c_{i},0) \prec (d,0) \prec (d,1)\big],\\
        v_{i} &= \big[(b,0)\prec (c_{i},0) \prec (c_{i},1) \prec (d,1)\big],\\
        w_{i} &= \big[(b,0)\prec (b,1) \prec (c_{i},1) \prec (d,1)\big]
    \end{align*}
for $1\leq i \leq 4$, see figure \ref{fig:butterfly-leaf--decorated-configuration}.
\begin{figure}[htp]
    \centering
    \input{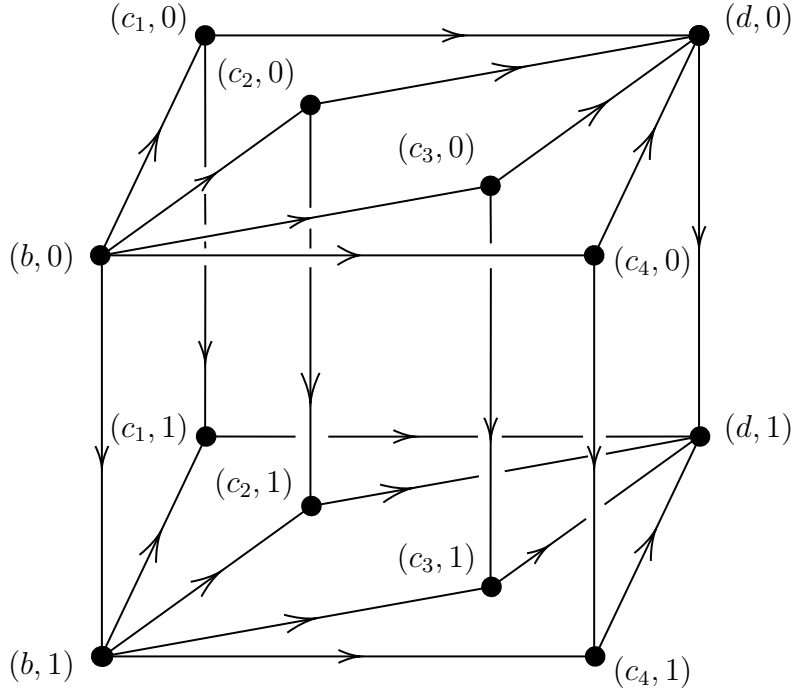}
    \caption{The poset $P(D,x,y) = P(D',x',y')\times \{0,1\}$ for $D$ a configuration with both leaf and butterfly configuration $D'$.}
    \label{fig:butterfly-leaf--decorated-configuration}
\end{figure}
Because the butterfly matching matches $e_{1}\leftrightarrow e_{2}$ and $e_{3}\leftrightarrow e_{4}$, there will be edges $u_{1}-u_{2}$ and $u_{3}-u_{4}$, and since the leaf $Z$ is labeled with $x_{+}$ and $A$ is an $m$-arc there will be edges $w_{1}-w_{2}$ and $w_{3}-w_{4}$. It can be seen that the following edges exist independently of our choice of butterfly matching:
\begin{center}
\begin{tabular}{c c c c}
    $v_1-u_1$ & $v_1-w_1$ & $v_2-u_2$ & $v_2-w_2$\\
    $v_3-u_3$ & $v_3-w_3$ & $v_4-u_4$ & $v_4-w_4$ 
\end{tabular}   
\end{center}
 Therefore, $\partial_{exp}\calM(D,x,y)$ is a disjoint union of two $6$-cycles and the components are 
\begin{center}
\begin{tabular}{c c c}
    $v_1-u_1-u_2-v_2-w_2-w_1-v_1$ & \text{and} & $v_3-u_3-u_4-v_4-w_4-w_3-v_3.$
\end{tabular}   
\end{center}
Now consider the case where the butterfly configuration \( D' \) is disjoint from all the leaves in \( D \). We leave it to the reader to verify that, in this case as well, \( \partial_{exp}\mathcal{M}(D, x, y) \) is a disjoint union of two \( 6 \)-cycles.
\end{proof}

\begin{lemma}\label{graph-poset-and-dual-poset-are-isomorphic}
    The graphs $\partial_{exp}\calM (D,x,y)$ and $\partial_{exp}\calM(D^*,y^*,x^*)$ are isomorphic.
\end{lemma}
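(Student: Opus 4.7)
The plan is to construct an explicit graph isomorphism by promoting the order-reversing poset bijection $f_D : P(D, x, y) \to P(D^*, y^*, x^*)$ of Lemma \ref{dual-poset-is-reverse-poset} to the level of the boundary graphs. First, I would identify the vertex set of $\partial_{exp}\mathcal{M}(D, x, y)$ with the set of maximal (length-$3$) chains $(D, y) \prec (E_1, z_1) \prec (E_2, z_2) \prec (s(D), x)$ in $P(D, x, y)$; this identification comes from (RM-1)--(RM-2), since the 0-dimensional corners of the 1-dimensional graph $\partial_{exp}\mathcal{M}(D,x,y)$ arise precisely as products of 0-dimensional moduli spaces, which by the construction in Section \ref{0-dimensional moduli spaces} correspond to maximal chains. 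Because $f_D$ reverses order, it takes such a chain to the reversed chain $(D^*, y^*) \prec (E_2^*, z_2^*) \prec (E_1^*, z_1^*) \prec (s(D^*), x^*)$ in $P(D^*, y^*, x^*)$, yielding a canonical bijection on vertex sets.

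Next, I would set up the bijection on edges. By (RM-2), an edge of $\partial_{exp}\mathcal{M}(D,x,y)$ corresponds to a length-$2$ sub-chain $(D,y) \prec (E,z) \prec (s(D),x)$ with either $\mathrm{ind}(D\setminus E) = 2$ or $\mathrm{ind}(E\setminus s(D)) = 2$, together with a choice of connected component of the associated index-$2$ moduli space. The order-reversing bijection $f_D$ sends such sub-chains to sub-chains of $P(D^*, y^*, x^*)$, interchanging the roles of $D\setminus E$ and $E\setminus s(D)$. For index-$2$ sub-configurations that are \emph{not} butterfly configurations (the $k=2$ cases), $\mathcal{M}$ is connected and the correspondence of edges is automatic; the incidence with the already-matched vertices is forced by the compatibility diagram in (RM-1).

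The main obstacle is the butterfly case. One must show: (i) if an index-$2$ sub-configuration $(\widehat{E}, \widehat{x}, \widehat{y})$ appearing inside $(D, x, y)$ is a butterfly, then so is its dual $(\widehat{E}^*, \widehat{y}^*, \widehat{x}^*)$; and (ii) the butterfly matching of Definition \ref{butterfly-matching} on the two sides pairs corresponding maximal chains. For (i), one checks from Definition \ref{butterfly configuration} that applying the dual operation to the figure-$8$ with two $\Delta$-arcs of Figure \ref{fig:butterfly-resolution configuration} produces, after the equivalence moves in Figure \ref{fig:equivalence} (in particular (b) and (i)), another butterfly configuration; the required labels work out because $x_-$ and $x_+$ are swapped by duality. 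For (ii), the key point is that the two distinguished sub-arcs $P$ and $Q$ of the unique circle $Z$ used to define the matching on $\widehat{E}$ correspond, under the identification $Z(s(\widehat{E})) = Z(\widehat{E}^*)$, to the analogous distinguished sub-arcs on the dual side, so the induced pairings of circles $C_{1,1} \leftrightarrow C_{2,1}$ and $C_{1,2} \leftrightarrow C_{2,2}$ are carried to each other.

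Granting (i) and (ii), the bijections on vertices and edges fit together to preserve incidence, giving the desired graph isomorphism. The hardest step is (ii); I expect it to require a careful local diagrammatic argument tracking the paths $P$ and $Q$ through the duality, modulo the equivalence relation of Definition \ref{equivalent resolution configuration}. Once this is in place, the uniqueness clause in Proposition \ref{induction-proposition-for-resolution-moduli-spaces} (E-4) further implies that $\mathcal{M}(D,x,y)$ and $\mathcal{M}(D^*, y^*, x^*)$ are themselves diffeomorphic, but that stronger statement is not needed for the lemma.
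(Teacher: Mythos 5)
Your proposal is correct and follows essentially the same route as the paper: the paper also promotes the order-reversing isomorphism $f_D$ of Lemma~\ref{dual-poset-is-reverse-poset} to a bijection on maximal chains (vertices) and then checks that two such chains bound an interval in an index-$2$ moduli space if and only if their duals do, with the only nontrivial case being the butterfly configuration, whose self-duality and matching of butterfly matchings the paper verifies diagrammatically in Figure~\ref{fig:Dual-butterfly-configuration}. Your steps (i) and (ii) are exactly that verification, just spelled out more explicitly than the paper's terse argument.
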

\begin{proof}
    Let \( f_D \) denote the isomorphism between the poset \( P(D, x, y) \) and the reverse poset \( P(D^{*}, y^{*}, x^{*}) \), as given by Lemma~\ref{dual-poset-is-reverse-poset}. For any index \( 2 \) decorated resolution configuration \( (E, u, v) \), the vertices  
    \[
    [(E, v) \prec (F_1, w_1) \prec (s(E), u)] \quad \text{and} \quad [(E, v) \prec (F_2, w_2) \prec (s(E), u)]
    \]  
    bound an interval in \( \mathcal{M}(E, u, v) \) if and only if the corresponding vertices  
    $[(f_E(s(E)), u^*) \prec (f_E(F_1), w_1^*) \prec (f_E(E), v^*)]$ and $[(f_E(s(E)), u^*) \prec (f_E(F_2), w_2^*) \prec (f_E(E), v^*)]$ bound an interval in \( \mathcal{M}(E^*, v^*, u^*) \), where \( f_E \) is the isomorphism given by Lemma~\ref{dual-poset-is-reverse-poset}. This fact is illustrated in Figure~\ref{fig:Dual-butterfly-configuration} for the case \( (E, u, v) = (B, x_{-}, x_{+}) \), where \( B \) is the butterfly configuration.
    \begin{figure}[htp]
        \centering
        \input{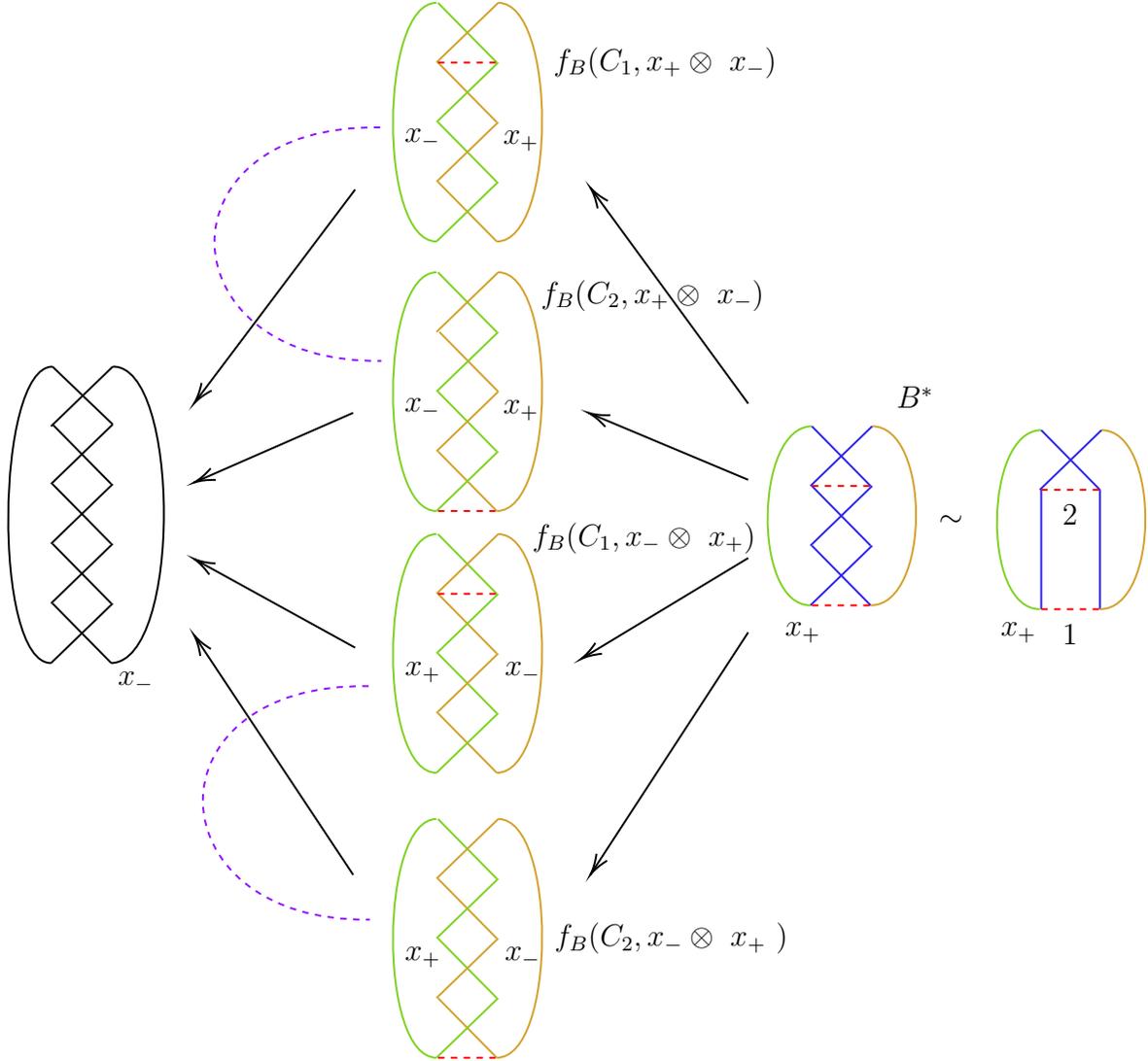}
        \caption{The poset \( P(B^{*}, x_{-}, x_{+}) \) is isomorphic to \( P(B, x_{-}, x_{+}) \) via the isomorphism \( f_{B} \), where \( (B, x_{-}, x_{+}) \) is the butterfly configuration. Moreover, the butterfly matching has been explicitly described.}
        \label{fig:Dual-butterfly-configuration}
    \end{figure}
    The proof now follows from the fact that the maximal chains in \(P(D, x, y)\) and in \(P(D^{*}, y^{*}, x^{*})\) are in one-to-one correspondence via the isomorphism \(f_D\). Consequently, the vertices of \(\partial_{exp}\mathcal{M}(D, x, y)\) correspond bijectively to the vertices of \(\partial_{exp}\mathcal{M}(D^{*}, y^{*}, x^{*})\).
\end{proof}

    \begin{corollary}\label{moduliSpaces-containing-coleaf}
        For an index $3$ basic decorated resolution configuration $(D,x,y)$ if $D$ contains a coleaf, then the graph $\partial_{exp}
        \mathcal{M}(D,x,y)$ is a $6$-cycle or a disjoint union of two $6$-cycles. 
    \end{corollary}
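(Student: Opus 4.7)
The plan is to deduce this corollary directly from the two preceding results by passing to the dual decorated resolution configuration. Recall from Definition~\ref{graph corresponding to resolution configuration, leaf, coleaf} that a coleaf of $D$ is an arc $A \in A(D)$ such that one endpoint of the dual arc $A^{*} \in A(D^{*})$ is a leaf of $D^{*}$. Hence, if $D$ contains a coleaf, then $D^{*}$ contains a leaf, and $(D^{*}, y^{*}, x^{*})$ is again an index $3$ basic decorated resolution configuration.

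Applying Lemma~\ref{moduliSpace-containing-leaf} to $(D^{*}, y^{*}, x^{*})$, I conclude that $\partial_{exp}\mathcal{M}(D^{*}, y^{*}, x^{*})$ is either a single $6$-cycle or a disjoint union of two $6$-cycles. Then, by Lemma~\ref{graph-poset-and-dual-poset-are-isomorphic}, the graphs $\partial_{exp}\mathcal{M}(D, x, y)$ and $\partial_{exp}\mathcal{M}(D^{*}, y^{*}, x^{*})$ are isomorphic, so $\partial_{exp}\mathcal{M}(D, x, y)$ inherits the same description.

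There is essentially no obstacle here: the coleaf-to-leaf duality and the graph isomorphism between the dual expected boundaries are precisely the tools set up for this purpose in the preceding two results. The only small point to double-check is that the hypotheses of Lemma~\ref{moduliSpace-containing-leaf}---namely, that $(D^{*}, y^{*}, x^{*})$ is an index $3$ basic decorated resolution configuration---are preserved under duality, which follows immediately from Definitions~\ref{dual resolution configuration} and~\ref{dual of a decorated resolution configuration}, together with the fact that duality exchanges $Z(D)$ and $Z(s(D))$ while preserving the index.
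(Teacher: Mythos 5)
Your proof is correct and follows exactly the paper's own argument: coleaf in $D$ gives a leaf in $D^{*}$, Lemma~\ref{moduliSpace-containing-leaf} applies to $(D^{*},y^{*},x^{*})$, and Lemma~\ref{graph-poset-and-dual-poset-are-isomorphic} transfers the conclusion back to $\partial_{exp}\mathcal{M}(D,x,y)$. Nothing to add.
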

    \begin{proof}
         If \( D \) has a coleaf, then \( D^{*} \) has a leaf. By Lemma~\ref{moduliSpace-containing-leaf}, the graph \( \partial_{exp} \mathcal{M}(D^{*}, y^{*}, x^{*}) \) is either a 6-cycle or a disjoint union of two 6-cycles. Furthermore, by Lemma~\ref{graph-poset-and-dual-poset-are-isomorphic}, we have an isomorphism  
        $\partial_{exp} \mathcal{M}(D, x, y) \cong \partial_{exp} \mathcal{M}(D^{*}, y^{*}, x^{*})$. It follows that \( \partial_{exp} \mathcal{M}(D, x, y) \) is a 6-cycle or a disjoint union of two 6-cycles, as required.
    \end{proof}
   For an index 3 basic decorated resolution configuration \((D, x, y)\), the number of circles in \(D\), i.e., \(\#Z(D)\), satisfies \(\#Z(D) \leq 6\). Otherwise, if \(\#Z(D) \geq 7\), there must exist a circle in \(Z(D)\) that does not intersect any arc. If \(\#Z(D) \in \{4, 5, 6\}\), then \(D\) must contain a leaf. From Lemma~\ref{moduliSpace-containing-leaf}, we have a clear understanding of the graph \(\partial_{exp}\mathcal{M}(D, x, y)\) in the case when \(D\) contains a leaf. Therefore, from this point forward, we restrict our attention to the cases where \(\#Z(D) \in \{1, 2, 3\}\).
\begin{lemma}\label{lemma:graphG(D)-when-number-of-circles-within3}
    For an index \(3\) basic decorated resolution configuration \((D,x,y)\), the associated graph \(G(D)\), as defined in Definition~\ref{graph corresponding to resolution configuration, leaf, coleaf}, must be one of the graphs illustrated in Figure~\ref{fig:graphG(D)-when-number-of-circles-within3}.
\end{lemma}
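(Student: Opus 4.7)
The plan is to prove this lemma by a straightforward combinatorial enumeration of all multigraphs that can arise as $G(D)$, under the constraints imposed by Definition~\ref{basic resolution configuration} and the hypotheses of the lemma. Since $D$ has index $3$, the graph $G(D)$ has exactly $3$ edges (one for each arc in $A(D)$). Since $D$ is basic, every vertex of $G(D)$ has degree at least $1$. By assumption, the number of vertices $\#Z(D) \in \{1,2,3\}$.

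First I would split into the three cases according to $\#Z(D)$ and exhaust the possibilities by the degree sequence (recalling that a loop contributes $2$ to the degree of its vertex and that the sum of degrees equals $2 \cdot 3 = 6$). For $\#Z(D) = 1$, the only abstract multigraph is a single vertex carrying three loops. For $\#Z(D) = 2$, with both vertices of degree at least $1$ and total degree $6$, the possibilities are (i) three parallel edges between the two vertices, (ii) two parallel edges plus one loop (at either vertex), (iii) a single edge together with two loops distributed as either $2{+}0$ or $1{+}1$ between the two vertices. For $\#Z(D) = 3$, the possibilities are (i) a triangle, (ii) a path of length $2$ with a loop at an endpoint, (iii) two parallel edges between two vertices together with a pendant edge to the third vertex, (iv) a single loop at one vertex together with two parallel edges between the other two vertices, and (v) two loops at one vertex together with a single edge between the other two vertices.

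Next I would cross-check each candidate against the figure \ref{fig:graphG(D)-when-number-of-circles-within3} and discard any multigraph that cannot occur in our setting. The relevant obstructions come from two sources. On one hand, each loop in $G(D)$ corresponds to an arc that is either a $\Delta$-arc or an $\eta$-arc (Lemma~\ref{determine-m,Delta,eta}), and since $(G,M) \in \mathscr{G}$ we can exclude any configuration of loops forcing a bad face of the hypercube (Definition~\ref{the-certain-family}). On the other hand, certain multigraph types are automatically equivalent under the local moves of Figure~\ref{fig:equivalence} to simpler types already in the list, and by Lemma~\ref{lemma:equivalence-of-decorated-resoln-conf} we may replace $D$ by any equivalent representative.

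The main obstacle I expect is not the enumeration itself but the verification that each of the abstract multigraphs listed above really is realized by some basic index-$3$ resolution configuration arising from a perfect matching graph $\Gamma_M$ with $(G,M) \in \mathscr{G}$, so that no listed graph is vacuous, while at the same time confirming that no additional multigraph survives the constraints. This amounts to drawing, for each combinatorial type, an explicit model resolution configuration on $S^2$ whose circles realize the prescribed incidence pattern with the three arcs and whose corresponding $2$-dimensional faces of the hypercube of states are all free of the $m\circ\Delta = \eta\circ\eta$ relation. Once this realizability check is complete, the lemma follows by matching the surviving list bijectively with the multigraphs depicted in Figure~\ref{fig:graphG(D)-when-number-of-circles-within3}.
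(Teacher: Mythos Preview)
Your overall strategy---enumerate all multigraphs on at most three vertices with exactly three edges and minimum degree one---is exactly what is needed, and indeed the paper offers no proof beyond the figure itself. However, your enumeration is incomplete in a way that matters.

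For $\#Z(D)=2$ you omit the case where the two vertices are joined by no non-loop edge at all: three loops distributed $2{+}1$. This is a perfectly good multigraph with minimum degree one (the isolated-with-one-loop vertex has degree $2$), and it is precisely the graph labeled~2.E in the figure. For $\#Z(D)=3$ you miss three further cases: a path of length~$2$ with a loop at the \emph{middle} vertex (your item~(ii) places the loop only at an endpoint; the middle-loop version is~3.E); two loops at two distinct vertices together with a single edge from one of those looped vertices to the third (graph~3.F); and three isolated vertices each carrying one loop (graph~3.G). All four omissions are disconnected or have the loop placed in a position you did not consider, and all four appear in the figure.

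Conversely, your item~(iii) for three vertices---two parallel edges between $Z_1,Z_2$ together with a pendant edge to $Z_3$---is a legitimate multigraph with three vertices, three edges, and minimum degree one, yet it does \emph{not} appear among 3.A--3.G. This looks like a genuine omission in the paper's figure. It is harmless for the subsequent argument because $Z_3$ is a leaf and so this case is absorbed by Lemma~\ref{moduliSpace-containing-leaf}, but strictly speaking the lemma as stated is not quite a complete list.

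Finally, the second and third paragraphs of your plan overreach. The lemma is a bare combinatorial classification of the abstract multigraph $G(D)$; there is no filtering by bad faces or by equivalence of resolution configurations at this stage, and no realizability claim is being made. Those considerations enter only in the subsequent lemmas (\ref{throwing-away-the-boring-cases} and Remark~\ref{figuring-out-the-interesting-cases}). For the present lemma you need only the complete enumeration---once you fill in the four missing cases above, you are done.
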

\begin{figure}[htp]
        \centering
        \input{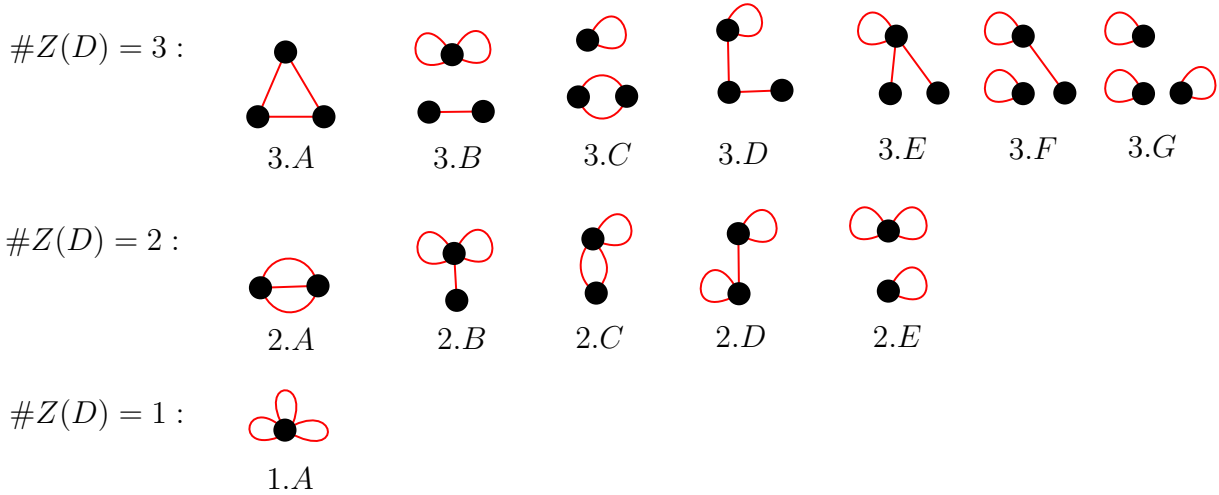}
        \caption{For all index 3 basic decorated resolution configurations \( (D,x,y) \) with \( \#Z(D) \in \{1,2,3\} \), all possible corresponding graphs \( G(D) \) are depicted above.}
        \label{fig:graphG(D)-when-number-of-circles-within3}
\end{figure}
   
\begin{lemma}\label{throwing-away-the-boring-cases}
    For an index \( 3 \) basic decorated resolution configuration \( (D, x, y) \), if the graph \( G(D) \) corresponds to one of the diagrams labeled 3.A, 3.B, 3.C, 3.D, 3.E, 3.F, 3.G, 2.B, 2.D, or 2.E in Figure~\ref{fig:graphG(D)-when-number-of-circles-within3}, then the boundary $\partial_{exp} \mathcal{M}(D, x, y)$ is either a 6-cycle or a disjoint union of two 6-cycles.
\end{lemma}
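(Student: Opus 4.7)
The approach is a case-by-case analysis over the graphs listed in Figure~\ref{fig:graphG(D)-when-number-of-circles-within3}. For each graph $G(D)$, I would aim to show that either $D$ contains a leaf (so that Lemma~\ref{moduliSpace-containing-leaf} applies) or $D$ contains a coleaf (so that Corollary~\ref{moduliSpaces-containing-coleaf} applies); in either case $\partial_{exp}\mathcal{M}(D,x,y)$ is a $6$-cycle or a disjoint union of two $6$-cycles, which is exactly the desired conclusion.

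First, I would dispose of the graphs that visibly contain a pendant vertex in $G(D)$. Inspection of Figure~\ref{fig:graphG(D)-when-number-of-circles-within3} shows that the three-circle graphs 3.B, 3.C, 3.D, 3.E, 3.F, 3.G each contain at least one vertex of degree~$1$; such a vertex corresponds to a circle $Z\in Z(D)$ incident to exactly one arc, and hence is a leaf of $D$ in the sense of Definition~\ref{graph corresponding to resolution configuration, leaf, coleaf}. Lemma~\ref{moduliSpace-containing-leaf} then applies directly. An analogous observation handles the two-circle graph 2.D, which likewise has a pendant vertex.

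For the remaining graphs---the triangle 3.A and the two-circle graphs 2.B and 2.E---the graph $G(D)$ has no vertex of degree~$1$, so I would pass to the dual configuration $D^*$. Using Lemma~\ref{determine-m,Delta,eta} to classify each arc of $D$ locally as an $m$-, $\Delta$-, or $\eta$-arc, together with the hypothesis $(G,M)\in\mathscr{G}$ (which forbids the bad face of Figure~\ref{fig:SingleCircleSurgery}(B) and thereby restricts which configurations of $\eta$-type arcs can appear in the relevant hypercube faces), I would verify case by case that $G(D^*)$ always contains a pendant vertex, i.e., $D$ has a coleaf. Corollary~\ref{moduliSpaces-containing-coleaf} then completes the argument.

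The hardest case will be the triangle 3.A, where $G(D)$ is $2$-regular so no leaf is visible and one must work with the dual. The analysis splits into the finitely many ways the three arcs of $D$ can be assigned types from $\{m,\Delta\}$ (the $\eta$-type being excluded by the $\mathscr{G}$-hypothesis together with Theorem~\ref{theorem:eta-arc}), and in each sub-case I would draw $D^*$ explicitly from Definition~\ref{dual resolution configuration} and apply Lemma~\ref{determine-m,Delta,eta} to exhibit a leaf in $G(D^*)$. The graph isomorphism $\partial_{exp}\mathcal{M}(D,x,y)\cong\partial_{exp}\mathcal{M}(D^*,y^*,x^*)$ supplied by Lemma~\ref{graph-poset-and-dual-poset-are-isomorphic} then transports the $6$-cycle structure back from the dual. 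The two-circle cases 2.B and 2.E yield to the same method with less bookkeeping, since there are fewer arc-type assignments to enumerate.
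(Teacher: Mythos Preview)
Your leaf/coleaf dichotomy is the right idea for most of the list, and it matches what the paper does for 3.B, 3.D, 3.E, 3.F, 2.B (leaves) and 3.C, 3.G, 2.D, 2.E (coleaves). Two issues, one minor and one fatal.

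\emph{Minor.} Your sorting of the cases is off. The graphs 3.C, 3.G, and 2.D have \emph{no} pendant vertex: 3.C is a self-loop at one vertex together with two further vertices joined by a double edge (all degrees~2); 3.G is three isolated self-loops; 2.D is two vertices joined by a single edge, each carrying a self-loop. These are precisely the coleaf cases. Conversely 2.B does have a leaf, so it needs no dual analysis.

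\emph{Fatal.} Your plan for the triangle 3.A cannot succeed, because 3.A has \emph{neither} a leaf nor a coleaf. All three arcs of $D$ are $m$-arcs (each joins two distinct circles), so $D$ has no leaf. For the dual: after any two $m$-surgeries one is left with a single circle, and the third surgery is a $\Delta$-surgery (else $(D,y)\preceq(s(D),x)$ would fail). Hence $|Z(s(D))|=2$, and each dual arc $A_i^{*}$ is an $m$-arc in $D^{*}$, so $G(D^{*})$ is two vertices joined by three parallel edges. There is no pendant vertex in $G(D^{*})$, hence no coleaf in $D$. Your proposed route through Corollary~\ref{moduliSpaces-containing-coleaf} is therefore blocked; no amount of $\mathscr{G}$-hypothesis bookkeeping will produce a leaf in $G(D^{*})$.

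The paper handles 3.A by a different, direct argument: every index-$2$ face of the hypercube has both arcs of $m$-type (either two of the original three $m$-arcs, or, after one $m$-surgery, the two remaining arcs join the two resulting circles and are again $m$-arcs). Thus no butterfly configuration occurs in any $2$-face, so every $1$-dimensional moduli space is a single interval and $\partial_{exp}\mathcal{M}(D,x,y)$ is a single $6$-cycle. You should replace your coleaf argument for 3.A with this observation.
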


\begin{proof}
    Since \((D, x, y)\) is a decorated resolution configuration, implying that \((D, y) \prec \big(s(D), x\big)\), there should be no arc in \(A(D)\) that is an \(\eta\)-arc. Hence, in all cases, we assume that every arc in \(A(D)\) is either an \(m\)-arc or a \(\Delta\)-arc. If \(G(D)\) is isomorphic (as a graph) to one of the graphs labeled 3.B, 3.D, 3.E, 3.F, or 2.B, as shown in Figure~\ref{fig:graphG(D)-when-number-of-circles-within3}, then note that \(D\) contains a leaf. Therefore, by Lemma~\ref{moduliSpace-containing-leaf}, the graph \(\partial_{exp} \mathcal{M}(D, x, y)\) is either a 6-cycle or a disjoint union of two 6-cycles. Similarly, note that, if $G(D)$ is isomorphic to one of the graphs in Figures~3.C, 3.G, 2.D, or 2.E shown in Figure \ref{fig:graphG(D)-when-number-of-circles-within3} then $D$ contains a coleaf. So by the Corollary \ref{moduliSpaces-containing-coleaf}, the graph $\partial_{exp} \mathcal{M}(D, x, y)$ is either a $6$-cycle or a disjoint union of two $6$-cycles. Now consider the case where \( G(D) \) is isomorphic to the graph shown in Figure~\ref{fig:graphG(D)-when-number-of-circles-within3}~3.A. Observe that all three arcs in \( A(D) \) are \( m \)-arcs. Let \( E \) be the resolution configuration obtained from \( D \) by performing surgery along one of the arcs in \( A(D) \). Then the graph \(G(E)\) is the graph with two vertices and two edges, where both edges connect the same pair of vertices—that is, neither edge is a loop. This implies that both arcs in $A(E)$ are $m$-arcs. Therefore, no face of the hypercube corresponding to $(D,x,y)$ is a butterfly configuration. Hence the graph $\partial_{exp}\mathcal{M}(x,y)$ is a $6$-cycle, and we are done. 
\end{proof}
The remaining cases occur when \( G(D) \) corresponds to one of the graphs labeled 2.A, 2.C, or 1.A in Figure~\ref{fig:graphG(D)-when-number-of-circles-within3}.
\begin{remark}\label{figuring-out-the-interesting-cases}
    Up to equivalence of resolution configurations, the only remaining nontrivial cases are those in which \( D \) is one of the resolution configurations depicted in Figure~\ref{fig:interesting-cases}.
    \begin{figure}[htp]
        \centering
        \tikzset{every picture/.style={line width=0.75pt}} 

\begin{tikzpicture}[x=0.75pt,y=0.75pt,yscale=-1,xscale=1]

\draw [line width=0.75]    (60,44) -- (60,64) ;
\draw [line width=0.75]    (60,64) -- (60,84) ;
\draw [line width=0.75]    (60,84) -- (60,104) ;
\draw [line width=0.75]    (60,104) -- (60,124) ;
\draw [line width=0.75]    (60,44) .. controls (20,45) and (21,124) .. (60,124) ;
\draw [line width=0.75]    (80,44) -- (80,64) ;
\draw [line width=0.75]    (80,64) -- (80,84) ;
\draw [line width=0.75]    (80,84) -- (80,104) ;
\draw [line width=0.75]    (80,104) -- (80,124) ;
\draw [line width=0.75]    (80,44) .. controls (122,44) and (121,124) .. (80,124) ;
\draw [color={rgb, 255:red, 252; green, 3; blue, 3 }  ,draw opacity=1 ][line width=0.75]  [dash pattern={on 2.5pt off 2.5pt}]  (60,64) -- (80,64) ;
\draw [color={rgb, 255:red, 252; green, 3; blue, 3 }  ,draw opacity=1 ][line width=0.75]  [dash pattern={on 2.5pt off 2.5pt}]  (60,84) -- (80,84) ;
\draw [color={rgb, 255:red, 252; green, 3; blue, 3 }  ,draw opacity=1 ][line width=0.75]  [dash pattern={on 2.5pt off 2.5pt}]  (60,103) -- (80,103) ;
\draw [line width=0.75]    (293.9,48.6) .. controls (272.08,48.6) and (272.08,86.14) .. (294.63,86.69) ;
\draw [line width=0.75]    (323.74,48.6) .. controls (349.2,49.15) and (347.75,87.24) .. (323.01,86.69) ;
\draw [line width=0.75]    (293.9,48.6) -- (323.01,86.69) ;
\draw [line width=0.75]    (323.74,48.6) -- (294.63,86.69) ;
\draw [line width=0.75]    (294.7,104.16) .. controls (272.87,104.16) and (272.87,141.69) .. (295.43,142.24) ;
\draw [line width=0.75]    (324.53,104.16) .. controls (349.99,104.71) and (348.54,142.8) .. (323.8,142.24) ;
\draw [line width=0.75]    (294.7,104.16) -- (323.8,142.24) ;
\draw [line width=0.75]    (324.53,104.16) -- (295.43,142.24) ;
\draw [color={rgb, 255:red, 252; green, 3; blue, 3 }  ,draw opacity=1 ][line width=0.75]  [dash pattern={on 2.5pt off 2.5pt}]  (294.63,86.69) -- (294.7,104.16) ;
\draw [color={rgb, 255:red, 252; green, 3; blue, 3 }  ,draw opacity=1 ][line width=0.75]  [dash pattern={on 2.5pt off 2.5pt}]  (324.24,87.48) -- (324.53,104.16) ;
\draw [color={rgb, 255:red, 252; green, 3; blue, 3 }  ,draw opacity=1 ][line width=0.75]  [dash pattern={on 2.5pt off 2.5pt}]  (293.9,48.6) .. controls (293,31.14) and (323.16,31.14) .. (323.74,48.6) ;
\draw [line width=0.75]    (535.07,103.71) .. controls (497.59,118) and (495.5,24.93) .. (548.42,62.48) ;
\draw [line width=0.75]    (574,63.8) .. controls (546.8,93.8) and (542.55,100.09) .. (535.07,103.71) ;
\draw [line width=0.75]    (548.42,62.48) .. controls (564.4,79) and (559.6,74.2) .. (573.6,93.4) ;
\draw [line width=0.75]    (615.2,69) .. controls (617.09,105.73) and (581.64,105.36) .. (573.6,93.4) ;
\draw [line width=0.75]    (615.2,69) .. controls (616,59) and (595.6,37.4) .. (574,63.8) ;
\draw [color={rgb, 255:red, 252; green, 3; blue, 3 }  ,draw opacity=1 ][line width=0.75]  [dash pattern={on 2.5pt off 2.5pt}]  (574,64.48) -- (550.42,64.48) ;
\draw [color={rgb, 255:red, 252; green, 3; blue, 3 }  ,draw opacity=1 ][line width=0.75]  [dash pattern={on 2.5pt off 2.5pt}]  (573.6,93.4) -- (548.67,92.98) ;
\draw [color={rgb, 255:red, 252; green, 3; blue, 3 }  ,draw opacity=1 ][line width=0.75]  [dash pattern={on 2.5pt off 2.5pt}]  (585.8,53.38) -- (528.27,53.38) ;

\draw (33.5,77) node [anchor=north west][inner sep=0.75pt]  [font=\normalsize] [align=left] {$\displaystyle x_{+}$};
\draw (89,77) node [anchor=north west][inner sep=0.75pt]  [font=\normalsize] [align=left] {$\displaystyle x_{+}$};
\draw (51,160.5) node [anchor=north west][inner sep=0.75pt]   [align=left] {$\displaystyle 2.A$};
\draw (296,159.5) node [anchor=north west][inner sep=0.75pt]   [align=left] {$\displaystyle 2.C$};
\draw (505.3,105.5) node [anchor=north west][inner sep=0.75pt]  [font=\normalsize]  {$x_{+}$};
\draw (348.8,58.5) node [anchor=north west][inner sep=0.75pt]  [font=\normalsize]  {$x_{+}$};
\draw (348.8,111.5) node [anchor=north west][inner sep=0.75pt]  [font=\normalsize]  {$x_{+}$};
\draw (553,156.5) node [anchor=north west][inner sep=0.75pt]   [align=left] {$\displaystyle 1.A$};

\end{tikzpicture}
        \caption{Interesting index 3 decorated resolution configuration}
        \label{fig:interesting-cases}
\end{figure}
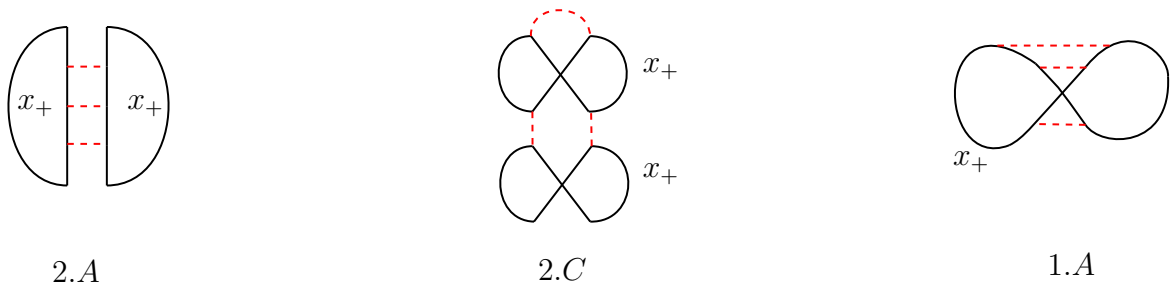
Note that in the case labeled 2.A in Figure~\ref{fig:interesting-cases}, the \(y\)-labeling for both circles in \(D\) must be \(x_{+}\); otherwise, there does not exist an \(x\)-labeling on the circles of \(s(D)\) such that \((D, y) \prec \big(s(D), x\big)\). For a similar reason, in the case shown in Figure~\ref{fig:interesting-cases}~2.C, both circles must also be labeled with \(x_{+}\), and in the case shown in Figure~\ref{fig:interesting-cases}~1.A, the single circle must be labeled with \(x_{+}\).

The decorated resolution configurations corresponding to the cases labeled 2.A and 1.A in Figure~\ref{fig:interesting-cases} are dual to each other up to equivalence. Therefore, by Lemma~\ref{graph-poset-and-dual-poset-are-isomorphic}, the graphs \( \partial_{exp} \mathcal{M}(D, x, y) \) associated to these two configurations are isomorphic.
\end{remark}

\begin{lemma}\label{analysing Case 2A}
    Let \((D, x, y)\) be an index 3 basic decorated resolution configuration such that \((D, y)\) corresponds to the labeled resolution configuration shown in Figure~\ref{fig:interesting-cases}~2.A. Then, \(\partial_{exp} \mathcal{M}(D, x, y)\) is a disjoint union of two 6-cycles.
\end{lemma}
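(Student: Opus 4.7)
Proof plan. The approach is to enumerate the $12$ maximal chains in $P(D,x,y)$, identify the $12$ boundary edges of $\partial_{exp}\mathcal{M}(D,x,y)$ arising from Definition~\ref{def:resolution moduli space}(RM-2), and check that they assemble into two disjoint $6$-cycles by tracking how the three butterfly matchings interact.

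First I will check that $P(D,x,y)$ has exactly $12$ maximal chains. In case 2.A, the graph $G(D)$ consists of two vertices joined by three parallel edges, so every arc $A_i$ is an $m$-arc in $D$. Each $s_{A_i}(D)$ is therefore a single immersed circle with a double point at $A_i$, and the two remaining arcs form a butterfly configuration inside $s_{A_i}(D)$. Consequently $s_{A_i,A_j}(D)$ has two circles and $s(D)$ has one. The partial order $(D,y)\prec (s(D),x_-)$ with $y = x_+\otimes x_+$ forces label $+$ on the unique circle of each $s_{A_i}(D)$ and a labeling $L \in \{(+,-),(-,+)\}$ on the two circles of each $s_{A_i,A_j}(D)$. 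Thus $P(D,x,y)$ has $6\times 2 = 12$ maximal chains, indexed by an ordering $\sigma \in S_3$ of the arcs together with an intermediate labeling $L$.

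Next I will enumerate the boundary edges. The $\partial_{exp,2}$ contribution decomposes over pairs $\{j,k\}$ and labelings $L$ (six choices in all); each sub-configuration $(D \setminus s_{A_j,A_k}(D),\, L,\, (+,+))$ is the non-butterfly index-$2$ decorated resolution configuration of Figure~\ref{fig:index-2-both-m-arc} with $k = 2$, so it contributes one interval with endpoints $[A_j,A_k,A_i;L]$ and $[A_k,A_j,A_i;L]$. This gives $6$ edges, all preserving $L$. The $\partial_{exp,1}$ contribution decomposes over the choice of first arc $A_i$; each sub-configuration $(s_{A_i}(D)\setminus s(D),\, x_-,\, x_+)$ is a butterfly with $k = 4$, which by Definition~\ref{butterfly-matching} contributes two intervals pairing $[A_i,A_j,A_k;L]$ with $[A_i,A_k,A_j;L']$ for a matching-dependent $L'$. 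This gives $6$ further edges, so $\partial_{exp}\mathcal{M}(D,x,y)$ is a $2$-regular graph on $12$ vertices and hence a disjoint union of cycles.

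The heart of the argument is to pin down the pairing $L \mapsto L'$ induced by the butterfly matching at each $A_i$. I will assign to each pair of circles in $s_{A_i,A_j}(D)$ an intrinsic type ($P$ or $Q$), obtained from Definition~\ref{butterfly-matching} (after applying the same-side local move of Figure~\ref{fig:equivalence}(i) when $A_j$ and $A_k$ lie on the same side of $\star_i$, which occurs when $A_i \in \{A_1, A_3\}$ in the straight theta-graph picture), and show that this type is well defined: the $P$-type circle picked out by the matching at $\star_i$ coincides with the one picked out by the matching at $\star_j$. This is the key topological step, requiring an explicit trace of the single circle of $s_{A_i}(D)$ through its double point, and it is where the planar theta-graph structure of case 2.A is essential. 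Granted the intrinsic type, every butterfly matching preserves type, hence preserves $L$, so every one of the $12$ edges preserves the partition of the $12$ chains into two classes of size $6$ according to which type-$P$ circle carries the label $+$.

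Within each class, a direct trace yields the single $6$-cycle
\[
[A_1,A_2,A_3;L] - [A_1,A_3,A_2;L] - [A_3,A_1,A_2;L] - [A_3,A_2,A_1;L] - [A_2,A_3,A_1;L] - [A_2,A_1,A_3;L] - [A_1,A_2,A_3;L],
\]
alternating between butterfly edges (determined by which arc is performed first) and non-butterfly edges (determined by which arc is performed last). The analogous cycle appears in the other class, so $\partial_{exp}\mathcal{M}(D,x,y)$ is a disjoint union of two $6$-cycles. The main obstacle is the intrinsicality of the $P$/$Q$-type assignment across the three butterfly matchings; everything else is bookkeeping.
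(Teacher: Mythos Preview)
Your proposal is correct and follows essentially the same approach as the paper: enumerate the twelve maximal chains, identify the six non-butterfly edges (from the $\partial_{exp,2}$ faces, each the $k=2$ index-$2$ configuration of Figure~\ref{fig:index-2-both-m-arc}) and the six butterfly edges (from the $\partial_{exp,1}$ faces), verify that all three butterfly matchings are mutually consistent, and read off the two $6$-cycles. The paper carries out the consistency check by direct inspection, recording in a table that $Z_{1,110}\leftrightarrow Z_{1,101}\leftrightarrow Z_{1,011}$ and $Z_{2,110}\leftrightarrow Z_{2,101}\leftrightarrow Z_{2,011}$ across the three butterfly faces, whereas you phrase the same fact as the intrinsicality of the $P/Q$-type assignment; these are the same statement, and your observation about the same-side local move for $A_i\in\{A_1,A_3\}$ is exactly the care needed to extract the matching from Definition~\ref{butterfly-matching} in those cases.
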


\begin{proof}
    In Figure~\ref{fig:Case2A-hypercube}, we provide an ordering of the arcs in \( A(D) \), labeled \( A_{1}, A_{2}, \) and \( A_{3} \), respectively. For each labeled resolution configuration \( (D', z) \in P(D, x, y) \), we specify an ordering of the circles in \( Z(D') \) whenever \( \# Z(D') \geq 2 \). According to the Figure~\ref{fig:Case2A-hypercube}, we denote the vertices as follows:  
     \begin{figure}[htp]
        \centering
        \input{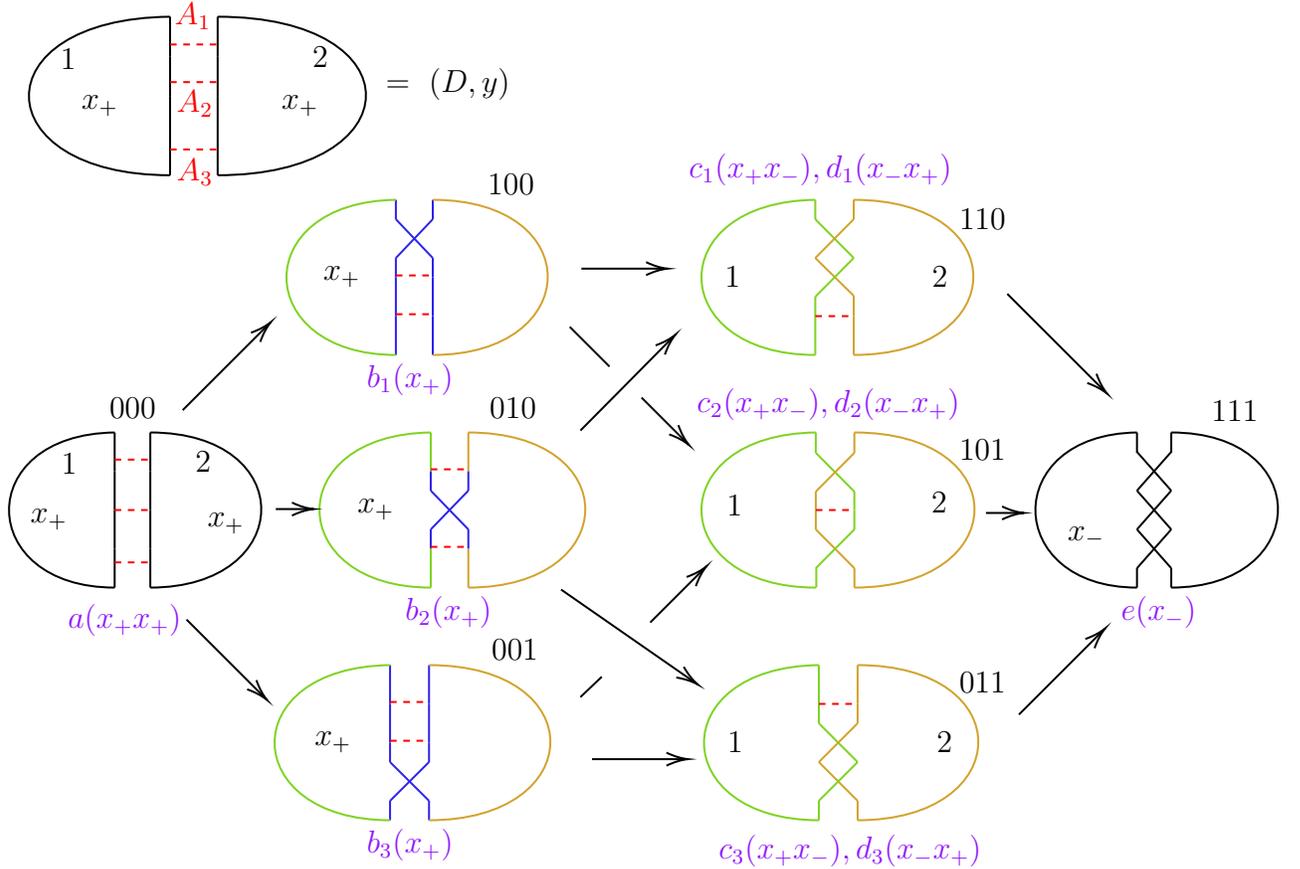}
        \caption{Poset corresponding to the decorated resolution configuration corresponding to Case in Figure~\ref{fig:interesting-cases}~2.A}
        \label{fig:Case2A-hypercube}
    \end{figure}
    \[
    \begin{aligned}
        & a = (D, x_{+} x_{+}), \quad \quad \quad  \quad  \quad \,\, b_i = (s_{\{A_i\}}(D), x_{+}) \text{ for } i = 1, 2, 3, \\
        & c_1 = (s_{\{A_1, A_2\}}(D), x_{+} x_{-}), \quad d_1 = (s_{\{A_1, A_2\}}(D), x_{-} x_{+}), \\
        & c_2 = (s_{\{A_1, A_3\}}(D), x_{+} x_{-}), \quad d_2 = (s_{\{A_1, A_3\}}(D), x_{-} x_{+}), \\
        & c_3 = (s_{\{A_2, A_3\}}(D), x_{+} x_{-}), \quad d_3 = (s_{\{A_2, A_3\}}(D), x_{-} x_{+}), \\
        & e = (s(D), x_{-}).
    \end{aligned}
    \]
    
    Now the vertices of the graph $\partial_{exp}\mathcal{M}(D,x,y)$ i.e. the maximal chains of the poset $P(D,x,y)$ are as follows.
    \begin{align*}
        u_{1} &= [a \prec b_{1} \prec c_{1} \prec e]\quad p_{1} = [a \prec b_{2} \prec c_{1} \prec e]\\
        v_{1} &= [a \prec b_{1} \prec d_{1} \prec e]\quad q_{1} = [a \prec b_{2} \prec d_{1} \prec e]\\
        u_{2} &= [a \prec b_{1} \prec c_{2} \prec e]\quad s_{1} = [a \prec b_{3} \prec c_{2} \prec e]\\
        v_{2} &= [a \prec b_{1} \prec d_{2} \prec e]\quad t_{1} = [a \prec b_{3} \prec d_{2} \prec e]\\
        p_{2} &= [a \prec b_{2} \prec c_{3} \prec e]\quad s_{2} = [a \prec b_{3} \prec c_{3} \prec e]\\
        q_{2} &= [a \prec b_{2} \prec d_{3} \prec e]\quad t_{2} = [a \prec b_{3} \prec d_{3} \prec e]
    \end{align*}
   
    Lets denote the faces of the cube by a $4$-tuple of vertices $i\in \{0,1\}^3$ and the $n$-th circle in the $i$-th vertex by $Z_{n,i}$. The faces $(100, 110, 111, 101)$ and $(010, 110, 111, 011)$, and $(001, 101, 111, 011)$ come from the butterfly configurations. The butterfly matchings are given by,
\begin{center}
\begin{tabular}{c | c}
    Faces & Butterfly Matchings \\
    \hline 
    & \\
    $\multirow{2}{*}{(100, 110, 111, 101)}$ & $Z_{1,110} \longleftrightarrow Z_{1,101}$,\\ & $Z_{2,110} \longleftrightarrow Z_{2,101}$ \\
     & \\
    \hline
     & \\
    $\multirow{2}{*}{(010, 110, 111, 011)}$ & $Z_{1,110} \longleftrightarrow Z_{1,011}$,\\
     & $Z_{2,110} \longleftrightarrow Z_{2,011}$\\
     & \\
     \hline
     & \\
     $\multirow{2}{*}{(001, 101, 111, 011)}$ & $Z_{1,101} \longleftrightarrow Z_{1,011}$,\\
     & $Z_{2,101} \longleftrightarrow Z_{2,011}$
\end{tabular}
\end{center} 
Note that the following edges in \(\partial_{exp} \mathcal{M}(D, x, y)\) are not part of the butterfly configurations; therefore, these edges exist independently of the butterfly matchings.
\begin{align*}
    u_{1}-p_{1}  &\quad v_{1}-q_{1} \\
    u_{2}-s_{1}  &\quad v_{2}-t_{1} \\
    p_{2}-s_{2}  &\quad q_{2}-t_{2} 
\end{align*}
 \begin{figure}[htp]
        \centering
        \tikzset{every picture/.style={line width=0.75pt}} 

\begin{tikzpicture}[x=0.75pt,y=0.75pt,yscale=-1,xscale=1]

\draw [line width=0.75]    (276.75,92.25) -- (247.75,92.25) ;
\draw [line width=0.75]    (176.75,92.25) -- (147.75,92.25) ;
\draw [line width=0.75]    (77.75,92.25) -- (48.75,92.25) ;
\draw [color={rgb, 255:red, 45; green, 35; blue, 235 }  ,draw opacity=1 ][line width=0.75]    (229.5,106) .. controls (228.5,142.5) and (91.5,141) .. (91.5,106) ;
\draw [color={rgb, 255:red, 45; green, 35; blue, 235 }  ,draw opacity=1 ][line width=0.75]    (130.5,83) .. controls (130.5,48.5) and (32.5,50) .. (32.5,83) ;
\draw [color={rgb, 255:red, 45; green, 35; blue, 235 }  ,draw opacity=1 ][line width=0.75]    (286.5,86) .. controls (286.5,51.5) and (188.5,52) .. (188.5,85) ;
\draw [line width=0.75]    (594.75,87.25) -- (565.75,87.25) ;
\draw [line width=0.75]    (494.75,87.25) -- (465.75,87.25) ;
\draw [line width=0.75]    (395.75,87.25) -- (366.75,87.25) ;
\draw [color={rgb, 255:red, 45; green, 35; blue, 235 }  ,draw opacity=1 ][line width=0.75]    (547.5,101) .. controls (546.5,137.5) and (408.5,135) .. (408.5,100) ;
\draw [color={rgb, 255:red, 45; green, 35; blue, 235 }  ,draw opacity=1 ][line width=0.75]    (447.5,77) .. controls (447.5,42.5) and (348.5,44) .. (348.5,77) ;
\draw [color={rgb, 255:red, 45; green, 35; blue, 235 }  ,draw opacity=1 ][line width=0.75]    (605.5,82) .. controls (605.5,47.5) and (505.5,45) .. (505.5,78) ;

\draw (281.06,86.32) node [anchor=north west][inner sep=0.75pt]  [rotate=-2]  {$u_{1}$};
\draw (221.56,86.84) node [anchor=north west][inner sep=0.75pt]  [rotate=-2]  {$p_{1}$};
\draw (182.06,87.32) node [anchor=north west][inner sep=0.75pt]  [rotate=-2]  {$u_{2}$};
\draw (123.56,85.87) node [anchor=north west][inner sep=0.75pt]  [rotate=-2]  {$s_{1}$};
\draw (85.56,86.84) node [anchor=north west][inner sep=0.75pt]  [rotate=-2]  {$p_{2}$};
\draw (25.56,86.87) node [anchor=north west][inner sep=0.75pt]  [rotate=-2]  {$s_{2}$};
\draw (598.56,80.84) node [anchor=north west][inner sep=0.75pt]  [rotate=-2]  {$v_{1}$};
\draw (540.06,81.36) node [anchor=north west][inner sep=0.75pt]  [rotate=-2]  {$q_{1}$};
\draw (500.56,81.84) node [anchor=north west][inner sep=0.75pt]  [rotate=-2]  {$v_{2}$};
\draw (442.06,80.39) node [anchor=north west][inner sep=0.75pt]  [rotate=-2]  {$t_{1}$};
\draw (404.06,80.36) node [anchor=north west][inner sep=0.75pt]  [rotate=-2]  {$q_{2}$};
\draw (343.06,81.39) node [anchor=north west][inner sep=0.75pt]  [rotate=-2]  {$t_{2}$};

\end{tikzpicture}
        \caption{The graph $\partial_{exp}\mathcal{M}(D, x,y)$ for when $D$ is the resolution configuration described in Figure~\ref{fig:interesting-cases}~2.A.}
        \label{fig:Case2A-boundary-moduli-space}
\end{figure}
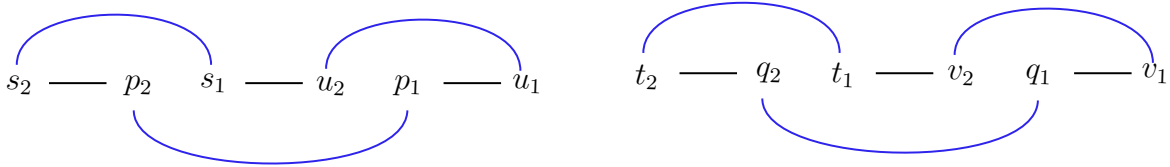
Moreover due to the butterfly matchings, there will be edges: $u_{1}-u_{2}$, $v_{1}-v_{2}$, $p_{1}-p_{2}$, $q_{1}-q_{2}$, $s_{1}-s_{2}$, and $t_{1}-t_{2}$. As a result, the graph $\partial_{exp} \mathcal{M}(D, x, y)$ is a disjoint union of two $6$-cycles; see Figure~\ref{fig:Case2A-boundary-moduli-space}, where the blue edges are coming from the faces involving the butterfly configurations, and the black edges are independent of the butterfly matchings.
\end{proof}
\begin{lemma}\label{analysing Case 2C}
     Let \((D, x, y)\) be an index 3 basic decorated resolution configuration such that \((D, y)\) corresponds to the labeled resolution configuration shown in Figure~\ref{fig:interesting-cases}~2.C. Then, \(\partial_{exp} \mathcal{M}(D, x, y)\) is a disjoint union of two 6-cycles.
\end{lemma}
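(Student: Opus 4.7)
The plan is to mimic the analysis of Lemma~\ref{analysing Case 2A} step by step, replacing the three $m$-arcs there by the one $\Delta$-arc $A_1$ on the upper circle and the two $m$-arcs $A_2, A_3$ connecting the two circles, as depicted in Figure~\ref{fig:interesting-cases}~2.C. I would begin by ordering the arcs $A_1, A_2, A_3$ and, at each vertex of the hypercube $\{0,1\}^3$, choosing an ordering of the circles of the corresponding labeled resolution configuration. The labelings $x$ on $Z(s(D))$ and $y$ on $Z(D)$ are forced by the remark following Figure~\ref{fig:interesting-cases}, and the labelings at the six intermediate vertices compatible with $(D,y) \prec (s(D),x)$ can be enumerated, yielding a list of twelve maximal chains in $P(D,x,y)$ fully analogous to the list $u_i, v_i, p_i, q_i, s_i, t_i$ appearing in the proof of Lemma~\ref{analysing Case 2A}.

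The next step is to determine which of the three $2$-dimensional faces of the cube are butterfly configurations in the sense of Definition~\ref{butterfly configuration}, applying Lemma~\ref{determine-m,Delta,eta} on every edge and, if necessary, invoking the local moves of Figure~\ref{fig:equivalence} to absorb the double points appearing in the source circles. Two of the three faces --- those pairing the $\Delta$-arc $A_1$ with one of the $m$-arcs $A_2$ or $A_3$ --- should turn out to be butterfly faces, while the remaining face, involving only the two $m$-arcs $A_2$ and $A_3$, should not contain a butterfly configuration. On each butterfly face I would read off the two prescribed edges using the butterfly matching of Definition~\ref{butterfly-matching}, while on the third face the two edges are forced independently of any choice (as in the analysis of Case $2.A$).

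Assembling these edges would yield a $3$-regular graph on the twelve vertices, which I would then verify splits into two disjoint $6$-cycles, exactly as in Figure~\ref{fig:Case2A-boundary-moduli-space}. The main obstacle is the combinatorial bookkeeping: one must check that the butterfly matchings prescribed on the two butterfly faces interact with the forced edges on the remaining face to produce two disjoint hexagons rather than a single $12$-cycle.

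As a cross-check I would appeal to Lemma~\ref{graph-poset-and-dual-poset-are-isomorphic} and compute $\partial_{exp}\mathcal{M}(D^*, y^*, x^*)$ in parallel. Since $|Z(D^*)| = |Z(s(D))| = 1$ and the underlying graph $G(D^*)$ is obtained from $G(D)$ by dualising each arc, the dual configuration falls into one of the cases already handled in Lemma~\ref{throwing-away-the-boring-cases} or in the leaf/coleaf cases covered by Lemma~\ref{moduliSpace-containing-leaf} and Corollary~\ref{moduliSpaces-containing-coleaf}, so the graph-level isomorphism forces $\partial_{exp}\mathcal{M}(D,x,y)$ to have the same structure as something we have already computed. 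This duality cross-check should rule out the pathological $12$-cycle scenario and confirm the decomposition into two $6$-cycles.
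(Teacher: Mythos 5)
Your overall strategy is the same as the paper's: fix an ordering of the arcs, enumerate the twelve maximal chains of $P(D,x,y)$, identify the butterfly faces, and assemble the edges. Your identification of the butterfly faces is correct (they are the two faces through $\overline{1}$ pairing the $\Delta$-arc $A_1$ with $A_2$, respectively with $A_3$; the remaining face through $\overline{1}$, spanned by the two $m$-arcs, has three circles at its bottom vertex, hence a leaf, hence $k=2$). But the assembly step as you describe it does not work. First, $\partial_{exp}\mathcal{M}(D,x,y)$ is the boundary of a $2$-dimensional $\langle 2\rangle$-manifold, i.e.\ a closed $1$-manifold, so the graph is $2$-regular, not $3$-regular; a $3$-regular graph can never decompose into disjoint $6$-cycles. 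Second, you only collect edges from the three faces containing $\overline{1}$ (two from each butterfly face plus two forced ones), i.e.\ six edges on twelve vertices, which is a perfect matching and carries no cycle structure at all. Each maximal chain lies on exactly two edges: one coming from the index-$2$ face spanned by its two lower steps (composed with the remaining $0$-dimensional moduli space on top) and one from the face spanned by its two upper steps (composed with the bottom point). You are missing the six forced edges contributed by the three faces through $\overline{0}$ (in the paper's notation $u_1$--$s_2$, $u_2$--$t_2$, $v_1$--$p_1$, $v_2$--$q_1$, $s_1$--$p_2$, $t_1$--$q_2$); only with the full set of twelve edges can one check that the two butterfly matchings close up two hexagons rather than a $12$-cycle.

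The duality cross-check you invoke to exclude the $12$-cycle also fails. For the configuration of Figure~\ref{fig:interesting-cases}~2.C, $Z(D^*)=Z(s(D))$ is a single circle carrying all three dual arcs (each of them a $\Delta$-arc), so $G(D^*)$ is of type 1.A; it has no leaf, and no coleaf either, since a coleaf of $D^*$ would require a leaf of $(D^*)^*\sim D$, which the 2.C configuration does not possess. Hence $D^*$ is not covered by Lemma~\ref{throwing-away-the-boring-cases}, Lemma~\ref{moduliSpace-containing-leaf}, or Corollary~\ref{moduliSpaces-containing-coleaf}; it is exactly as nontrivial as 2.C itself, and it is not equivalent to the configuration 1.A of Figure~\ref{fig:interesting-cases} (that one is dual to 2.A, and 2.A and 2.C are not equivalent, e.g.\ their graphs $G(D)$ differ). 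So Lemma~\ref{graph-poset-and-dual-poset-are-isomorphic} only transports the problem to an equally hard configuration and gives no independent confirmation. The two-$6$-cycle conclusion has to be verified directly from the twelve edges, which is exactly what the paper does via the hypercube of Figure~\ref{fig:Case2C-hypercube}.
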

\begin{proof}
    The proof goes similar to the Lemma~\ref{analysing Case 2A}. The hypercube corresponding to the decorated resolution configuration $(D,x,y)$ is illustrated in the Figure~\ref{fig:Case2C-hypercube}.
    \begin{figure}[htp]
        \centering
        \input{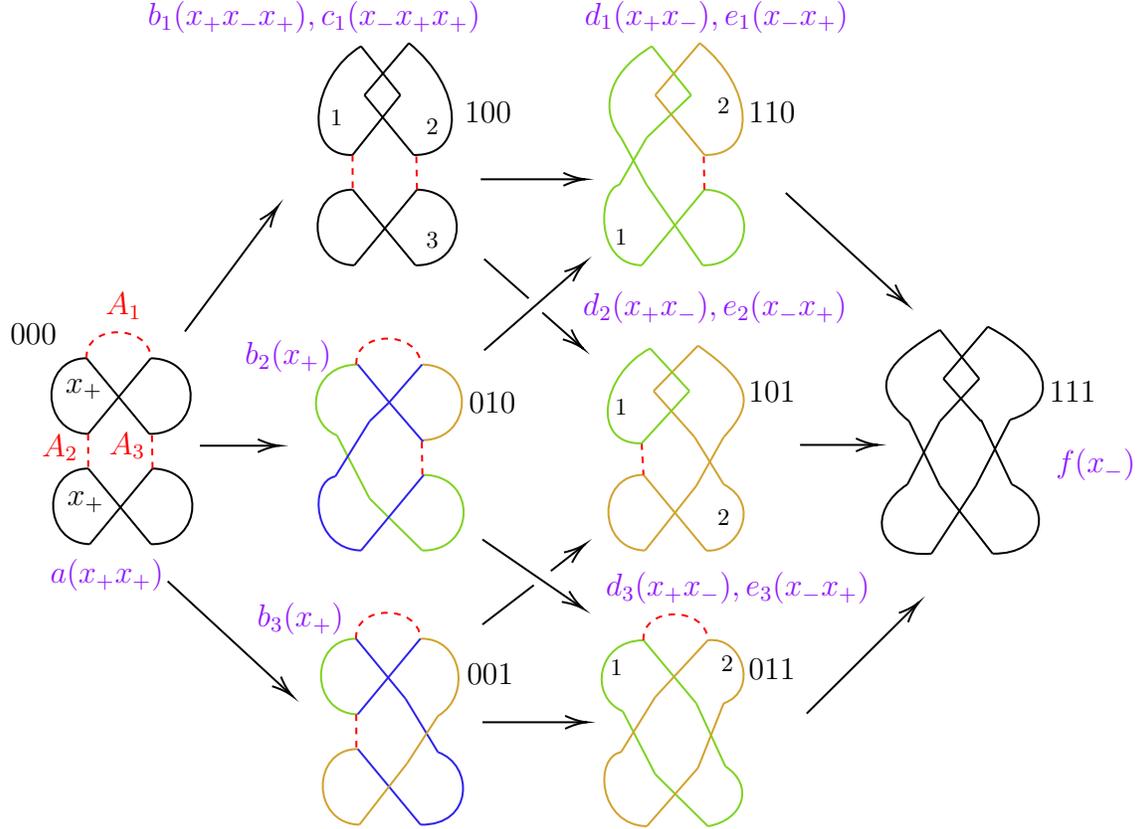}
        \caption{Poset corresponding to the decorated resolution configuration corresponding to Case Figure~\ref{fig:interesting-cases}~2.C.}
        \label{fig:Case2C-hypercube}
    \end{figure}
   The vertices in the graph $\partial_{exp}\mathcal{M}(D,x,y)$ i.e., the maximal chains in the poset $P(D,x,y)$ are as follows. 
   \begin{align*}
        u_{1} &= [a \prec b_{1} \prec d_{1} \prec f]\quad s_{2} = [a \prec b_{2} \prec d_{1} \prec f]\\
        v_{1} &= [a \prec b_{1} \prec d_{2} \prec f]\quad t_{2} = [a \prec b_{2} \prec e_{1} \prec f]\\
        u_{2} &= [a \prec c_{1} \prec e_{1} \prec f]\quad s_{1} = [a \prec b_{2} \prec d_{3} \prec f]\\
        v_{2} &= [a \prec c_{1} \prec e_{2} \prec f]\quad t_{1} = [a \prec b_{2} \prec e_{3} \prec f]\\
        p_{1} &= [a \prec b_{3} \prec d_{2} \prec f]\quad p_{2} = [a \prec b_{3} \prec d_{3} \prec f]\\
        q_{1} &= [a \prec b_{3} \prec e_{2} \prec f]\quad q_{2} = [a \prec b_{3} \prec e_{3} \prec f]
    \end{align*}
    Note that the faces corresponding to $(010, 110, 111, 011)$ and $(001, 101, 111, 011)$ come from the butterfly configurations. 
    The following edges in $\partial_{exp}\mathcal{M}(D,x,y)$ are not part of the butterfly configurations. 
    \begin{align*}
        u_{1}-s_{2}  &\quad u_{1}-v_{1} \quad u_{2}-t_{2} \quad u_{2}-v_{2} \\
        v_{1}-p_{1}  &\quad v_{2}-q_{1} \quad s_{1}-p_{2} \quad t_{1}-q_{2}
    \end{align*}
    On the other hand, the edges that come from the butterfly configurations are: $s_{1}-s_{2}$, $t_{1}-t_{2}$, $p_{1}-p_{2}$, and $q_{1}-q_{2}$. Therefore the graph $\partial_{exp}\mathcal{M}(D,x,y)$ is a disjoint union of two $6$-cycles; see Figure~\ref{fig:Case2C-boundary-moduli-space}. 
    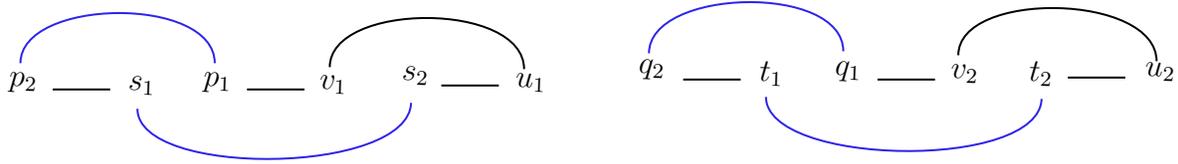
\begin{figure}[htp]
        \centering
        \tikzset{every picture/.style={line width=0.75pt}} 

\begin{tikzpicture}[x=0.75pt,y=0.75pt,yscale=-1,xscale=1]

\draw [line width=0.75]    (285.75,98.25) -- (256.75,98.25) ;
\draw [line width=0.75]    (187.75,100.25) -- (158.75,100.25) ;
\draw [line width=0.75]    (89.75,100.25) -- (60.75,100.25) ;
\draw [color={rgb, 255:red, 45; green, 35; blue, 235 }  ,draw opacity=1 ][line width=0.75]    (241.5,107) .. controls (240.5,143.5) and (103.5,145) .. (103.5,110) ;
\draw [color={rgb, 255:red, 45; green, 35; blue, 235 }  ,draw opacity=1 ][line width=0.75]    (142.5,87) .. controls (142.5,52.5) and (44.5,54) .. (44.5,87) ;
\draw [color={rgb, 255:red, 0; green, 0; blue, 0 }  ,draw opacity=1 ][line width=0.75]    (298.5,90) .. controls (298.5,55.5) and (200.5,56) .. (200.5,89) ;
\draw [line width=0.75]    (601.75,94.25) -- (572.75,94.25) ;
\draw [line width=0.75]    (505.75,95.25) -- (476.75,95.25) ;
\draw [line width=0.75]    (407.75,95.25) -- (378.75,95.25) ;
\draw [color={rgb, 255:red, 45; green, 35; blue, 235 }  ,draw opacity=1 ][line width=0.75]    (559.5,105) .. controls (558.5,141.5) and (420.5,139) .. (420.5,104) ;
\draw [color={rgb, 255:red, 45; green, 35; blue, 235 }  ,draw opacity=1 ][line width=0.75]    (459.5,81) .. controls (459.5,46.5) and (361.5,49) .. (361.5,82) ;
\draw [color={rgb, 255:red, 0; green, 0; blue, 0 }  ,draw opacity=1 ][line width=0.75]    (617.5,86) .. controls (617.5,51.5) and (517.5,50) .. (517.5,83) ;

\draw (293.06,90.32) node [anchor=north west][inner sep=0.75pt]  [rotate=-2]  {$u_{1}$};
\draw (235.56,86.84) node [anchor=north west][inner sep=0.75pt]  [rotate=-2]  {$s_{2}$};
\draw (194.06,91.32) node [anchor=north west][inner sep=0.75pt]  [rotate=-2]  {$v_{1}$};
\draw (135.56,89.87) node [anchor=north west][inner sep=0.75pt]  [rotate=-2]  {$p_{1}$};
\draw (97.56,91.84) node [anchor=north west][inner sep=0.75pt]  [rotate=-2]  {$s_{1}$};
\draw (37.56,89.87) node [anchor=north west][inner sep=0.75pt]  [rotate=-2]  {$p_{2}$};
\draw (610.56,84.84) node [anchor=north west][inner sep=0.75pt]  [rotate=-2]  {$u_{2}$};
\draw (552.06,84.36) node [anchor=north west][inner sep=0.75pt]  [rotate=-2]  {$t_{2}$};
\draw (512.56,85.84) node [anchor=north west][inner sep=0.75pt]  [rotate=-2]  {$v_{2}$};
\draw (454.06,84.39) node [anchor=north west][inner sep=0.75pt]  [rotate=-2]  {$q_{1}$};
\draw (416.06,84.36) node [anchor=north west][inner sep=0.75pt]  [rotate=-2]  {$t_{1}$};
\draw (355.06,83.39) node [anchor=north west][inner sep=0.75pt]  [rotate=-2]  {$q_{2}$};

\end{tikzpicture}
        \caption{The graph $\partial_{exp}\mathcal{M}(D, x,y)$ for when $D$ is the resolution configuration described in Figure~\ref{fig:interesting-cases}~2.C.}
        \label{fig:Case2C-boundary-moduli-space}
    \end{figure}
\end{proof}
\begin{theorem}\label{index3-theorem}
    For an index \(3\) basic decorated resolution configuration \((D, x, y)\), the graph \(\partial_{exp} \mathcal{M}(D, x, y)\) is either a 6-cycle or a disjoint union of two 6-cycles. As a result, the maps \(\mathcal{F}|_{\partial}: \partial_{exp} \mathcal{M}(D, x, y) \to \partial \mathcal{M}_{\mathscr{C}_{C}(3)}(\overline{1}, \overline{0})\) are trivial covering maps, and by Proposition \ref{induction-proposition-for-resolution-moduli-spaces}, we obtain a unique extension \(\mathcal{F}: \mathcal{M}(D, x, y) \to \partial \mathcal{M}_{\mathscr{C}_{C}(3)}(\overline{1}, \overline{0})\) satisfying the conditions \textnormal{(RM-1)} - \textnormal{(RM-4)} of  Definition \ref{def:resolution moduli space}.
\end{theorem}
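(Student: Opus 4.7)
The plan is to reduce the statement to a finite case analysis on the combinatorial type of \( D \), using the lemmas established earlier in the section. First I would observe that, since \( D \) is a basic resolution configuration of index 3, every circle in \( Z(D) \) must meet at least one of the three arcs, so \( \# Z(D) \le 6 \). Partition the argument into two regimes: \( \# Z(D) \in \{4,5,6\} \) and \( \# Z(D) \in \{1,2,3\} \). In the first regime, a simple counting argument (three arcs have six endpoints, so with four or more circles at least one circle must contain exactly one endpoint) shows that \( D \) has a leaf, and then Lemma~\ref{moduliSpace-containing-leaf} immediately yields that \( \partial_{exp}\mathcal{M}(D,x,y) \) is a \( 6 \)-cycle or a disjoint union of two \( 6 \)-cycles.

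For the second regime, I would appeal to Lemma~\ref{lemma:graphG(D)-when-number-of-circles-within3}, which enumerates all possible graphs \( G(D) \): the thirteen types depicted in Figure~\ref{fig:graphG(D)-when-number-of-circles-within3}. Of these, Lemma~\ref{throwing-away-the-boring-cases} already disposes of the ten cases 3.A, 3.B, 3.C, 3.D, 3.E, 3.F, 3.G, 2.B, 2.D, 2.E, either by producing a leaf (invoking Lemma~\ref{moduliSpace-containing-leaf}), a coleaf (invoking Corollary~\ref{moduliSpaces-containing-coleaf}), or by ruling out any butterfly face inside the hypercube. So only the three cases 2.A, 2.C, and 1.A remain. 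As noted in Remark~\ref{figuring-out-the-interesting-cases}, the labeling data for each is essentially forced, and up to equivalence 2.A and 1.A are dual to one another, so by Lemma~\ref{graph-poset-and-dual-poset-are-isomorphic} they produce isomorphic graphs \( \partial_{exp}\mathcal{M}(D,x,y) \). Then Lemma~\ref{analysing Case 2A} handles 2.A (and hence 1.A), and Lemma~\ref{analysing Case 2C} handles 2.C, each giving a disjoint union of two \( 6 \)-cycles.

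Having verified that \( \partial_{exp}\mathcal{M}(D,x,y) \) is always a disjoint union of \( 6 \)-cycles, I would then invoke the inductive machinery. Since \( \partial\mathcal{M}_{\mathscr{C}_C(3)}(\overline{1},\overline{0}) \) is a single \( 6 \)-cycle (the boundary of the permutohedron \( P_3 \), see Figure~\ref{fig:CubeFlowCategory}), and since \( \mathcal{F}|_{\partial} \) is a covering map respecting the graph structure by Proposition~\ref{induction-proposition-for-resolution-moduli-spaces}(E-1) and~(E-2), the fact that each component of \( \partial_{exp}\mathcal{M}(D,x,y) \) is itself a \( 6 \)-cycle means that \( \mathcal{F}|_{\partial} \) restricts to a homeomorphism on each component, i.e.\ it is a trivial covering map. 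The extension \( \mathcal{F}:\mathcal{M}(D,x,y)\to\mathcal{M}_{\mathscr{C}_C(3)}(\overline{1},\overline{0}) \) then exists by Proposition~\ref{induction-proposition-for-resolution-moduli-spaces}(E-3), and is unique up to diffeomorphism by~(E-4), since the ambient dimension \( n = 2 \geq 2 \). Conditions~(RM-1)--(RM-4) follow from the inductive assumptions together with the construction in Proposition~\ref{induction-proposition-for-resolution-moduli-spaces}.

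The principal obstacle, and the real content of the proof, is the butterfly-matching bookkeeping needed to verify that in cases 2.A and 2.C the boundary graph closes up into \emph{disjoint} \( 6 \)-cycles rather than a single \( 12 \)-cycle. This is precisely where the choice of butterfly matching in Definition~\ref{butterfly-matching} matters: a different convention would pair the twelve vertices into different edge patterns, and one must check that the three butterfly faces appearing inside each of these hypercubes are matched consistently so that the cycles split. Lemmas~\ref{analysing Case 2A} and \ref{analysing Case 2C} carry out this verification explicitly; everything else in the proof is bookkeeping.
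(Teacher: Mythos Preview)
Your proposal is correct and follows essentially the same route as the paper: the paper's proof simply cites Lemmas~\ref{lemma:graphG(D)-when-number-of-circles-within3}, \ref{throwing-away-the-boring-cases}, \ref{analysing Case 2A}, \ref{analysing Case 2C}, and Remark~\ref{figuring-out-the-interesting-cases}, and your write-up accurately unpacks how these pieces assemble (including the reduction of case~1.A to case~2.A via duality and Lemma~\ref{graph-poset-and-dual-poset-are-isomorphic}).
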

\begin{proof}
    The statement follows from Lemmas~\ref{lemma:graphG(D)-when-number-of-circles-within3}, \ref{throwing-away-the-boring-cases}, \ref{analysing Case 2A}, and \ref{analysing Case 2C}, as well as Remark~\ref{figuring-out-the-interesting-cases}.
\end{proof}
\subsection{$n$-dimensional moduli spaces, $n\geq 3$}
\begin{proposition}\label{index-greater-than-3}
    For index \((n+1)\) basic decorated resolution configurations with \(n \geq 3\), there exist spaces \(\mathcal{M}(D, x, y)\) and maps \(\mathcal{F}\) satisfying conditions \textnormal{(RM-1)} through \textnormal{(RM-4)} of  Definition \ref{def:resolution moduli space}.
\end{proposition}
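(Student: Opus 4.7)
The plan is to proceed by induction on the index of $D$, with the base cases of indices $1, 2, 3$ already handled in Sections~\ref{0-dimensional moduli spaces}, \ref{1-dimensional moduli spaces}, \ref{2-dimensional moduli spaces} and Theorem~\ref{index3-theorem}. The inductive hypothesis is that the spaces $\mathcal{M}(D, x, y)$ and maps $\mathcal{F}$ have been constructed for every basic decorated resolution configuration of index at most $n$, satisfying \textnormal{(RM-1)}–\textnormal{(RM-4)}. Let $(D, x, y)$ be a basic decorated resolution configuration of index $n+1$ with $n \geq 3$.

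First, I will invoke parts \textnormal{(E-1)} and \textnormal{(E-2)} of Proposition~\ref{induction-proposition-for-resolution-moduli-spaces}: the previously constructed $\mathcal{F}$ maps, together with the composition structure of \textnormal{(RM-1)}, assemble on
\[
\partial_{exp}\mathcal{M}(D,x,y) = \coprod_{\substack{(E,z)\in P(D,x,y)\\ 0 < \mathrm{ind}(D\setminus E) < n+1}} \mathcal{M}(D\setminus E, z|, y|)\times \mathcal{M}(E\setminus s(D), x|, z|)
\]
into a continuous map $\mathcal{F}|_{\partial}: \partial_{exp}\mathcal{M}(D,x,y) \to \partial \mathcal{M}_{\mathscr{C}_C(n+1)}(\overline{1}, \overline{0})$ that is a covering map and respects the $\langle n\rangle$-boundary structure on both sides.

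Next, I will apply the crucial triviality input of part \textnormal{(E-3)}: the required extension $\mathcal{F}:\mathcal{M}(D,x,y)\to \mathcal{M}_{\mathscr{C}_C(n+1)}(\overline{1},\overline{0})$ exists if and only if $\mathcal{F}|_\partial$ is a trivial covering map. For $n \geq 3$, the moduli space $\mathcal{M}_{\mathscr{C}_C(n+1)}(\overline{1}, \overline{0})$ is diffeomorphic to the permutohedron $P_{n+1}$, whose boundary is a PL sphere of dimension $n-1 \geq 2$ and hence simply connected. Since every covering of a simply connected space is trivial on each component, $\mathcal{F}|_\partial$ is automatically a trivial covering, so an extension $(\mathcal{M}(D,x,y), \mathcal{F})$ exists; condition \textnormal{(E-4)} guarantees it is unique up to diffeomorphism fixing the boundary. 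Conditions \textnormal{(RM-1)}–\textnormal{(RM-4)} hold for this extension by construction: \textnormal{(RM-2)} and \textnormal{(RM-3)} are built into \textnormal{(E-3)}, \textnormal{(RM-4)} follows because each component of $\mathcal{M}(D,x,y)$ is connected and maps to the connected manifold $P_{n+1}$, and \textnormal{(RM-1)} is preserved because the composition maps on the boundary came from previously constructed moduli spaces that already satisfied it.

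In contrast to the low-index analysis, the inductive step is formally immediate—there is no combinatorial obstacle, precisely because $\pi_1(\partial P_{n+1}) = 0$ for $n\geq 3$. The real difficulty, already addressed earlier in the paper, was the $n=2$ case (Section~\ref{1-dimensional moduli spaces}), where the butterfly matching had to be chosen to trivialize the nontrivial covering of $\partial\mathcal{M}_{\mathscr{C}_C(2)}(\overline{1},\overline{0})$, and the $n=3$ case (Theorem~\ref{index3-theorem}), where each graph $\partial_{exp}\mathcal{M}(D,x,y)$ had to be verified by direct case analysis to be a disjoint union of $6$-cycles so that the covering over $\partial P_3$ (itself a $6$-cycle) is trivial. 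Once these are in place, the induction propagates without further choices, completing the proof.
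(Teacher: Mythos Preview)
Your proposal is correct and follows essentially the same approach as the paper: both argue by induction, invoking Proposition~\ref{induction-proposition-for-resolution-moduli-spaces} and the fact that for $n\geq 3$ the boundary $\partial \mathcal{M}_{\mathscr{C}_C(n+1)}(\overline{1},\overline{0}) \cong \partial P_{n+1} \cong S^{n-1}$ is simply connected, so that the covering $\mathcal{F}|_\partial$ is automatically trivial and (E-3) applies. Your write-up is more detailed (explicitly invoking (E-1), (E-2), (E-4) and commenting on why (RM-1)--(RM-4) persist), but the mathematical content is the same.
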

\begin{proof}
    From Definition~\ref{Cube-flow-category}, observe that \(\mathcal{M}_{\mathscr{C}_{C}(n+1)}(\overline{1}, \overline{0})\) is diffeomorphic to the permutohedron \(P_{n+1}\), which in turn is diffeomorphic to the closed disk \(\mathbb{D}^{n}\). Consequently, for \(n \geq 3\), the boundary \(\partial \mathcal{M}_{\mathscr{C}_{C}(n+1)}(\overline{1}, \overline{0})\) is diffeomorphic to the sphere \(S^{n-1}\). Since every covering of \(S^{n-1}\) is trivial for \(n \geq 3\), the statement of the proposition follows by applying Proposition~\ref{induction-proposition-for-resolution-moduli-spaces} inductively.
\end{proof}
\begin{theorem}\label{maintheorem-resolution-moduli-space}
   For any basic decorated resolution configuration \( (D, x, y) \), there exist spaces \( \mathcal{M}(D, x, y) \) and maps \( \mathcal{F} \) satisfying conditions~\textnormal{(RM-1)} through~\textnormal{(RM-4)} of Definition~\ref{def:resolution moduli space}.
\end{theorem}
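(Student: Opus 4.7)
The plan is to prove the theorem by induction on the index $n = \mathrm{ind}(D)$ of the basic decorated resolution configuration $(D, x, y)$, assembling all the constructions carried out in Section~\ref{section:Resolution Moduli spaces} into a single inductive argument. The key observation is that Proposition~\ref{induction-proposition-for-resolution-moduli-spaces} reduces the construction at each stage to checking that the boundary covering map $\mathcal{F}|_{\partial}: \partial_{exp}\mathcal{M}(D,x,y) \to \partial\mathcal{M}_{\mathscr{C}_{C}(n)}(\overline{1},\overline{0})$ is trivial, and all the case analysis needed for low-dimensional cases has already been completed.

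First, I would establish the base cases. For $n = 1$, the construction in Section~\ref{0-dimensional moduli spaces} defines $\mathcal{M}(D,x,y)$ as a point with $\mathcal{F}$ the trivial $1$-fold covering, and conditions (RM-1)--(RM-4) are verified directly. For $n = 2$, the analysis in Section~\ref{1-dimensional moduli spaces} shows that $D$ falls into one of three types: configurations with a leaf or coleaf, the two-vertex two-edge graph case (each giving $k = 2$), or butterfly configurations ($k = 4$); the butterfly matching (Definition~\ref{butterfly-matching}) resolves the only genuine choice required at this level, yielding the desired $\mathcal{M}(D,x,y)$ as either an interval or a disjoint union of two intervals together with the covering map $\mathcal{F}$. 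For $n = 3$, Theorem~\ref{index3-theorem} already establishes that $\partial_{exp}\mathcal{M}(D,x,y)$ is either a $6$-cycle or a disjoint union of two $6$-cycles, hence $\mathcal{F}|_{\partial}$ is a trivial covering map and Proposition~\ref{induction-proposition-for-resolution-moduli-spaces}(E-3) provides the extension.

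For the inductive step, assume that for all basic decorated resolution configurations of index at most $n$ (where $n \geq 3$), the moduli spaces and maps have been constructed satisfying (RM-1)--(RM-4). Given a basic decorated resolution configuration $(D, x, y)$ with $\mathrm{ind}(D) = n + 1$, parts (E-1) and (E-2) of Proposition~\ref{induction-proposition-for-resolution-moduli-spaces} assemble the previously defined lower-index spaces and maps into a continuous covering map $\mathcal{F}|_{\partial}: \partial_{exp}\mathcal{M}(D,x,y) \to \partial\mathcal{M}_{\mathscr{C}_{C}(n+1)}(\overline{1},\overline{0})$. Since $\mathcal{M}_{\mathscr{C}_{C}(n+1)}(\overline{1},\overline{0}) \cong P_{n+1} \cong \mathbb{D}^{n}$, the boundary $\partial\mathcal{M}_{\mathscr{C}_{C}(n+1)}(\overline{1},\overline{0})$ is diffeomorphic to $S^{n-1}$ with $n \geq 3$, which is simply connected. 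Consequently every covering of this boundary is trivial, so $\mathcal{F}|_{\partial}$ is automatically a trivial covering, and Proposition~\ref{induction-proposition-for-resolution-moduli-spaces}(E-3) supplies the required extension $\mathcal{F}: \mathcal{M}(D,x,y) \to \mathcal{M}_{\mathscr{C}_{C}(n+1)}(\overline{1},\overline{0})$ satisfying (RM-1)--(RM-4); uniqueness up to diffeomorphism follows from part (E-4). This is precisely the content of Proposition~\ref{index-greater-than-3}, which I would cite directly.

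The only step where real work is involved is the index $3$ case (Theorem~\ref{index3-theorem}), since that is where the triviality of $\mathcal{F}|_{\partial}$ could conceivably fail due to the butterfly matching choices interacting globally in the cube's $2$-dimensional faces; this is already handled through the exhaustive case analysis of Lemmas~\ref{moduliSpace-containing-leaf}--\ref{analysing Case 2C}. Given those results, the present theorem is a formal packaging: the proof reduces to invoking the base cases $n=1,2,3$ and then applying the simply-connectedness of $S^{n-1}$ for $n \geq 3$ inductively via Proposition~\ref{induction-proposition-for-resolution-moduli-spaces}. The hard part---verifying triviality on the hexagonal boundary of the permutohedron $P_3$---has already been absorbed into the butterfly matching convention, so no new obstruction appears here.
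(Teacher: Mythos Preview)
Your proposal is correct and follows exactly the same approach as the paper: the proof simply assembles the constructions of Sections~\ref{0-dimensional moduli spaces} and~\ref{1-dimensional moduli spaces}, Theorem~\ref{index3-theorem}, and Proposition~\ref{index-greater-than-3} into the inductive scheme governed by Proposition~\ref{induction-proposition-for-resolution-moduli-spaces}. Your write-up is more expansive than the paper's one-line citation, but the logical structure is identical.
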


\begin{proof}
    The result follows from the constructions and results presented in Section~\ref{0-dimensional moduli spaces}, Section~\ref{1-dimensional moduli spaces}, Theorem~\ref{index3-theorem}, and Proposition~\ref{index-greater-than-3}.
\end{proof}
\section{Khovanov–Lipshitz–Sarkar Stable Homotopy Type for Planar Trivalent Graphs with Perfect Matchings}\label{}
For a planar trivalent graph \(G\) with a perfect matching \(M\) such that \((G, M) \in \mathscr{G}\), we define the associated \(2\)-factor flow category \(\mathscr{C}(\Gamma_{M})\), similar to the Lipshitz and Sarkar's Khovanov flow category \cite[Definition 5.3]{KhStableHomotopyType}. Using the resolution moduli spaces \(\mathcal{M}(D, x, y)\) and the maps \(\mathcal{F}\) constructed in Section~\ref{section:Resolution Moduli spaces}, we obtain a cover functor \(\mathscr{F}: \mathscr{C}(\Gamma_{M}) \to \mathscr{C}_{C}(|M|)\). This functor equips the \(2\)-factor flow category \(\mathscr{C}(\Gamma_{M})\) with the structure of a framed flow category. Applying the Cohen–Jones–Segal realization to \(\mathscr{C}(\Gamma_{M})\) then yields CW complex, which we refer to as the \textit{2-factor space}. Taking the formal desuspension of this space produces a spectrum, which we refer to as the 2-factor spectrum. This construction closely parallels that of the Khovanov spectrum introduced in \cite[Definition 5.5]{KhStableHomotopyType}.

\begin{definition} \label{def:2factor-flow-category}
    Given a perfect matching graph \(\Gamma_{M}\) representing a planar trivalent graph \(G\) with a perfect matching \(M\), where \((G, M) \in \mathscr{G}\), we define the associated \emph{2-factor flow category} \(\mathscr{C}(\Gamma_{M})\) as follows. 
    
    Let \(|M| = n\) denote the number of perfect matching edges. Fix an ordering of the edges in \(M\), and label them \(M_1, \dots, M_n\). The \emph{objects} of the category are labeled resolution configurations of the form \(\mathbf{x} = (D_{\Gamma_{M}}(u), x)\), where \(u \in \{0,1\}^n\). (See Definition~\ref{resolution configuration corresponding to a state} for the notion of \(D_{\Gamma_{M}}(u)\).)
    
    Each object \(\mathbf{x}\) is equipped with two gradings: the \emph{homological grading} \(\hgr(\mathbf{x})\) and the \emph{quantum grading} \(\qgr(\mathbf{x})\), as defined in Definition~\ref{gradings}. In this category, we use the homological grading as the grading for the flow category.
    
    For objects \(\mathbf{x} = (D_{\Gamma_{M}}(u), x)\) and \(\mathbf{y} = (D_{\Gamma_{M}}(v), y)\), the morphism space \(\mathrm{Hom}_{\mathscr{C}(\Gamma_{M})}(\mathbf{x}, \mathbf{y})\) is empty unless \(\mathbf{y} \prec \mathbf{x}\). But if $\mathbf{y}\prec \mathbf{x}$, then we consider the decorated resolution configuration $\big(D_{\Gamma_{M}}(v)\backslash D_{\Gamma_{M}}(u), x|, y|\big)$, where $x|$ is the restriction of the labeling $x$ on the circles of $s\big(D_{\Gamma_{M}}(v)\backslash D_{\Gamma_{M}}(u)\big) = D_{\Gamma_{M}}(u)\backslash D_{\Gamma_{M}}(v)$, and $y|$ is the restriction of the labeling of $y$ on the circles of $D_{\Gamma_{M}}(v)\backslash D_{\Gamma_{M}}(u)$. By Lemma~\ref{basic}, the decorated resolution configuration \( \big(D_{\Gamma_{M}}(v) \setminus D_{\Gamma_{M}}(u), x|, y|\big) \) is basic. If \( \mathbf{y} \prec \mathbf{x} \), we define
    \[
        \mathrm{Hom}_{\mathscr{C}(\Gamma_{M})}(\mathbf{x}, \mathbf{y}) := \mathcal{M}\big(D_{\Gamma_{M}}(v) \setminus D_{\Gamma_{M}}(u), x|, y|\big).
    \]
    The composition of morphism is defined using the composition maps for the resolution moduli spaces described in the Definition~\ref{def:resolution moduli space}. Let $\mathbf{y}= (D_{\Gamma_{M}}(v), y), \mathbf{z} = (D_{\Gamma_{M}}(w), z), \mathbf{x} = (D_{\Gamma_{M}}(u), x)$ and  are objects that satisfies $\mathbf{y} \prec \mathbf{z} \prec \mathbf{x}$. We define the composition of morphism as the following composition map of the resolution moduli spaces: 
    $$
        \mathcal{M}\big(D_{\Gamma_{M}}(v)\backslash D_{\Gamma_{M}}(w), z|, y|\big) \times \mathcal{M}\big(D_{\Gamma_{M}}(w)\backslash D_{\Gamma_{M}}(u), x|, z|\big) 
        \xrightarrow[]{ \circ } 
        \mathcal{M}\big(D_{\Gamma_{M}}(v)\backslash D_{\Gamma_{M}}(u), x|, y|\big).
    $$
\end{definition}

\begin{definition}\label{cover-functor}
    Let \(\Gamma_{M}\) be a perfect matching graph representing \((G, M) \in \mathscr{G}\), with \(|M| = n\). We define a forgetful functor \(\mathscr{F}\) from the 2-factor flow category \(\mathscr{C}(\Gamma_{M})\) to the cube flow category \(\mathscr{C}_{C}(n)\) as follows. On objects, the functor \(\mathscr{F}\) is given by $\mathscr{F}\big((D_{\Gamma_{M}}(u), x)\big) = u$, for $u\in \{0,1\}^{n}$. Let $\mathbf{x}= \big(D_{\Gamma_{M}}(u), x\big)$ and $\mathbf{y} = \big(D_{\Gamma_{M}}(v),y\big)$ are two objects with $\mathbf{y}\prec \mathbf{x}$, then at the morphism level $\mathscr{F}: \mathrm{Hom}_{\mathscr{C}(\Gamma_{M})}(\mathbf{x}, \mathbf{y}) \to \mathrm{Hom}_{\mathscr{C}_{C}(n)}\big(\mathscr{F}(\mathbf{x}), \mathscr{F}(\mathbf{y})\big) = \mathcal{M}_{\mathscr
    {C}_{C}(n)}(u,v)$ is defined to be the composition:
    $$
    \mathcal{M}\big(D_{\Gamma_{M}}(v)\backslash D_{\Gamma_{M}}(u), x|, y|\big) \xrightarrow[]{\mathcal{F}} \mathcal{M}_{\mathscr{C}_{C}(|u|-|v|)}(\overline{1}, \overline{0}) \xrightarrow[]{\mathcal{I}_{u,v}} \mathcal{M}_{\mathscr{C}_{C}(n)}(u,v).
    $$
    where the last functor $\mathcal{I}_{u,v}$ is the inclusion functor. 
\end{definition}
 
\begin{theorem}\label{2factor-flow-category-is-a-cubical-flow-category}
    Given a perfect matching graph \( \Gamma_{M} \) representing a planar trivalent graph \( G \) with perfect matching \( M \), where \( (G,M) \in \mathscr{G} \), the $2$-factor flow category $\mathscr{C}(\Gamma_{M})$ is a cubical flow category.
\end{theorem}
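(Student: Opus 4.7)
The plan is to verify the two defining properties of a cubical flow category: first, that $\mathscr{C}(\Gamma_{M})$ is a flow category in the sense of \cite[Section 3.2]{KhStableHomotopyType}, and second, that the forgetful functor $\mathscr{F}: \mathscr{C}(\Gamma_{M}) \to \mathscr{C}_{C}(n)$ of Definition~\ref{cover-functor} is a cover functor in the sense of \cite[Definition 3.28]{KhStableHomotopyType}. Because the category and the functor have been assembled directly from the resolution moduli spaces $\mathcal{M}(D,x,y)$ produced in Theorem~\ref{maintheorem-resolution-moduli-space}, the entire argument reduces to reading off the consequences of conditions \textnormal{(RM-1)}–\textnormal{(RM-4)}.

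First, I would check the flow category axioms. The objects of $\mathscr{C}(\Gamma_M)$ are graded by $\hgr$, and for any two objects $\mathbf{x} = (D_{\Gamma_M}(u),x)$ and $\mathbf{y} = (D_{\Gamma_M}(v),y)$ with $\mathbf{y} \prec \mathbf{x}$, the morphism space is the resolution moduli space $\mathcal{M}(D_{\Gamma_M}(v)\setminus D_{\Gamma_M}(u), x|, y|)$. By Theorem~\ref{maintheorem-resolution-moduli-space} this is a smooth $\langle k-1 \rangle$-manifold of dimension $\hgr(\mathbf{x})-\hgr(\mathbf{y})-1$ where $k = \hgr(\mathbf{x})-\hgr(\mathbf{y})$. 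Composition is supplied by the $\circ$ maps of (RM-1), which are smooth and strictly associative by construction. Condition (RM-2) identifies the $i$-th boundary stratum of $\mathcal{M}(D,x,y)$ with the image of compositions over intermediate objects $(E,z)\in P(D,x,y)$ of index $i$; this is exactly the boundary decomposition required of a flow category. Hence $\mathscr{C}(\Gamma_M)$ is a flow category.

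Next, I would verify that $\mathscr{F}$ is a cover functor. On objects $\mathscr{F}$ is the tautological map to $\{0,1\}^{n}$ that manifestly preserves the grading, and on morphism spaces it is the composition of $\mathcal{F}$ from Theorem~\ref{maintheorem-resolution-moduli-space} with the inclusion functor $\mathcal{I}_{u,v}$ of Definition~\ref{inclusion functor}. Condition (RM-3) says $\mathcal{F}$ is a covering map and a local diffeomorphism, while $\mathcal{I}_{u,v}$ identifies $\mathcal{M}_{\mathscr{C}_{C}(|u|-|v|)}(\overline{1},\overline{0})$ diffeomorphically with $\mathcal{M}_{\mathscr{C}_{C}(n)}(u,v)$; together these imply that $\mathscr{F}$ restricts to a covering map on each morphism space. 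Compatibility with composition—the key functoriality requirement—is precisely the commutative diagram in (RM-1). Finally, (RM-4) guarantees that this covering is trivial on each connected component, which is the triviality requirement in the definition of a cover functor.

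The main obstacle in this proof is not a single deep argument but the bookkeeping needed to match the $\langle k \rangle$-manifold face structures on the two sides and to confirm that the inclusion $\mathcal{I}_{u,v}$ indeed picks out the correct connected component of $\mathcal{M}_{\mathscr{C}_{C}(n)}(u,v)$ compatibly with the $\prec$-chain decomposition of $P(D,x,y)$. All of these points have effectively been prepared in Section~\ref{section:Resolution Moduli spaces}, so the write-up will essentially mirror the corresponding verification for the Khovanov flow category in \cite[Section 5]{KhStableHomotopyType}, with $(G,M)\in \mathscr{G}$ and the butterfly matching of Definition~\ref{butterfly-matching} playing the roles that the planar link diagram and the ladybug matching play there.
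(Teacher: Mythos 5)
Your proposal is correct and takes essentially the same approach as the paper: the paper's proof simply cites Theorem~\ref{maintheorem-resolution-moduli-space} together with the definitions of a cover functor and of a cubical flow category, which is exactly the verification via conditions (RM-1)--(RM-4) that you spell out in detail.
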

\begin{proof}
    Due to Theorem~\ref{maintheorem-resolution-moduli-space}, the functor \( \mathscr{F} \) is a cover functor. We refer the reader to \cite[Definition~3.28]{KhStableHomotopyType} for the definition of a cover functor. For the definition of a cubical flow category, we refer the reader to \cite[Definition~3.21]{Burnside-stable-homotopy}.
\end{proof}

Since the cube flow category \( \mathscr{C}_{C}(n) \) is a framed flow category, the $2$-factor flow category is also a framed flow category, with the framing induced via the cover \( \mathscr{F} \).

\begin{definition}
    Let \( \Gamma_{M} \) be a perfect matching graph representing \( (G, M) \in \mathscr{G} \), with \( |M| = n \).
    Consider the framed flow category \( (\mathscr{C}(\Gamma_{M}), \imath, \Phi) \) associated to \( \Gamma_{M} \),
    where \( \imath \) is a neat embedding for some \( \mathbf{d} \), induced from the neat embedding of the cube flow category \( \mathscr{C}_{C}(n) \) via the cover functor \( \mathscr{F} \), as defined in Definition~\ref{cover-functor}.
    The coherent framing \( \Phi \) is similarly induced from the coherent framing of \( \mathscr{C}_{C}(n) \).
    Note that the (flow category) gradings of all objects in \( \mathscr{C}(\Gamma_{M}) \) lie within the interval \([0, n]\).
    We define the \emph{2-factor space} to be the Cohen-Jones-Segal realization \( |\mathscr{C}(\Gamma_{M})|_{\imath, \Phi, 0, n} \) 
    of the framed flow category \( (\mathscr{C}(\Gamma_{M}), \imath, \Phi) \). Note that the 2-factor space is defined in the same spirit as the Khovanov space constructed by Lipshitz and Sarkar; see \cite[Definition 5.5]{KhStableHomotopyType}.
\end{definition}

For a framed flow category \(\mathscr{C}\), the associated cochain complex \((C^{*}(\mathscr{C}), \partial)\) is defined as follows; see \cite[Definition~3.20]{KhStableHomotopyType}. The cochain groups \(C^{m}(\mathscr{C})\) are \(\mathbb{Z}\)-modules freely generated by the objects in \(\Ob(\mathscr{C})\) whose grading is \(m\). For \(x, y \in \Ob(\mathscr{C})\) with \(\mathrm{gr}(x) = \mathrm{gr}(y) + 1\), the coefficient of \(\partial y\) at \(x\), denoted by \(\langle \partial y, x \rangle\), is given by the signed count of points in the moduli space \(\mathcal{M}_{\mathscr{C}}(x, y)\).

Similarly, one may define the associated cochain complex with \(\mathbb{Z}_{2}\)-coefficients, denoted \((\overline{C^{*}(\mathscr{C})}, \overline{\partial}; \mathbb{Z}_{2})\), as follows; see also \cite{Transverse-extreme-spectra}. The cochain groups \(\overline{C^{m}(\mathscr{C})}\) are \(\mathbb{Z}_{2}\)-modules freely generated by the objects in \(\Ob(\mathscr{C})\) of grading \(m\), and for \(x, y \in \Ob(\mathscr{C})\) with \(\mathrm{gr}(x) = \mathrm{gr}(y) + 1\), the coefficient \(\langle \overline{\partial} y, x \rangle\) is the mod 2 count of points in \(\mathcal{M}_{\mathscr{C}}(x, y)\).

For the purposes of Lemma~\ref{lemma:working-with-Z2-coefficient}, we assume the reader is familiar with the Cohen--Jones--Segal realization of a framed flow category; see \cite[Definition~3.23]{KhStableHomotopyType}, and also \cite{CJS}.

\begin{lemma}[\cite{Transverse-extreme-spectra}, Lemma 3.14]\label{lemma:working-with-Z2-coefficient}
    Let \((\mathscr{C}, \imath, \Phi)\) be a framed flow category, where \(\imath\) is a neat embedding for some tuple \(\mathbf{d}\). Suppose the (flow category) gradings of all objects in \(\mathscr{C}\) lie in the interval \([B, A]\), and let  
    \[
    N = d_{B} + d_{B+1} + \cdots + d_{A-1} - B.
    \]  
    Let \(|\mathscr{C}|\) denote the Cohen–Jones–Segal realization of \((\mathscr{C}, \imath, \Phi)\). If each nonempty \(0\)-dimensional moduli space \(\mathcal{M}_{\mathscr{C}}(x, y)\) consists of a single point, then the reduced cellular cochain complex \(\widetilde{C}^{*}(|\mathscr{C}|; \mathbb{Z}_{2})[-N]\) is isomorphic to the associated cochain complex \((\overline{C^{*}(\mathscr{C})}, \overline{\partial}; \mathbb{Z}_{2})\), where \([\cdot]\) denotes the degree shift operator.
\end{lemma}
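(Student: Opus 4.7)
The plan is to unpack the Cohen--Jones--Segal realization $|\mathscr{C}|_{\imath,\Phi,B,A}$ as an explicit CW complex, identify its cells with the objects of $\mathscr{C}$, compute the dimensions of these cells, and then match the mod $2$ cellular coboundary with the coboundary $\overline{\partial}$ of the associated cochain complex. Because the target coefficient ring is $\mathbb{Z}_{2}$, all sign ambiguities stemming from the framing $\Phi$ collapse, so the argument reduces to a dimension bookkeeping calculation together with a mod $2$ count of $0$-dimensional moduli spaces.

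First I would recall the construction of $|\mathscr{C}|_{\imath,\Phi,B,A}$. For each object $x$ with $\hgr(x)=m$, the realization contains a closed cell $\mathcal{C}(x)$ obtained as a thickening (by the neat embedding $\imath$ and the framing $\Phi$) of a point; its dimension is
\[
\dim \mathcal{C}(x) \;=\; (m-B) + \bigl(d_{B}+d_{B+1}+\cdots+d_{A-1}\bigr) \;=\; m + N.
\]
Thus, after applying the degree-shift $[-N]$, the cell $\mathcal{C}(x)$ sits in cellular cochain degree exactly $m=\hgr(x)$. In particular we obtain a natural identification of $\mathbb{Z}_{2}$-modules
\[
\widetilde{C}^{m}\bigl(|\mathscr{C}|;\mathbb{Z}_{2}\bigr)[-N] \;\cong\; \overline{C^{m}(\mathscr{C})},
\]
sending the cellular generator dual to $\mathcal{C}(x)$ to the generator $x$.

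Next I would analyze the attaching map of a cell $\mathcal{C}(x)$ along its $(\dim\mathcal{C}(x)-1)$-skeleton to extract the cellular coboundary. By construction, the portion of the attaching map that lands inside $\mathcal{C}(y)$ (for $\hgr(y)=m-1$) is parametrized by the compactified moduli space $\mathcal{M}_{\mathscr{C}}(x,y)$, thickened by the neat embedding data and framed via $\Phi$. The degree of this portion of the attaching map equals the signed count of points in the $0$-dimensional moduli space $\mathcal{M}_{\mathscr{C}}(x,y)$ (where signs depend on the coherent framing $\Phi$). Passing to $\mathbb{Z}_{2}$-coefficients kills the signs, so this degree reduces to the mod $2$ cardinality of $\mathcal{M}_{\mathscr{C}}(x,y)$. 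By hypothesis, each nonempty $\mathcal{M}_{\mathscr{C}}(x,y)$ of dimension $0$ is a single point, hence its mod $2$ count is $1$ exactly when $\mathcal{M}_{\mathscr{C}}(x,y)\neq\emptyset$; this is precisely $\langle\overline{\partial}y,x\rangle$ by the definition of $\overline{\partial}$.

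Finally, I would combine the identification of cochain modules with the matching of boundary coefficients to conclude that the chain-level isomorphism $\widetilde{C}^{*}(|\mathscr{C}|;\mathbb{Z}_{2})[-N]\cong(\overline{C^{*}(\mathscr{C})},\overline{\partial})$ intertwines the two differentials. The principal technical hurdle in carrying this out rigorously is keeping careful track of the cell dimensions and of which factors of $\mathbb{R}^{d_{i}}$ contribute to the attaching map of $\mathcal{C}(x)$ along $\mathcal{C}(y)$; once one verifies that the relevant thickening of a single point in $\mathcal{M}_{\mathscr{C}}(x,y)$ gives a degree-$1$ map of spheres (mod $2$), the rest is formal. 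This dimension bookkeeping step is really the only nontrivial input; the single-point assumption on $0$-dimensional moduli spaces ensures that no cancellation can occur at the $\mathbb{Z}_{2}$ level, bypassing the more delicate sign analysis that would be required for integer coefficients.
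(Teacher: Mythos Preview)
Your proposal is correct and follows essentially the same approach as the paper's proof: identify the cells of $|\mathscr{C}|$ with objects of $\mathscr{C}$ (with the appropriate dimension shift by $N$), then show that the degree of the attaching map from $\mathcal{C}(x)$ onto $\mathcal{C}(y)$ equals the signed count of $\mathcal{M}_{\mathscr{C}}(x,y)$, and finally pass to $\mathbb{Z}_{2}$ to kill the signs. The paper's version is terser, invoking the homeomorphism $C_{y}(x)\cong C(y)\times\mathcal{M}(x,y)$ and citing \cite[Lemma~3.24]{KhStableHomotopyType} for the degree computation rather than writing out the dimension bookkeeping you sketch, but the substance is the same.
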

\begin{proof}
   For \( x, y \in \mathrm{Ob}(\mathcal{C}) \) such that \( \mathrm{gr}(x) = \mathrm{gr}(y) + 1 \), consider the cellular boundary map $\pi: \partial C(x) \to C(y) / \partial C(y)$. This map has degree equal to \( \#\pi^{-1}(p) \) for any point \( p \in \operatorname{int}(C(y)) \). Since there exists a homeomorphism $C_y(x) \cong C(y) \times \mathcal{M}(x, y)$, the signed count \( \#\pi^{-1}(p) \) is equal to \( (-1)^K \cdot \#\mathcal{M}(x, y) \) for some \( K \in \mathbb{Z} \); see \cite[Lemma 3.24]{KhStableHomotopyType}. Suppose that \( \mathcal{M}(x, y) \) consists of a single point (when nonempty). Then the coefficient of \( x \) in \( \overline{\partial} y \) is $\langle \overline{\partial} y, x \rangle = \overline{1} \in \mathbb{Z}_2$. 
   
   On the other hand, the mod 2 degree of the map \( \pi \) is also \( \overline{(-1)^K} = \overline{1} \in \mathbb{Z}_2 \). The lemma now follows from the identification \( \operatorname{Hom}(\mathbb{Z}, \mathbb{Z}_2) \cong \mathbb{Z}_2 \).
\end{proof}
Note that for two objects \(\mathbf{x}\) and \(\mathbf{y}\) in \(\mathscr{C}(\Gamma_{M})\) satisfying \(\mathbf{y} \prec \mathbf{x}\), we have \(\hgr(\mathbf{y}) < \hgr(\mathbf{x})\) and \(\qgr(\mathbf{x}) = \qgr(\mathbf{y})\). Consider the full subcategory $\mathscr{C}^{j}(\Gamma_{M})$ consisting of all the objects $\mathbf{x}\in \mathrm{Ob}(\mathscr{C}(\Gamma_{M}))$ satisfying $\qgr(\mathbf{x})=j$. Note that $\mathscr{C}^{j}(\Gamma_{M})$ is a downward closed as well as an upward closed subcategory. We refer the reader to \cite[Definition 3.29]{KhStableHomotopyType} for the definitions of upward-closed and downward-closed subcategories. We can consider $ \mathscr{C}^{j}(\Gamma_{M})$ as a framed flow category, where the neat embedding and framings are induced from the framed flow category $\mathscr{C}(\Gamma_{M})$. Moreover, for each object $\mathbf{x}\in \mathrm{Ob}(\mathscr{C}(\Gamma_{M}))$, the object $\mathbf{x}\in \mathrm{Ob}(\mathscr{C}^{j}(\Gamma_{M}))$ for some unique $j$. Therefore, by Lipshitz--Sarkar \cite[Lemma 3.31]{KhStableHomotopyType}, the 2-factor space decomposes as a wedge sum indexed by the quantum gradings.
$$
|\mathscr{C}(\Gamma_{M})| \cong \bigvee_{j} |\mathscr{C}^{j}(\Gamma_{M})|
$$


\begin{theorem}\label{reduced-cohomology-is-2factor-cohomology}
   The reduced cellular cochain complex $\widetilde{C}^{*}(|\mathscr{C}^{j}(\Gamma_{M})|;\mathbb{Z}_{2})[-N]$ is isomorphic to $(C^{*,j}(\Gamma_{M}), \partial^{*,j})$ as defined in Definition \ref{def:2-factor-cohomology}. Consequently,  
    \[
    H^{i,j}(G,M) \cong \widetilde{H}^{i}\big(|\mathscr{C}^{j}(\Gamma_{M})|; \mathbb{Z}_{2}\big)[-N].
    \]
\end{theorem}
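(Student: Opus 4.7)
The plan is to invoke Lemma~\ref{lemma:working-with-Z2-coefficient} applied to the framed flow category $\mathscr{C}^{j}(\Gamma_{M})$, so that the proof reduces to two verifications: (i) every nonempty $0$-dimensional moduli space in $\mathscr{C}^{j}(\Gamma_{M})$ consists of a single point, and (ii) the associated cochain complex $(\overline{C^{*}(\mathscr{C}^{j}(\Gamma_{M}))}, \overline{\partial}; \mathbb{Z}_{2})$ can be identified with the 2-factor cochain complex $(C^{*,j}(\Gamma_{M}), \partial^{*,j})$ from Definition~\ref{def:2-factor-cohomology}. The degree shift $[-N]$ then appears exactly as in Lemma~\ref{lemma:working-with-Z2-coefficient}.

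For (i), observe that the $0$-dimensional morphism spaces of $\mathscr{C}^{j}(\Gamma_{M})$ arise between objects $\mathbf{x} = (D_{\Gamma_{M}}(u), x)$ and $\mathbf{y} = (D_{\Gamma_{M}}(v), y)$ with $\hgr(\mathbf{x}) = \hgr(\mathbf{y}) + 1$, i.e.\ $|u| = |v| + 1$. In this case $\mathrm{Hom}_{\mathscr{C}(\Gamma_{M})}(\mathbf{x}, \mathbf{y}) = \mathcal{M}(D_{\Gamma_{M}}(v)\setminus D_{\Gamma_{M}}(u), x|, y|)$ corresponds to an index $1$ basic decorated resolution configuration, and by the construction in Section~\ref{0-dimensional moduli spaces} such a moduli space is either empty or a single point. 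Nonemptiness, in turn, is precisely governed by the condition $\mathbf{y}\prec \mathbf{x}$ of Definition~\ref{partial order}, since this is exactly the partial order that makes the underlying labeled resolution configurations part of a valid decorated resolution configuration.

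For (ii), the underlying $\mathbb{Z}_{2}$-modules agree essentially by definition: the generators of $\overline{C^{i}(\mathscr{C}^{j}(\Gamma_{M}))}$ are the objects $\mathbf{x}$ with $\hgr(\mathbf{x}) = i$ and $\qgr(\mathbf{x}) = j$, which are precisely the labeled resolution configurations generating $C^{i,j}(\Gamma_{M})$. For the differentials, Lemma~\ref{lemma:working-with-Z2-coefficient} computes $\langle \overline{\partial}\mathbf{y}, \mathbf{x}\rangle = \overline{1}$ exactly when the single-point moduli space is nonempty, i.e.\ when $\mathbf{y}\prec \mathbf{x}$. Comparing this with the formula $\partial((D_{\Gamma_{M}}(w), y)) = \sum_{(D_{\Gamma_{M}}(v),x)} (D_{\Gamma_{M}}(v),x)$ of Definition~\ref{def:2-factor-cohomology}, where the sum is over labeled resolution configurations with $|v| = |w| + 1$ and $(D_{\Gamma_{M}}(w),y)\prec (D_{\Gamma_{M}}(v),x)$, yields the identification of boundary maps. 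Finally, the cohomological statement follows by passing to cohomology and using the wedge decomposition $|\mathscr{C}(\Gamma_{M})| \cong \bigvee_{j} |\mathscr{C}^{j}(\Gamma_{M})|$ together with the fact that the boundary map preserves the quantum grading.

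The only subtle point that requires care is the bookkeeping of the degree shift $[-N]$, which tracks the cumulative embedding dimensions between the minimal homological grading $B = 0$ and the shift induced by the Cohen--Jones--Segal realization; this is an input to Lemma~\ref{lemma:working-with-Z2-coefficient} rather than a new computation. Beyond that, the proof is essentially a translation between two descriptions of the same combinatorial data, and presents no substantive obstacle once the 2-factor flow category has been verified (Theorem~\ref{2factor-flow-category-is-a-cubical-flow-category}) to have single-point $0$-dimensional moduli spaces.
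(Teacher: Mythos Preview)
Your proposal is correct and follows essentially the same approach as the paper: verify that the nonempty $0$-dimensional moduli spaces are single points (by construction in Section~\ref{0-dimensional moduli spaces}), identify the associated $\mathbb{Z}_{2}$-cochain complex with $(C^{*,j}(\Gamma_{M}),\partial^{*,j})$, and apply Lemma~\ref{lemma:working-with-Z2-coefficient}. Your write-up is in fact more explicit about the identification of generators and differentials than the paper's terse version.
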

\begin{proof}
     Note that the associated cochain complex $(\overline{C^{*}(\mathscr{C}^{j}(\Gamma_{M}))}, \overline{\partial}; \mathbb{Z}_{2})$ coincides with the cochain complex $(C^{*, j}(\Gamma_{M}), \partial^{*, j})$, since all nonempty zero-dimensional moduli spaces in the 2-factor flow category consist of single points. For the same reason, Lemma \ref{lemma:working-with-Z2-coefficient} applies, yielding an isomorphism
    \[
    \widetilde{C}^{*}\big(|\mathscr{C}^{j}(\Gamma_{M})|; \mathbb{Z}_{2}\big)[-N] \cong \big(\overline{C^{*}(\mathscr{C}^{j}(\Gamma_{M}))}, \overline{\partial}; \mathbb{Z}_{2}\big),
    \]
    and the proof is complete.

\end{proof}

\begin{definition}\label{2-factor-spectrum}
    We define the $2$\textit{-factor spectrum} to be the suspension spectrum of $|\mathscr{C}(\Gamma_{M})|$ formally desuspended $N$ times. We denote the 2-factor spectrum as $\mathcal{X}(\Gamma_{M})$. Note that $\mathcal{X}(\Gamma_{M})$ decomposes as the wedge product $\bigvee\limits_{j}\mathcal{X}^{j}(\Gamma_{M})$. This $2$-factor spectrum is defined in the same spirit as the Khovanov spectrum constructed by Lipshitz and Sarkar; see~\cite[Definition 5.5]{KhStableHomotopyType}.
\end{definition}

\begin{corollary}\label{corollary:reduced-cohomology}
    The reduced cohomology of $\mathcal{X}(\Gamma_{M})$ with $\mathbb{Z}_{2}$-coefficients is isomorphic to the $2$-factor homology defined in \ref{def:2-factor-cohomology}.
    \[
    \widetilde{H}^{i}(\mathcal{X}^{j}(\Gamma_{M}); \mathbb{Z}_{2}) \cong H^{i,j}(G,M).
    \]
\end{corollary}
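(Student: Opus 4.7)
The plan is to deduce this corollary almost immediately from Theorem~\ref{reduced-cohomology-is-2factor-cohomology}, since the corollary is essentially a translation of that theorem from the level of the Cohen--Jones--Segal realization to the level of the formal desuspension spectrum.

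First I would recall the definition of the $2$-factor spectrum: by Definition~\ref{2-factor-spectrum}, $\mathcal{X}(\Gamma_{M}) = \Sigma^{-N}\Sigma^{\infty}|\mathscr{C}(\Gamma_{M})|_{\imath,\Phi,0,n}$, and this decomposes compatibly with the wedge decomposition $|\mathscr{C}(\Gamma_{M})| \cong \bigvee_{j}|\mathscr{C}^{j}(\Gamma_{M})|$ coming from the fact that morphisms in $\mathscr{C}(\Gamma_{M})$ preserve quantum grading. Thus $\mathcal{X}^{j}(\Gamma_{M}) = \Sigma^{-N}\Sigma^{\infty}|\mathscr{C}^{j}(\Gamma_{M})|$. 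The standard fact that reduced cohomology of a formally desuspended suspension spectrum shifts degree then yields
\[
\widetilde{H}^{i}(\mathcal{X}^{j}(\Gamma_{M});\mathbb{Z}_{2}) \cong \widetilde{H}^{i+N}(|\mathscr{C}^{j}(\Gamma_{M})|;\mathbb{Z}_{2}) = \widetilde{H}^{i}(|\mathscr{C}^{j}(\Gamma_{M})|;\mathbb{Z}_{2})[-N].
\]

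Next I would invoke Theorem~\ref{reduced-cohomology-is-2factor-cohomology}, which identifies the right-hand side above with $H^{i,j}(G,M;\mathbb{Z}_{2})$. This requires only that the hypotheses of Lemma~\ref{lemma:working-with-Z2-coefficient} apply to $\mathscr{C}^{j}(\Gamma_{M})$, namely that every nonempty $0$-dimensional moduli space $\mathcal{M}_{\mathscr{C}^{j}(\Gamma_{M})}(\mathbf{x},\mathbf{y})$ consists of a single point. This is immediate from the construction in Section~\ref{0-dimensional moduli spaces}, where the index~$1$ resolution moduli spaces were defined to be a single point.

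Combining the two displayed isomorphisms gives the desired statement for $\mathbb{Z}_{2}$ coefficients. For the $R = \mathbb{Z}$ case, I would remark (as the paper does in its earlier discussion of bad faces) that since $(G,M)\in\mathscr{G}$ implies no bad face of the form $m\circ\Delta = \eta\circ\eta$ occurs, coherent sign assignments exist and the same argument applies with integer coefficients: each nonempty $0$-dimensional moduli space consists of a single point with a sign, and the signed count agrees with the integral differential on $C^{*,j}(\Gamma_{M})$ as defined by Baldridge in this setting. There is no substantial obstacle here; the only subtlety is the bookkeeping of the degree shift by $N$, which is built into the definition of formal desuspension and requires no further argument.
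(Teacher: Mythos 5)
Your proposal is correct and takes essentially the same route as the paper, whose proof simply states that the corollary is immediate from Theorem~\ref{reduced-cohomology-is-2factor-cohomology} and Definition~\ref{2-factor-spectrum}; your write-up merely makes explicit the degree-shift bookkeeping for the formal desuspension and the wedge decomposition over quantum gradings. The extra paragraph on $\mathbb{Z}$-coefficients is not needed for the stated $\mathbb{Z}_{2}$ corollary, but it is consistent with the remark the paper makes immediately afterwards.
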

\begin{proof}
    The statement is immediate from the Theorem \ref{reduced-cohomology-is-2factor-cohomology} and Definition \ref{2-factor-spectrum}.
\end{proof}

\begin{remark}
     For a perfect matching graph \( \Gamma_{M} \) representing a planar trivalent graph \( G \) with perfect matching \( M \), the $2$-factor homology is originally defined with \( \mathbb{Z}_{2} \)-coefficients; see~\cite{CohomologyPlanarTrivalentGraph}. This restriction arises from the presence of bad faces; see Remark~\ref{bad-face-remark}. In our setup, since \( (G,M) \in \mathscr{G} \) and the hypercube of states contains no bad faces, we can define the $2$-factor homology with \( \mathbb{Z} \)-coefficients by choosing a sign assignment for the cube $\mathcal{C}(|M|)$; see \cite[Section 4.2]{KhStableHomotopyType}. Using~\cite[Lemma~3.24]{KhStableHomotopyType}, one can similarly verify that the statement of Corollary~\ref{corollary:reduced-cohomology} holds over \( \mathbb{Z} \)-coefficients as well: $ \widetilde{H}^{i}(\mathcal{X}^{j}(\Gamma_{M}); \mathbb{Z}) \cong H^{i,j}(G, M; \mathbb{Z})$.
\end{remark}

\subsection{Invariance of 2-factor spectra under flip moves}
\begin{theorem}\label{thm:invariance-of-2factorspectra}
    Let \( \Gamma_M \) be a perfect matching graph representing a planar trivalent graph \( G \) with perfect matching \( M \), such that \( (G, M) \in \mathscr{G} \), and let \( \widetilde{\Gamma}_M \) be another representative of \( (G, M) \) such that \( \Gamma_M \) and \( \widetilde{\Gamma}_M \) are related by a 0-flip, 1-flip, or 2-flip move. Then, for each quantum grading \( j \), the associated 2-factor spectrum \( \mathcal{X}^{j}(\Gamma_{M}) \) is stably homotopy equivalent to \( \mathcal{X}^{j}(\widetilde{\Gamma}_{M}) \).
\end{theorem}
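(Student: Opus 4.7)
The plan is to split the proof by flip type: the 0-flip and 1-flip moves induce an outright isomorphism of framed flow categories, whereas the 2-flip move requires lifting Baldridge's cochain-level invariance argument \cite[Theorem~5.1]{CohomologyPlanarTrivalentGraph} to the spectrum level by canceling acyclic subcomplexes in the $2$-factor flow category.

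For the 0-flip and 1-flip moves, the observation from the proof of Theorem~\ref{theorem:eta-arc} (via \cite[Propositions~5.3 and~5.5]{CohomologyPlanarTrivalentGraph}) is that, for every state $v \in \{0,1\}^{n(\Gamma_M)}$, the resolution configurations $D_{\Gamma_M}(v)$ and $D_{\widetilde{\Gamma}_M}(v)$ are canonically identified up to local isotopy, with each arc preserving its type ($m$, $\Delta$, or $\eta$). This yields a grading-preserving bijection on the object sets of $\mathscr{C}(\Gamma_M)$ and $\mathscr{C}(\widetilde{\Gamma}_M)$, and identifies the basic decorated resolution configurations that determine the morphism data. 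Since the butterfly matching of Definition~\ref{butterfly-matching} is defined in a standard local disk around an arc, it is preserved by this identification, and by uniqueness clause~(E-4) of Proposition~\ref{induction-proposition-for-resolution-moduli-spaces} the induced map on moduli spaces $\mathcal{M}(D,x,y)$ is compatible with the cover functor $\mathscr{F}$ to $\mathscr{C}_C(n)$. The result is an isomorphism of cubical framed flow categories, and therefore a homeomorphism of the Cohen--Jones--Segal realizations in each quantum grading, yielding $\mathcal{X}^j(\Gamma_M) = \mathcal{X}^j(\widetilde{\Gamma}_M)$.

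For the 2-flip move, Baldridge's cochain isomorphism is constructed by partitioning the states inside the flipping disk according to the four subcases of \cite[Analysis~5.11]{CohomologyPlanarTrivalentGraph}; in three of those subcases the states of $\Gamma_M$ and $\widetilde{\Gamma}_M$ are canonically identified, while subcase~(4) requires an explicit isomorphism on the non-acyclic generators together with a cancellation of certain acyclic pairs. The plan is to lift this to the spectrum by invoking the standard cancellation principle for framed flow categories (cf.~the analogous arguments in \cite{KhStableHomotopyType, Burnside-stable-homotopy}): whenever $(\mathbf{x},\mathbf{y})$ is a pair of objects with $\hgr(\mathbf{x}) = \hgr(\mathbf{y})+1$, $\qgr(\mathbf{x}) = \qgr(\mathbf{y})$, and $\mathcal{M}_{\mathscr{C}(\Gamma_M)}(\mathbf{x},\mathbf{y})$ consisting of a single point, the pair generates a subcategory whose contribution to the Cohen--Jones--Segal realization can be eliminated through a zigzag of stable equivalences. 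I will identify the canceling pairs in $\mathscr{C}(\Gamma_M)$ and $\mathscr{C}(\widetilde{\Gamma}_M)$ corresponding to Baldridge's acyclic generators, excise them on both sides, and then exhibit the resulting reduced framed flow categories as isomorphic.

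The principal obstacle is subcase~(4) of Analysis~5.11, where two circles swap their entry--exit arcs across the flipping disk. There I must verify that butterfly configurations arising internally to the flipping disk on the $\Gamma_M$ side correspond, under Baldridge's identification, to butterfly configurations on the $\widetilde{\Gamma}_M$ side with matched butterfly pairings. This compatibility is not automatic, since the identification rearranges the double points and arcs inside the flipping disk; verifying it will require a careful case analysis that traces how the local-move equivalences of Figure~\ref{fig:equivalence} and Lemma~\ref{threeImportantEquivalenceMoves} realize the identification of the two hypercubes, and then applies Definition~\ref{butterfly-matching} on both sides. I expect this butterfly-matching check to be the technical heart of the proof, directly analogous to the ladybug-matching verification carried out by Lipshitz--Sarkar across Reidemeister~III. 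Once it is in place, the higher-index coherences among the moduli spaces of index $\geq 3$ follow automatically from the uniqueness clause~(E-4) of Proposition~\ref{induction-proposition-for-resolution-moduli-spaces}, completing the argument.
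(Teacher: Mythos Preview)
Your treatment of the 0-flip and 1-flip cases matches the paper's.

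For the 2-flip case, you have misread Baldridge's invariance argument: unlike Reidemeister moves in the Khovanov setting, a flip move does not change the number of perfect matching edges, so the two cubes have the same dimension and there are no acyclic pairs to cancel. Baldridge's map $S$ (\cite[Definition~5.18]{CohomologyPlanarTrivalentGraph}) is a genuine cochain \emph{isomorphism}, not merely a quasi-isomorphism with contractible pieces. The paper therefore bypasses all cancellation machinery and instead lifts $S$ directly to an isomorphism of flow categories $F_S:\mathscr{C}(\Gamma_M)\to\mathscr{C}(\widetilde{\Gamma}_M)$, with inverse $G_{S^{-1}}$, using Theorem~\ref{theorem:eta-arc} to see that the arc types (and hence the basic decorated resolution configurations governing the hom-sets) correspond under $S$. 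Once the flow categories are isomorphic, the realizations agree for compatible framing choices.

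Your instinct that the butterfly-matching compatibility in subcase~(4) of Analysis~5.11 is the real content is correct, and is essentially what underlies the paper's claim that the hom-sets correspond; the paper handles this rather briefly by appealing to Theorem~\ref{theorem:eta-arc} (arc-type preservation under flips) and then letting uniqueness~(E-4) propagate the identification to higher indices. So the technical heart you identify is present, but it sits inside a direct-isomorphism argument rather than a cancellation argument.
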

\begin{proof}
    If \(\Gamma_M\) and \(\widetilde{\Gamma}_M\) are related by a 0-flip or 1-flip move, then by \cite[Propositions 5.3 and 5.5]{CohomologyPlanarTrivalentGraph}, there exists a canonical isomorphism $S: \left(C^{i,j}(\Gamma_M), \partial\right) \to \left(C^{i,j}(\widetilde{\Gamma}_M), \widetilde{\partial}\right)$. In fact, the cochain complexes are identical. As a consequence, the 2-factor flow categories \(\mathscr{C}(\Gamma_M)\) and \(\mathscr{C}(\widetilde{\Gamma}_M)\) are canonically isomorphic. Therefore, for some compatible choices of neat embeddings and framings for both \(\mathscr{C}(\Gamma_M)\) and \(\mathscr{C}(\widetilde{\Gamma}_M)\), the associated 2-factor spectra are stably homotopy equivalent.\par
    Now let \( \Gamma_{M} \) and \( \widetilde{\Gamma}_{M} \) be related by a sequence of 2-flip moves. As discussed in the proof of Theorem~\ref{theorem:eta-arc}, it suffices to consider the interesting case described in \cite[Analysis 5.11(4)]{CohomologyPlanarTrivalentGraph}. In this case, Baldridge explicitly defines a map $S: \left(C^{i,j}(\Gamma_M), \partial\right) \to \left(C^{i,j}(\widetilde{\Gamma}_M), \widetilde{\partial}\right)$, see \cite[Definition 5.18]{CohomologyPlanarTrivalentGraph}. This map is a cochain isomorphism over \( \mathbb{Z}_{2} \)-coefficients. 
    Combining this fact with Theorem~\ref{theorem:eta-arc}, there exists a one-to-one correspondence between the labeled resolution configurations of \( \Gamma_{M} \) and those of \( \widetilde{\Gamma}_{M} \), as well as a one-to-one correspondence between the hom-sets $\mathrm{Hom}_{\mathscr{C}(\Gamma_{M})}(\mathbf{x}, \mathbf{y}) \quad \text{and} \quad \mathrm{Hom}_{\mathscr{C}(\widetilde{\Gamma}_{M})}(\widetilde{\mathbf{x}}, \widetilde{\mathbf{y}})$, such that there exists a functor $F_{S}: \mathscr{C}(\Gamma_M) \to \mathscr{C}(\widetilde{\Gamma}_M)$ which preserves both the homological and quantum gradings.
    Similarly, using the inverse map \( S^{-1} \), we obtain a functor $G_{S^{-1}}: \mathscr{C}(\widetilde{\Gamma}_M) \to \mathscr{C}(\Gamma_M)$, such that \( F_{S} \circ G_{S^{-1}} = \mathrm{id}_{\mathscr{C}(\widetilde{\Gamma}_M)} \) and \( G_{S^{-1}} \circ F_{S} = \mathrm{id}_{\mathscr{C}(\Gamma_M)} \). Thus, \( \mathscr{C}(\Gamma_M) \) and \( \mathscr{C}(\widetilde{\Gamma}_M) \) are isomorphic as flow categories.
    As a consequence, for some compatible choices of neat embeddings and framings, the 2-factor spectrum \( \mathcal{X}^{j}(\Gamma_{M}) \) is stably homotopy equivalent to \( \mathcal{X}^{j}(\widetilde{\Gamma}_{M}) \).
\end{proof}

\begin{remark}
The stable homotopy type of $\mathcal{X}^{j}(\Gamma_{M})$, with $|M| = n$, does not depend on the ordering of the perfect matching edges, the choice of neat embeddings and framings of the cube flow category $\mathscr{C}_{C}(n)$, the sign assignments of the cube $\mathcal{C}(n)$, or the framed neat embedding of the $2$-factor flow category $\mathscr{C}(\Gamma_{M})$. The proof is analogous to that of \cite[Proposition~6.1]{KhStableHomotopyType}. However, our construction does depend on the choice of butterfly matching, as defined in Definition~\ref{butterfly-matching}. A different choice of butterfly matching would violate Lemma~\ref{analysing Case 2A}, and consequently, the cover functor $\mathscr{F}$ could not be constructed using Proposition~\ref{induction-proposition-for-resolution-moduli-spaces}.
\end{remark}

\section{Examples of Homotopy Type for Planar Trivalent Graphs with Perfect Matchings}\label{Section:Example}
\begin{example}\label{computing-homotopy-type-example}
    Consider the perfect matching graph \(\Gamma^{\theta_{1}}_{M_{1}}\), which represents the \(\theta_1\) graph with perfect matching \(M_1\), as illustrated in Figure~\ref{fig:theta-1-graph-hypercube}.
        \begin{figure}[htp]
            \centering
  
\tikzset {_nbsmiaobs/.code = {\pgfsetadditionalshadetransform{ \pgftransformshift{\pgfpoint{0 bp } { 0 bp }  }  \pgftransformrotate{0 }  \pgftransformscale{2 }  }}}
\pgfdeclarehorizontalshading{_y4ultc5x6}{150bp}{rgb(0bp)=(1,1,1);
rgb(37.5bp)=(1,1,1);
rgb(37.5bp)=(0,0,0);
rgb(100bp)=(0,0,0)}
\tikzset{_3xe2yqlm4/.code = {\pgfsetadditionalshadetransform{\pgftransformshift{\pgfpoint{0 bp } { 0 bp }  }  \pgftransformrotate{0 }  \pgftransformscale{2 } }}}
\pgfdeclarehorizontalshading{_66090eg2y} {150bp} {color(0bp)=(transparent!0);
color(37.5bp)=(transparent!0);
color(37.5bp)=(transparent!10);
color(100bp)=(transparent!10) } 
\pgfdeclarefading{_9dmxgdzd2}{\tikz \fill[shading=_66090eg2y,_3xe2yqlm4] (0,0) rectangle (50bp,50bp); } 

  
\tikzset {_htbv07z7l/.code = {\pgfsetadditionalshadetransform{ \pgftransformshift{\pgfpoint{0 bp } { 0 bp }  }  \pgftransformrotate{0 }  \pgftransformscale{2 }  }}}
\pgfdeclarehorizontalshading{_sta46kkh5}{150bp}{rgb(0bp)=(1,1,1);
rgb(37.5bp)=(1,1,1);
rgb(37.5bp)=(0,0,0);
rgb(100bp)=(0,0,0)}
\tikzset{_fik1rzzmr/.code = {\pgfsetadditionalshadetransform{\pgftransformshift{\pgfpoint{0 bp } { 0 bp }  }  \pgftransformrotate{0 }  \pgftransformscale{2 } }}}
\pgfdeclarehorizontalshading{_sjh4dufvb} {150bp} {color(0bp)=(transparent!0);
color(37.5bp)=(transparent!0);
color(37.5bp)=(transparent!10);
color(100bp)=(transparent!10) } 
\pgfdeclarefading{_8ftnjwvk1}{\tikz \fill[shading=_sjh4dufvb,_fik1rzzmr] (0,0) rectangle (50bp,50bp); } 
\tikzset{every picture/.style={line width=0.75pt}} 

\begin{tikzpicture}[x=0.75pt,y=0.75pt,yscale=-1,xscale=1]

\draw   (95,50.56) .. controls (95,40.86) and (108.76,33) .. (125.73,33) .. controls (142.7,33) and (156.46,40.86) .. (156.46,50.56) .. controls (156.46,60.26) and (142.7,68.12) .. (125.73,68.12) .. controls (108.76,68.12) and (95,60.26) .. (95,50.56) -- cycle ;
\draw [line width=3]    (125.73,33) -- (124.85,69) ;
\path  [shading=_y4ultc5x6,_nbsmiaobs,path fading= _9dmxgdzd2 ,fading transform={xshift=2}] (120.03,33) .. controls (120.03,30.42) and (122.58,28.34) .. (125.73,28.34) .. controls (128.88,28.34) and (131.43,30.42) .. (131.43,33) .. controls (131.43,35.58) and (128.88,37.66) .. (125.73,37.66) .. controls (122.58,37.66) and (120.03,35.58) .. (120.03,33) -- cycle ; 
 \draw  [line width=1.5]  (120.03,33) .. controls (120.03,30.42) and (122.58,28.34) .. (125.73,28.34) .. controls (128.88,28.34) and (131.43,30.42) .. (131.43,33) .. controls (131.43,35.58) and (128.88,37.66) .. (125.73,37.66) .. controls (122.58,37.66) and (120.03,35.58) .. (120.03,33) -- cycle ; 

\path  [shading=_sta46kkh5,_htbv07z7l,path fading= _8ftnjwvk1 ,fading transform={xshift=2}] (119.15,69) .. controls (119.15,66.42) and (121.7,64.34) .. (124.85,64.34) .. controls (128,64.34) and (130.55,66.42) .. (130.55,69) .. controls (130.55,71.58) and (128,73.66) .. (124.85,73.66) .. controls (121.7,73.66) and (119.15,71.58) .. (119.15,69) -- cycle ; 
 \draw  [line width=1.5]  (119.15,69) .. controls (119.15,66.42) and (121.7,64.34) .. (124.85,64.34) .. controls (128,64.34) and (130.55,66.42) .. (130.55,69) .. controls (130.55,71.58) and (128,73.66) .. (124.85,73.66) .. controls (121.7,73.66) and (119.15,71.58) .. (119.15,69) -- cycle ; 

\draw [line width=0.75]    (218.4,30.6) -- (218.4,65.8) ;
\draw    (218.4,30.6) .. controls (173,36.33) and (194,67.8) .. (218.6,65.8) ;
\draw    (238.08,30.6) .. controls (284.33,34.67) and (271.67,64.67) .. (238.08,66) ;
\draw [line width=0.75]    (238.08,30.6) -- (238.08,66) ;
\draw [color={rgb, 255:red, 252; green, 3; blue, 3 }  ,draw opacity=1 ] [dash pattern={on 2.5pt off 2.5pt}]  (218.4,48.2) -- (238.08,48.2) ;
\draw    (294,47.5) -- (429.67,47.5) ;
\draw [shift={(431.67,47.5)}, rotate = 180] [color={rgb, 255:red, 0; green, 0; blue, 0 }  ][line width=0.75]    (10.93,-3.29) .. controls (6.95,-1.4) and (3.31,-0.3) .. (0,0) .. controls (3.31,0.3) and (6.95,1.4) .. (10.93,3.29)   ;
\draw [line width=0.75]    (490.4,30.6) -- (510.08,66) ;
\draw    (490.4,30.6) .. controls (445,36.33) and (466,67.8) .. (490.6,65.8) ;
\draw    (510.08,30.6) .. controls (556.33,34.67) and (543.67,64.67) .. (510.08,66) ;
\draw [line width=0.75]    (510.08,30.6) -- (490.6,65.8) ;

\draw (547,38.4) node [anchor=north west][inner sep=0.75pt]    {$\{1\}$};
\draw (206,77.4) node [anchor=north west][inner sep=0.75pt]    {$D_{\Gamma _{M_{1}}^{\theta _{1}}}( 0)$};
\draw (479,76.4) node [anchor=north west][inner sep=0.75pt]    {$D_{\Gamma _{M_{1}}^{\theta _{1}}}( 1)$};
\draw (31,36.4) node [anchor=north west][inner sep=0.75pt]    {$\Gamma _{M_{1}}^{\theta _{1}} =$};
\draw (200,39.4) node [anchor=north west][inner sep=0.75pt]    {$1$};
\draw (245,39.4) node [anchor=north west][inner sep=0.75pt]    {$2$};
\draw (349,54.4) node [anchor=north west][inner sep=0.75pt]    {$m$};

\end{tikzpicture}
            \caption{Hypercube of states corresponding to $(\theta_{1}, M_{1})$}
            \label{fig:theta-1-graph-hypercube}
        \end{figure}
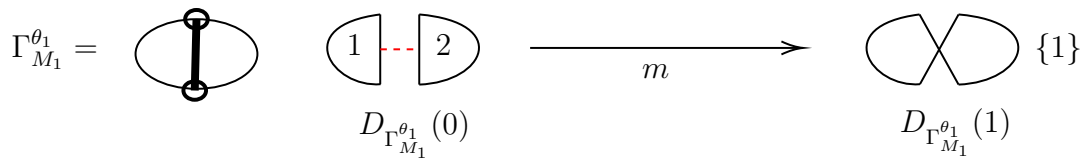
Table~\ref{table:theta-graph-example} lists all labeled resolution configurations \(\big(D_{\Gamma^{\theta_{1}}_{M_{1}}}(v), x\big)\) corresponding to \((\theta_{1}, M_{1})\), along with their gradings.
\begin{table}[h!]
        \centering
        \begin{tabular}{|c| c | c|} 
         \hline
         labeled resolution configurations $\big(D_{\Gamma^{\theta_{1}}_{M_{1}}}(v),x\big)$ & $\hgr$ & $\qgr$ \\ 
         \hline\hline
         $A=\big(D_{\Gamma^{\theta_{1}}_{M_{1}}}(0), x_{+}x_{+}\big)$ & 0 & 2  \\ 
         \hline
        $B=\big(D_{\Gamma^{\theta_{1}}_{M_{1}}}(0), x_{+}x_{-}\big)$ & 0 & 0 \\
         \hline
         $C=\big(D_{\Gamma^{\theta_{1}}_{M_{1}}}(0), x_{-}x_{+}\big)$ & 0 & 0 \\
         \hline
         $D=\big(D_{\Gamma^{\theta_{1}}_{M_{1}}}(0), x_{-}x_{-}\big)$ & 0 & -2  \\
         \hline
         $E=\big(D_{\Gamma^{\theta_{1}}_{M_{1}}}(1), x_{+}\big)$ & 1 & 2 \\
         \hline
         $F=\big(D_{\Gamma^{\theta_{1}}_{M_{1}}}(1), x_{-}\big)$ & 1 & 0\\
         \hline
        \end{tabular}
        \vspace{0.5cm}
        \caption{Labeled resolution configurations corresponding to $(\theta_{1}, M_{1})$ and their gradings.}
        \label{table:theta-graph-example}
\end{table}
The $2$-factor flow  category $\mathscr{C}(\Gamma_{M_{1}}^{\theta_{1}})$ is illustrated in the Figure \ref{fig:theta-1-graph-flow-category}, where the objects are represented by black dots and the morphisms are represented by the arrows.
        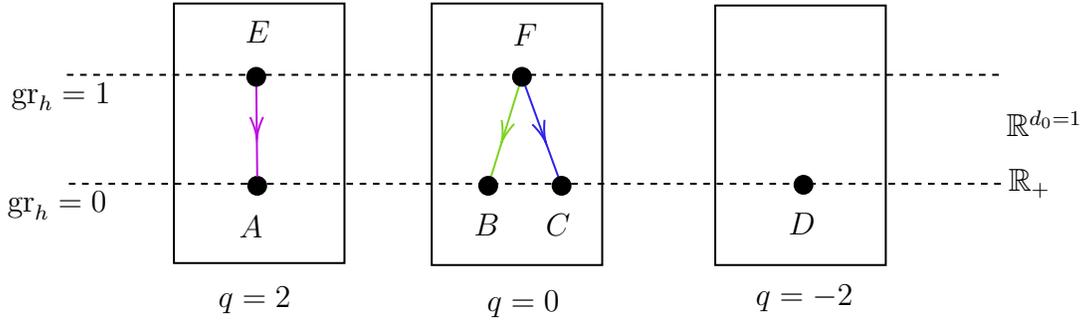
\begin{figure}[htp]
            \centering
            \tikzset{every picture/.style={line width=0.75pt}} 

\begin{tikzpicture}[x=0.75pt,y=0.75pt,yscale=-1,xscale=1]

\draw  [dash pattern={on 2.5pt off 2.5pt}]  (80.5,65) -- (550.5,65) ;
\draw  [dash pattern={on 2.5pt off 2.5pt}]  (81.5,120) -- (551.5,120) ;
\draw  [fill={rgb, 255:red, 0; green, 0; blue, 0 }  ,fill opacity=1 ] (172,121) .. controls (172,118.51) and (174.01,116.5) .. (176.5,116.5) .. controls (178.99,116.5) and (181,118.51) .. (181,121) .. controls (181,123.49) and (178.99,125.5) .. (176.5,125.5) .. controls (174.01,125.5) and (172,123.49) .. (172,121) -- cycle ;
\draw  [fill={rgb, 255:red, 0; green, 0; blue, 0 }  ,fill opacity=1 ] (447.5,120.5) .. controls (447.5,118.01) and (449.51,116) .. (452,116) .. controls (454.49,116) and (456.5,118.01) .. (456.5,120.5) .. controls (456.5,122.99) and (454.49,125) .. (452,125) .. controls (449.51,125) and (447.5,122.99) .. (447.5,120.5) -- cycle ;
\draw   (134.5,29) -- (220.5,29) -- (220.5,160) -- (134.5,160) -- cycle ;
\draw [color={rgb, 255:red, 189; green, 16; blue, 224 }  ,draw opacity=1 ]   (176,66) -- (176.5,116.5) ;
\draw [shift={(176.31,97.25)}, rotate = 269.43] [color={rgb, 255:red, 189; green, 16; blue, 224 }  ,draw opacity=1 ][line width=0.75]    (10.93,-3.29) .. controls (6.95,-1.4) and (3.31,-0.3) .. (0,0) .. controls (3.31,0.3) and (6.95,1.4) .. (10.93,3.29)   ;
\draw [color={rgb, 255:red, 126; green, 211; blue, 33 }  ,draw opacity=1 ]   (310,66) -- (293,121) ;
\draw [shift={(299.73,99.23)}, rotate = 287.18] [color={rgb, 255:red, 126; green, 211; blue, 33 }  ,draw opacity=1 ][line width=0.75]    (10.93,-3.29) .. controls (6.95,-1.4) and (3.31,-0.3) .. (0,0) .. controls (3.31,0.3) and (6.95,1.4) .. (10.93,3.29)   ;
\draw [color={rgb, 255:red, 45; green, 35; blue, 235 }  ,draw opacity=1 ]   (310,66) -- (330,121) ;
\draw [shift={(322.05,99.14)}, rotate = 250.02] [color={rgb, 255:red, 45; green, 35; blue, 235 }  ,draw opacity=1 ][line width=0.75]    (10.93,-3.29) .. controls (6.95,-1.4) and (3.31,-0.3) .. (0,0) .. controls (3.31,0.3) and (6.95,1.4) .. (10.93,3.29)   ;
\draw   (264.5,29) -- (350.5,29) -- (350.5,161) -- (264.5,161) -- cycle ;
\draw   (407.5,30) -- (493.5,30) -- (493.5,161) -- (407.5,161) -- cycle ;
\draw  [fill={rgb, 255:red, 0; green, 0; blue, 0 }  ,fill opacity=1 ] (171.5,66) .. controls (171.5,63.51) and (173.51,61.5) .. (176,61.5) .. controls (178.49,61.5) and (180.5,63.51) .. (180.5,66) .. controls (180.5,68.49) and (178.49,70.5) .. (176,70.5) .. controls (173.51,70.5) and (171.5,68.49) .. (171.5,66) -- cycle ;
\draw  [fill={rgb, 255:red, 0; green, 0; blue, 0 }  ,fill opacity=1 ] (305.5,66) .. controls (305.5,63.51) and (307.51,61.5) .. (310,61.5) .. controls (312.49,61.5) and (314.5,63.51) .. (314.5,66) .. controls (314.5,68.49) and (312.49,70.5) .. (310,70.5) .. controls (307.51,70.5) and (305.5,68.49) .. (305.5,66) -- cycle ;
\draw  [fill={rgb, 255:red, 0; green, 0; blue, 0 }  ,fill opacity=1 ] (288.5,121) .. controls (288.5,118.51) and (290.51,116.5) .. (293,116.5) .. controls (295.49,116.5) and (297.5,118.51) .. (297.5,121) .. controls (297.5,123.49) and (295.49,125.5) .. (293,125.5) .. controls (290.51,125.5) and (288.5,123.49) .. (288.5,121) -- cycle ;
\draw  [fill={rgb, 255:red, 0; green, 0; blue, 0 }  ,fill opacity=1 ] (325.5,121) .. controls (325.5,118.51) and (327.51,116.5) .. (330,116.5) .. controls (332.49,116.5) and (334.5,118.51) .. (334.5,121) .. controls (334.5,123.49) and (332.49,125.5) .. (330,125.5) .. controls (327.51,125.5) and (325.5,123.49) .. (325.5,121) -- cycle ;

\draw (155.5,170.4) node [anchor=north west][inner sep=0.75pt]    {$q=2$};
\draw (291,172.4) node [anchor=north west][inner sep=0.75pt]    {$q=0$};
\draw (426.5,169.4) node [anchor=north west][inner sep=0.75pt]    {$q=-2$};
\draw (169,36.4) node [anchor=north west][inner sep=0.75pt]    {$E$};
\draw (304,37.9) node [anchor=north west][inner sep=0.75pt]    {$F$};
\draw (166,134.4) node [anchor=north west][inner sep=0.75pt]    {$A$};
\draw (284.5,133.4) node [anchor=north west][inner sep=0.75pt]    {$B$};
\draw (320.5,133.4) node [anchor=north west][inner sep=0.75pt]    {$C$};
\draw (443,132.9) node [anchor=north west][inner sep=0.75pt]    {$D$};
\draw (553,81.4) node [anchor=north west][inner sep=0.75pt]    {$\mathbb{R}^{d_{0} =1}$};
\draw (554,110.9) node [anchor=north west][inner sep=0.75pt]    {$\mathbb{R}_{+}$};
\draw (49,122.4) node [anchor=north west][inner sep=0.75pt]    {$\text{gr}_{h} =0$};
\draw (51,67.4) node [anchor=north west][inner sep=0.75pt]    {$\text{gr}_{h} =1$};

\end{tikzpicture}
            \caption{The 2-factor flow category $\mathscr{C}(\Gamma_{M_{1}}^{\theta_{1}})$ corresponding $(\theta_{1}, M_{1})$}
            \label{fig:theta-1-graph-flow-category}
        \end{figure}
    Note that all moduli spaces of the flow category \(\mathscr{C}(\Gamma_{M_{1}}^{\theta_{1}})\) are \(0\)-dimensional. Let \(\mathbf{d} = (d_{i})_{i \in \mathbb{Z}}\) be the sequence defined by \(d_{i} = 0\) for all \(i \neq 0\), and \(d_{0} = 1\). We define a neat embedding \(\imath\) of the flow category \(\mathscr{C}(\Gamma_{M_{1}}^{\theta_{1}})\) using this choice of \(\mathbf{d}\), so that all moduli spaces embed into \(\mathbb{R}^{d_{0}} = \mathbb{R}\). In Figure~\ref{fig:neat-embedding-theta-1-graph}, we illustrate the neat embedding \(\imath\) along with a coherent framing \(\Phi\), where all \(0\)-dimensional moduli spaces are chosen to be positively framed. Thus, the \(2\)-factor flow category \(\mathscr{C}(\Gamma_{M_{1}}^{\theta_{1}})\) is a framed flow category. 
        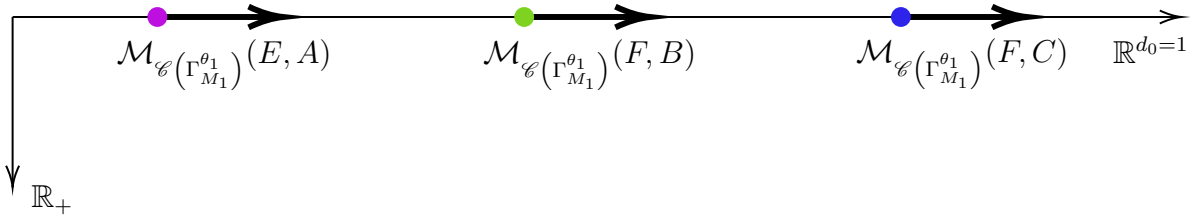
\begin{figure}[htp]
            \centering
            \tikzset{every picture/.style={line width=0.75pt}} 

\begin{tikzpicture}[x=0.75pt,y=0.75pt,yscale=-1,xscale=1]

\draw    (26,61) -- (613.5,61) ;
\draw [shift={(615.5,61)}, rotate = 180] [color={rgb, 255:red, 0; green, 0; blue, 0 }  ][line width=0.75]    (10.93,-3.29) .. controls (6.95,-1.4) and (3.31,-0.3) .. (0,0) .. controls (3.31,0.3) and (6.95,1.4) .. (10.93,3.29)   ;
\draw    (26,61) -- (26,145) ;
\draw [shift={(26,147)}, rotate = 270] [color={rgb, 255:red, 0; green, 0; blue, 0 }  ][line width=0.75]    (10.93,-3.29) .. controls (6.95,-1.4) and (3.31,-0.3) .. (0,0) .. controls (3.31,0.3) and (6.95,1.4) .. (10.93,3.29)   ;
\draw [line width=2.25]    (99,61) -- (158.5,61) ;
\draw [shift={(162.5,61)}, rotate = 180] [color={rgb, 255:red, 0; green, 0; blue, 0 }  ][line width=2.25]    (17.49,-5.26) .. controls (11.12,-2.23) and (5.29,-0.48) .. (0,0) .. controls (5.29,0.48) and (11.12,2.23) .. (17.49,5.26)   ;
\draw [line width=2.25]    (284,61) -- (343.5,61) ;
\draw [shift={(347.5,61)}, rotate = 180] [color={rgb, 255:red, 0; green, 0; blue, 0 }  ][line width=2.25]    (17.49,-5.26) .. controls (11.12,-2.23) and (5.29,-0.48) .. (0,0) .. controls (5.29,0.48) and (11.12,2.23) .. (17.49,5.26)   ;
\draw [line width=2.25]    (474,61) -- (533.5,61) ;
\draw [shift={(537.5,61)}, rotate = 180] [color={rgb, 255:red, 0; green, 0; blue, 0 }  ][line width=2.25]    (17.49,-5.26) .. controls (11.12,-2.23) and (5.29,-0.48) .. (0,0) .. controls (5.29,0.48) and (11.12,2.23) .. (17.49,5.26)   ;
\draw  [color={rgb, 255:red, 126; green, 211; blue, 33 }  ,draw opacity=1 ][fill={rgb, 255:red, 126; green, 211; blue, 33 }  ,fill opacity=1 ] (279.5,61) .. controls (279.5,58.51) and (281.51,56.5) .. (284,56.5) .. controls (286.49,56.5) and (288.5,58.51) .. (288.5,61) .. controls (288.5,63.49) and (286.49,65.5) .. (284,65.5) .. controls (281.51,65.5) and (279.5,63.49) .. (279.5,61) -- cycle ;
\draw  [color={rgb, 255:red, 189; green, 16; blue, 224 }  ,draw opacity=1 ][fill={rgb, 255:red, 189; green, 16; blue, 224 }  ,fill opacity=1 ] (94.5,61) .. controls (94.5,58.51) and (96.51,56.5) .. (99,56.5) .. controls (101.49,56.5) and (103.5,58.51) .. (103.5,61) .. controls (103.5,63.49) and (101.49,65.5) .. (99,65.5) .. controls (96.51,65.5) and (94.5,63.49) .. (94.5,61) -- cycle ;
\draw  [color={rgb, 255:red, 45; green, 35; blue, 235 }  ,draw opacity=1 ][fill={rgb, 255:red, 45; green, 35; blue, 235 }  ,fill opacity=1 ] (469.5,61) .. controls (469.5,58.51) and (471.51,56.5) .. (474,56.5) .. controls (476.49,56.5) and (478.5,58.51) .. (478.5,61) .. controls (478.5,63.49) and (476.49,65.5) .. (474,65.5) .. controls (471.51,65.5) and (469.5,63.49) .. (469.5,61) -- cycle ;

\draw (34,144.4) node [anchor=north west][inner sep=0.75pt]    {$\mathbb{R}_{+}$};
\draw (579.5,69.4) node [anchor=north west][inner sep=0.75pt]    {$\mathbb{R}^{d_{0} =1}$};
\draw (77,70.6) node [anchor=north west][inner sep=0.75pt]    {$\mathcal{M}_{\mathscr{C}\left( \Gamma _{M_{1}}^{\theta _{1}}\right)}( E,A)$};
\draw (261.86,70.83) node [anchor=north west][inner sep=0.75pt]    {$\mathcal{M}_{\mathscr{C}\left( \Gamma _{M_{1}}^{\theta _{1}}\right)}( F,B)$};
\draw (450,70.4) node [anchor=north west][inner sep=0.75pt]    {$\mathcal{M}_{\mathscr{C}\left( \Gamma _{M_{1}}^{\theta _{1}}\right)}( F,C)$};

\end{tikzpicture}
            \caption{Neat embedding and framing of the moduli spaces in $\mathscr{C}(\Gamma_{M_{1}}^{\theta_{1}})$}
            \label{fig:neat-embedding-theta-1-graph}
        \end{figure}

      \begin{figure}[htp]
            \centering
            \tikzset{every picture/.style={line width=0.75pt}} 

\begin{tikzpicture}[x=0.75pt,y=0.75pt,yscale=-1,xscale=1]

\draw  [fill={rgb, 255:red, 213; green, 210; blue, 210 }  ,fill opacity=1 ] (41.5,93) -- (587.5,93) -- (587.5,233) -- (41.5,233) -- cycle ;
\draw    (308.56,93) -- (308.56,274) ;
\draw [shift={(308.56,276)}, rotate = 270] [color={rgb, 255:red, 0; green, 0; blue, 0 }  ][line width=0.75]    (10.93,-3.29) .. controls (6.95,-1.4) and (3.31,-0.3) .. (0,0) .. controls (3.31,0.3) and (6.95,1.4) .. (10.93,3.29)   ;
\draw    (11.5,93) -- (41.5,93) ;
\draw    (559.5,93) -- (626,93) ;
\draw [shift={(628,93)}, rotate = 180] [color={rgb, 255:red, 0; green, 0; blue, 0 }  ][line width=0.75]    (10.93,-3.29) .. controls (6.95,-1.4) and (3.31,-0.3) .. (0,0) .. controls (3.31,0.3) and (6.95,1.4) .. (10.93,3.29)   ;
\draw [line width=3]    (318,93) -- (379.5,93) ;
\draw  [color={rgb, 255:red, 189; green, 16; blue, 224 }  ,draw opacity=1 ][fill={rgb, 255:red, 189; green, 16; blue, 224 }  ,fill opacity=1 ] (343.75,93) .. controls (343.75,90.51) and (345.76,88.5) .. (348.25,88.5) .. controls (350.74,88.5) and (352.75,90.51) .. (352.75,93) .. controls (352.75,95.49) and (350.74,97.5) .. (348.25,97.5) .. controls (345.76,97.5) and (343.75,95.49) .. (343.75,93) -- cycle ;
\draw    (416.5,39) .. controls (355.7,34.17) and (365.68,63.81) .. (368.25,82.06) ;
\draw [shift={(368.5,84)}, rotate = 263.66] [color={rgb, 255:red, 0; green, 0; blue, 0 }  ][line width=0.75]    (10.93,-3.29) .. controls (6.95,-1.4) and (3.31,-0.3) .. (0,0) .. controls (3.31,0.3) and (6.95,1.4) .. (10.93,3.29)   ;
\draw  [fill={rgb, 255:red, 213; green, 210; blue, 210 }  ,fill opacity=1 ] (41.5,383) -- (587.5,383) -- (587.5,523) -- (41.5,523) -- cycle ;
\draw    (308.56,383) -- (308.56,564) ;
\draw [shift={(308.56,566)}, rotate = 270] [color={rgb, 255:red, 0; green, 0; blue, 0 }  ][line width=0.75]    (10.93,-3.29) .. controls (6.95,-1.4) and (3.31,-0.3) .. (0,0) .. controls (3.31,0.3) and (6.95,1.4) .. (10.93,3.29)   ;
\draw    (11.5,383) -- (41.5,383) ;
\draw    (559.5,383) -- (626,383) ;
\draw [shift={(628,383)}, rotate = 180] [color={rgb, 255:red, 0; green, 0; blue, 0 }  ][line width=0.75]    (10.93,-3.29) .. controls (6.95,-1.4) and (3.31,-0.3) .. (0,0) .. controls (3.31,0.3) and (6.95,1.4) .. (10.93,3.29)   ;
\draw [line width=3]    (398,383) -- (459.5,383) ;
\draw  [color={rgb, 255:red, 126; green, 211; blue, 33 }  ,draw opacity=1 ][fill={rgb, 255:red, 126; green, 211; blue, 33 }  ,fill opacity=1 ] (422.75,383) .. controls (422.75,380.51) and (424.76,378.5) .. (427.25,378.5) .. controls (429.74,378.5) and (431.75,380.51) .. (431.75,383) .. controls (431.75,385.49) and (429.74,387.5) .. (427.25,387.5) .. controls (424.76,387.5) and (422.75,385.49) .. (422.75,383) -- cycle ;
\draw    (423.5,335) .. controls (455.18,330.2) and (451.84,358.58) .. (448.87,374.13) ;
\draw [shift={(448.5,376)}, rotate = 281.31] [color={rgb, 255:red, 0; green, 0; blue, 0 }  ][line width=0.75]    (10.93,-3.29) .. controls (6.95,-1.4) and (3.31,-0.3) .. (0,0) .. controls (3.31,0.3) and (6.95,1.4) .. (10.93,3.29)   ;
\draw [line width=3]    (508,383) -- (569.5,383) ;
\draw  [color={rgb, 255:red, 45; green, 35; blue, 235 }  ,draw opacity=1 ][fill={rgb, 255:red, 45; green, 35; blue, 235 }  ,fill opacity=1 ] (533.75,383) .. controls (533.75,380.51) and (535.76,378.5) .. (538.25,378.5) .. controls (540.74,378.5) and (542.75,380.51) .. (542.75,383) .. controls (542.75,385.49) and (540.74,387.5) .. (538.25,387.5) .. controls (535.76,387.5) and (533.75,385.49) .. (533.75,383) -- cycle ;
\draw    (519,340) -- (519,372) ;
\draw [shift={(519,374)}, rotate = 270] [color={rgb, 255:red, 0; green, 0; blue, 0 }  ][line width=0.75]    (10.93,-3.29) .. controls (6.95,-1.4) and (3.31,-0.3) .. (0,0) .. controls (3.31,0.3) and (6.95,1.4) .. (10.93,3.29)   ;
\draw    (382,436) -- (422.14,392.47) ;
\draw [shift={(423.5,391)}, rotate = 132.68] [color={rgb, 255:red, 0; green, 0; blue, 0 }  ][line width=0.75]    (10.93,-3.29) .. controls (6.95,-1.4) and (3.31,-0.3) .. (0,0) .. controls (3.31,0.3) and (6.95,1.4) .. (10.93,3.29)   ;
\draw    (537.5,440) -- (537.5,397) ;
\draw [shift={(537.5,395)}, rotate = 90] [color={rgb, 255:red, 0; green, 0; blue, 0 }  ][line width=0.75]    (10.93,-3.29) .. controls (6.95,-1.4) and (3.31,-0.3) .. (0,0) .. controls (3.31,0.3) and (6.95,1.4) .. (10.93,3.29)   ;
\draw    (347.5,135) -- (347.5,107) ;
\draw [shift={(347.5,105)}, rotate = 90] [color={rgb, 255:red, 0; green, 0; blue, 0 }  ][line width=0.75]    (10.93,-3.29) .. controls (6.95,-1.4) and (3.31,-0.3) .. (0,0) .. controls (3.31,0.3) and (6.95,1.4) .. (10.93,3.29)   ;

\draw (274.22,257.4) node [anchor=north west][inner sep=0.75pt]    {$\mathbb{R}_{+}$};
\draw (613.5,64.4) node [anchor=north west][inner sep=0.75pt]    {$\mathbb{R}^{d_{0} =1}$};
\draw (328.4,137.4) node [anchor=north west][inner sep=0.75pt]    {$\mathcal{M}_{\mathscr{C}\left( \Gamma _{M_{1}}^{\theta _{1}}\right)}( E,A)$};
\draw (421,30.4) node [anchor=north west][inner sep=0.75pt]    {$e( A) \times \mathcal{M}_{\mathscr{C}\left( \Gamma _{M_{1}}^{\theta _{1}}\right)}( E,A)$};
\draw (125,166.4) node [anchor=north west][inner sep=0.75pt]    {$e( E)$};
\draw (274.22,547.4) node [anchor=north west][inner sep=0.75pt]    {$\mathbb{R}_{+}$};
\draw (616.5,357.4) node [anchor=north west][inner sep=0.75pt]    {$\mathbb{R}^{d_{0} =1}$};
\draw (330.4,437.4) node [anchor=north west][inner sep=0.75pt]    {$\mathcal{M}_{\mathscr{C}\left( \Gamma _{M_{1}}^{\theta _{1}}\right)}( F,B)$};
\draw (254.5,326.4) node [anchor=north west][inner sep=0.75pt]    {$e( B) \times \mathcal{M}_{\mathscr{C}\left( \Gamma _{M_{1}}^{\theta _{1}}\right)}( F,B)$};
\draw (125,456.4) node [anchor=north west][inner sep=0.75pt]    {$e( F)$};
\draw (479.4,440.4) node [anchor=north west][inner sep=0.75pt]    {$\mathcal{M}_{\mathscr{C}\left( \Gamma _{M_{1}}^{\theta _{1}}\right)}( F,C)$};
\draw (455.5,311.4) node [anchor=north west][inner sep=0.75pt]    {$e( C) \times \mathcal{M}_{\mathscr{C}\left( \Gamma _{M_{1}}^{\theta _{1}}\right)}( F,C)$};

\end{tikzpicture}
            \caption{Cohen-Jones Segal's realization of $\mathscr{C}(\Gamma_{M_{1}}^{\theta_{1}})$}
            \label{fig:CJS-theta-1-graph-2cells}
        \end{figure}
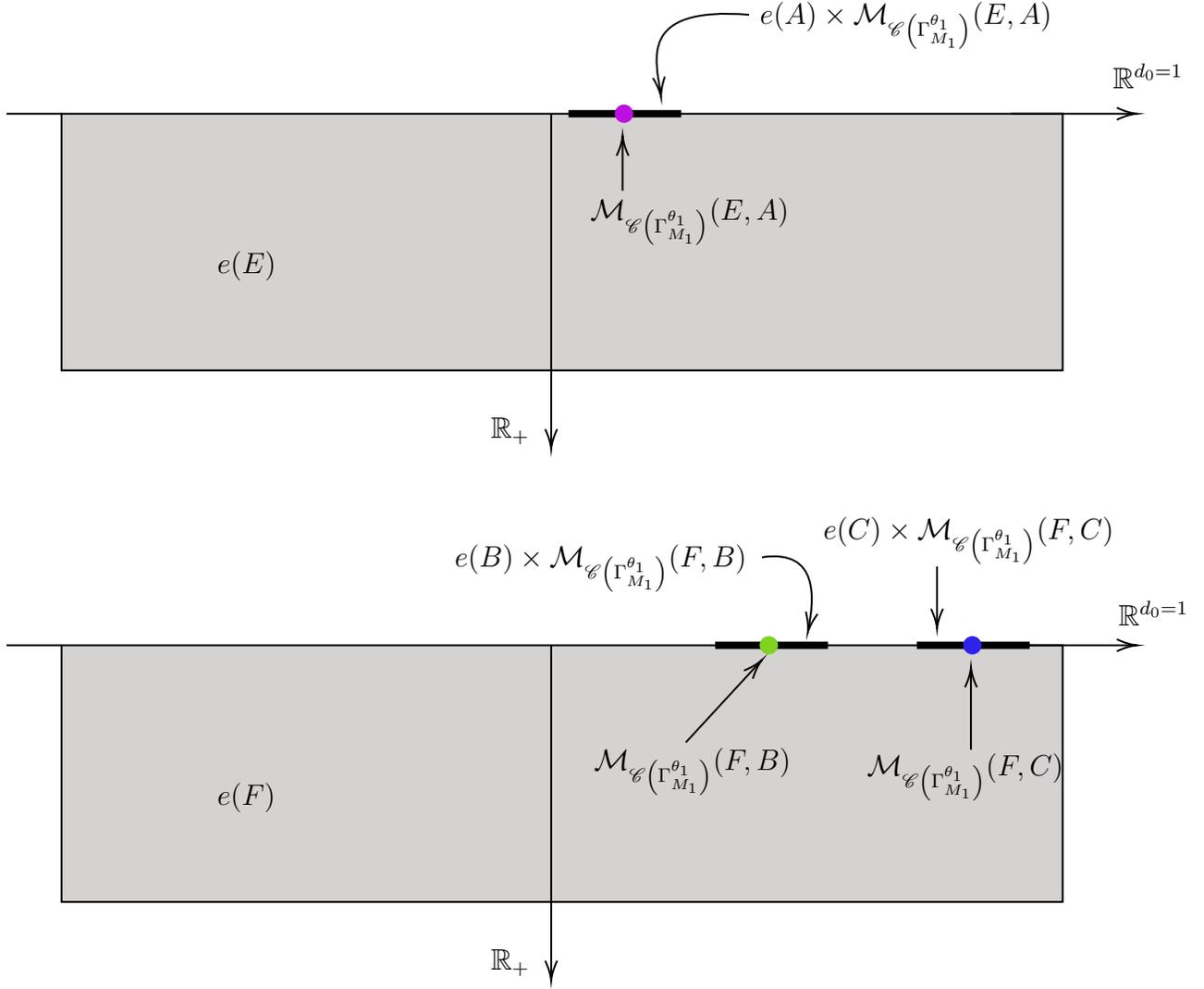
   
    We now describe the Cohen--Jones--Segal realization \(|\mathscr{C}(\Gamma_{M_{1}}^{\theta_{1}})|_{\imath, \Phi}\) as a cellular complex. Let \(\epsilon\) and \(R\) be two real numbers such that \(0 < \epsilon \ll R\). We begin with a \(0\)-cell, which serves as the basepoint, denoted by \(*\). Consider the objects in the flow category with homological grading \(\hgr = 0\). These objects are \(A\), \(B\), \(C\), and \(D\). To each of these objects, we associate a \(1\)-cell, which is a copy of \(\{0\} \times [-\epsilon, \epsilon] \subset \mathbb{R}_{+} \times \mathbb{R}^{d_{0}}\). We denote these cells by \(e(A)\), \(e(B)\), \(e(C)\), and \(e(D)\), respectively. The boundary of each of these \(1\)-cells is attached to the basepoint \(*\), forming the \(1\)-skeleton \(S^{1} \vee S^{1} \vee S^{1} \vee S^{1}\).

    The objects \(E\) and \(F\) have homological grading \(\hgr = 1\). Let \(e(E)\) and \(e(F)\) denote the cells corresponding to \(E\) and \(F\), respectively. Each of these is a copy of \([0, R] \times [-R, R] \subset \mathbb{R}_{+} \times \mathbb{R}^{d_{0}=1}\); see Figure~\ref{fig:CJS-theta-1-graph-2cells}. We now describe how the boundaries of these \(2\)-cells are attached to the \(1\)-skeleton.
    
    Due to the neat embedding and coherent framing, we can identify a region \( S \subset \partial e(E) \) with the product \( e(A) \times \mathcal{M}_{\mathscr{C}\big(\Gamma_{M_{1}}^{\theta_{1}}\big)}(E, A) \). This region \( S \) is then attached to \( e(A) \) via a degree-one map. The remaining portion of the boundary, i.e., \( \partial e(E) \setminus S \), is attached to the basepoint \(*\). As a result, the image of \( e(E) \) under this gluing procedure is homeomorphic to a closed 2-disk \( D^{2} \), with \(* \in \partial D^{2} \). \par
    
    Similarly, there exist regions \( S_{1} \) and \( S_{2} \) in \( \partial e(F) \) such that  $S_{1} \cong e(B) \times \mathcal{M}_{\mathscr{C}\big(\Gamma_{M_{1}}^{\theta_{1}}\big)}(F,B)$ and $S_{2} \cong e(C) \times \mathcal{M}_{\mathscr{C}\big(\Gamma_{M_{1}}^{\theta_{1}}\big)}(F,C)$. We attach the region \( S_{1} \) homeomorphically to \( e(B) \), and \( S_{2} \) to \( e(C) \), via degree-one maps. The remaining boundary portion, $\partial e(F) \setminus (S_{1} \cup S_{2})$, is glued to the basepoint \( * \). After attaching both 2-cells \( e(E) \) and \( e(F) \), the resulting space is homotopy equivalent to \( D^2 \vee X \vee S^{1}\), where \( X \) is closure of the space obtained by removing \( D^2 \vee D^2 \) from \( S^2 \). Therefore, the Cohen–Jones–Segal realization \( \left|\mathscr{C}\big(\Gamma_{M_{1}}^{\theta_{1}}\big)\right|_{\imath, \Phi} \) is homotopy equivalent to \( S^1 \vee S^1 \).

    Consequently, the 2-factor spectrum is  
    \[
    \mathcal{X}\big(\Gamma_{M_{1}}^{\theta_{1}}\big) = \Sigma^{-1}(S^1 \vee S^1) = S^0 \vee S^0,
    \]  
    with the summands corresponding to quantum gradings:
    \[
    \mathcal{X}^{2}\big(\Gamma_{M_{1}}^{\theta_{1}}\big) = *, \quad
    \mathcal{X}^{0}\big(\Gamma_{M_{1}}^{\theta_{1}}\big) = S^{0}, \quad
    \mathcal{X}^{-2}\big(\Gamma_{M_{1}}^{\theta_{1}}\big) = S^{0}.
    \]
    
\end{example}

\section{Closed webs coming from links belong to the family $\mathscr{G}$}\label{webs}
In this section, we work with three distinct homology theories: $\mathfrak{sl}_3$ link homology \cite{sl3-link-homology}, Khovanov homology for links \cite{KhovanovHomology, Bar-Natan-Khovanov-homology}, and 2-factor homology for planar trivalent graphs with perfect matchings \cite{CohomologyPlanarTrivalentGraph}. Each of these theories involves choices between 0- and 1-resolutions (or analogous operations). To avoid confusion, we adopt the following terminology: we use \emph{smoothings} exclusively in the context of Khovanov link homology, \emph{resolutions} for planar trivalent graphs with perfect matchings, and \emph{flattenings} in the context of $\mathfrak{sl}_3$ link homology; see Figure~\ref{fig:flattening}, Figure~\ref{fig:smoothings}, and Figure~\ref{fig: resolution configuration for a state}(a). \par
\begin{figure}[htp]
        \centering
        \tikzset{every picture/.style={line width=0.75pt}} 

\begin{tikzpicture}[x=0.75pt,y=0.75pt,yscale=-1,xscale=1]

\draw    (59.5,151) -- (99.38,91.66) ;
\draw [shift={(100.5,90)}, rotate = 123.91] [color={rgb, 255:red, 0; green, 0; blue, 0 }  ][line width=0.75]    (10.93,-3.29) .. controls (6.95,-1.4) and (3.31,-0.3) .. (0,0) .. controls (3.31,0.3) and (6.95,1.4) .. (10.93,3.29)   ;
\draw    (72.5,116) -- (52.79,92.53) ;
\draw [shift={(51.5,91)}, rotate = 49.97] [color={rgb, 255:red, 0; green, 0; blue, 0 }  ][line width=0.75]    (10.93,-3.29) .. controls (6.95,-1.4) and (3.31,-0.3) .. (0,0) .. controls (3.31,0.3) and (6.95,1.4) .. (10.93,3.29)   ;
\draw    (84.5,129) -- (102.5,151) ;
\draw    (289.5,73) .. controls (305.99,42.93) and (297.08,26.03) .. (284.66,9.53) ;
\draw [shift={(283.5,8)}, rotate = 52.59] [color={rgb, 255:red, 0; green, 0; blue, 0 }  ][line width=0.75]    (10.93,-3.29) .. controls (6.95,-1.4) and (3.31,-0.3) .. (0,0) .. controls (3.31,0.3) and (6.95,1.4) .. (10.93,3.29)   ;
\draw    (332.5,74) .. controls (316.01,43.93) and (324.92,27.03) .. (337.34,10.53) ;
\draw [shift={(338.5,9)}, rotate = 127.41] [color={rgb, 255:red, 0; green, 0; blue, 0 }  ][line width=0.75]    (10.93,-3.29) .. controls (6.95,-1.4) and (3.31,-0.3) .. (0,0) .. controls (3.31,0.3) and (6.95,1.4) .. (10.93,3.29)   ;
\draw    (590.5,145) -- (540.8,86.52) ;
\draw [shift={(539.5,85)}, rotate = 49.64] [color={rgb, 255:red, 0; green, 0; blue, 0 }  ][line width=0.75]    (10.93,-3.29) .. controls (6.95,-1.4) and (3.31,-0.3) .. (0,0) .. controls (3.31,0.3) and (6.95,1.4) .. (10.93,3.29)   ;
\draw    (571.5,110) -- (591.27,84.58) ;
\draw [shift={(592.5,83)}, rotate = 127.87] [color={rgb, 255:red, 0; green, 0; blue, 0 }  ][line width=0.75]    (10.93,-3.29) .. controls (6.95,-1.4) and (3.31,-0.3) .. (0,0) .. controls (3.31,0.3) and (6.95,1.4) .. (10.93,3.29)   ;
\draw    (559.5,123) -- (543.5,141) ;
\draw    (310,190.13) -- (328.5,169) ;
\draw [shift={(329.81,167.5)}, rotate = 131.2] [color={rgb, 255:red, 0; green, 0; blue, 0 }  ][line width=0.75]    (10.93,-3.29) .. controls (6.95,-1.4) and (3.31,-0.3) .. (0,0) .. controls (3.31,0.3) and (6.95,1.4) .. (10.93,3.29)   ;
\draw    (310,190.13) -- (291.57,170.86) ;
\draw [shift={(290.19,169.42)}, rotate = 46.27] [color={rgb, 255:red, 0; green, 0; blue, 0 }  ][line width=0.75]    (10.93,-3.29) .. controls (6.95,-1.4) and (3.31,-0.3) .. (0,0) .. controls (3.31,0.3) and (6.95,1.4) .. (10.93,3.29)   ;
\draw    (310,190.13) -- (310,223.12) ;
\draw [shift={(310,212.63)}, rotate = 270] [color={rgb, 255:red, 0; green, 0; blue, 0 }  ][line width=0.75]    (10.93,-3.29) .. controls (6.95,-1.4) and (3.31,-0.3) .. (0,0) .. controls (3.31,0.3) and (6.95,1.4) .. (10.93,3.29)   ;
\draw    (308.4,224.32) -- (287.5,240) ;
\draw [shift={(310,223.12)}, rotate = 143.12] [color={rgb, 255:red, 0; green, 0; blue, 0 }  ][line width=0.75]    (10.93,-3.29) .. controls (6.95,-1.4) and (3.31,-0.3) .. (0,0) .. controls (3.31,0.3) and (6.95,1.4) .. (10.93,3.29)   ;
\draw    (311.6,224.32) -- (332.5,240) ;
\draw [shift={(310,223.12)}, rotate = 36.88] [color={rgb, 255:red, 0; green, 0; blue, 0 }  ][line width=0.75]    (10.93,-3.29) .. controls (6.95,-1.4) and (3.31,-0.3) .. (0,0) .. controls (3.31,0.3) and (6.95,1.4) .. (10.93,3.29)   ;
\draw    (164.5,71) -- (235.69,37.84) ;
\draw [shift={(237.5,37)}, rotate = 155.03] [color={rgb, 255:red, 0; green, 0; blue, 0 }  ][line width=0.75]    (10.93,-3.29) .. controls (6.95,-1.4) and (3.31,-0.3) .. (0,0) .. controls (3.31,0.3) and (6.95,1.4) .. (10.93,3.29)   ;
\draw    (165.5,178) -- (234.69,210.16) ;
\draw [shift={(236.5,211)}, rotate = 204.93] [color={rgb, 255:red, 0; green, 0; blue, 0 }  ][line width=0.75]    (10.93,-3.29) .. controls (6.95,-1.4) and (3.31,-0.3) .. (0,0) .. controls (3.31,0.3) and (6.95,1.4) .. (10.93,3.29)   ;
\draw    (404.31,34.84) -- (473.5,67) ;
\draw [shift={(402.5,34)}, rotate = 24.93] [color={rgb, 255:red, 0; green, 0; blue, 0 }  ][line width=0.75]    (10.93,-3.29) .. controls (6.95,-1.4) and (3.31,-0.3) .. (0,0) .. controls (3.31,0.3) and (6.95,1.4) .. (10.93,3.29)   ;
\draw    (404.31,201.16) -- (475.5,168) ;
\draw [shift={(402.5,202)}, rotate = 335.03] [color={rgb, 255:red, 0; green, 0; blue, 0 }  ][line width=0.75]    (10.93,-3.29) .. controls (6.95,-1.4) and (3.31,-0.3) .. (0,0) .. controls (3.31,0.3) and (6.95,1.4) .. (10.93,3.29)   ;

\draw (155.55,187.86) node [anchor=north west][inner sep=0.75pt]  [rotate=-23.02] [align=left] {$\displaystyle 0$-flattening};
\draw (414.95,211.8) node [anchor=north west][inner sep=0.75pt]  [rotate=-335.09] [align=left] {1-flattening};
\draw (170.95,82.8) node [anchor=north west][inner sep=0.75pt]  [rotate=-335.09] [align=left] {1-flattening};
\draw (395.55,47.86) node [anchor=north west][inner sep=0.75pt]  [rotate=-23.02] [align=left] {$\displaystyle 0$-flattening};
\draw (17,161) node [anchor=north west][inner sep=0.75pt]   [align=left] {positive crossing};
\draw (510,154) node [anchor=north west][inner sep=0.75pt]   [align=left] {negative crossing};

\end{tikzpicture}
        \caption{Flattenings in $\mathfrak{sl}_{3}$ link homology}
        \label{fig:flattening}
\end{figure}
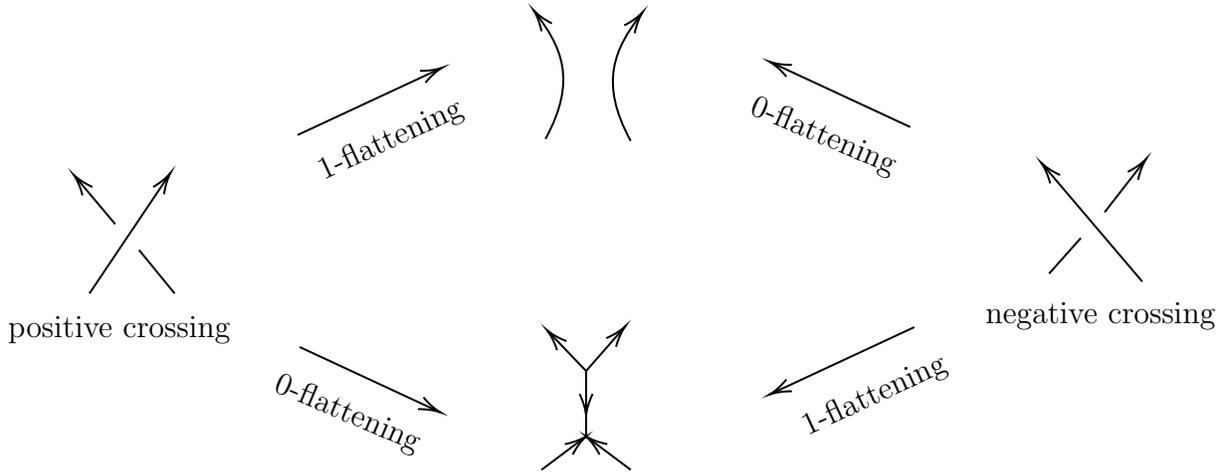
\begin{figure}[htp]
        \centering
        \tikzset{every picture/.style={line width=0.75pt}} 

\begin{tikzpicture}[x=0.75pt,y=0.75pt,yscale=-1,xscale=1]

\draw    (266.5,127) -- (307.5,66) ;
\draw    (279.5,92) -- (258.5,67) ;
\draw    (291.5,105) -- (309.5,127) ;
\draw    (336,96) -- (417.5,96) ;
\draw [shift={(419.5,96)}, rotate = 180] [color={rgb, 255:red, 0; green, 0; blue, 0 }  ][line width=0.75]    (10.93,-3.29) .. controls (6.95,-1.4) and (3.31,-0.3) .. (0,0) .. controls (3.31,0.3) and (6.95,1.4) .. (10.93,3.29)   ;
\draw    (249.25,96.5) -- (168.5,96.5) ;
\draw [shift={(166.5,96.5)}, rotate = 360] [color={rgb, 255:red, 0; green, 0; blue, 0 }  ][line width=0.75]    (10.93,-3.29) .. controls (6.95,-1.4) and (3.31,-0.3) .. (0,0) .. controls (3.31,0.3) and (6.95,1.4) .. (10.93,3.29)   ;
\draw    (89.5,126) .. controls (106.5,95) and (96.5,78) .. (83.5,61) ;
\draw    (132.5,127) .. controls (115.5,96) and (125.5,79) .. (138.5,62) ;
\draw    (510.81,111.78) .. controls (479.96,94.51) and (462.87,104.36) .. (445.76,117.21) ;
\draw    (512.19,68.79) .. controls (481.04,85.52) and (464.13,75.37) .. (447.24,62.22) ;
\draw [color={rgb, 255:red, 252; green, 3; blue, 3 }  ,draw opacity=1 ] [dash pattern={on 2.5pt off 2.5pt}]  (98.5,95) -- (123.5,95) ;

\draw (166,108) node [anchor=north west][inner sep=0.75pt]   [align=left] {$\displaystyle 0$-smoothing};
\draw (336,107) node [anchor=north west][inner sep=0.75pt]   [align=left] {$\displaystyle 1$-smoothing};
\draw (48.5,142) node [anchor=north west][inner sep=0.75pt]   [align=left] {$\displaystyle \textcolor[rgb]{0.99,0.01,0.01}{A} :$ smoothing arc };
\draw (100.5,98.4) node [anchor=north west][inner sep=0.75pt]  [color={rgb, 255:red, 252; green, 3; blue, 3 }  ,opacity=1 ]  {$A$};

\end{tikzpicture}
        \caption{Smoothings in Khovanov homology}
        \label{fig:smoothings}
\end{figure}
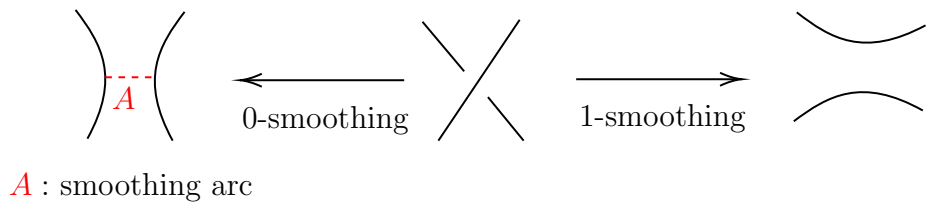
\begin{definition}\label{def:oriented-smoothing}
Given an oriented link diagram \( D_L \) representing an oriented link \( L \), consider the smoothing obtained by applying the \(0\)-smoothings at all positive crossings and the \(1\)-smoothings at all negative crossings. The resulting diagram is called the \emph{oriented smoothing} of \( D_L \), and is denoted by \( D_L^{os} \). Recall that Olga Plamenevskaya defined a transverse knot invariant using this oriented smoothing \cite{Olga-transverse-knot-invariant}.
\end{definition}
Consider all \( 2^n \) closed webs obtained by performing flattenings at the crossings of \( D_L \), where $n$ is the number of crossings of $D_{L}$; see \cite{sl3-link-homology, Kuperberg-spiders}. The underlying graphs of these webs are planar trivalent graphs that admit a natural perfect matching, where each edge in the perfect matching corresponds to a crossing of the link diagram \( D_L \); see Figure~\ref{fig:web-from-link}.
\begin{figure}[htp]
        \centering
        \input{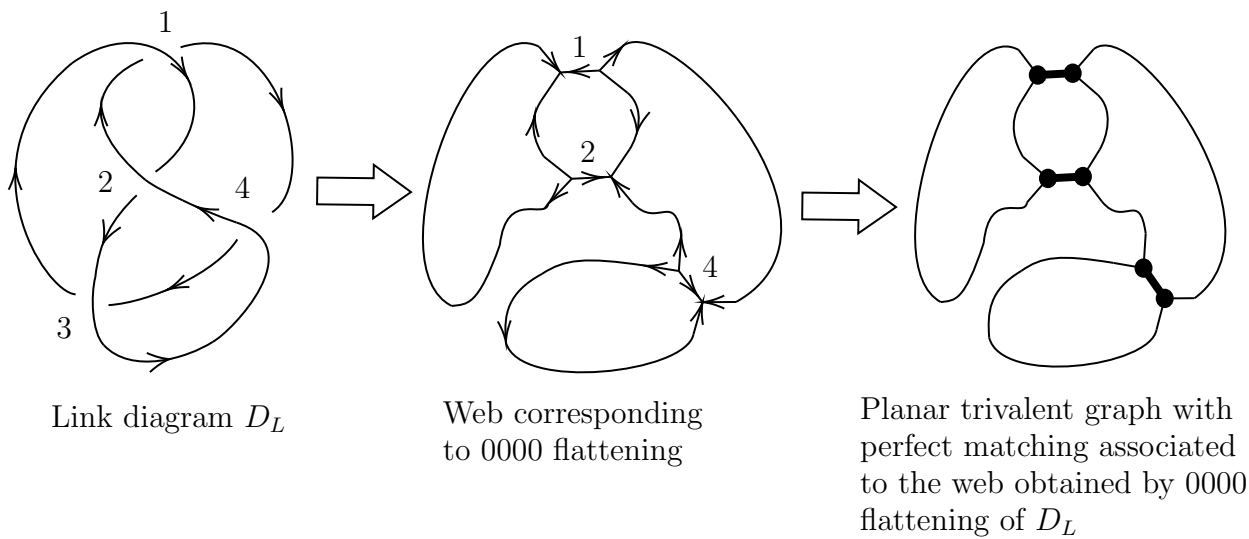}
        \caption{Closed web obtained by flattening the crossings of link diagram $D_{L}$ and the corresponding planar trivalent graph with perfect matching}
        \label{fig:web-from-link}
\end{figure}
\begin{definition}
    For an oriented link diagram \( D_L \), we define the \emph{oriented flattening} of \( D_L \) to be the closed web obtained by applying \(0\)-flattenings at all positive crossings and \(1\)-flattenings at all negative crossings; we denote this by \( D_L^{of} \). Similarly, the closed web obtained by applying \(1\)-flattenings at all positive crossings and \(0\)-flattenings at all negative crossings is called the \emph{dual oriented flattening}; we denote this by $D_{L}^{dof}$; see Figure~\ref{fig:oriented-flattening}. 
\end{definition}
\begin{figure}[htp]
        \centering
        \input{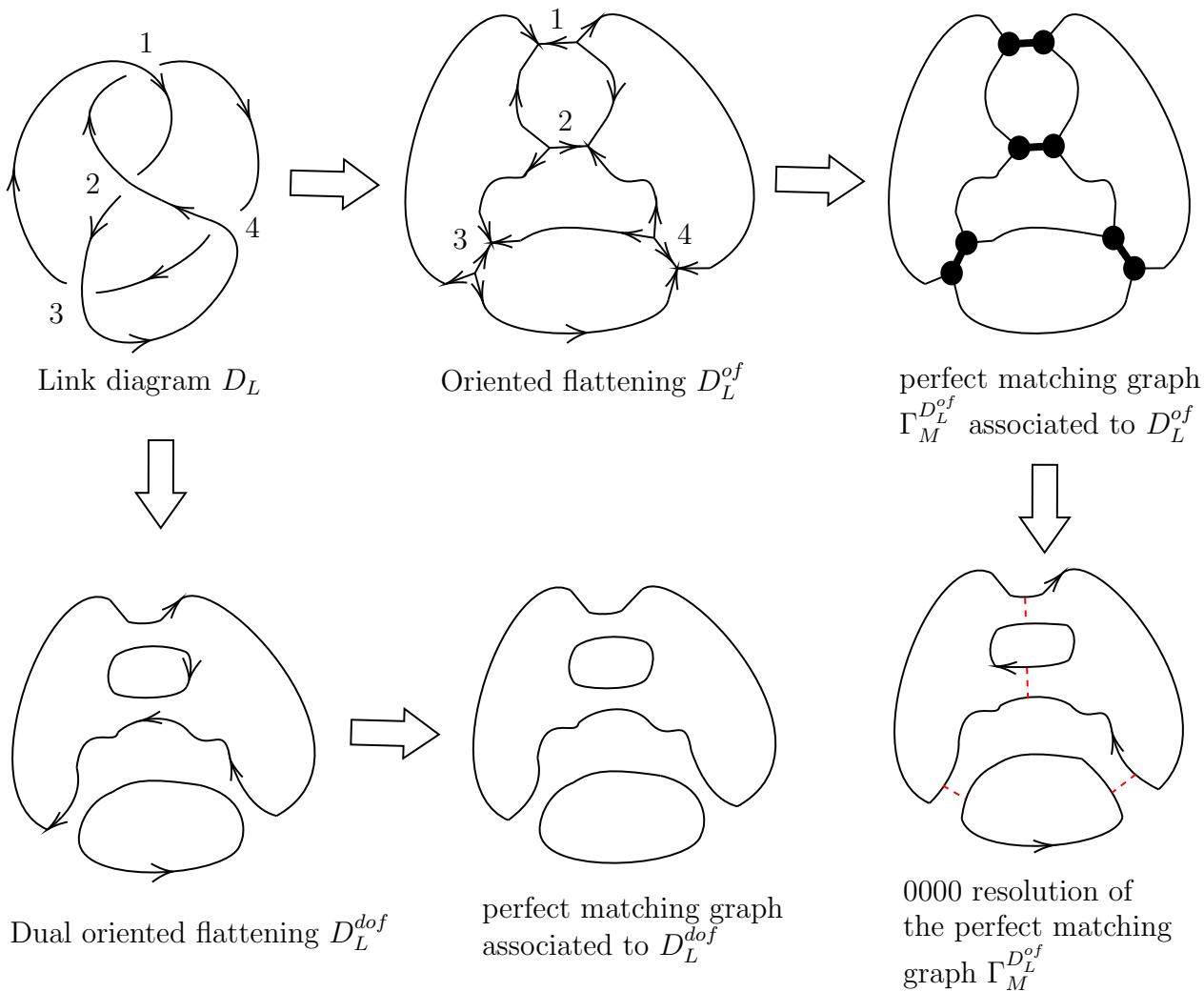}
        \caption{Oriented and dual oriented flattening}
        \label{fig:oriented-flattening}
\end{figure}

\begin{definition}
    Let \(D\) be a resolution configuration in which every circle of \(Z(D)\) contains no double points. If there exists an orientation of all circles in \(Z(D)\) such that every arc in \(A(D)\) has one of the local pictures shown in Figure~\ref{fig:conssistently-orientable}, then we say that \(D\) is \emph{consistently orientable}.
\end{definition}

\begin{lemma}\label{all-arcs-in-zero-state-of-web-are-m-arc}
Let \( W \) be a closed web obtained by flattening all the crossings of an oriented link diagram \( D_L \). Let \( \Gamma_M \) denote the perfect matching graph associated to \( W \). Consider the resolution configuration \( D_{\Gamma_M}(\overline{0}) \), where \( \overline{0} \in \{0,1\}^{|M|} \) is the vector with all entries equal to zero; see Definition~\ref{resolution configuration corresponding to a state}. The circles in \( D_{\Gamma_M}(\overline{0}) \) correspond bijectively to the loops in \( D_L^{dof} \), and hence also to the circles in \( D_L^{os} \). Moreover, all arcs in \( D_{\Gamma_M}(\overline{0}) \) are \( m \)-arcs, and \( D_{\Gamma_M}(\overline{0}) \) is consistently orientable.
\end{lemma}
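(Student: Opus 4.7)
The plan is to establish each of the three claims by combining a careful local analysis at every crossing of $D_L$ with the link orientation and the planarity of $S^2$. First, I would identify the $1$-manifold underlying $D_{\Gamma_M}(\overline{0})$ (the union of circles, ignoring the arcs) with the oriented smoothing $D_L^{os}$, up to ambient isotopy. Near each crossing, the $0$-resolution at the corresponding perfect matching edge of $\Gamma_M$, interpreted in the context of the link orientation, produces a local smoothing pattern matching the oriented smoothing of that crossing; piecing these local identifications together yields a global isotopy. Since the loops of $D_L^{dof}$ also recover $D_L^{os}$ upon contracting each pair of adjacent trivalent vertices and the connecting thick edge (yielding the oriented smoothing pattern of $D_L$), the three sets of curves---circles of $D_{\Gamma_M}(\overline{0})$, loops of $D_L^{dof}$, and circles of $D_L^{os}$---are in natural bijection.

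For the $m$-arc claim, I would apply Lemma~\ref{determine-m,Delta,eta}. Each arc $A$ in $D_{\Gamma_M}(\overline{0})$ is placed at the former location of a perfect matching edge, equivalently at a crossing of $D_L$. In the standard local disk $B$ containing $A$, the two local arcs $\alpha$ and $\beta$ correspond, via the bijection above, to the two local strands of the oriented smoothing at that crossing. Both strands inherit orientations from the link, and these orientations are locally \emph{parallel}: at a positive crossing both strands point ``upward,'' and at a negative crossing both point ``downward.'' The key topological fact is that an embedded oriented simple closed curve in $S^2$ cannot contain two nearby parallel strands traversed in the \emph{same} direction---by the Jordan curve theorem, along any such curve, nearby parallel strands must be traversed in opposite directions. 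Consequently $\alpha$ and $\beta$ lie on distinct Seifert circles, so $Z_\alpha \neq Z_\beta$ and case~(1) of Lemma~\ref{determine-m,Delta,eta} applies, giving that $A$ is an $m$-arc.

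For consistent orientability, I would orient each circle of $D_{\Gamma_M}(\overline{0})$ by transporting the natural orientation of the corresponding Seifert circle in $D_L^{os}$ across the bijection. At each arc $A$, the two adjacent strands then carry orientations induced by the oriented smoothing---locally parallel and pointing in the same direction---which matches one of the allowed local pictures in Figure~\ref{fig:conssistently-orientable}.

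The main obstacle, I expect, will lie in the first step: verifying, across all possible local configurations (positive or negative crossing, combined with $0$- or $1$-flattening in $W$), that the $0$-resolution of the perfect matching edge always reconstructs the oriented smoothing pattern up to planar isotopy regardless of which flattening was chosen for $W$. The independence of this identification on the choice of $W$ is subtle and requires careful case analysis. Once this local identification is in place, both the $m$-arc property and consistent orientability follow cleanly from the orientation of $L$ together with the Jordan-curve-theorem argument outlined above.
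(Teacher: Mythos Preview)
Your proposal is correct and parallels the paper's argument closely. The identification of circles in $D_{\Gamma_M}(\overline{0})$ with Seifert circles of $D_L^{os}$, and the consistent orientability via the link orientation, are handled essentially the same way in both. The one point of divergence is the $m$-arc claim: the paper simply invokes the standard fact that the oriented smoothing $D_L^{os}$ realizes the maximal number of circles among all smoothings in the Khovanov cube (citing Plamenevskaya and others), so every smoothing arc must merge two circles and is therefore an $m$-arc. Your Jordan--curve argument---two coherently oriented parallel strands cannot lie on a single embedded circle in $S^2$---is precisely the direct proof of that fact, so you are unpacking what the paper takes as given. Your anticipated obstacle about independence of the choice of $W$ is well-placed but resolves cleanly: at a crossing where $W$ already carries the oriented-smoothing local picture there is no perfect matching edge at all, while at a crossing where $W$ carries a thick edge the $0$-resolution of that edge reproduces the oriented smoothing locally; either way the underlying $1$-manifold agrees with $D_L^{os}$.
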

\begin{proof}
    The proof is straightforward; see also Figure~\ref{fig:web-from-link} and Figure~\ref{fig:oriented-flattening}. All arcs in \( D_{\Gamma_M}(\overline{0}) \) are \( m \)-arcs, since the oriented smoothing \( D_L^{os} \) contains the maximal number of circles among all smoothings appearing in the hypercube of smoothings of \( D_L \) in the context of Khovanov homology \cite{Olga-transverse-knot-invariant, Olga2006_2, LipSarTransverse, extreme-khovanov-spectra}, and all smoothing arcs (see Figure~\ref{fig:smoothings}) are $m$-arcs. Moreover, the consistent orientation of the circles in \(D_{\Gamma_M}(\overline{0})\) is induced by the orientation of the circles in \(D_{L}^{os}\) or, equivalently, by the orientation of the loops in \(D_{L}^{dof}\); see Figure~\ref{fig:oriented-flattening}.
\end{proof}

\begin{figure}[htp]
        \centering
        \tikzset{every picture/.style={line width=0.75pt}} 

\begin{tikzpicture}[x=0.75pt,y=0.75pt,yscale=-1,xscale=1]

\draw    (379.37,52.77) -- (379.37,128.35) ;
\draw [shift={(379.37,130.35)}, rotate = 270] [color={rgb, 255:red, 0; green, 0; blue, 0 }  ][line width=0.75]    (10.93,-3.29) .. controls (6.95,-1.4) and (3.31,-0.3) .. (0,0) .. controls (3.31,0.3) and (6.95,1.4) .. (10.93,3.29)   ;
\draw    (443.64,52.77) -- (443.64,126.87) ;
\draw [shift={(443.64,128.87)}, rotate = 270] [color={rgb, 255:red, 0; green, 0; blue, 0 }  ][line width=0.75]    (10.93,-3.29) .. controls (6.95,-1.4) and (3.31,-0.3) .. (0,0) .. controls (3.31,0.3) and (6.95,1.4) .. (10.93,3.29)   ;
\draw [color={rgb, 255:red, 252; green, 3; blue, 3 }  ,draw opacity=1 ] [dash pattern={on 4.5pt off 4.5pt}]  (379.37,91.93) -- (443.27,91.93) ;
\draw  [dash pattern={on 4.5pt off 4.5pt}] (360.21,91.93) .. controls (360.21,63.7) and (383.09,40.81) .. (411.32,40.81) .. controls (439.55,40.81) and (462.44,63.7) .. (462.44,91.93) .. controls (462.44,120.16) and (439.55,143.04) .. (411.32,143.04) .. controls (383.09,143.04) and (360.21,120.16) .. (360.21,91.93) -- cycle ;
\draw    (232.57,133.55) -- (231.42,57.98) ;
\draw [shift={(231.39,55.99)}, rotate = 89.13] [color={rgb, 255:red, 0; green, 0; blue, 0 }  ][line width=0.75]    (10.93,-3.29) .. controls (6.95,-1.4) and (3.31,-0.3) .. (0,0) .. controls (3.31,0.3) and (6.95,1.4) .. (10.93,3.29)   ;
\draw    (168.3,134.53) -- (167.17,60.44) ;
\draw [shift={(167.14,58.44)}, rotate = 89.13] [color={rgb, 255:red, 0; green, 0; blue, 0 }  ][line width=0.75]    (10.93,-3.29) .. controls (6.95,-1.4) and (3.31,-0.3) .. (0,0) .. controls (3.31,0.3) and (6.95,1.4) .. (10.93,3.29)   ;
\draw [color={rgb, 255:red, 252; green, 3; blue, 3 }  ,draw opacity=1 ] [dash pattern={on 4.5pt off 4.5pt}]  (231.97,94.4) -- (168.07,95.37) ;
\draw  [dash pattern={on 4.5pt off 4.5pt}] (251.13,94.11) .. controls (251.56,122.34) and (229.03,145.57) .. (200.8,145.99) .. controls (172.57,146.42) and (149.34,123.89) .. (148.91,95.66) .. controls (148.48,67.43) and (171.02,44.2) .. (199.25,43.77) .. controls (227.47,43.34) and (250.7,65.88) .. (251.13,94.11) -- cycle ;

\end{tikzpicture}
        \caption{Local picture of arc for consistently orientable resolution configuration}
        \label{fig:conssistently-orientable}
\end{figure}
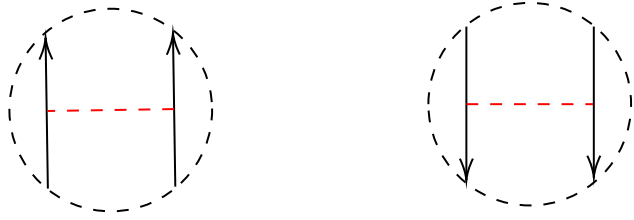

For states \(u, v \in \{0,1\}^n\), we write \(v \le u\) if \(v_i \le u_i\) for all \(i\). If \(v \le u\) and \(|u| - |v| = l\), we write \(v \le_l u\).

\begin{lemma}\label{lemma:upto-equivlence-bad-face}
   Consider a perfect matching graph \(\Gamma_M\) representing a planar trivalent graph \(G\) with perfect matching \(M\), and assume \((G, M) \notin \mathscr{G}\). Then the hypercube of states of \(\Gamma_{M}\) contains a bad face, i.e., a diagram in which the relation \(m \circ \Delta = \eta \circ \eta\) holds. Let the states involved in this bad face be \(v, v', v'', u \in \{0,1\}^{|M|}\) such that \(v \le_{1} v' \le_{1} u\) and \(v \le_{1} v'' \le_{1} u\). The resolution configuration \(D_{\Gamma_{M}}(v) \setminus D_{\Gamma_{M}}(u)\) is equivalent to one of those shown in Figure~\ref{fig:equivalent-to-badface}.
\end{lemma}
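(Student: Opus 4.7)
\medskip

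\noindent\textbf{Proof proposal.} The plan is to pin down the local topology forced by the bad-face relation and then use the equivalence moves from Figure~\ref{fig:equivalence} together with Lemma~\ref{threeImportantEquivalenceMoves} to reduce to the canonical forms listed in Figure~\ref{fig:equivalent-to-badface}. First, since $(G,M) \notin \mathscr{G}$, by definition a bad face exists, giving the states $v \le_{1} v' \le_{1} u$ and $v \le_{1} v'' \le_{1} u$. By Lemma~\ref{basic}, $D := D_{\Gamma_{M}}(v) \setminus D_{\Gamma_{M}}(u)$ is a basic resolution configuration of index $2$, with two arcs $A_1, A_2 \in A(D)$ corresponding to the two coordinates in which $v$ and $u$ differ.

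Next I would extract the arc types forced by the relation $m \circ \Delta = \eta \circ \eta$ on the face. Exactly one of the two paths $v \to v' \to u$ and $v \to v'' \to u$ realizes $\Delta$ followed by $m$, and the other realizes $\eta$ followed by $\eta$. Without loss of generality, surgery on $A_1$ at $v$ is the $\Delta$-surgery and the subsequent surgery on $A_2$ is the $m$-surgery, while surgering $A_2$ at $v$ first yields an $\eta$-surgery and surgering $A_1$ from $v''$ yields another $\eta$-surgery. Using Lemma~\ref{determine-m,Delta,eta} applied in the standard local disks around $A_1$ and $A_2$, I would argue that at $v$ both $A_1$ and $A_2$ have their two endpoints on the same circle $Z_{A_1}$ and $Z_{A_2}$, respectively. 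The requirement that after the $\Delta$-surgery on $A_1$ the arc $A_2$ becomes an $m$-arc (i.e.\ its endpoints land on \emph{different} components of the split) forces $Z_{A_1} = Z_{A_2} =: Z$, so \emph{all four endpoints of $A_1 \cup A_2$ lie on a single circle}. Moreover, $Z(D)$ consists precisely of this one circle together with whatever additional circles in $Z(D)$ are dictated by the forgotten part of the configuration; a separate count using $|Z(D)| = |Z(s(D))|$ and basicness will show $Z(D) = \{Z\}$.

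Having reduced to the case of a single circle $Z$ carrying both arcs, I would carry out a local case analysis recording, in the standard local disks $B_1$ and $B_2$, the two orderings of the four boundary points $\{z_i, w_i, x_i, y_i\}$ on $\partial B_i$ for $i=1,2$ (as in Definition~\ref{standard local disk containing an arc}). By Lemma~\ref{determine-m,Delta,eta}, the $\Delta$-condition at $A_1$ requires the circle $Z$ to connect $z_1$-to-$y_1$ and $x_1$-to-$w_1$ outside $B_1$, while the $\eta$-condition at $A_2$ requires $Z$ to connect $z_2$-to-$x_2$ and $y_2$-to-$w_2$ outside $B_2$. Enumerating the ways these four oriented strands of $Z$ outside $B_1 \cup B_2$ can be patched together on $S^2$, and keeping track of the (possibly) extra double points arising from 1-resolutions in $D_{\Gamma_{M}}(v)$ that are frozen in the basic part, yields a short list of possibilities for $D$.

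The final step is to apply the equivalence relation of Definition~\ref{equivalent resolution configuration}, combined with Lemma~\ref{threeImportantEquivalenceMoves}, to collapse each case in this list to one of the canonical forms in Figure~\ref{fig:equivalent-to-badface}. The main obstacle I anticipate is the combinatorial case analysis in the previous paragraph: one must be careful that the patching of the four external strands, together with the freedom to introduce or remove double points via moves~(a) and~(g) of Figure~\ref{fig:equivalence} and the equivalences in Figure~\ref{fig:threeImportantEquivalenceMoves}, genuinely collapses to only the listed canonical models and produces no exotic configurations. Once this reduction is carried out, the lemma follows.
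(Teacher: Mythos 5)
Your proposal follows essentially the same route as the paper's proof: by Lemma~\ref{basic} the configuration is basic of index~2, the bad-face constraint forces all four arc endpoints onto a single circle, and the analysis in the standard local disks via Lemma~\ref{determine-m,Delta,eta}, followed by an enumeration of how the external strands pair up on $S^2$ and a reduction via the swap moves of Lemma~\ref{threeImportantEquivalenceMoves}, collapses everything to the configurations of Figure~\ref{fig:equivalent-to-badface}. The combinatorial step you deferred is exactly what the paper carries out (an explicit list of eight endpoint pairings reduced to a single case by those moves, with (A) and (B) identified by move~(C)), so your outline matches the published argument.
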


\begin{proof}
By Lemma~\ref{basic}, the resolution configuration \( D_{\Gamma_{M}}(v) \setminus D_{\Gamma_{M}}(u) \) is a basic resolution configuration. Furthermore, \( D_{\Gamma_{M}}(v) \setminus D_{\Gamma_{M}}(u) \) contains two arcs, \( A_{1} \) and \( A_{2} \), where one is a \( \Delta \)-arc and the other is an \( \eta \)-arc. Let \( B_{1} \) and \( B_{2} \) denote the standard local disks around \( A_{1} \) and \( A_{2} \), respectively, as illustrated in Figure~\ref{fig:standard-local-disks}.

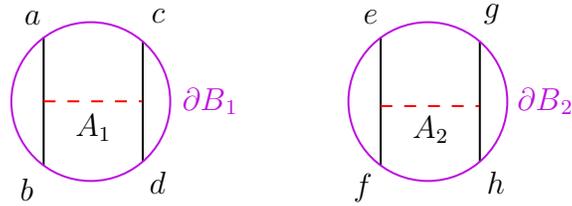
\begin{figure}[htp]
        \centering
        \tikzset{every picture/.style={line width=0.75pt}} 

\begin{tikzpicture}[x=0.75pt,y=0.75pt,yscale=-1,xscale=1]

\draw    (192.14,67.57) -- (192.14,132.31) ;
\draw    (242.14,69.53) -- (242.14,130.31) ;
\draw  [color={rgb, 255:red, 189; green, 16; blue, 224 }  ,draw opacity=1 ] (175.86,100.07) .. controls (175.86,77.94) and (193.8,60) .. (215.93,60) .. controls (238.06,60) and (256,77.94) .. (256,100.07) .. controls (256,122.2) and (238.06,140.14) .. (215.93,140.14) .. controls (193.8,140.14) and (175.86,122.2) .. (175.86,100.07) -- cycle ;
\draw [color={rgb, 255:red, 252; green, 3; blue, 3 }  ,draw opacity=1 ] [dash pattern={on 4.5pt off 4.5pt}]  (192.14,99.94) -- (241.94,99.94) ;
\draw    (362.14,67.57) -- (362.14,132.31) ;
\draw    (412.14,69.53) -- (412.14,130.31) ;
\draw  [color={rgb, 255:red, 189; green, 16; blue, 224 }  ,draw opacity=1 ] (345.86,100.07) .. controls (345.86,77.94) and (363.8,60) .. (385.93,60) .. controls (408.06,60) and (426,77.94) .. (426,100.07) .. controls (426,122.2) and (408.06,140.14) .. (385.93,140.14) .. controls (363.8,140.14) and (345.86,122.2) .. (345.86,100.07) -- cycle ;
\draw [color={rgb, 255:red, 252; green, 3; blue, 3 }  ,draw opacity=1 ] [dash pattern={on 4.5pt off 4.5pt}]  (362,102.34) -- (411.8,102.34) ;

\draw (181.06,52.74) node [anchor=north west][inner sep=0.75pt]    {$a$};
\draw (245.03,51.74) node [anchor=north west][inner sep=0.75pt]    {$c$};
\draw (178.77,137.4) node [anchor=north west][inner sep=0.75pt]    {$b$};
\draw (244.14,133.71) node [anchor=north west][inner sep=0.75pt]    {$d$};
\draw (206.8,104.84) node [anchor=north west][inner sep=0.75pt]  [color={rgb, 255:red, 0; green, 0; blue, 0 }  ,opacity=1 ]  {$A_{1}$};
\draw (260.6,91.54) node [anchor=north west][inner sep=0.75pt]    {$\textcolor[rgb]{0.74,0.06,0.88}{\partial B}\textcolor[rgb]{0.74,0.06,0.88}{_{1}}$};
\draw (347.03,134.74) node [anchor=north west][inner sep=0.75pt]    {$f$};
\draw (412.77,49.4) node [anchor=north west][inner sep=0.75pt]    {$g$};
\draw (414.14,133.71) node [anchor=north west][inner sep=0.75pt]    {$h$};
\draw (376.8,105.84) node [anchor=north west][inner sep=0.75pt]  [color={rgb, 255:red, 0; green, 0; blue, 0 }  ,opacity=1 ]  {$A_{2}$};
\draw (429.6,91.54) node [anchor=north west][inner sep=0.75pt]    {$\textcolor[rgb]{0.74,0.06,0.88}{\partial B}\textcolor[rgb]{0.74,0.06,0.88}{_{2}}$};
\draw (352,52.4) node [anchor=north west][inner sep=0.75pt]    {$e$};

\end{tikzpicture}
        \caption{Standard local disks around two arcs $A_{1}$ and $A_{2}$}
        \label{fig:standard-local-disks}
\end{figure}

Without loss of generality, assume \( A_{1} \) is an \( \eta \)-arc and \( A_{2} \) is a \( \Delta \)-arc. Let \( D_{\Gamma_{M}}(v') \) be the resolution configuration obtained from \( D_{\Gamma_{M}}(v) \) by performing surgery along the \( \eta \)-arc \( A_{1} \), i.e., \( D_{\Gamma_{M}}(v') = s_{A_{1}}\big(D_{\Gamma_{M}}(v)\big) \). Since \( D_{\Gamma_{M}}(v) \), \( D_{\Gamma_{M}}(v') \), \( D_{\Gamma_{M}}(v'') \), and \( D_{\Gamma_{M}}(u) \) form a bad face, it follows that in the resolution configuration \( D_{\Gamma_{M}}(v') \), the arc \( A_{2} \) must be an \( \eta \)-arc. There are exactly eight cases where this occurs, as follows.

\begin{enumerate}
        \item The pairs $\{a,g\}$, $\{c,f\}$, $\{b,e\}$, $\{d,h\}$ are connected.
        \item The pairs $\{a,g\}$, $\{c,h\}$, $\{b,f\}$, $\{d,e\}$ are connected. 
        \item The pairs $\{a,e\}$, $\{c,f\}$, $\{d,g\}$, $\{b,h\}$ are connected. 
        \item The pairs $\{a,e\}$, $\{c,h\}$, $\{d,f\}$, $\{b,g\}$ are connected. 
        \item The pairs $\{a,f\}$, $\{b,g\}$, $\{c,e\}$, $\{d,h\}$ are connected. 
        \item The pairs $\{a,f\}$, $\{b,h\}$, $\{c,g\}$, $\{d,e\}$ are connected. 
        \item The pairs $\{a,h\}$, $\{e,c\}$, $\{d,g\}$, $\{b,f\}$ are connected. 
        \item The pairs $\{a,h\}$, $\{e,b\}$, $\{d,f\}$, $\{c,g\}$ are connected. 
\end{enumerate}
Using Lemma~\ref{threeImportantEquivalenceMoves}, we can swap the roles \(a \leftrightarrow b\) and \(c \leftrightarrow d\). As a result, the resolution configurations in case~(1) are equivalent to those in case~(4), case~(2) to case~(5), case~(3) to case~(8), and case~(6) to case~(7). Likewise, swapping \(e \leftrightarrow f\) and \(g \leftrightarrow h\) shows that the configurations in case~(1) are equivalent to those in case~(7), and those in case~(2) to case~(8). Moreover, by Lemma~\ref{threeImportantEquivalenceMoves}, we may swap \(a \leftrightarrow e\), \(b \leftrightarrow f\), \(c \leftrightarrow g\), and \(d \leftrightarrow h\), implying that case~(1) is also equivalent to case~(5). Therefore, up to equivalence, it suffices to consider only case~(1), whose resolution configurations are each equivalent to either Figure~\ref{fig:equivalent-to-badface}~(A) or (B). Moreover, the resolution configurations shown in Figure~\ref{fig:equivalent-to-badface}~(A) and~(B) are equivalent to each other via the local move depicted in Figure~\ref{fig:threeImportantEquivalenceMoves}~(C).

\begin{figure}[htp]
        \centering
        \tikzset{every picture/.style={line width=0.75pt}} 

\begin{tikzpicture}[x=0.75pt,y=0.75pt,yscale=-1,xscale=1]

\draw    (128.5,60) -- (169,98) ;
\draw    (165.5,56) -- (128,98) ;
\draw    (128,98) .. controls (120.5,127) and (174.5,129) .. (169,98) ;
\draw    (165.5,56) .. controls (235.5,39) and (265.5,167) .. (153.5,183) ;
\draw    (128.5,60) .. controls (81.5,37) and (31.5,166) .. (153.5,183) ;
\draw [color={rgb, 255:red, 252; green, 3; blue, 3 }  ,draw opacity=1 ] [dash pattern={on 2.5pt off 2.5pt}]  (152.5,120) -- (153.5,183) ;
\draw [color={rgb, 255:red, 252; green, 3; blue, 3 }  ,draw opacity=1 ] [dash pattern={on 2.5pt off 2.5pt}]  (106,167) .. controls (116.5,227) and (195.5,235) .. (206.5,161) ;
\draw    (438.5,53) -- (478,93) ;
\draw    (474.5,51) -- (437,93) ;
\draw    (437,93) .. controls (429.5,122) and (483.5,124) .. (478,93) ;
\draw    (474.5,51) .. controls (560.5,32) and (655.5,211) .. (488.5,224) ;
\draw    (438.5,53) .. controls (324.5,49) and (300.5,216) .. (436.5,224) ;
\draw [color={rgb, 255:red, 252; green, 3; blue, 3 }  ,draw opacity=1 ] [dash pattern={on 2.5pt off 2.5pt}]  (461.5,115) -- (461.5,164) ;
\draw [color={rgb, 255:red, 252; green, 3; blue, 3 }  ,draw opacity=1 ] [dash pattern={on 2.5pt off 2.5pt}]  (486.5,190) .. controls (521.5,178) and (561.5,179) .. (571.5,183) ;
\draw    (438.5,190) -- (488.5,224) ;
\draw    (486.5,190) -- (436.5,224) ;
\draw    (438.5,190) .. controls (445.5,148) and (487.5,165) .. (486.5,190) ;

\draw (146,254) node [anchor=north west][inner sep=0.75pt]   [align=left] {(A)};
\draw (452,251) node [anchor=north west][inner sep=0.75pt]   [align=left] {(B)};
\draw (159,131) node [anchor=north west][inner sep=0.75pt]   [align=left] {$\displaystyle \Delta $-arc};
\draw (190,204) node [anchor=north west][inner sep=0.75pt]   [align=left] {$\displaystyle \eta $-arc};
\draw (467,131) node [anchor=north west][inner sep=0.75pt]   [align=left] {$\displaystyle \eta $-arc};
\draw (518,156) node [anchor=north west][inner sep=0.75pt]   [align=left] {$\displaystyle \Delta $-arc};

\end{tikzpicture}
        \caption{The resolution configuration \( D_{\Gamma_{M}}(v) \setminus D_{\Gamma_{M}}(u) \) associated with a bad face, up to equivalence}
        \label{fig:equivalent-to-badface}
\end{figure}
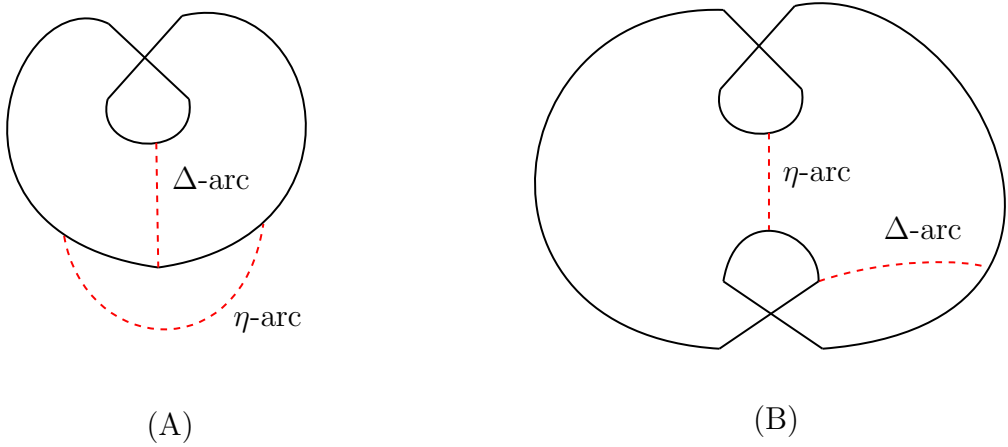

\end{proof}

\begin{lemma}\label{bad-face-zero-state-not-consistent}
    Consider a perfect matching graph \(\Gamma_M\) representing a planar trivalent graph \(G\) with perfect matching \(M\), and assume that \((G, M) \notin \mathscr{G}\). Suppose all arcs of the resolution configuration \(D_{\Gamma_M}(\overline{0})\) are \(m\)-arcs. Then \(D_{\Gamma_{M}}(\overline{0})\) is not consistently orientable.
\end{lemma}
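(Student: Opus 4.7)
The plan is to argue by contradiction: assume $D_{\Gamma_M}(\overline{0})$ is consistently orientable via some orientation $\vec{o}$, and derive that the hypercube of states of $\Gamma_M$ contains no bad face, contradicting $(G,M) \notin \mathscr{G}$. The cornerstone of the argument will be a purely local observation: in any consistently oriented resolution configuration, no arc can be an $\eta$-arc. Indeed, at an $\eta$-arc $A$, Lemma~\ref{determine-m,Delta,eta} places both boundary points of $A$ on a single circle $Z$ that connects $z$ with $x$ and $y$ with $w$ outside the standard local disk $B$. Following any orientation of $Z$ and tracking the directions in which the two strands enter and exit $B$, the outside connectivity forces one strand in $B$ to point ``downward'' while the other points ``upward,'' contradicting Figure~\ref{fig:conssistently-orientable}, in which both strands in $B$ point in the same direction.

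The second step is to propagate the orientation $\vec{o}$ through the hypercube of resolutions. The key technical claim is that an $m$-arc or $\Delta$-arc surgery on a consistently oriented configuration produces another consistently oriented configuration, once the notion of consistent orientability is extended to permit the transverse double points introduced by surgery: each strand of the resulting $4$-valent graph inherits an orientation, and the local picture of Figure~\ref{fig:conssistently-orientable} is still required at every remaining arc. For an $m$-arc surgery, the four endpoints on $\partial B$ have compatible orientations by the consistency hypothesis, and the two newly inserted crossing strands can be oriented to match them; an analogous verification handles $\Delta$-arc surgeries. Applied to the state $\overline{0}$, where every arc is an $m$-arc by hypothesis, this claim combines with the first observation and an induction on $|w|$ to show that for every $w \in \{0,1\}^{|M|}$, the configuration $D_{\Gamma_M}(w)$ contains no $\eta$-arc at all.

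This immediately contradicts the existence of a bad face: by Lemma~\ref{lemma:upto-equivlence-bad-face}, a bad face would produce states $v \le_2 u$ for which $D_{\Gamma_M}(v) \setminus D_{\Gamma_M}(u)$ is equivalent to one of the configurations in Figure~\ref{fig:equivalent-to-badface}, each of which contains an $\eta$-arc. The bijection on arcs arising from the equivalence relation (Remark~\ref{bijections-arising-from-equivalence}) transports this $\eta$-arc to an $\eta$-arc of $D_{\Gamma_M}(v)$, contradicting the conclusion of the induction. The main obstacle will be formulating the extended notion of consistent orientability precisely in the presence of double points, and verifying rigorously that $m$- and $\Delta$-arc surgeries preserve the orientation picture at every \emph{other} arc of the configuration; this requires a careful case analysis of how the local disk of another arc can interact with the surgery region, and is the step where the argument most clearly depends on the planarity of $\Gamma_M$.
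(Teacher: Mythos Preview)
Your proposal is correct and takes a genuinely different route from the paper. The paper argues constructively: starting from the bad face, it traces back through the hypercube to $D_{\Gamma_M}(\overline{0})$ and explicitly exhibits an odd cycle of circles $C^{(0)},\ldots,C^{(2n)}$ linked by arcs $R_0,\ldots,R_{2n}$, then uses the planar nesting pattern to argue this cycle cannot be consistently oriented. You instead run the contrapositive by forward propagation: a consistent orientation at $\overline{0}$ pushes through every $m$- and $\Delta$-surgery (once you extend the notion to immersed circles), so every state inherits a consistent orientation, and your local observation then rules out $\eta$-arcs everywhere, hence bad faces. Your argument is cleaner and actually proves more---it never uses the hypothesis that all arcs at $\overline{0}$ are $m$-arcs, only that none are $\eta$-arcs, which your first observation already gives. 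Two small points: the detour through Lemma~\ref{lemma:upto-equivlence-bad-face} at the end is unnecessary, since the $\eta\circ\eta$ side of a bad face directly places an $\eta$-arc in $D_{\Gamma_M}(v)$; and your closing remark about planarity is misplaced---your propagation and your no-$\eta$-arc observation are purely local and do not invoke planarity at all, whereas it is the paper's nesting argument for the odd cycle in $S^2$ that genuinely uses it.
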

\begin{proof}
Since \((G, M) \notin \mathscr{G}\), it follows from Corollary~\ref{bad-face-corollary} that the hypercube of states corresponding to \(\Gamma_M\) in the context of 2-factor homology must contain a bad face. Therefore, there exist states \(v, u \in \{0,1\}^{|M|}\) with \(v \le_{2} u\) such that the resolution configuration \(D_{\Gamma_M}(v) \setminus D_{\Gamma_M}(u)\) contains a circle \(C\) meeting both the boundary points of an \(\eta\)-arc \(A_1\) and the boundary points of a \(\Delta\)-arc \(A_2\). Moreover, in \(D_{\Gamma_M}(v) \setminus D_{\Gamma_M}(u)\), the circle $C$ must contain at least one double point; see Lemma~\ref{lemma:upto-equivlence-bad-face}. Hence \(|v| > 1\); set \(l = |v|\). There then exists a chain of states \(\overline{0} = v_{0} \le_{1} v_{1} \le_{1} \cdots \le_{1} v_{l} = v \le_{1} v' \le_{1} u\) such that \(D_{\Gamma_{M}}(v_{1}) = s_{A_{0}}(D_{\Gamma_{M}}(v_{0}))\) for some \(m\)-arc \(A_{0}\) in \(D_{\Gamma_{M}}(\overline{0})\), and \(D_{\Gamma_{M}}(v')\) contains an \(\eta\)-arc which is the same as \(A_{2}\). If necessary, choose a different \(v_{1}\) and consider the circle \(C^{(0)} \in Z(D_{\Gamma_{M}}(\overline{0}))\) that intersects both \(A_{0}\) and \(A_{2}\) in \(D_{\Gamma_{M}}(\overline{0})\). In order for \(\eta\)-arcs to appear in both \(D_{\Gamma_{M}}(v)\) and \(D_{\Gamma_{M}}(v')\), there must exist a finite sequence of arcs \(R_{0}, R_{1}, \dots, R_{2n}\) and a finite sequence of circles \(C^{(0)}, C^{(1)}, \dots, C^{(2n)}\) in \(D_{\Gamma_{M}}(\overline{0})\) such that \(R_{i}\) intersects both \(C^{(i)}\) and \(C^{(i+1)}\) for \(0 \leq i \leq 2n-1\), and \(R_{2n}\) intersects both \(C^{(2n)}\) and \(C^{(0)}\). The final arc \(R_{2n}\) is an $\eta$-arc in $s_{\{R_{0},\dots,R_{2n-1}\}}(D_{\Gamma_{M}}(\overline{0}))$. Furthermore, we may choose the sequence so that either (A) all circles are nested inside \(C^{(0)}\) or (B) none of the circles are nested; see Figure~\ref{fig:bad-face-inconsistent}~(A) and (B). Note that in both cases, it is impossible to assign a consistent orientation to the circles \(C^{(0)}, \dots, C^{(2n)}\).

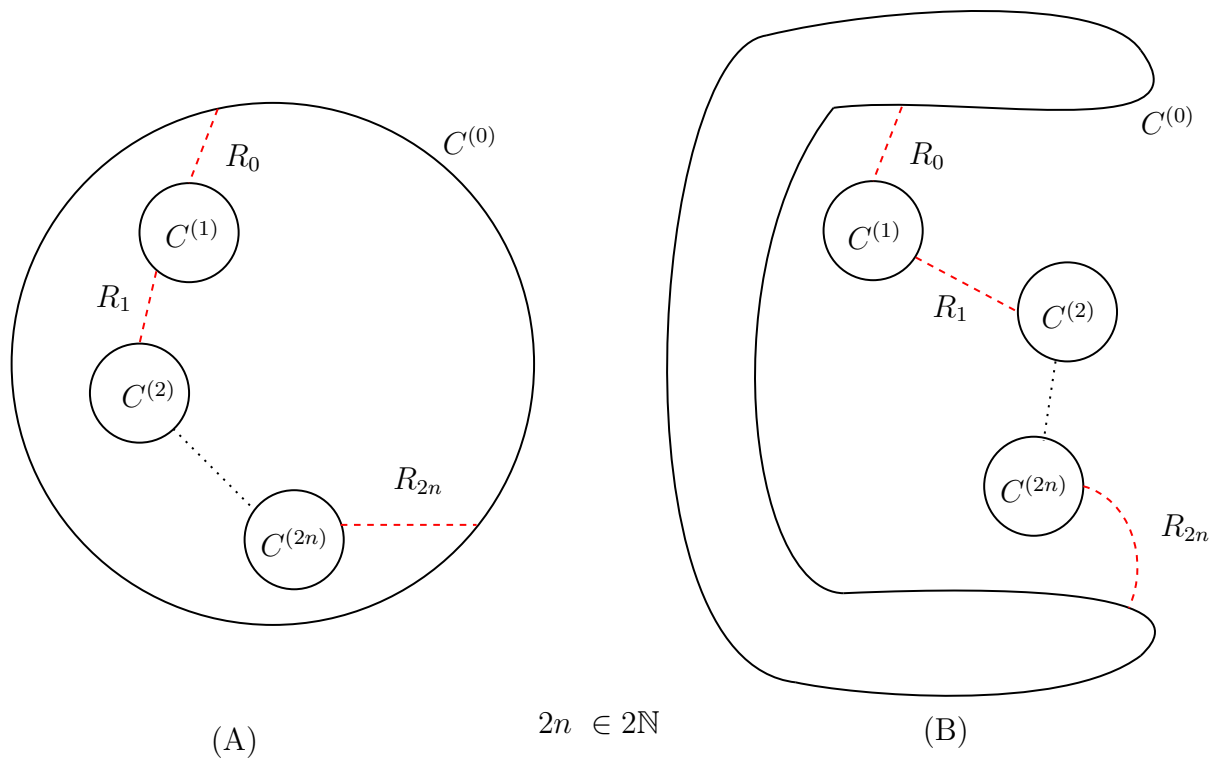
\begin{figure}[htp]
        \centering
        \tikzset{every picture/.style={line width=0.75pt}} 

\begin{tikzpicture}[x=0.75pt,y=0.75pt,yscale=-1,xscale=1]

\draw   (15,211.75) .. controls (15,138.99) and (73.99,80) .. (146.75,80) .. controls (219.51,80) and (278.5,138.99) .. (278.5,211.75) .. controls (278.5,284.51) and (219.51,343.5) .. (146.75,343.5) .. controls (73.99,343.5) and (15,284.51) .. (15,211.75) -- cycle ;
\draw   (79.5,145.5) .. controls (79.5,131.69) and (90.69,120.5) .. (104.5,120.5) .. controls (118.31,120.5) and (129.5,131.69) .. (129.5,145.5) .. controls (129.5,159.31) and (118.31,170.5) .. (104.5,170.5) .. controls (90.69,170.5) and (79.5,159.31) .. (79.5,145.5) -- cycle ;
\draw [color={rgb, 255:red, 252; green, 3; blue, 3 }  ,draw opacity=1 ] [dash pattern={on 2.5pt off 2.5pt}]  (119,83) -- (104.5,120.5) ;
\draw [color={rgb, 255:red, 252; green, 3; blue, 3 }  ,draw opacity=1 ] [dash pattern={on 2.5pt off 2.5pt}]  (88,165) -- (79.5,201.5) ;
\draw   (54.5,226.5) .. controls (54.5,212.69) and (65.69,201.5) .. (79.5,201.5) .. controls (93.31,201.5) and (104.5,212.69) .. (104.5,226.5) .. controls (104.5,240.31) and (93.31,251.5) .. (79.5,251.5) .. controls (65.69,251.5) and (54.5,240.31) .. (54.5,226.5) -- cycle ;
\draw  [dash pattern={on 0.84pt off 2.51pt}]  (96.5,244.5) -- (136.5,284.5) ;
\draw   (132.5,300.5) .. controls (132.5,286.69) and (143.69,275.5) .. (157.5,275.5) .. controls (171.31,275.5) and (182.5,286.69) .. (182.5,300.5) .. controls (182.5,314.31) and (171.31,325.5) .. (157.5,325.5) .. controls (143.69,325.5) and (132.5,314.31) .. (132.5,300.5) -- cycle ;
\draw [color={rgb, 255:red, 252; green, 3; blue, 3 }  ,draw opacity=1 ] [dash pattern={on 2.5pt off 2.5pt}]  (181,293) -- (249.5,293) ;
\draw   (424.5,144.5) .. controls (424.5,130.69) and (435.69,119.5) .. (449.5,119.5) .. controls (463.31,119.5) and (474.5,130.69) .. (474.5,144.5) .. controls (474.5,158.31) and (463.31,169.5) .. (449.5,169.5) .. controls (435.69,169.5) and (424.5,158.31) .. (424.5,144.5) -- cycle ;
\draw [color={rgb, 255:red, 252; green, 3; blue, 3 }  ,draw opacity=1 ] [dash pattern={on 2.5pt off 2.5pt}]  (464,82) -- (449.5,119.5) ;
\draw [color={rgb, 255:red, 252; green, 3; blue, 3 }  ,draw opacity=1 ] [dash pattern={on 2.5pt off 2.5pt}]  (471,158) -- (522.5,185.5) ;
\draw   (522.5,185.5) .. controls (522.5,171.69) and (533.69,160.5) .. (547.5,160.5) .. controls (561.31,160.5) and (572.5,171.69) .. (572.5,185.5) .. controls (572.5,199.31) and (561.31,210.5) .. (547.5,210.5) .. controls (533.69,210.5) and (522.5,199.31) .. (522.5,185.5) -- cycle ;
\draw  [dash pattern={on 0.84pt off 2.51pt}]  (541.5,210.5) -- (535.5,250.5) ;
\draw   (505.5,273.5) .. controls (505.5,259.69) and (516.69,248.5) .. (530.5,248.5) .. controls (544.31,248.5) and (555.5,259.69) .. (555.5,273.5) .. controls (555.5,287.31) and (544.31,298.5) .. (530.5,298.5) .. controls (516.69,298.5) and (505.5,287.31) .. (505.5,273.5) -- cycle ;
\draw    (410.5,372.5) .. controls (315.5,362) and (337.5,55) .. (395.5,46) ;
\draw    (410.5,372.5) .. controls (437.5,378.5) and (544.5,389) .. (584.5,359) ;
\draw    (395.5,46) .. controls (457.5,31) and (565.5,25.5) .. (584.5,53.5) ;
\draw    (584.5,53.5) .. controls (621.5,103) and (491.5,74.5) .. (429.5,82.5) ;
\draw    (429.5,82.5) .. controls (367.5,160.5) and (385.5,326.5) .. (434.5,327.5) ;
\draw    (434.5,327.5) .. controls (629.5,319) and (590.5,353) .. (584.5,359) ;
\draw [color={rgb, 255:red, 252; green, 3; blue, 3 }  ,draw opacity=1 ] [dash pattern={on 2.5pt off 2.5pt}]  (555.5,273.5) .. controls (578.5,279.5) and (589.5,310) .. (578.5,335) ;

\draw (91,137.4) node [anchor=north west][inner sep=0.75pt]    {$C^{( 1)}$};
\draw (69,218.4) node [anchor=north west][inner sep=0.75pt]    {$C^{( 2)}$};
\draw (139,293.4) node [anchor=north west][inner sep=0.75pt]    {$C^{( 2n)}$};
\draw (231,90.4) node [anchor=north west][inner sep=0.75pt]    {$C^{( 0)}$};
\draw (121,99.4) node [anchor=north west][inner sep=0.75pt]    {$R_{0}$};
\draw (56,170.4) node [anchor=north west][inner sep=0.75pt]    {$R_{1}$};
\draw (206,262.4) node [anchor=north west][inner sep=0.75pt]    {$R_{2n}$};
\draw (466,98.4) node [anchor=north west][inner sep=0.75pt]    {$R_{0}$};
\draw (478,175.4) node [anchor=north west][inner sep=0.75pt]    {$R_{1}$};
\draw (593,286.4) node [anchor=north west][inner sep=0.75pt]    {$R_{2n}$};
\draw (114,393) node [anchor=north west][inner sep=0.75pt]   [align=left] {(A)};
\draw (473,388) node [anchor=north west][inner sep=0.75pt]   [align=left] {(B)};
\draw (279,385.4) node [anchor=north west][inner sep=0.75pt]    {$2n\ \in 2\mathbb{N}$};
\draw (583,80.4) node [anchor=north west][inner sep=0.75pt]    {$C^{( 0)}$};
\draw (435,138.4) node [anchor=north west][inner sep=0.75pt]    {$C^{( 1)}$};
\draw (533,178.4) node [anchor=north west][inner sep=0.75pt]    {$C^{( 2)}$};
\draw (512,266.4) node [anchor=north west][inner sep=0.75pt]    {$C^{( 2n)}$};

\end{tikzpicture}
        \caption{The resolution configuration \( D_{\Gamma_{M}}(\overline{0}) \) is not consistently orientable when \((G, M) \notin \mathscr{G}\) and all arcs of \( D_{\Gamma_{M}}(\overline{0}) \) are \( m \)-arcs.}
        \label{fig:bad-face-inconsistent}
\end{figure}

\end{proof}

 \begin{theorem}\label{web-in-our-family}
     For a closed web $W$ obtained by flattening all the crossings of an oriented link diagram \( D_L \), consider the  perfect matching graph $\Gamma_{M}$ associated to \( W \). Let $\Gamma_{M}$ represent a planar trivalent graph $G$ with perfect matching $M$, then $(G,M)\in \mathscr{G}$. 
 \end{theorem}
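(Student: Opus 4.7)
The plan is to deduce the theorem directly by combining Lemmas~\ref{all-arcs-in-zero-state-of-web-are-m-arc} and~\ref{bad-face-zero-state-not-consistent} and arguing by contradiction. The key observation is that these two lemmas together pin down the configuration $D_{\Gamma_M}(\overline{0})$ so tightly that membership in $\mathscr{G}$ becomes forced.

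First I would apply Lemma~\ref{all-arcs-in-zero-state-of-web-are-m-arc} to the perfect matching graph $\Gamma_M$ associated to the closed web $W$. This gives two facts about the resolution configuration $D_{\Gamma_M}(\overline{0})$: every arc in $A\big(D_{\Gamma_M}(\overline{0})\big)$ is an $m$-arc, and $D_{\Gamma_M}(\overline{0})$ is consistently orientable, with the consistent orientation induced from the orientation of the circles in the oriented smoothing $D_L^{os}$ (equivalently, the loops of $D_L^{dof}$).

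Next I would suppose, for contradiction, that $(G,M)\notin \mathscr{G}$. By Corollary~\ref{bad-face-corollary} (and Definition~\ref{the-certain-family}), the hypercube of states of $\Gamma_M$ then contains a bad face. Since all arcs of $D_{\Gamma_M}(\overline{0})$ are $m$-arcs, the hypothesis of Lemma~\ref{bad-face-zero-state-not-consistent} is satisfied, and the lemma therefore guarantees that $D_{\Gamma_M}(\overline{0})$ is \emph{not} consistently orientable. This directly contradicts the consistent orientability established in the previous step, so the assumption $(G,M)\notin \mathscr{G}$ must be false, and we conclude $(G,M)\in \mathscr{G}$.

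The argument is essentially a one-line consequence of the two preceding lemmas, so no real obstacle is anticipated; the substantive work has already been carried out in Lemmas~\ref{all-arcs-in-zero-state-of-web-are-m-arc} and~\ref{bad-face-zero-state-not-consistent}. The only point worth stating explicitly in the write-up is the compatibility of the two notions of consistent orientability being compared: the orientation supplied in the first step is a genuine consistent orientation in the sense required by Lemma~\ref{bad-face-zero-state-not-consistent}, which is immediate from the fact that it comes from an honest orientation of the link $L$ transferred through the flattening operation of Figure~\ref{fig:flattening}.
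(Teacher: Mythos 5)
Your proposal is correct and is exactly the paper's argument: the paper proves the theorem by stating that it follows directly from Lemma~\ref{all-arcs-in-zero-state-of-web-are-m-arc} and Lemma~\ref{bad-face-zero-state-not-consistent}, which is precisely the contrapositive combination you spell out. Your explicit remark about the compatibility of the orientation coming from the link via the flattening is a reasonable (and harmless) elaboration of what the paper leaves implicit.
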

 \begin{proof}
     This proof follows directly from Lemma~\ref{all-arcs-in-zero-state-of-web-are-m-arc} and Lemma~\ref{bad-face-zero-state-not-consistent}.
 \end{proof}
 
The class of planar trivalent graphs equipped with perfect matchings, as characterized by Theorem~\ref{web-in-our-family}, forms a particularly important family in the study of link homology and quantum invariants. The construction of a stable homotopy type for 2-factor homology presented in this paper serves as a foundational step toward a broader theory, with direct implications for the closed webs arising in the context of \( \mathfrak{sl}_3 \) and, more generally, \( \mathfrak{sl}_n \) link invariants. Notably, the quantum number \([2]_q\) used in the present setting can naturally be replaced by \([n]_q\); see \cite[Theorem~3.1]{CohomologyPlanarTrivalentGraph}, \cite[Chapter~9]{Temperley-Lieb-3-manifold}, and \cite[Definition~3.3]{quantum-state-systems-baldridge}. In this generalization, Theorem~\ref{web-in-our-family} reveals that the class of graphs studied here is precisely the critical family of planar graphs necessary for constructing a robust stable homotopy type invariant for \( \mathfrak{sl}_n \) knot invariants. As such, this family provides a framework for future developments in categorification and topological invariants associated with higher-rank quantum link homologies.

\section{Conclusion}
In this paper, we construct a stable homotopy type for a family of planar trivalent graphs with perfect matchings. This family includes closed webs obtained by flattening link diagrams (see Section~\ref{webs}). Our stable homotopy type is a refinement of both the $2$-factor homology~\cite{CohomologyPlanarTrivalentGraph} and the $2$-factor polynomial. It remains an open question whether this stable homotopy refinement is strictly stronger than $2$-factor homology. One possible way to investigate this is by computing Steenrod operations, following techniques similar to those in~\cite{Steenrod-square-on-Khovanov}. In a forthcoming paper, the author plans to compute Steenrod squares for $2$-factor homology.  

Similar to the stable homotopy refinements of Khovanov homology \cite{KhStableHomotopyType, Burnside-stable-homotopy} and knot Floer homology for links in \( S^{3} \) \cite{knot-Floer-stable-homotopy}, the broader goal is to construct stable homotopy type invariants for virtual links. This is inspired by the work of Kauffman, Nikonov, and Ogasa~\cite{StableHomotopyTypeForLinksInThickenedHigherGenusSurface, ogasa-thickened-genus}, which establishes a stable homotopy refinement for links in thickened genus surfaces \( \Sigma_{g} \times [0,1] \). Moreover, as shown in~\cite{ribbon-moves}, invariants of virtual links can be converted into invariants of ribbon graphs, reducing the problem to constructing a stable homotopy type invariant for ribbon graphs. In particular, the author aims to develop stable homotopy refinements of $n$-color homology~\cite{n-color-homology} for ribbon graphs. The refinement of $2$-factor homology for planar trivalent graphs presented here is an important first step in that direction. The author suspects that tackling the ribbon graph case will require addressing the role of butterfly matchings, and the techniques developed in this paper are likely to be essential for that work.  

Although the stable homotopy type defined here is an invariant of planar trivalent graphs with perfect matchings, the construction can be extended to more general planar graphs using the \emph{blowup} of the graph, which admits a canonical perfect matching (see~\cite[Section~2.3]{n-color-homology}). In the broader context of stable homotopy refinements, a major open problem is constructing stable homotopy type invariants for \(\mathfrak{sl}_{n}\) Khovanov--Rozansky homologies of links. Although a conjectural extension exists for special kinds of knots called matched diagrams~\cite{sln-stable-homotopy-matched-diagram}, the construction of such refinements for arbitrary links remains open and is a direction the author plans to pursue.
\bibliography{refs}  
\bibliographystyle{amsalpha}

\end{document}